\newcommand{\FundingLogos}{%
  % Adjust height here but DO NOT make the EU emblem smaller than 1cm for print.
  % Use the official files downloaded from the Commission / ERC sites.
  \raisebox{0pt}{\includegraphics[height=1.5cm]{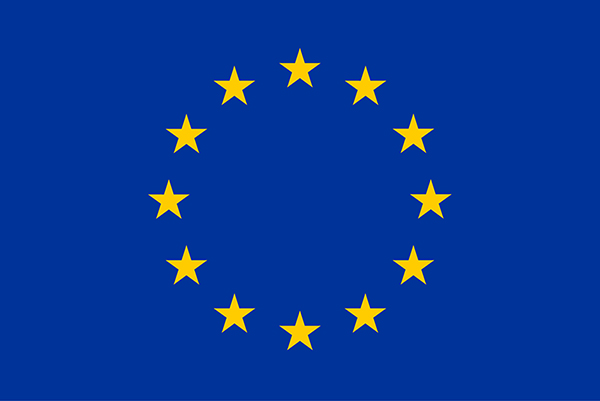}}%
  \hspace{1em}%
  \raisebox{0pt}{\includegraphics[height=1.5cm]{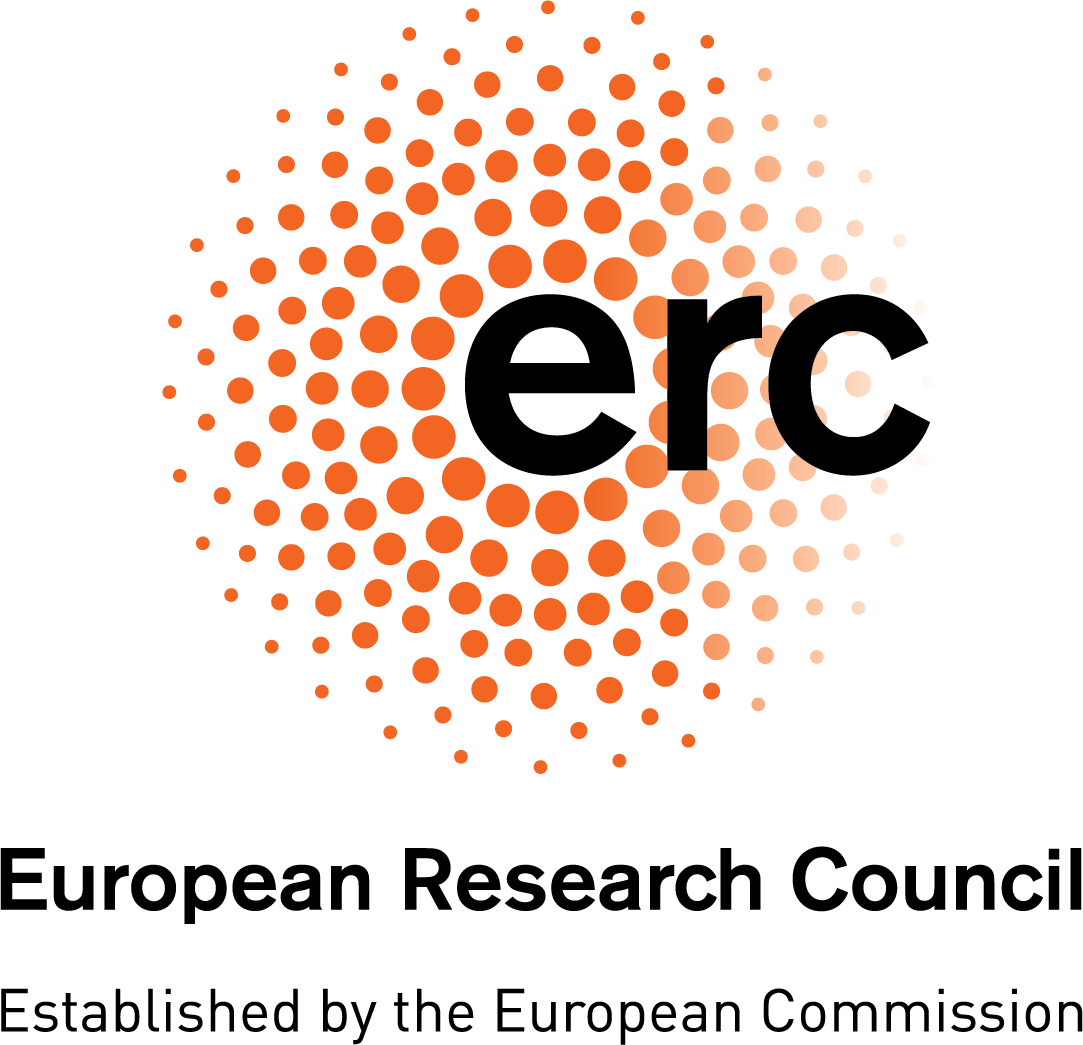}}%
}
\DeclareMathOperator*{\argmin}{arg\,min}
\DeclareMathOperator{\im}{Im}
\DeclareMathOperator{\dom}{Dom}
\numberwithin{equation}{section}
\theoremstyle{plain}
\newtheorem{theorem}{Theorem}
\newtheorem{lemma}[theorem]{Lemma}
\newtheorem{proposition}[theorem]{Proposition}
\newtheorem{corollary}[theorem]{Corollary}
\numberwithin{theorem}{section}
\theoremstyle{definition}
\newtheorem{definition}{Definition}
\newtheorem{assumption}{Assumption}
\newtheorem{axiom}{Axiom}
\theoremstyle{remark}
\newtheorem{remark}{Remark}
\crefname{axiom}{Axiom}{Axioms}
\crefname{property}{Property}{Properties}
\crefname{assumption}{Assumption}{Assumptions}
\newcommand{\R}{\mathbb{R}}
\newcommand{\F}{\mathcal{F}}
\newcommand{\e}{\varepsilon}
\newcommand{\weak}{\rightharpoonup}
\newcommand{\NN}{\mathbb{N}}
\newcommand{\X}{\mathcal{X}}
\newcommand{\Ii}{\mathbb{I}}
\newcommand{\supp}{\mathrm{supp}}
\newcommand{\E}{\mathcal{E}}
\newcommand{\Xx}{\mathbb{X}}
\newcommand{\Z}{\mathbb{Z}}
\newcommand{\Cost}{\mathcal{L}}
\newcommand{\Speed}{\upvartheta}
\newcommand{\Pp}{\mathscr{P}}
\newcommand{\Price}{\mathscr{P}}
\newcommand{\diff}{\,\mathrm{d}}
\newcommand{\Crit}{\mathscr{C}}
\newcommand{\Cc}{\mathfrak{c}}
\newcounter{property} % Define a new counter
\newenvironment{properties}
{
    \begin{list}{\textit{Property \arabic{property}:\,}}
    {
        \usecounter{property} % Use the custom counter
        \setlength{\leftmargin}{2em} % Set the left margin
        \setlength{\labelwidth}{1em} % Set the label width
        \setlength{\itemindent}{0em} % Adjust item indentation if needed
    }
}
{
    \end{list}
}
\crefname{property}{Property}{Properties}
\title[Balanced quasistatic evolutions of critical points in metric spaces]{Balanced quasistatic evolutions of critical points in metric spaces}
\author[S.~Almi]{Stefano Almi$^{1}$}
\email{stefano.almi@unina.it}
\author[M.~Fornasier]{Massimo Fornasier$^{2, 3, 4}$}
\email{massimo.fornasier@cit.tum.de}
\author[J.~Klemenc]{Jona Klemenc$^{2,3}$}%\footrecall{tum}\footrecall{mcml}}
\email{jona.klemenc@tum.de}
\author[A.~Scagliotti]{Alessandro Scagliotti$^{2,3}$}%\footrecall{tum}\footrecall{mcml}}
\email{scag@ma.tum.de}
\address{$^1$ Dipartimento di Matematica e Applicazioni ``R. Caccioppoli", Universit\`a di Napoli Federico II, via Cintia, 80126 Napoli, Italy}
\address{$^2$CIT School, Technical University of Munich, Garching bei M\"unchen, Germany.}
\address{$^3$Munich Center for Machine Learning (MCML), Munich, Germany.}
\address{$^4$Munich Data Science Institute (MDSI), Technical University of Munich, Garching bei M\"unchen, Germany}
\begin{document}

\begin{abstract}
Quasistatic evolutions of critical points of time-dependent energies exhibit piecewise smooth behavior, making them useful for modeling continuum mechanics phenomena like elastic-plasticity and fracture. Traditionally, such evolutions have been derived as vanishing viscosity and inertia limits, leading to balanced viscosity solutions. However, for nonconvex energies, these constructions have been realized in Euclidean spaces and assume non-degenerate critical points. 
In this paper, we take a different approach by decoupling the time scales of the energy evolution and of the transition to equilibria.
Namely, starting from an equilibrium configuration, we let the energy evolve, while keeping frozen the system state; then, we update the state by freezing the energy, while letting the system transit via gradient flow or an approximation of it (e.g., minimizing movement or backward differentiation schemes). 
This approach has several advantages. It aligns with the physical principle that systems transit through energy-minimizing steady states. It is also fully constructive and computationally implementable, with physical and computational costs governed by appropriate action functionals. Additionally, our analysis is simpler and more general than previous formulations in the literature, as it does not require non-degenerate critical points. Finally, this approach extends to evolutions in locally compact metric path spaces, and our axiomatic presentation allows for various realizations.
\end{abstract}

\maketitle

\section{Introduction}
Quasistatic evolutions of critical points driven by time-dependent energies are characterized by piecewise smooth behaviors. For this reason, they have been used to model time-dependent phenomena in continuum mechanics, such as linearly elastic perfect plasticity or cohesive and brittle fracture \cite{dalmaso2006quasistatic,dalmaso2008vanishing,efendiev2006rate,fiaschi2009vanishing}. 
Traditionally, proposed constructions have been mostly relying on a vanishing  viscosity and inertia limit, yielding solutions characterized by an energy balance, known as \emph{balanced viscosity solutions} \cite{Zanini2007,  Agostiniani_2012,AR17, Scilla_2018_2, Scilla_2018,Scilla_2019} (see Section \ref{sec:relatedwork} for more details);
these derivations for quite general  nonconvex energies have been developed in Euclidean spaces and under the restrictive assumption of non-degeneracy of critical points of the energy. 
 In such constructions, two time-scales coexist: One corresponding to the evolution of the driving energy, and the other corresponding to the aspiration of the system towards equilibria.

In this paper, we follow an alternative route, which disentangles the two time-scales.
Starting from an equilibrium configuration, we define the \emph{discrete quasistatic evolution} by letting the energy evolve, while keeping frozen the system state. After a small amount of time $\delta>0$, the state is in general out of equilibrium. We amend this situation by freezing the energy, while letting the system transit via gradient flow---or some discrete approximation of it, like minimizing movement or backward differentiation scheme. Afterwards, we iterate this procedure for the whole evolution horizon, obtaining a piecewise constant curve $t\mapsto \eta^\delta (t)$.
For vanishing time discretizations $\delta \to 0$, it turns out that the limiting trajectories indeed enjoy the same energy balance properties as balanced viscosity solutions.

There are multiple advantages of our approach, which was introduced and explored first for time-evolving constraints in \cite{ACFS17} (see \cite{Almi-Belz-AMPA, Almi-Belz-Negri, Almi-Negri2020, Bourdin, Knees-NegriM3AS} for some applications to phase-field fracture evolution). First of all, it follows the physical principle that a system moves through action-minimizing transitions towards a steady state. % Moreover, it is fully constructive and it can be rendered fully computational.
Secondly, it allows us to effortlessly combine a discrete time scale for the system transition---e.g., when considering the minimizing movement scheme---with a continuous time scale for the energy evolution.
Third, the separation of the time scales leads to a disentanglement of the technical-proving challenges shared between previous approaches and ours: Taking \cite{AR17} as a seminal example, the two careful arguments concerning functional compactness and trajectory surgery coexist in the proof of \cite[Proposition 4.1 and Proposition 4.5]{AR17}. In comparison, when deriving the limiting curve, we retrieve compactness by relying on the properties of the system transitions in a purely axiomatic way. 
After having applied the functional compactness arguments, we establish the validity of the axioms by trajectory surgery, in a separate step. The resulting framework leaves open the possibility of further system transitions beyond the three which we have studied here, i.e., gradient flow, minimizing movement and backward differentiation scheme (see below for more details).
Lastly, our construction immediately suggests a numerical implementation approach, whereas constructions through viscosity and inertia limits face the numerical difficulties associated with exploding velocity fields.
%\textcolor{cyan}{Both physical and computational costs of the system are characterized by appropriate action functionals}.
% Secondly, our analysis is simpler than the vanishing viscosity one. In particular, we do not need to require non-degeneracy of critical points. Finally, the construction can be fully realized for evolutions with values in locally compact metric path spaces. The presentation of our results is axiomatic, leaving open the possibility for different realizations.

Let us now introduce the results of the paper more formally.
In the setting of this paper, we consider a locally compact metric path space $(\Xx, d)$, and a sufficiently well-behaved energy $E \colon [0, T] \times \Xx \to \R$ in charge of driving the system. We seek to construct a \emph{balanced quasistatic evolution} $\hat \eta\colon [0, T] \to \Xx$. Being a ``quasistatic evolution'', we demand that 
\begin{equation}\label{eq:crit_point_evol}
    |\partial E_t|\big(\hat \eta(t)\big) = 0 \quad \mbox{for every } t\in [0,T] \setminus J,
\end{equation}
where $J=\{t \in [0,T]: \hat \eta^+(t) \neq \hat \eta^-(t)\}$ is the countable jump set of $\hat \eta$.
Moreover, the property of being ``balanced'' refers to complying with the energy balance
\begin{equation}\label{eq:e_b_0}
     E_t(\hat\eta^+(t)) - E_s(\hat\eta^-(s)) = \int_s^t \partial_t E_\tau(\hat\eta(\tau)) \diff \tau - \bar \mu([s, t]), \quad \forall s < t \in (0,T),
\end{equation}
for a suitable positive Radon measure $\mu \in \mathcal M^+([0,T])$ supported on $J$, together with the fact that, for each $t \in J$, we have that
\begin{equation}\label{eq:jump_characterization_intro}
    \mu(\{t\}) = E_t\big(\hat \eta^-(t)\big) - E_t\big(\hat \eta^+(t)\big) = c_t\big(\hat \eta^-(t), \hat \eta^+(t)\big),
\end{equation}
Here, $c_t\colon \Xx \times \Xx \to \R$ is defined by minimizing energy-dissipation integrals, and is tailored on the energy landscape at time $t$ and on the system transition rule. Furthermore, \eqref{eq:jump_characterization_intro} enforces that the balanced quasistatic evolution $\hat \eta$ does not jump over energy barriers.
On this point, the work \cite{ACFS17} provided a construction of quasistatic evolutions in presence of a fixed energy and a time-varying linear constraint. Unfortunately, in that case it was not possible to achieve the energy balance. Instead, the authors established an inequality of the form
\begin{equation}\label{eq:enineq}
    E_t \big(\hat \eta^+(t) \big)  \leq E_s \big(\hat \eta^-(s) \big) +
    \int_s^t \partial_t E_\tau \big(\hat \eta(\tau) \big) \diff \tau
\end{equation}
for every $0< s\leq t < T$.
In contrast, we bring to completion the program outlined above, even in the presence of degenerate critical points.
For such degenerate critical points not to interfere with the limiting construction,
we have to set this limiting construction in a quotient space $\mathcal{X}$ of the path space $[0, T] \times \Xx$,
where we identify points in the same connected component of the set of critical points of $E_t$,
with the limiting curve $\hat \eta$ taking values in $\mathcal{X}$.
In the simplified theorem, we assume \cref{ass:conn_comp,ass:der_time,ass:metric_space,ass:path+Lipsch,ass:PL+time_der_slope,ass:reg_E,ass:unif_coerc,ass:t_der_quotient}, which ensures that all the functions $E$, $|\partial E|$, $\partial_t E$ and $c$ factor through $\mathcal{X}$; we denote the resulting maps as
$\hat E$, $|\partial \hat E|$, $\partial_t \hat E$ and $\hat c$ (\cref{fig:quotient-space-diagram}).
\begin{figure}[htb]
    \centering
% https://tikzcd.yichuanshen.de/#N4Igdg9gJgpgziAXAbVABwnAlgFyxMJZABgBoBGAXVJADcBDAGwFcYkQTSACAFUpAC+pdJlz5CKchWp0mrdgB0FAW3o4AFgGMmwABoDBwkBmx4CRKcRkMWbRBzK9KXJXmXwXC3QA9DI0+JEAMzSNDby9koASoIyMFAA5vBEoABmAE4QykhkIDgQSFKytooK6mrASjA49AZCaZnZiLn5SABMYXJ2IK4w3jjpysBYUAKebh5VNQB6lQqwjDUA+mB1RhlZ7TStiEUARjBgUDmdJfZoIDSM9AeMAAqiZhIgWGDYsJcg6jD0x-Y4AHcIN9fggaHB1FhUjgcvUQBsmh08gVECEQNdbg8AuZ7K93mxThEQABRbgAHyUaHo6TwTC4xLJ3Ep1NpjCWOHp3E0fnhjUK2xRaMYr26UHoEPinwxMHuj0CuLeIwJxSJSnKHNJXApCipNKwdLVanpjM8utZ7M86s5ho53IElAEQA
\begin{tikzcd}
    & {[0, T] \times \Xx} \arrow[d, "q" description, two heads] \arrow[rrd, "{E, |\partial E|, \partial_t E, c}" description] &  &    \\
{[0, T]} \arrow[r, "\hat{\eta}^{\delta_n}"] \arrow[ru, "\textrm{id} \times \eta^{\delta_n}"] & \mathcal{X} \arrow[rr, "{\hat E, |\partial \hat E|, \partial_t \hat E,\hat c}" description, dashed]                     &  & \R
\end{tikzcd}
    \caption{The spaces involved in the definition of the quasistatic evolution $\hat \eta$.
    If $E$ does not have degenerate critical points, both $q$ is the identitity and
    $\hat \eta$ takes values in $[0, T] \times \Xx$.}
    \label{fig:quotient-space-diagram}
\end{figure}
{
\renewcommand{\thetheorem}{\arabic{theorem}~(simplified)}
\begin{theorem}
    Let us assume \cref{ass:conn_comp,ass:der_time,ass:metric_space,ass:path+Lipsch,ass:PL+time_der_slope,ass:reg_E,ass:unif_coerc,ass:t_der_quotient}.
    Furthermore, let the family of mappings $\bar{\omega}_t\colon \Xx \to \Xx$ indexed by $t \in [0, T]$ be the transition rule as in \cref{def:discr_evol}, corresponding to an action $c_t$ and
    complying with \cref{ax:decreasing,ax:gradient,ax:action_triangular_time_stable}.
    Then, for any positive vanishing sequence $(\delta_n)_{n \in \NN}$ and for the corresponding discrete quasistatic evolutions
    $\eta^{\delta_n}$, we can---without relabeling---extract a subsequence
    such that:
    \begin{itemize}
        \item The compositions {$q \circ (\mathrm{id} \times \eta^{\delta_n}) \eqqcolon \hat \eta^{\delta_n}$} converge pointwise to a piecewise continuous limiting curve $\hat\eta\colon [0, T] \to \X$.
        \item There exists a positive Radon measure $\bar\mu \in \mathcal{M}([0, T])$ such that $\mu^{\delta^n} \weak^* \bar\mu$, where $\mu^{\delta_n}$ is as defined in \cref{eq:def_mu_delta}. Moreover, the set $J \coloneqq \mathrm{supp}\, \bar\mu$ consists of countably many points.
        \item The left and right limits $\hat\eta^-(t)$ and $\hat\eta^+(t)$ of $\hat\eta$ exist for every $t \in (0, T)$, and so do the limits $\hat\eta^+(0)$ and $\hat\eta^-(T)$.
        \item The limiting curve $\hat\eta$ fulfills, for all $0 \leq s < t \leq T$, the energy balance identity
        \begin{equation*}
             \hat E_t(\hat\eta^+(t)) - \hat E_s(\hat\eta^-(s)) = \int_s^t \partial_t \hat E(\hat\eta(\tau)) \diff \tau - \bar \mu([s, t]).
        \end{equation*}
        \item $|\partial \hat E_t|\big(\hat\eta(t)\big) = 0$ for all $t \in [0, T]$.
        \item The limiting curve $\hat\eta$ is continuous in $[0, T] \setminus J$, and for every $t \in J$ we have that 
        \begin{equation*}
            \hat E_t(\hat\eta^-(t)) - \hat E_t(\hat\eta^+(t)) = \bar \mu(\{t\}) = \hat c_t\big(\hat\eta^-(t), \hat\eta^+(t)\big).
        \end{equation*}
    \end{itemize}
\end{theorem}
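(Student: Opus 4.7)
\emph{Proof plan.} The argument will proceed in four stages: extracting a limit curve by a Helly-type selection in the quotient $\X$, extracting a limit measure by weak$^*$ compactness, passing to the limit in the discrete energy balance, and finally pinning down the jump structure by a trajectory-surgery argument.

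For the first stage, I plan to combine \cref{ax:decreasing} with the discrete energy balance satisfied along each $\eta^{\delta_n}$ to bound $E_t(\eta^{\delta_n}(t))$ uniformly in both $n$ and $t$. Uniform coercivity from \cref{ass:unif_coerc} then confines all values to a common compact set in $\X$. The crucial monotone quantity is the total discrete action: summing the triangular and time-stability bounds in \cref{ax:action_triangular_time_stable} across all transition steps produces a uniform bound $\sum_k c_{t_k}(\eta^{\delta_n}(t_k^-), \eta^{\delta_n}(t_k^+)) \leq C$, which plays the role of a BV-type estimate. Together with sublevel compactness, a generalized Helly selection in the quotient yields, up to a subsequence, pointwise convergence $q \circ (\mathrm{id} \times \eta^{\delta_n}) \to \hat\eta$ with one-sided limits existing everywhere. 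Passage to the quotient is essential here, since otherwise degeneracy of critical points could prevent the limit from being well-defined on $[0,T]\times\Xx$.

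The measures $\mu^{\delta_n}$ from \eqref{eq:def_mu_delta} have uniformly bounded mass (again by the discrete energy balance), so by Banach--Alaoglu a further subsequence converges weakly$^*$ to some $\bar\mu \in \M^+([0,T])$. The support $J = \supp \bar\mu$ is at most countable and coincides with the discontinuity set of $\hat\eta$, because the same BV-type bound forces atoms of $\bar\mu$ and jumps of $\hat\eta$ to occur at the same points. With both convergences in hand, the continuous energy balance follows from the discrete one: the $\hat E$ and $\partial_t\hat E$ terms pass to the limit by \cref{ass:reg_E,ass:t_der_quotient} and the pointwise convergence of $\eta^{\delta_n}$, while $\mu^{\delta_n}([s,t])\to\bar\mu([s,t])$ at continuity points of $\bar\mu$ via weak$^*$ convergence. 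The criticality $|\partial \hat E_t|(\hat\eta(t))=0$ follows from \cref{ax:gradient}, which forces every post-transition state to be a critical point, combined with lower semicontinuity of $|\partial \hat E|$ along the pointwise limit in $\X$.

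The main obstacle I anticipate is the jump relation $\bar\mu(\{t\})=\hat c_t(\hat\eta^-(t),\hat\eta^+(t))$ at $t\in J$. One direction is easy: the definition of $c$ as an infimum over admissible energy-dissipation paths, combined with the energy balance applied across $t$, gives
\[
\hat c_t(\hat\eta^-(t),\hat\eta^+(t)) \leq \hat E_t(\hat\eta^-(t)) - \hat E_t(\hat\eta^+(t)) = \bar\mu(\{t\}).
\]
The reverse inequality is where the separation-of-time-scales strategy genuinely pays off and where the real work sits. The plan is to localize the discrete transitions of $\eta^{\delta_n}$ inside a shrinking neighborhood $[t-\rho,t+\rho]$ of the jump, concatenate them into a single admissible competitor for $\hat c_t$ at the frozen energy $\hat E_t$, and use the time-stability part of \cref{ax:action_triangular_time_stable} to control the discrepancy between the time-varying $c_\tau$ on the small window and the frozen $c_t$. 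Sending first $n\to\infty$ and then $\rho\to 0$, the total action $\mu^{\delta_n}([t-\rho,t+\rho])$ is asymptotically bounded above by $\hat c_t(\hat\eta^-(t),\hat\eta^+(t))$, which gives the reverse inequality and completes the jump identity. This surgery—performed in $\X$ so that degenerate critical components collapse to points—is precisely the technical burden that, in the viscosity-limit approach, is entangled with compactness; disentangling them is the methodological payoff of the present framework.
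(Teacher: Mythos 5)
Your overall architecture matches the paper's (Proposition~\ref{prop:limit_energies}, Theorem~\ref{thm:traj_convergence}, Lemma~\ref{lemma:action_implies_traj}, Corollary~\ref{cor:action_implies_traj}), and your Stage~4 surgery is conceptually the same mechanism as \cref{ax:action_triangular_time_stable} put to work in \cref{lemma:action_implies_traj}: localize around a candidate jump time, compare $c_t$ with $\sum c_{t_i}$ via the time-stable triangular inequality, pass to the limit by weak-$*$ convergence of $\mu^{\delta_n}$ and lower semicontinuity of $c_t$, then shrink the window. That part is fine.

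The gap is in Stage~1, where you invoke ``a generalized Helly selection in the quotient'' driven by the bound $\sum_k c_{t_k}(\eta^{\delta_n}(t_k^-), \eta^{\delta_n}(t_k^+)) \leq C$ as a ``BV-type estimate.'' This bound controls the total \emph{dissipated energy}, not the metric variation of the curves: $c_t(x,y)$ is not a distance, and in particular vanishes whenever $x,y$ lie in the same connected component of critical points, so the $d$-variation of $\eta^{\delta_n}$ can be arbitrarily large even while the total action is bounded. Moreover, $\X$ is only a topological quotient (Hausdorff by \cref{lem:q_space_haus}, but not naturally metrized), so Helly's theorem does not straightforwardly apply. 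What actually makes the pointwise limit exist \emph{and be unique} outside a countable set, and makes one-sided limits exist, is \cref{ax:limits_traject} (implied by \cref{ax:action_triangular_time_stable} through \cref{lemma:action_implies_traj}): if $\bar\mu(\{t\})=0$, any two cluster values of $\eta^{\delta_n}$ near $t$ must land in the same connected component. Passing to the quotient removes ambiguity within a component, but does \emph{not} by itself resolve ambiguity between components; only the action axiom does. Consequently, your staging is off: you defer the action-based argument to Stage~4, but the paper needs it already to build $\hat\eta$ (Steps~2, 3 and~5 of the proof of \cref{thm:traj_convergence}). The correct mechanism in Stage~1 is: Helly's theorem applied only to the scalar monotone functions $\F^{\delta_n}_t := \E_t(\eta^{\delta_n}) - \int_0^t \mathcal D^{\delta_n}$ to obtain $\bar\mu$ and $\bar\E$; trajectory compactness from \cref{lem:bound_traj} and a diagonal extraction to define $\eta_{\mathrm{temp}}$ on a countable set $I \supseteq J$; and then \cref{ax:limits_traject} to show the extension of $q\circ(\mathrm{id}\times\eta_{\mathrm{temp}})$ to $[0,T]$ is uniquely determined and that one-sided limits exist. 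You should also note explicitly that $\partial_t \hat E$ is well-defined on $\X$ only because \cref{ass:t_der_quotient} is assumed (or, under the stronger \cref{ass:strengthening_6}, because of \cref{lemma:time_derivative_factors}); your sketch uses $\partial_t \hat E(\hat\eta(\tau))$ without saying where the factorization comes from.
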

}
As is apparent in the theorem, we are pursuing an axiomatic construction. Comparing \cref{ax:decreasing,ax:gradient} with \cref{ax:action_triangular_time_stable}, the latter looks technically more involved---indeed, it distills the aforementioned argument of trajectory surgery---, constituting a potential obstruction in finding complying transition rules. To remedy this difficulty, we propose the notion of \emph{actions generated by curves} in \cref{def:generated_by_curves} and we prove that it suffices for conforming to \cref{ax:action_triangular_time_stable}. Moreover, we provide three examples of transition rules and their corresponding actions, namely the gradient flow (GF), the minimizing movement scheme (MMS) and the backward differentiation formula (BDF2). While the variational nature of the gradient flow is well established \cite{greenbook} and indeed used, e.g., in \cite{AR17}, the formulation of the action for MMS has been introduced later in \cite{Scilla_2018}. Finally, we propose here an action for the BDF2 scheme that is, to the best of our knowledge, novel. 
Our analysis of the BDF2 is inspired by~\cite{Matthes-Plazotta}, which is the first work describing the BDF2 as a numerical algorithm for the approximation of gradient flows in a metric space.  By proving that those three actions are actually generated by curves, we guarantee that GF, MMS and BDF2 are suitable for constructing balanced quasistatic evolutions.
{However, we would like to emphasize the distinction between these actions. While the action associated with the GF (gradient flow) models a time-continuous transition, those of MMS and BDF2 are based on purely algorithmic (discrete-time) iterations, see Figure \ref{fig:quasistatic}. Therefore, we understand the GF action as ``physical" and those of MMS and BDF2 as ``computational". 
%Nonetheless, they are shown to be equivalent in a suitable sense, as discussed in XXX.
}

Finally, in Section~\ref{sec:num}, we illustrate our theoretical findings with a simple numerical experiment on a model of elastic rod fracture. Despite its simplicity, the purpose of this example is to demonstrate the full computability of our approach and its correspondence to physical principles. 

\begin{figure}[H]
    \centering
    \includegraphics[width=0.7\textwidth]{./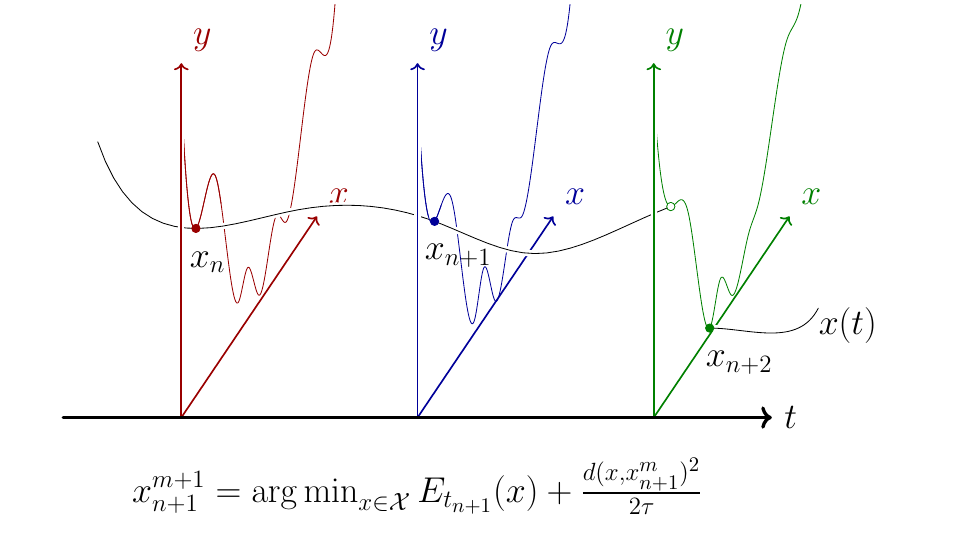}
    \caption{We display here a scheme explaining how a quasistatic evolution of critical point is constructed through MMS transition rule.}
    \label{fig:quasistatic}
\end{figure}

{\subsection{Related work} \label{sec:relatedwork}

Our results are inspired by the research initiated by C.~Zanini in \cite{Zanini2007}. In that work, the author constructs unique evolutions of critical points of smooth, nonconvex energy functions, where the critical points satisfy certain transversality conditions. These evolutions are obtained as limits of vanishing viscosity solutions to the singularly perturbed gradient flow equation:  
\[
\varepsilon \dot{x}(t) = - \nabla_x E_t(x(t)).
\]
The construction of these solutions involves patching together smooth branches of equilibrium solutions (slow dynamics) with heteroclinic solutions of the gradient flow (fast dynamics).

Building upon the concept of balanced viscosity solutions introduced in the context of rate-independent evolutions \cite{MR2525194,MR2887927,MR3531671}, V.~Agostiniani and R.~Rossi further proved in \cite{AR17} that vanishing viscosity evolutions of isolated critical points fulfill an energy balance equations as in \eqref{eq:e_b_0}.
We also report that in \cite{fiaschi2009vanishing} quasitatic evolutions were constructed as stochastic processes obtained as limits of vanishing viscosity solutions in the framework Young measures.

In \cite{Scilla_2018} G. Scilla and F. Solombrino perform a convergence analysis of a discrete-in-time minimization scheme approximating a finite dimensional singularly perturbed gradient flow. They allow for different scalings between the viscosity parameter $\varepsilon$ and the time scale $\tau$. When the ratio $\varepsilon/\tau$ diverges, the authors  prove the convergence of this scheme to balanced viscosity solutions of the quasistatic evolution problem obtained as a formal limit for $\varepsilon \to 0$ of the gradient flow. 
They also characterize the limit evolution corresponding to an asymptotically finite ratio between the scales, and they derived the expression of the transition action $c_t$ corresponding to the minimizing movement scheme.

Although transversality conditions for critical points as in \cite{Zanini2007} are known to be generically fulfilled \cite{ARS_2015}, they exclude interesting situations, often appearing in the application (for instance, stationary solutions to \eqref{eq:crit_point_evol} whose stability changes depending on the time $t$, usually giving rise to bifurcation of other branches of critical points). In \cite{Scilla_2018_2}, G. Scilla and F. Solombrino investigate the phenomenon of delayed loss of stability in singularly perturbed gradient flows. Their study focuses on the relaxation of transversality conditions imposed on critical points, exploring the consequences of their removal. 

The papers \cite{Agostiniani_2012,Scilla_2019} also study the vanishing inertia and viscosity limit of a second order system set, driven by a possibly nonconvex time-dependent energy satisfying very general assumptions. By means of a variational approach, they show that the solutions of the singularly perturbed problem converge to a curve of stationary points of the energy and characterize the behavior of the limit evolution at jump times. At those times, the left and right limits of the evolution are connected by a finite number of heteroclinic solutions to the unscaled equation. 

To position our contribution within this line of research, we emphasize that the aforementioned works \cite{Zanini2007, AR17, Scilla_2018, Scilla_2018_2} have exclusively studied the vanishing viscosity limit of singularly perturbed gradient flows in Euclidean spaces. In contrast, this paper extends the construction of balanced quasistatic evolutions of critical points to metric spaces, without imposing transversality conditions or assuming isolated critical points\footnote{From private communications by G.~Savaré,  we are aware that V.~Agostiniani, R.~Rossi, and G.~Savaré have been working for a while on generalizing the vanishing viscosity limit of solutions to singularly perturbed gradient flows in metric spaces. However, our approach differs significantly, as it relies on a separation of scales and limits of discrete-time evolutions.}.

It is now relevant to highlight another related line of research, initiated by the seminal works \cite{efendiev2006rate,MR2525194, MR2887927, MR3531671} on the construction of rate-independent BV-evolutions. The key distinction between this framework and both our setting and the previously mentioned works \cite{Zanini2007, AR17, Scilla_2018, Scilla_2018_2} lies in the presence of a 1-homogeneous term in the energy functional, which introduces a dissipation term into the dynamics. In their pioneering work \cite{MR2525194}, A. Mielke, R. Rossi, and G. Savaré construct evolutions as limits of viscous regularizations of solutions. Using similar techniques as in \cite{efendiev2006rate}, they introduce the concept of parametrized metric solutions for rate-independent systems, which are absolutely continuous mappings from a parameter interval into an extended state space. In this framework, jumps are interpreted as generalized gradient flows, during which time remains constant, ultimately leading to BV-solutions. Notably, their formulation is developed entirely within the setting of metric spaces. In the follow-up paper \cite{MR2887927} the same authors have revisited and aimed to clarify the key features of balanced viscosity solutions. 
The work is conducted in a finite-dimensional setting but employs two distinct convex functionals on the derivative of the solution: a 1-homogeneous functional, which is the standard choice for rate-independent systems {(cf.~\cite{Roubicek-Mielke} and references therein),} and a superlinear functional that introduces viscous regularization.  
This approach offers significantly greater generality compared to the metric framework, which imposes the use of the same norm for both the rate-independent term and the quadratic regularization. As a result, a more refined analysis is required to understand the behavior of solutions during jumps. In the concluding work \cite{MR3531671} the authors extend their results to the infinite-dimensional setting (Banach spaces). In this line of research as well, the approach involves constructing solutions as limits of a vanishing viscosity setting, where two distinct time scales coexist: one governing the evolution of critical points and the other regulating transitions during jumps. 
{We further refer to the recent contributions~\cite{Riva-Scilla-Solombrino-1, Riva-Scilla-Solombrino-2} for a similar research plan involving vanishing inertia and viscosity, where inertial effects appear in the energetic characterization of jump points.}

To clarify how the present work is collocated in relation to \cite{MR2525194, MR2887927, MR3531671} and the very vast related literature on rate-independent systems and doubly nonlinear equations (which we purposely do not report here), we reiterate the key differences:  
1. We do not consider energy functionals with 1-homogeneous terms, nor do we seek BV rate-independent solutions.  
2. Our construction explicitly separates/decouples the time scales of critical point evolution and jump transitions.  
3. Our formulation is applicable in metric spaces.

We conclude this review of related work by noting that the separation/decoupling of time scales through local-in-time transitions was first explored in \cite{ACFS17}. In that model, the energy was fixed, while the driving force of the dynamics was introduced via a time-dependent constraint. Additionally, the analysis was conducted in Euclidean spaces setting and only an energy inequality \eqref{eq:enineq} was obtained.
}

\subsection{Plan of the Paper} 
In \cref{sec:assumptions}, we collect the assumptions that are needed in the results proved in the paper. \\
In \cref{sec:quasist_constr}, we propose an axiomatic construction of discrete quasistatic evolutions. Namely, the main result of the section (\cref{thm:traj_convergence}) states that, if such piecewise constant trajectories are defined through a transition rule that complies with \cref{ax:decreasing,ax:gradient,ax:limits_traject}, then the discrete quasistatic evolutions converge pointwisely---up to the extraction of a subsequence---to a limiting quasistatic curve taking values in a quotient space of $[0,T]\times \Xx$.
What is missing at this level, though, is a characterization of the energy jumps in the discontinuity points of the trajectory in terms of a variational action.\\
In \cref{sec:actions_abstract}, we tackle this point by introducing the notion of \emph{transition rule compatible with an action} (see \cref{def:action}) and of \emph{action generated by curves} (see \cref{def:generated_by_curves}).
Namely, we first show that \cref{ax:action_triangular_time_stable} implies \cref{ax:limits_traject}, and that we can relate energy jumps to the values of the action (see \cref{lemma:action_implies_traj}).
Then, we prove that, if a transition rule is compatible with an action generated by curves, then it satisfies \cref{ax:action_triangular_time_stable} (see \cref{lemma:good_curves_fulfill_continuous_triangular_inequality}).
Finally, we are in a position to prove the main result of the paper (see \cref{thm:main_result_complete}).\\
In \cref{sec:examples_of_evolution_rules_elaborate} we show that transition rules obtained using gradient flow, minimizing movements and the BDF2 scheme are compatible with actions generated by curves (see \cref{subsec:gradient_flow,subsec:elaborate_mms,subsec:elaborate_BDF2}). \\
Finally, in \cref{sec:num} we present  simple numerical experiments to illustrate our theoretical findings, where we simulate the breaking of an elastic rod.

% Under certain assumptions on the ambient space $\Xx$ and on the energy $E: \Xx \times [0, T] \to \R$
% ---concretely, \cref{ass:conn_comp,ass:der_time,ass:metric_space,ass:path+Lipsch,ass:PL+time_der_slope,ass:reg_E,ass:unif_coerc} introduced in the next section--- we construct a dissipative viscosity solution. As we do not require the critical points of $E_t$ to be isolated, this solution takes values in the quotient space
% $\X \coloneqq \big([0,T]\times \Xx \big)/\sim$,
% where $\sim$ identifies points in the same connected component of $\{x \in \Xx \mid |\partial_t E_t|(x) = 0\}$, for all $t \in [0, T]$;
% we describe $\sim$ more formally in \cref{eq:def_equiv_rel}. Our investigation culminates in the following theorem, proven at the end of \cref{sec:examples_of_evolution_rules_elaborate} \textcolor{red}{TODO: actually prove there}.
\section{General Notations and Assumptions} \label{sec:assumptions}

In this section, we collect the assumptions that are required throughout the paper. In each statement we shall explicitly list the needed assumptions. To ease the reader, we adopt similar notations as in \cite{AR17}.
% Given an energy $E:[0,T]\times \R^d \to \R$ differentiable in the space variable, we are interesting in finding curves $u:[0,T]\to\R^d$ such that the following condition is satisfied:
% \begin{equation}\label{eq:crit_point_evol}
%     \nabla E_t(u(t)) = 0 \quad \mbox{for a.e. } t\in [0,T].
% \end{equation}
We set our analysis on a metric space $(\Xx, d)$. We first state the basic structure that we need on $(\Xx, d)$.
\begin{assumption} \label{ass:metric_space}
    The space $(\Xx, d)$ is a locally compact metric space.
\end{assumption}

We make the following assumptions on the time-varying energy $E\colon [0,T]\times \Xx \to \R$ that drives the evolution.

\begin{assumption}\label{ass:reg_E}
    The function $E\colon [0,T]\times \Xx \to \R$ is continuous. Moreover, it is differentiable in the first variable, for every $t\in [0,T]$ and for every $x\in \Xx$, and the derivative $\partial_t E_\cdot \colon [0,T]\times \Xx\to \R$ is continuous.
    Finally, for every $t\in[0,T]$ and $x\in\Xx$ we consider the slope
    \begin{equation*}
        |\partial E_t|(x) \coloneqq 
        \limsup_{x'\to x} \frac{\big( E_t(x) - E_t(x') \big)^+}{d(x,x')},
    \end{equation*}
    and we assume that $|\partial E_\cdot|\colon [0,T]\times \Xx \to \R$ is a continuous function.
\end{assumption}
\begin{remark}\label{rmk:slope_strong_upper_gradient}
    Under the continuity assumption and using local compactness, the slope coincides with the local Lipschitz constant
    \begin{equation*}
        \limsup_{x'\to x} \frac{\big| E_t(x) - E_t(x') \big|}{d(x,x')}.
    \end{equation*}
    As the local Lipschitz constant is a strong upper gradient (as defined in~\cite[Def. 1.2.1]{greenbook}), under the preceding assumption, the slope is as well.
\end{remark}

\begin{assumption}\label{ass:der_time}
    There exist positive constants $C_1,C_2>0$  such that
    \begin{equation*}
        |\partial_t E_t(x)| \leq C_1 E_t(x) + C_2
    \end{equation*}
    for every $(t,x)\in [0,T]\times \Xx$.
\end{assumption}

From \cref{ass:der_time} it follows that $E_t(x)\geq -C_2/C_1$ for every $(t,x)\in [0,T]\times \Xx$, i.e., $E\colon[0,T] \times \Xx\to \R$ is bounded from below. Without loss of generality, we will assume throughout the paper that $E$ is non-negative.

\begin{assumption}\label{ass:unif_coerc}
    We set $G\colon \Xx\to \R$ as  $G(x)\coloneqq \sup_{t\in[0,T]} |E_t(x)|$ for every $x\in\Xx$, and we require that $G$ is coercive.  Namely, the sublevel sets $\{x \in \Xx \mid G(x)\leq C\}$ are compact in $\Xx$ for every $C\in \mathbb{R}$.
\end{assumption}

\begin{remark}\label{rmk:coerc_Et}
    From \cref{ass:reg_E,ass:unif_coerc,ass:der_time} it follows that $E_t:\Xx\to\R$ is uniformly coercive in the time variable. Indeed, if we consider the set $\{x\in\Xx \mid E_t(x)\leq C\}$ for $C\geq0$, then from \cref{ass:der_time} and the Gr\"onwall Lemma we get
    $E_s(x)\leq (C+C_2|s-t|)e^{C_1|s-t|}$ for every $s\in[0,T]$ and for every $x\in\Xx$ satisfying $E_t(x)\leq C$.
    Hence, for every $t\in[0,T]$ we deduce the inclusion
    \begin{equation*}
        \{x\in\Xx \mid  E_t(x)\leq C\} \subset
        \left\{
        x\in\Xx \mid G(x)\leq (C+C_2T)e^{C_1T}
        \right\},
    \end{equation*}
    where the larger set (which does not depend on $t$) is compact by \cref{ass:unif_coerc}.
\end{remark}

\begin{assumption} \label{ass:conn_comp}
    For every $t\in [0,T]$, the set $\Crit_t\coloneqq \{x\in \Xx \mid |\partial E_t|(x) =0\}$ can be expressed as the disjoint union of well-separated compacts, each of them being path-connected with rectifiable paths.
    Moreover, for every path-connected component $\Crit_t' \subset \Crit_t$, $E_t\colon \Xx\to\R$ is constant on $\Crit_t'$.
\end{assumption}

\begin{remark}
    We recall that in general the constancy of $E_t$ on the connected components of $\Crit_t$ may fail (see \cite{W35}).
    However, this property is implied, e.g., by the fact that any two points in a connected component can be joined by a path $\eta \in AC([0,1],\Xx)$ (see \cite[Section~1.1]{greenbook} for this notion) such that $|\partial E_t|\big( \eta(s) \big) =0$ for every $s \in [0,1]$.
    Moreover, also from \cref{ass:PL+time_der_slope} below, it follows that $E_t$ is constant on the path-connected components of $\Crit_t$ (see \cref{rmk:constant_components}).
\end{remark}

\begin{assumption} \label{ass:PL+time_der_slope}
%    We require that, for every $t\in [0,T]$, the local Polyak-Lojasiewicz condition holds, i.e., for every $x\in \Xx$ such that $|\partial E_t | (x)=0$ there exists $\theta\in (0,1)$, $C>0$ and a neighborhood $U\ni x$ such that
%    \begin{equation*}
%        |E_t(x')-E_t(x)|^\theta\leq C |\partial E_t |(x')
%    \end{equation*}
%    for every $x'\in U$.
    We require that, for every $t\in [0,T]$ and for every $x \in \Xx$ with $|\partial E_t|(x)=0$ there exists a neighborhood $U\ni x$ and a function $\varepsilon_x\colon[0,+\infty)\to [0,+\infty)$ such that
    \begin{equation} \label{eq:liminf_ineq_relaxed}
        E_t(x') - E_t(x) \geq - \varepsilon_x\big( d(x,x') \big) |\partial E_t|(x')
    \end{equation}
    for every $x'\in U$, where $\varepsilon_x(0) = \lim_{s\to 0^+} \varepsilon_x(s) = 0$.\\
    Moreover, we assume that for a fixed $u \in \Xx$, the function $t \mapsto |\partial E_t|(x)$ is Lipschitz continuous on $[0, T]$, locally uniformly w.r.t. $x$.
\end{assumption}

\begin{remark}
    We report that the inequality \eqref{eq:liminf_ineq_relaxed} in \cref{ass:PL+time_der_slope} is a reformulation of the more classical condition
    \begin{equation*}
        \liminf_{x'\to x} \frac{E_t(x') - E_t(x)}{|\partial E_t|(x')} \geq 0
    \end{equation*}
    required for those $x$ such that $|\partial E_t|(x)=0$ (see, e.g., \cite[Assumption~E4]{AR17} and\cite[Assumption~F4]{Scilla_2018}), under the hypothesis of \emph{isolated critical points}.
    In our setting, the ratio written above is not suitable, as it is not defined whenever $x$ lies in the interior of the set where $|\partial E_t|$ vanishes.
\end{remark}

\begin{remark} \label{rmk:constant_components}
    From \cref{ass:reg_E,ass:PL+time_der_slope} it follows that for every path-connected component $\Crit_t' \subset \Crit_t$, $E_t\colon \Xx\to\R$ is constant on $\Crit_t'$.
    To see this, we show that, for every $c\in \R$, we have that both $A_c\coloneqq \{x \in \Crit_t' \mid  E_{t}  (x) \geq c \}$ and $\bar A_c \coloneqq\{x \in \Crit_t' \mid  E_{t}  (x) < c \}$ are open in the topology induced by $(\Xx,d)$ on $\Crit_t'$.
    On the one hand, let $x \in A_c$, then by virtue of \cref{ass:PL+time_der_slope}, there exists an open ball $B_\rho(x)$ such that  $E_{t} (x')\geq E_{t} (x) \geq c$ for every $x' \in \Crit_t'\cap B_\rho(x)$ (here we combined \cref{eq:liminf_ineq_relaxed} with the fact that $x'$ is a critical point).
    On the other hand, if $x \in \bar A_c$, then by continuity of $E_t$ we deduce the existence of an open ball $B_\rho(x)$ such that $ E_{t}  (x')< c$ for every $x' \in \Crit_t'\cap B_\rho(x)$.
    Since $\Crit_t' = A_c \cup \bar A_c$ for every $c\in\R$, we deduce that either $\Crit_t' = A_c$ and $\bar A_c= \emptyset$, or $\Crit_t' = \bar A_c$ and $ A_c= \emptyset$.
    Therefore, we have that, for every $x\in \Crit_t'$, $E_t(x) = \sup\{c\in\R\mid A_c \neq \emptyset\}$.
\end{remark}

It turns out that the time derivative $\partial_t E_t$ is constant on the connected components of critical points, for every but at most countably many exceptional instants $t\in [0,T]$ (see \cref{lemma:time_derivative_factors}). However, to show that, we need to strengthen the condition formulated in \cref{eq:liminf_ineq_relaxed}.

{
\renewcommand{\theassumption}{6'}
\begin{assumption} \label{ass:strengthening_6}
        We require that, for every $t\in [0,T]$ and for every $x \in \Xx$ with $|\partial E_t|(x)=0$ there exists a neighborhood $U\ni x$ and a function $\varepsilon_x\colon[0,+\infty)\to [0,+\infty)$ such that
    \begin{equation} \label{eq:liminf_ineq_stronger}
        |E_t(x') - E_t(x)| \leq  \varepsilon_x\big( d(x,x') \big) |\partial E_t|(x')
    \end{equation}
    for every $x'\in U$, where $\varepsilon_x(0) = \lim_{s\to 0^+} \varepsilon_x(s) = 0$.\\
    Moreover, we assume that the function $t \mapsto |\partial E_t|(x)$ is Lipschitz continuous on $[0, T]$, locally uniformly with respect to $x \in \Xx$.
\end{assumption}
\addtocounter{assumption}{-1}
}

\begin{remark}
    We observe that the inequality \eqref{eq:liminf_ineq_stronger} and its one-sided version \eqref{eq:liminf_ineq_relaxed} hold if $E_t$ complies with the local Polyak-{\L}ojasiewicz condition and if $|\partial E_t|$ is continuous (as prescribed in \cref{ass:reg_E}).
\end{remark}

The next hypothesis will be used in the last part of the paper.

\begin{assumption} \label{ass:path+Lipsch}
    $(X,d)$ is a path space, i.e.,
    \begin{equation*}
        d(x,x') = \inf\left\{
        \int_0^1 |\dot \eta (s)| \diff s \, \mid \,
        \eta \in AC([0,1],\Xx), \, \eta(0)=x, \, \eta(1)=x'
        \right\}.
    \end{equation*}
    Moreover, we require that $|\partial E_t| \colon \Xx \to \R$ is Lipschitz continuous, uniformly as $t$ varies in $[0,T]$, i.e., there exists $L>0$ such that 
    \begin{equation*}
        \big| |\partial E_t|(x) - |\partial E_t|(x') \big| \leq L\, d(x,x')
    \end{equation*}
    for every $t\in[0,T]$  and every $x, x' \in \mathbb{X}$.
\end{assumption}
\begin{remark}\label{rmk:lipschitz_tau_relation}
    In \cref{subsec:elaborate_mms,subsec:elaborate_BDF2}, we describe discrete minimizing schemes with a step size $\tau$. Whenever we work with \cref{ass:path+Lipsch}, we will assume that $\tau \leq \frac{1}{L}$, where $L$ is the Lipschitz constant assumed in \cref{ass:path+Lipsch}.
\end{remark}

Finally, we need the following assumption on the time derivative of the driving energy for proving that the measure involved in the energy balance is purely atomic.
Before proceeding, we introduce the quotient space $\X$ as follows:
\begin{equation}\label{eq:def_quot_space}
    \X \coloneqq \big([0,T]\times \Xx \big)/\sim,
\end{equation}
where $\sim$ is the equivalence relation on $[0,T]\times\Xx$ given by
\begin{equation}\label{eq:def_equiv_rel}
    \begin{split}
        (t_1,x_1)\sim (t_2,x_2) \iff &
        t_1=t_2 \mbox{ and $x_1,x_2$ belong to the same} \\ & \mbox{path-connected component of }\{x \in \Xx \mid |\partial E_{t_1}|(x) = 0\}.
    \end{split}
\end{equation}
We equip the space $\X$ with the quotient topology, and we denote with $q\colon [0,T] \times \Xx \to\X$ the quotient map. 
We recall that the quotient topology on $\X$ is the strongest that makes the mapping $q$ continuous.
Finally, we use the notation $[(t,x)]$ to describe the elements of $\X$, i.e., the equivalence classes induced by the relation \eqref{eq:def_equiv_rel}. 

\begin{assumption} \label{ass:t_der_quotient}
    For every $[(s,x)] \in \X$ we have that $\partial_t E_s(x_1) = \partial_t E_s(x_2)$, for every $(s,x_1),(s,x_2)\in \X$. Hence, we can define $\partial_t \hat E\colon \X\to \R$ as $\partial_t \hat E\big( [(s,x)] \big)\coloneqq \partial_t E_s(x)$ for every $[(s,x)] \in \X$. Furthermore, we require that $\partial_t \hat E$ is continuous.
\end{assumption}

\begin{remark}
    In the case the set $\{x\in \Xx \mid |\partial E_t|(x) =0\}$ consists of isolated points for every $t\in [0,T]$, then \cref{ass:t_der_quotient} is implied by \cref{ass:reg_E}, as the quotient space $\X$ is homeomorphic to $[0,T]\times \Xx$.
\end{remark}

\section{Discrete quasistatic evolutions: axiomatic construction} \label{sec:quasist_constr}

In this section, we detail the properties that a family of piecewise constant curves should satisfy to be employed to retrieve (through a limiting argument) the solutions of \cref{eq:crit_point_evol}.
We begin by introducing the curves that we use in our construction.

\begin{definition}[Discrete quasi\-static evolution]\label{def:discr_evol}
    Let us assume \cref{ass:metric_space,ass:reg_E}.
    Here, we fix a family of mappings $\bar{\omega}_t\colon \Xx \to \Xx$ indexed by $t \in [0, T]$, which we call the \emph{transition rule}.
    Given $\delta>0$ and an initial state $x_0\in \Xx$ such that $|\partial E_0|(x_0)=0$, we construct a discrete quasistatic evolution $\eta^\delta\colon [0, T] \to \Xx$ as follows:
    \begin{itemize}
        \item We define $M\coloneqq \lfloor T/\delta \rfloor$ (or $M\coloneqq T/\delta - 1$, if $T/\delta$ is integer).
        \item We set $w_0 = x_0$.
        % \item For $i=1,\ldots,M$, for $t\in [i\delta, (i+1)\delta)$ (or $t\in [i\delta,T]$ in case $i=M$), we set $\eta^\delta(t)=\bar \omega_{i\delta}\big(\eta^\delta((i-1)\delta)\big)$.
        \item For $i=1,\ldots,M$, we assign $w_{i} = \bar \omega_{i\delta}(w_{i-1})$.
        \item For $i=0,\ldots,M$, for $t\in [i\delta, (i+1)\delta)$ (or $t\in [i\delta,T]$ in case $i=M$), we set $\eta^\delta(t)=w_i$.
    \end{itemize}
    We also introduce
    \begin{equation} \label{eq:def_J_delta}
        J^\delta\coloneqq \{ i\delta \mid i=1,\ldots,M \}.
    \end{equation}
\end{definition}
We observe that in \cref{def:discr_evol} we have not specified any rule for deriving  $\bar \omega_{t}$.
One of the main contributions of the present paper is to provide a list of axioms that such a scheme should satisfy to be used for the construction---through a limiting argument---of a balanced viscosity quasistatic solution (see \cite{AR17} for this notion).

In our arguments, the identities on the energy balance shall play a pivotal role.
Given $\delta>0$ and a discrete quasistatic evolution $\eta^\delta \colon [0,T]\to \Xx$ constructed according to \cref{def:discr_evol}, we define the function $\E_\cdot(\eta^\delta) \colon [0,T]\to\R$ as
\begin{equation}\label{eq:def_evolving_energ}
    t \mapsto \E_t(\eta^\delta)\coloneqq E_t(\eta^\delta(t))
\end{equation}
for every $t\in [0,T]$.
We observe that, owing to \cref{ass:reg_E}, the following limits exist:
\begin{align*}
    \E_t^+(\eta^\delta)\coloneqq \lim_{\tau\to t^+}  \E_\tau(\eta^\delta) = \lim_{\tau\to t^+}  E_\tau(\eta^\delta(\tau)), \\
    \E_t^-(\eta^\delta)\coloneqq \lim_{\tau\to t^-}  \E_\tau(\eta^\delta) = \lim_{\tau\to t^-}  E_\tau(\eta^\delta(\tau))
\end{align*}
for every $t\in (0,T)$.
Moreover, we define a priori $\E_0^-(\eta^\delta)\coloneqq \E_0(\eta^\delta)$ and $\E_T^+(\eta^\delta)\coloneqq \E_T(\eta^\delta)$.
We observe that by construction $\E_\cdot (\eta^\delta)$ is \emph{continuous from the right}.

\subsection{Energy balance}
In this part, we state the first axiom and we establish an energy balance for the discrete quasi\-static evolutions that fulfill it.
Moreover, we show at which extent the energy balance is preserved when we let $\delta$ tend to $0$.

\begin{axiom}\label{ax:decreasing}
    Let the family of mappings $\bar{\omega}_t\colon \Xx \to \Xx$ indexed by $t \in [0, T]$ be the transition rule as in \cref{def:discr_evol}.
    Then, we require that for every $x \in \Xx$ and for every $t \in [0, T]$, we have that $E_t\big(\bar \omega_t(x)\big) \leq E_t(x)$.
\end{axiom}
We define the non-negative Radon measure $\mu^\delta \in \mathcal{M}([0,T])$ as follows:
\begin{equation}\label{eq:def_mu_delta}
    \mu^\delta(B) \coloneqq \sum_{t\in J^\delta \cap B}
    \Big[  \E_{t}^-(\eta^\delta) -
        \E_{t}^+(\eta^\delta) \Big]
\end{equation}
for every Borel set $B\subset[0,T]$.
On one hand, owing to \cref{ax:decreasing}, we observe that
\begin{equation} \label{eq:meas_point}
    \mu^\delta(\{ t \}) = \E_{t}^-(\eta^\delta) -
    \E_{t}^+(\eta^\delta) \geq 0
\end{equation}
for every $t\in J^\delta$ (i.e., $t=i\delta$ with $i=1,\ldots,M$). On the other hand, $\mu(\{ t \})=0$ if $t\not \in J^\delta$.

We are now in a position to establish the energy balance identity for discrete quasistatic evolutions.

\begin{lemma}\label{prop:energy_balance}
    Let us assume \cref{ass:metric_space,ass:reg_E}. 
    Given $\delta>0$, let $\eta^\delta \colon [0,T]\to \Xx$ be a discrete quasistatic evolution constructed according to \cref{def:discr_evol}.
    Then, if \cref{ax:decreasing} is satisfied, for every $0\leq t_1 \leq t_2 \leq T$ the following identity holds:
    \begin{equation}\label{eq:ener_bal}
        \E_{t_2}^+( \eta^\delta) -
        \E_{t_1}^-( \eta^\delta) =
        \int_{t_1}^{t_2} \partial_t E_\tau \big(\eta^\delta(\tau)\big) \diff \tau - \mu^\delta([t_1,t_2]),
    \end{equation}
    where the non-negative measure $\mu^\delta$ is defined as in \cref{eq:def_mu_delta}.
\end{lemma}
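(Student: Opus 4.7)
The plan is to exploit the fact that $\eta^\delta$ is piecewise constant on the intervals $[i\delta,(i+1)\delta)$ by $i = 0,\dots,M$, so that the only contributions to the energy variation come from (i) the smooth $t$-dependence of $E_\tau(w_i)$ on each constant piece, and (ii) the state switches at the grid points in $J^\delta$. The key analytic tool is the fundamental theorem of calculus: by \cref{ass:reg_E}, for any fixed $w \in \Xx$ the map $t \mapsto E_t(w)$ is $C^1$ with derivative $\partial_t E_t(w)$, so
\begin{equation*}
    E_b(w) - E_a(w) = \int_a^b \partial_t E_\tau(w) \diff \tau
\end{equation*}
for every $0 \leq a \leq b \leq T$. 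Applied with $w = w_i$ on any subinterval $[a,b]\subseteq [i\delta,(i+1)\delta)$, this reads $E_b(\eta^\delta(b)) - E_a(\eta^\delta(a)) = \int_a^b \partial_t E_\tau(\eta^\delta(\tau))\diff\tau$.

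Next, I would decompose $[t_1,t_2]$ along the jump grid. Enumerate $J^\delta\cap [t_1,t_2] = \{s_1<\dots<s_k\}$ (possibly empty) and set $s_0\coloneqq t_1$, $s_{k+1}\coloneqq t_2$. On each interval $(s_{j-1}, s_j)$ the curve $\eta^\delta$ is constant, equal to some $w^{(j)}\in \Xx$, hence by the FTC above
\begin{equation*}
    \E_{s_j}^-(\eta^\delta) - \E_{s_{j-1}}^+(\eta^\delta) = E_{s_j}(w^{(j)}) - E_{s_{j-1}}(w^{(j)}) = \int_{s_{j-1}}^{s_j} \partial_t E_\tau(\eta^\delta(\tau))\diff\tau,
\end{equation*}
where the first equality uses that $\eta^\delta$ takes the constant value $w^{(j)}$ on all of $(s_{j-1}, s_j)$, so the one-sided limits $\E_{s_{j-1}}^+$ and $\E_{s_j}^-$ coincide with $E_{\cdot}(w^{(j)})$. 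At each interior switch $s_j$, the very definition \eqref{eq:def_mu_delta} gives $\E_{s_j}^-(\eta^\delta) - \E_{s_j}^+(\eta^\delta) = \mu^\delta(\{s_j\}) \geq 0$ thanks to \cref{ax:decreasing}. Summing the two displays over $j = 1,\dots,k+1$ and telescoping yields
\begin{equation*}
    \E_{t_2}^+(\eta^\delta) - \E_{t_1}^-(\eta^\delta) = \int_{t_1}^{t_2} \partial_t E_\tau(\eta^\delta(\tau))\diff\tau - \sum_{j=1}^{k} \mu^\delta(\{s_j\}),
\end{equation*}
and the last sum equals $\mu^\delta([t_1,t_2])$ since $\mu^\delta$ is purely atomic and supported in $J^\delta$.

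The only thing requiring care is the treatment of the endpoints: if $t_1\in J^\delta$ then the ``piece'' immediately to the left of $t_1$ actually lies outside $[t_1,t_2]$, but this is harmless because our sum starts from $\E_{t_1}^-$, which records precisely the value $E_{t_1}(w^{(1)^-})$ that the telescoping needs; a symmetric observation applies at $t_2$, and the endpoint conventions $\E_0^- \coloneqq \E_0$ and $\E_T^+ \coloneqq \E_T$ are consistent with this. I expect no genuine obstacle: the whole argument is a careful bookkeeping of telescoping sums combined with the one-sided limit conventions, and the sign of $\mu^\delta$ (which enters into the final identity) is the only place where \cref{ax:decreasing} is actually used (to ensure $\mu^\delta$ is a positive measure in $\mathcal M^+([0,T])$, not merely a signed one).
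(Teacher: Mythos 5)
The approach (telescoping along the jump grid plus FTC on the constancy intervals) is the same as the paper's, but your endpoint bookkeeping is flawed. You enumerate $\{s_1<\dots<s_k\}=J^\delta\cap[t_1,t_2]$ over the \emph{closed} interval and set $s_0:=t_1$, $s_{k+1}:=t_2$. When $t_1\in J^\delta$ this forces $s_0=s_1$, and the FTC display for $j=1$ is then false: the left side is $\E_{s_1}^-(\eta^\delta)-\E_{s_0}^+(\eta^\delta)=\E_{t_1}^-(\eta^\delta)-\E_{t_1}^+(\eta^\delta)=\mu^\delta(\{t_1\})$, in general positive, whereas the integral over $(s_0,s_1)=\emptyset$ is zero (symmetrically at $t_2$). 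Independently, even when all intervals are nondegenerate, summing the FTC display for $j=1,\dots,k+1$ and the jump display for $j=1,\dots,k$ telescopes to $\E_{t_2}^-(\eta^\delta)-\E_{t_1}^+(\eta^\delta)$, not the claimed $\E_{t_2}^+(\eta^\delta)-\E_{t_1}^-(\eta^\delta)$; the two differ by $\mu^\delta(\{t_1\})+\mu^\delta(\{t_2\})$, so the derivation as written fails exactly when $t_1$ or $t_2$ is a grid point, and the hand-waving paragraph at the end does not actually repair this.

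The remedy is what the paper does: enumerate the grid points in the \emph{open} interval $J^\delta\cap(t_1,t_2)$ (so every FTC interval is genuine) and insert the two endpoint jumps $\E_{t_1}^+(\eta^\delta)-\E_{t_1}^-(\eta^\delta)=-\mu^\delta(\{t_1\})$ and $\E_{t_2}^+(\eta^\delta)-\E_{t_2}^-(\eta^\delta)=-\mu^\delta(\{t_2\})$ as explicit terms in the telescoping decomposition; both vanish unless the corresponding endpoint happens to lie on the grid. The atoms collected are then exactly those of $J^\delta\cap[t_1,t_2]$, which gives $\mu^\delta([t_1,t_2])$ on the right. Your closing remark is correct: \cref{ax:decreasing} plays no role in establishing the identity \eqref{eq:ener_bal} itself, only in ensuring that $\mu^\delta$ is a non-negative (rather than merely signed) measure.
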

\begin{proof}
    Owing to \cref{ass:reg_E} and recalling that $\eta^\delta$ is by construction piecewise constant, it follows that the function $t\mapsto \E_t(\eta^\delta)$ is of class $C^1$ on the interval $\big((i-1)\delta,i\delta\big)$ for every $i=1,\ldots,M$ (as well as on the very last piece $\big(M\delta,T\big)$).
    With an algebraic manipulation, we write
    \begin{equation}\label{eq:decomp_sum}
        \begin{split}
            \E_{t_2}^+(\eta^\delta) - \E_{t_1}^-(\eta^\delta) & =
            \Big( \E_{t_1}^+(\eta^\delta) - \E_{t_1}^- (\eta^\delta) \Big)
            +
            \Big( \E_{i_1\delta}^-(\eta^\delta) - \E_{t_1}^+ (\eta^\delta) \Big)                         \\& \quad 
            + \sum_{j=1}^{\ell} \Big( \E_{i_j\delta}^+(\eta^\delta) - \E_{i_j\delta}^- (\eta^\delta) \Big) 
            %\\& \quad 
            + \sum_{j=1}^{\ell-1}
            \Big( \E_{i_{j+1}\delta}^-(\eta^\delta) - \E_{i_j\delta}^+ (\eta^\delta) \Big)
            \\& \quad 
            + \Big( \E_{t_2}^-(\eta^\delta) - \E_{i_\ell \delta}^+ (\eta^\delta) \Big)
            +
            \Big( \E_{t_2}^+(\eta^\delta) - \E_{t_2}^- (\eta^\delta) \Big),
        \end{split}
    \end{equation}
    where $\{i_1<\ldots< i_\ell\}=J^\delta \cap (t_1,t_2)$. Using the Fundamental Theorem of Calculus and the measure $\mu^\delta$ defined in \cref{eq:def_mu_delta}, we rephrase \cref{eq:decomp_sum} as
    \begin{equation*}
        \begin{split}
            \E_{t_2}^+(\eta^\delta) - \E_{t_1}^-(\eta^\delta) & = -\mu^\delta(\{ t_1 \}) + \int_{t_1}^{i_1\delta} \partial_t E_\tau \big(\eta^\delta(\tau) \big) \diff \tau +
            \sum_{j=1}^{\ell}  -\mu^\delta(\{ i_j\delta \})
            \\& \qquad 
             +
            \sum_{j=1}^{\ell-1} \int_{i_j\delta}^{i_{i+1}\delta} \partial_t E_\tau \big(\eta^\delta(\tau)\big) \diff \tau 
            %\\& \quad 
            +
            \int_{i_\ell\delta}^{t_2} \partial_t E_\tau \big(\eta^\delta(\tau)\big) \diff \tau
            -\mu^\delta(\{ t_2 \}),
        \end{split}
    \end{equation*}
    which yields \cref{eq:ener_bal}.
\end{proof}

\begin{remark} \label{rmk:ener_bal_variants}
    In some circumstances, it can be useful to employ slight variations of \cref{eq:ener_bal}. For example, when dealing with the difference of the right limits $\E^+_{t_2}(\eta^\delta) - \E^+_{t_1}(\eta^\delta)$, we observe that
    \begin{equation*}
        \E^+_{t_2}(\eta^\delta) - \E^+_{t_1}(\eta^\delta) = \E^+_{t_2}(\eta^\delta) - \E^-_{t_1}(\eta^\delta) + \mu^\delta(\{t_1\}),
    \end{equation*}
    yielding
    \begin{equation*}
        \E_{t_2}^+( \eta^\delta) -
        \E_{t_1}^+( \eta^\delta) =
        \int_{t_1}^{t_2} \partial_t E_\tau \big(\eta^\delta(\tau)\big) \diff \tau - \mu^\delta((t_1,t_2])
    \end{equation*}
    for every $0 \leq t_1 \leq t_2 \leq T$. Similarly, we also have
    \begin{equation*}
        \E_{t_2}^-( \eta^\delta) -
        \E_{t_1}^-( \eta^\delta) =
        \int_{t_1}^{t_2} \partial_t E_\tau \big(\eta^\delta(\tau)\big) \diff \tau - \mu^\delta([t_1,t_2)),
    \end{equation*}
    and
    \begin{equation*}
        \E_{t_2}^-( \eta^\delta) -
        \E_{t_1}^+( \eta^\delta) =
        \int_{t_1}^{t_2} \partial_t E_\tau \big(\eta^\delta(\tau) \big) \diff \tau - \mu^\delta((t_1,t_2)).
    \end{equation*}
\end{remark}

We are now interested in studying the family $(\E_\cdot(\eta^{\delta_n}))_{n\geq 1}$ when $\delta_n\to 0$ as $n\to\infty$, under the hypothesis that the discrete quasistatic evolutions $(\eta^{\delta_n})_n$ are constructed using the same initial state $x_0\in \Xx$.
We begin by proving a boundedness result.

\begin{lemma}\label{lem:bound_energ_evol}
    Let us assume \cref{ass:metric_space,ass:reg_E,ass:der_time}.
    For every $\delta>0$, let $\eta^\delta \colon [0,T]\to \Xx$ be a discrete quasistatic evolution constructed according to \cref{def:discr_evol} and starting from $x_0\in \Xx$.
    Then, if \cref{ax:decreasing} is satisfied, we have
    \begin{equation} \label{eq:bound_energ_evol}
        \sup_{t\in[0,T]} \E_t(\eta^\delta) \leq E_0(x_0)e^{C_1(T+1)} +C_2 \frac{e^{C_1(T+1)}-1}{C_1},
    \end{equation}
    where $C_1,C_2>0$ are the constants that appear in \cref{ass:der_time}. In particular, the estimate is independent of $\delta$.
\end{lemma}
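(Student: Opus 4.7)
The plan is to combine the energy balance identity from \cref{prop:energy_balance} with the growth bound of \cref{ass:der_time}, and then close the estimate through Grönwall's inequality. Throughout, the discreteness of $\delta$ will play no role beyond the monotonicity provided by \cref{ax:decreasing}, which is what makes the bound $\delta$-independent.

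First, I would fix $t\in(0,T]$ and invoke the variant of the energy balance stated in \cref{rmk:ener_bal_variants} with $t_1=0$ and $t_2=t$, namely
\begin{equation*}
    \E_t^-(\eta^\delta) - \E_0^-(\eta^\delta) = \int_0^t \partial_t E_\tau\bigl(\eta^\delta(\tau)\bigr)\diff\tau - \mu^\delta([0,t)).
\end{equation*}
By construction $\E_0^-(\eta^\delta)=E_0(x_0)$, and from \cref{eq:meas_point} combined with \cref{ax:decreasing} the measure $\mu^\delta$ is non-negative. Dropping the $\mu^\delta$-term and invoking \cref{ass:der_time} (using that, without loss of generality, $E\geq 0$) yields
\begin{equation*}
    \E_t^-(\eta^\delta) \leq E_0(x_0) + \int_0^t \bigl( C_1\, \E_\tau(\eta^\delta) + C_2 \bigr)\diff\tau.
\end{equation*}

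Next, I would pass from $\E_t^-(\eta^\delta)$ to $\E_t(\eta^\delta)$ on the left-hand side. For $t\notin J^\delta$ the two coincide. For $t=i\delta\in J^\delta$, the identity $\eta^\delta(i\delta)=\bar\omega_{i\delta}(w_{i-1})$ combined with \cref{ax:decreasing} gives $E_{i\delta}(w_i)\leq E_{i\delta}(w_{i-1})=\E_{i\delta}^-(\eta^\delta)$, so in every case $\E_t(\eta^\delta)\leq \E_t^-(\eta^\delta)$. This produces the closed integral inequality
\begin{equation*}
    \E_t(\eta^\delta) \leq E_0(x_0) + C_2\, t + C_1 \int_0^t \E_\tau(\eta^\delta)\diff\tau, \qquad t\in[0,T].
\end{equation*}
The map $\tau\mapsto \E_\tau(\eta^\delta)$ is bounded on $[0,T]$ (it is continuous on each of the finitely many intervals $[i\delta,(i+1)\delta)$), so the standard Grönwall lemma applies and yields
\begin{equation*}
    \E_t(\eta^\delta) \leq E_0(x_0)\, e^{C_1 t} + \frac{C_2}{C_1}\bigl(e^{C_1 t} - 1\bigr) \leq E_0(x_0)\, e^{C_1(T+1)} + \frac{C_2}{C_1}\bigl(e^{C_1(T+1)} - 1\bigr),
\end{equation*}
which is precisely \eqref{eq:bound_energ_evol}, with constants independent of $\delta$ as claimed.

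The argument is largely routine once the energy balance is in hand; the only mild subtlety is the passage from $\E_t^-$ to $\E_t$ at jump times, where \cref{ax:decreasing} is used a second time (it was already used to discard $-\mu^\delta$). No further axioms beyond \cref{ax:decreasing} enter, which is consistent with the statement's hypotheses.
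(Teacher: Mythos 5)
Your proof is correct, and it takes a genuinely different route from the paper's. The paper argues locally, applying a differential Grönwall estimate on each interval $[i\delta,(i+1)\delta)$ where $\eta^\delta$ is constant, then propagating the bound across jump points via \cref{ax:decreasing}, and finally telescoping the resulting geometric estimates. You instead invoke the global energy balance of \cref{prop:energy_balance} (already proved at that point under the same hypotheses, so there is no circularity), discard the non-negative defect measure $\mu^\delta$ in one stroke, observe that $\E_t(\eta^\delta)\leq\E_t^-(\eta^\delta)$ by a second application of \cref{ax:decreasing} at jump times, and close with a single application of the integral Grönwall inequality. Your approach is cleaner in that it packages the piecewise accounting into the energy-balance lemma rather than redoing it by hand, and it actually yields the sharper constant $e^{C_1 T}$ rather than $e^{C_1(T+1)}$; the statement's bound then follows trivially. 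The one point worth making explicit, which you flag, is that the integral Grönwall requires $\tau\mapsto\E_\tau(\eta^\delta)$ to be measurable, bounded, and non-negative -- the first two hold since it is piecewise $C^1$ on finitely many intervals, and non-negativity is guaranteed by the paper's normalization $E\geq 0$ following \cref{ass:der_time}, which you also correctly need in order to pass from $|\partial_t E_\tau|\leq C_1 E_\tau+C_2$ to a one-sided bound on $\partial_t E_\tau$.
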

\begin{proof}
    Let us consider $t\in[0,\delta)$. Since $\eta^\delta$ is constant on this interval, we have that  $\E_\cdot (\eta^{\delta})$ is $C^1$ in $[0, \delta)$.
    Then, by virtue of \cref{ass:der_time}, we compute
    \begin{equation} \label{eq:der_energ_evolv}
        \left| \frac{\mathrm{d}}{\mathrm{d}t} \E_t(\eta^\delta) \right| =  \left| \partial_t E_t(x_0) \right| \leq C_1 E_t(x_0) + C_2 =
        C_1\E_t(\eta^\delta) + C_2,
    \end{equation}
    which yields
    \begin{equation}\label{eq:est_2_energ_evolv}
        \sup_{t\in[0,\delta)} \E_t(\eta^\delta) \leq \left( \E_0(\eta^\delta) + \delta C_2 \right) e^{\delta C_1},
    \end{equation}
    where we applied the Gr\"onwall inequality. Moreover, repeating the same argument on the interval $[\delta,2\delta)$, we deduce that
    \begin{equation}\label{eq:est_1_energ_evolv}
        \sup_{t\in[\delta,2\delta)} \E_t(\eta^\delta) \leq \left( \E_\delta(\eta^\delta) + \delta C_2 \right) e^{\delta C_1}.
    \end{equation}
    However, recalling \cref{def:discr_evol} and \cref{eq:def_evolving_energ}, by virtue of \cref{ax:decreasing}, we have that
    \begin{equation*}
        \sup_{t\in[0,\delta)} \E_t(\eta^\delta) \geq \E^-_\delta(\eta^\delta) \geq \E^+_\delta(\eta^\delta) = \E_\delta(\eta^\delta),
    \end{equation*}
    which, together with \cref{eq:est_1_energ_evolv,eq:est_2_energ_evolv}, implies
    \begin{equation*}
        \begin{split}
            \sup_{t\in[0,2\delta)} \E_t(\eta^\delta) & \leq  \left( \sup_{t\in[0,\delta)} \E_t(\eta^\delta) + C_2\delta \right) e^{C_1\delta} 
            %\\ & 
            \leq \E_0(\eta^\delta)e^{2\delta C_1} + \delta C_2 \sum_{j=1}^2 e^{j\delta C_1}.
        \end{split}
    \end{equation*}
    With an inductive argument, it follows that
    \begin{equation*}
        \sup_{t\in[0,T]} \E_t(\eta^\delta) \leq \E_0(\eta^\delta)e^{(M +1) \delta C_1 } + \delta C_2 \sum_{j=1}^{M+1} e^{j\delta C_1},
    \end{equation*}
    where $M$ is the integer defined in \cref{def:discr_evol}.  
    By simple algebraic manipulations and basic properties of the exponential functions, we conclude for~\eqref{eq:bound_energ_evol}.
\end{proof}

The boundedness of the energy along discrete quasistatic evolutions implies that the trajectories themselves are uniformly bounded, as we shall see below.

\begin{lemma}\label{lem:bound_traj}
    Let us assume \cref{ass:metric_space,ass:reg_E,ass:der_time,ass:unif_coerc}.
    For every $\delta>0$, let $\eta^\delta\colon [0,T]\to \Xx$ be a discrete quasistatic evolution constructed according to \cref{def:discr_evol} and starting from $x_0\in \Xx$.  
    If \cref{ax:decreasing} is satisfied, there exists a compact set $K \subset \Xx$ such that $\eta^\delta(t)\in K$ for every $t\in [0,T]$ and every $\delta >0$.
\end{lemma}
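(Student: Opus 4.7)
The plan is to derive the compactness by combining the uniform energy bound from \cref{lem:bound_energ_evol} with the uniform coercivity property of $E_t$ discussed in \cref{rmk:coerc_Et}. Since this is essentially a concatenation of two already-established facts, I expect the argument to be short and routine; the main work was already done in proving those preliminary results.

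First, I would invoke \cref{lem:bound_energ_evol} to obtain a constant $C>0$, independent of $\delta$, such that
\begin{equation*}
    E_t(\eta^\delta(t)) = \E_t(\eta^\delta) \leq C \quad \text{for every } t\in[0,T] \text{ and every } \delta>0,
\end{equation*}
where explicitly $C = E_0(x_0)e^{C_1(T+1)} + C_2 (e^{C_1(T+1)}-1)/C_1$. The hypothesis $|\partial E_0|(x_0)=0$ used in \cref{def:discr_evol} is not needed here; only \cref{ax:decreasing} is used through \cref{lem:bound_energ_evol}.

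Next, I would appeal to \cref{rmk:coerc_Et}, which states that under \cref{ass:reg_E,ass:unif_coerc,ass:der_time}, any sublevel set $\{x\in\Xx \mid E_t(x)\leq C\}$ is contained in the $t$-independent sublevel set $\{x\in\Xx \mid G(x)\leq (C+C_2T)e^{C_1T}\}$. Applying this at each $t \in [0,T]$ to the point $\eta^\delta(t)$, which satisfies $E_t(\eta^\delta(t))\leq C$, we obtain
\begin{equation*}
    \eta^\delta(t) \in K \coloneqq \big\{ x\in\Xx \mid G(x)\leq (C+C_2T)e^{C_1T} \big\}
\end{equation*}
for every $t\in[0,T]$ and every $\delta>0$. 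Finally, by \cref{ass:unif_coerc}, the set $K$ is compact, which concludes the proof. No obstacle is anticipated; the entire statement is a direct corollary of \cref{lem:bound_energ_evol} and \cref{rmk:coerc_Et}.
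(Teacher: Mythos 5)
Your proof is correct and takes essentially the same route as the paper: invoke \cref{lem:bound_energ_evol} for a $\delta$-independent energy bound, then combine \cref{rmk:coerc_Et} with \cref{ass:unif_coerc} to place all trajectory points in a single compact sublevel set of $G$. The only difference is that you spell out the constants and the containment explicitly, while the paper's proof is a two-line pointer to the same ingredients.
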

\begin{proof}
    From \cref{lem:bound_energ_evol}, it follows that there exists $K_1> 0$ independent of $\delta$ such that
    \begin{equation*}  %\label{eq:energ_in_time}
        \sup_{t\in [0,T]} \E_t(\eta^\delta) =
        \sup_{t\in [0,T]} E_t(\eta^\delta(t)) \leq K_1,
    \end{equation*}
    yielding $E_t(\eta^\delta(t)) \leq K_1$ for every $t\in [0,T]$.
    Owing to \cref{rmk:coerc_Et}  and of \cref{ass:unif_coerc}, we deduce the thesis.
    %To conclude, it suffices to show that there exists $K_2>0$ independent of $\delta$ such that 
    %\begin{equation} \label{eq:coercivity_goal}
    %    G(u^\delta(t)) = \sup_{\tau\in [0,T]} E_\tau(u^\delta(t)) \leq K_2
    %\end{equation}
    %for every $t\in [0,T]$, where $G:\R^d\to\R$ is the coercive function provided by Assumption~\ref{ass:unif_coerc}. Using Assumption~\ref{ass:der_time} and the Gr\"onwall Lemma, we obtain 
    %\begin{equation*}
    %    E_\tau(u^\delta(t)) \leq (K_1 + C_2|\tau-t|)e^{C_1|\tau-t|}
    %\end{equation*}
    %for every $t,\tau \in [0,T]$. This proves \eqref{eq:coercivity_goal}.
\end{proof}

In the next result we study the variation of the functions $t\mapsto \E_t(\eta^\delta)$ as we tune the parameter $\delta$.

\begin{lemma}\label{lem:bound_var}
    Let us assume \cref{ass:metric_space,ass:reg_E,ass:der_time,ass:unif_coerc}.
    For every $\delta>0$, let $\eta^\delta \colon [0,T]\to \Xx$ be a discrete quasistatic evolution constructed according to \cref{def:discr_evol} and starting from $u_0\in \Xx$. 
    Let us consider the function $\E_\cdot(\eta^\delta) \colon [0,T]\to\R$ defined in \cref{eq:def_evolving_energ}. Then, if \cref{ax:decreasing} is satisfied, there exists $K>0$ independent of $\delta$ such that
    \begin{equation}\label{eq:bound_variation}
         \sup_{\mathcal{P}} \, \sum_{j=1}^m|\E_{s_{j+1}}(\eta^\delta)-\E_{s_{j}}(\eta^\delta)| \leq  K,
    \end{equation}
     where the supremum is taken over the family of finite partitions $\mathcal{P}=\{0=s_1<s_2<\ldots<s_m=T \}$ of $[0,T]$.
\end{lemma}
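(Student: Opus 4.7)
\smallskip

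\noindent\textbf{Proof plan.} The key observation is that the function $\E_\cdot(\eta^\delta)$ is piecewise $C^1$ on the complement of the finite set $J^\delta$ and jumps only downwards at points of $J^\delta$ thanks to \cref{ax:decreasing}. Hence, for any finite partition $0 = s_1 < \ldots < s_m = T$, the sum $\sum_j |\E_{s_{j+1}}(\eta^\delta)-\E_{s_j}(\eta^\delta)|$ is bounded by the total variation of $\E_\cdot(\eta^\delta)$, which splits naturally into a ``continuous'' contribution coming from the smooth pieces and a ``jump'' contribution coming from the atoms at $J^\delta$. The plan is to estimate each of the two contributions separately, uniformly in $\delta$.

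\smallskip

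\noindent\textbf{Continuous part.} On each maximal interval of constancy of $\eta^\delta$, the curve $\E_\cdot(\eta^\delta)$ is $C^1$ with derivative $\partial_t E_t(\eta^\delta(t))$, which by \cref{ass:der_time} is bounded in absolute value by $C_1\E_t(\eta^\delta)+C_2$. By \cref{lem:bound_energ_evol}, there exists a constant $K_1>0$, independent of $\delta$, such that $\sup_{t\in[0,T]}\E_t(\eta^\delta)\leq K_1$. Therefore, the total absolutely continuous variation of $\E_\cdot(\eta^\delta)$ is bounded by $(C_1K_1+C_2)T$, uniformly in $\delta$.

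\smallskip

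\noindent\textbf{Jump part.} By \cref{ax:decreasing} together with \eqref{eq:meas_point}, the jump contribution equals
\begin{equation*}
    \sum_{t\in J^\delta}\bigl|\E^+_t(\eta^\delta)-\E^-_t(\eta^\delta)\bigr| = \sum_{t\in J^\delta}\bigl(\E^-_t(\eta^\delta)-\E^+_t(\eta^\delta)\bigr) = \mu^\delta([0,T]).
\end{equation*}
To estimate $\mu^\delta([0,T])$, I would invoke the energy balance identity \eqref{eq:ener_bal} from \cref{prop:energy_balance} on the whole interval $[0,T]$, obtaining
\begin{equation*}
    \mu^\delta([0,T]) = \E_0(x_0) - \E_T(\eta^\delta) + \int_0^T \partial_t E_\tau\bigl(\eta^\delta(\tau)\bigr)\diff\tau.
\end{equation*}
Since without loss of generality $E\geq 0$ (so $\E_T(\eta^\delta)\geq 0$), and since the integral is controlled by $(C_1K_1+C_2)T$ by the argument of the previous paragraph, we get $\mu^\delta([0,T]) \leq \E_0(x_0) + (C_1K_1+C_2)T$, again independently of $\delta$.

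\smallskip

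\noindent\textbf{Conclusion.} Summing the two bounds yields \eqref{eq:bound_variation} with $K := (C_1K_1+C_2)T + \E_0(x_0) + (C_1K_1+C_2)T$, which is independent of $\delta$. No step looks like a real obstacle here: the argument is essentially an assembly of \cref{lem:bound_energ_evol} (for the pointwise energy bound), \cref{ass:der_time} (to control $\partial_t E$ by the energy itself), \cref{ax:decreasing} (to identify the jump variation with the total mass of $\mu^\delta$), and the energy balance \cref{prop:energy_balance} (to bound that total mass). The only mild care needed is to treat the endpoints $0$ and $T$ correctly when splitting the variation, so that the atoms at $J^\delta$ are counted with their full size.
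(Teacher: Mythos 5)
Your proof is correct and takes essentially the same route as the paper: both arguments reduce the partition sum to $\int_0^T|\partial_t E_\tau(\eta^\delta(\tau))|\,\mathrm{d}\tau + \mu^\delta([0,T])$, bound the integrand uniformly via \cref{ass:der_time} and \cref{lem:bound_energ_evol}, and then bound $\mu^\delta([0,T])$ from the energy balance of \cref{prop:energy_balance} applied to the whole interval $[0,T]$. The paper phrases this by applying the energy balance identity on each partition cell $[s_j, s_{j+1}]$ and summing, rather than by an explicit ``continuous part plus jump part'' decomposition of the total variation, but the underlying estimate is the same.
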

\begin{proof}
Let $m \in \mathbb{N}$ and let $\mathcal{P} = \{ 0 = s_{1} < s_{2}  < \ldots < s_{m} = T\}$ be a partition of~$[0, T]$.
Recalling that $\E_\cdot(\eta^\delta)$ is continuous from the right,  for every $j = 1, \ldots, m-1$ we have that
    \begin{equation} \label{eq:ener_bal_op_cl}
        \begin{split}
            \E_{s_{j+1}}(u^\delta)-\E_{s_{j}}(u^\delta) 
            & =  \E_{s_{j+1}}^+(\eta^\delta)-\E_{s_{j}}^-(\eta^\delta) + \E_{s_{j}}^-(\eta^\delta) - \E_{s_{j}}^+(\eta^\delta) \\
            & = \int_{s_j}^{s_{j+1}} \partial_t E_\tau \big( \eta^\delta(\tau) \big) \diff \tau - \mu^\delta([s_j,s_{j+1}]) + \mu^\delta(\{ s_j \}) \\
            & = \int_{s_j}^{s_{j+1}} \partial_t E_\tau \big( \eta^\delta(\tau) \big) \diff \tau - \mu^\delta((s_j,s_{j+1}]),
        \end{split}
    \end{equation}
    for every $s_j<s_{j+1}$.
    Since $\mu^\delta$ is a non-negative measure, we deduce that
    \begin{equation} \label{eq:bound_var_1}
        \sum_{j=1}^m|\E_{s_{j+1}}(\eta^\delta)-\E_{s_{j}}(\eta^\delta)| \leq
        \int_0^T \left| \partial_t E_\tau\big( \eta^\delta(\tau) \big) \right| \diff \tau + \mu^\delta([0,T]).
    \end{equation}
    We further observe that, by virtue of \cref{ass:der_time} and \cref{lem:bound_energ_evol}, we have
    \begin{equation} \label{eq:bound_var_2}
        \left| \partial_t E_\tau\big( \eta^\delta(\tau) \big) \right| \leq  C_1 \E_\tau(\eta^\delta) + C_2  \leq K_1
    \end{equation}
    for every $\tau\in [0,T]$, where $K_1$ is a constant that does not depend on $\delta$.
    Moreover, from \cref{prop:energy_balance} it descends that
    \begin{equation*}
        \mu^\delta([0,T]) = \E_0(\eta^\delta) - \E_T(\eta^\delta) + \int_0^T\partial_t E_\tau\big(\eta^\delta(\tau)\big) \diff \tau,
    \end{equation*}
    and, combining the last identity with \cref{eq:bound_var_2} and with \cref{prop:energy_balance}, we obtain
    \begin{equation} \label{eq:bound_var_3}
        \mu^\delta([0,T]) \leq K_2,
    \end{equation}
    where, once again, $K_2>0$ does not depend on $\delta$. Finally, by combining \cref{eq:bound_var_1,eq:bound_var_2,eq:bound_var_3}, we deduce the bound in \cref{eq:bound_variation}.
\end{proof}

We are now in a position to establish a result analogue to \cite[Proposition~5.2]{AR17}.

\begin{proposition}\label{prop:limit_energies}
    Let us assume \cref{ass:metric_space,ass:reg_E,ass:der_time,ass:unif_coerc}.
    Given a non-negative decreasing sequence $(\delta_n)_n$ such that $\delta_n\to 0$ as $n\to\infty$, let $\eta^{\delta_n} \colon [0,T]\to\Xx$ be discrete quasistatic evolutions constructed according to \cref{def:discr_evol} and starting from $x_0\in \Xx$. Then, if \cref{ax:decreasing} is satisfied, there exist a positive Radon measure $\bar \mu \in \mathcal{M}([0,T])$, and functions $\bar \E \in BV([0,T],\R)$ and $\mathcal{D}\in L^\infty([0,T],\R)$ such that, up to a subsequence, for $n\to \infty$ we have
    \begin{equation}\label{eq:convergences}
        \begin{split}
             & \mu^{\delta_n}\weak^* \bar \mu \quad \mbox{in }\mathcal M([0,T]),                  \\
             & \lim_{n\to\infty} \E_t(\eta^{\delta_n}) = \bar\E_t \quad \mbox{for every } t\in[0,T], \\
             & \mathcal{D}^{\delta_n} \weak^* \mathcal{D} \quad \mbox{in } L^\infty([0,T],\R),
        \end{split}
    \end{equation}
    where we introduced the notation $t\mapsto \mathcal{D}^{\delta_n}(t)\coloneqq \partial_t E_t\big( \eta^{\delta_n}(t) \big)$.
    Moreover, if we use $\bar \E^+_t, \bar \E^-_t$ to denote, respectively, the right and the left limits of $\bar \E$ at the instant $t\in[0,T]$ (here we set $\bar \E^-_0 \coloneqq \bar \E_0$ and $\bar \E^+_T \coloneqq \bar \E_T$), then we have that
    \begin{equation} \label{eq:ener_bal_limit}
        \bar \E^+_{t_2} - \bar \E^-_{t_1} = \int_{t_1}^{t_2} \mathcal{D}(\tau) \diff \tau - \bar \mu([t_1,t_2])
    \end{equation}
    for every $0\leq t_1\leq t_2 \leq T$.
    %In addition, $\bar \E$ is continuous from the right.
\end{proposition}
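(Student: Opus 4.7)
The plan is to obtain the three convergences in~\eqref{eq:convergences} by three independent compactness arguments, and then to derive the limiting energy balance~\eqref{eq:ener_bal_limit} by passing to the limit in~\eqref{eq:ener_bal} at ``good'' endpoints and then approximating.

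First, by \cref{lem:bound_energ_evol} and \cref{lem:bound_var}, the family $\{\E_\cdot(\eta^{\delta_n})\}_n$ is uniformly bounded in sup-norm and has uniformly bounded pointwise total variation on $[0,T]$. Helly's selection theorem then yields a (non-relabeled) subsequence with $\E_t(\eta^{\delta_n})\to \bar\E_t$ for every $t\in[0,T]$, where $\bar\E\in BV([0,T],\R)$; in particular $\bar\E$ has at most countably many discontinuities, and the one-sided limits $\bar\E^-_t,\bar\E^+_t$ exist everywhere. Next, the bound \eqref{eq:bound_var_3} shows that $\|\mu^{\delta_n}\|=\mu^{\delta_n}([0,T])$ is uniformly bounded, so by Banach--Alaoglu in $\mathcal{M}([0,T])=C([0,T])^*$ we can further extract a subsequence with $\mu^{\delta_n}\weak^*\bar\mu$; the limit $\bar\mu$ is nonnegative since every $\mu^{\delta_n}$ is. Finally, \eqref{eq:bound_var_2} (obtained from \cref{ass:der_time} and \cref{lem:bound_energ_evol}) bounds $\mathcal{D}^{\delta_n}$ uniformly in $L^\infty([0,T],\R)$, so a further extraction gives $\mathcal{D}^{\delta_n}\weak^*\mathcal{D}$ in $L^\infty=(L^1)^*$.

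To derive \eqref{eq:ener_bal_limit}, denote by $N\subset[0,T]$ the (countable) union of the atoms of $\bar\mu$ and of the discontinuity points of $\bar\E$. For any $s_1<s_2$ in $[0,T]\setminus N$, the Portmanteau characterisation of weak-* convergence of Radon measures applies, since $\bar\mu(\partial[s_1,s_2])=\bar\mu(\{s_1,s_2\})=0$, giving $\mu^{\delta_n}([s_1,s_2])\to\bar\mu([s_1,s_2])$. Moreover, testing the weak-* convergence $\mathcal{D}^{\delta_n}\weak^*\mathcal{D}$ against the $L^1$ function $\chi_{[s_1,s_2]}$ yields $\int_{s_1}^{s_2}\mathcal{D}^{\delta_n}\,\diff\tau\to\int_{s_1}^{s_2}\mathcal{D}\,\diff\tau$. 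Using that $\E_\cdot(\eta^{\delta_n})$ is right-continuous, combined with the fact that $s_1,s_2\notin N$ guarantees $\bar\E_{s_i}=\bar\E^-_{s_i}=\bar\E^+_{s_i}$, passing to the limit in \eqref{eq:ener_bal} applied to $(s_1,s_2)$ establishes the target identity for such endpoints.

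Finally, for arbitrary $t_1\le t_2$ in $[0,T]$, I select sequences $s_1^k,s_2^k\in[0,T]\setminus N$ with $s_1^k\nearrow t_1$ and $s_2^k\searrow t_2$ (trivially possible since $N$ is countable, with the convention $s_1^k=0$ if $t_1=0$ and $s_2^k=T$ if $t_2=T$). The left-hand side $\bar\E_{s_2^k}-\bar\E_{s_1^k}$ converges to $\bar\E^+_{t_2}-\bar\E^-_{t_1}$ by the very definition of the one-sided limits; the integral $\int_{s_1^k}^{s_2^k}\mathcal{D}(\tau)\,\diff\tau$ converges to $\int_{t_1}^{t_2}\mathcal{D}(\tau)\,\diff\tau$ by dominated convergence (since $\mathcal{D}\in L^\infty$); and the intervals $[s_1^k,s_2^k]$ decrease to $[t_1,t_2]$, so continuity from above of the finite measure $\bar\mu$ gives $\bar\mu([s_1^k,s_2^k])\to\bar\mu([t_1,t_2])$. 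Collecting the three limits yields \eqref{eq:ener_bal_limit}.

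The main technical obstacle is that weak-* convergence of measures does \emph{not} in general imply convergence of mass on arbitrary closed intervals, precisely because of possible atoms of $\bar\mu$ at the endpoints (exactly the places where the limiting $\bar\E$ may jump). The resolution is the above two-step scheme: first prove the identity for ``generic'' endpoints avoiding the countable exceptional set $N$, then extend to all endpoints by one-sided approximation, consistently with the one-sided limits appearing in \eqref{eq:ener_bal_limit}.
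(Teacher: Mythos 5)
Your proof is correct and reaches the same conclusion, but it organizes the argument somewhat differently from the paper. For the compactness step, you apply the BV form of Helly's selection theorem directly to the family $\E_\cdot(\eta^{\delta_n})$, invoking the uniform total-variation bound from \cref{lem:bound_var}; the paper instead introduces the auxiliary functions $\F^{\delta_n}_t := \E_t(\eta^{\delta_n}) - \int_0^t \mathcal{D}^{\delta_n}\,\diff\tau$, observes via \cref{rmk:ener_bal_variants} that they are non-increasing, applies the monotone version of Helly, and then recovers the pointwise limit of $\E_\cdot(\eta^{\delta_n})$ by adding back $\int_0^t\mathcal{D}\,\diff\tau$; both routes are valid and rest on the same quantitative bounds. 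For the limiting energy balance, your two-step scheme---first prove the identity at ``generic'' endpoints $s_1, s_2$ avoiding the countable exceptional set $N$ using full Portmanteau convergence on intervals with $\bar\mu$-null boundary, then pass to arbitrary $t_1 \le t_2$ by monotone approximation $s_1^k \nearrow t_1$, $s_2^k \searrow t_2$ and continuity from above of $\bar\mu$---is a genuine alternative to the paper's one-shot argument, which sandwiches $\bar\mu([t_1,t_2])$ directly between $\liminf_n \mu^{\delta_n}\big((t_1-\tfrac1k, t_2+\tfrac1k)\big)$ and $\limsup_n \mu^{\delta_n}\big([t_1-\tfrac1k, t_2+\tfrac1k]\big)$ using the one-sided Portmanteau inequalities for open and closed sets and thus never needs to name $N$. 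One detail you should spell out in the generic-endpoints step: passing to the limit in \eqref{eq:ener_bal} at $(s_1, s_2)$ involves the \emph{left} limit $\E^-_{s_1}(\eta^{\delta_n})$, and right-continuity of $\E_\cdot(\eta^{\delta_n})$ only controls $\E^+_{s_2}$. The cleanest fix is to use the half-open variant from \cref{rmk:ener_bal_variants},
\begin{equation*}
    \E^+_{s_2}(\eta^{\delta_n}) - \E^+_{s_1}(\eta^{\delta_n}) = \int_{s_1}^{s_2}\mathcal{D}^{\delta_n}\,\diff\tau - \mu^{\delta_n}\big((s_1,s_2]\big),
\end{equation*}
which involves only right limits (hence evaluations, by right-continuity), and to observe that $\mu^{\delta_n}\big((s_1,s_2]\big) \to \bar\mu([s_1,s_2])$ whenever $\bar\mu(\{s_1\}) = \bar\mu(\{s_2\}) = 0$; alternatively, note that $\E^-_{s_1}(\eta^{\delta_n}) = \E_{s_1}(\eta^{\delta_n}) + \mu^{\delta_n}(\{s_1\})$ and $\limsup_n \mu^{\delta_n}(\{s_1\}) \le \bar\mu(\{s_1\}) = 0$ by Portmanteau on the closed singleton.
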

\begin{proof}
    For every $n\geq1$, we define $\F^{\delta_n}\colon [0,T]\to\R$ as
    \begin{equation} \label{eq:def_F_ancill}
        \F^{\delta_n}_t \coloneqq \E_t(\eta^{\delta_n})- \int_0^t \mathcal{D}^{\delta_n}(\tau) \diff \tau
    \end{equation}
    for every $t\in [0,T]$.
    We observe that the functions $\F^{\delta_n}$ are non-increasing. Indeed, given $0\leq t_1 \leq t_2 \leq T$ we have that
    \begin{equation*}
        \begin{split}
            \F_{t_2}^{\delta_n} - \F_{t_1}^{\delta_n} & =
            \E_{t_2}^+(\eta^{\delta_n}) - \E_{t_1}^+(\eta^{\delta_n})
            - \int_{t_1}^{t_2} \mathcal{D}^{\delta_n}(\tau)\diff \tau %\\& 
            = -\mu^{\delta_n}((t_1,t_2]) \leq 0,
        \end{split}
    \end{equation*}
    where we used \cref{rmk:ener_bal_variants}.
    Moreover, the sequence $(\F^{\delta_n})_n$ is uniformly bounded, owing to \cref{eq:bound_var_2} and \cref{lem:bound_energ_evol}. By Helly's Selection Theorem, it follows that up to a subsequence, $(\F^{\delta_n})_n$ is pointwise convergent to a non-increasing function $\bar \F$ at every point $t\in [0,T]$. In addition, using again the estimate in \cref{eq:bound_var_2}, it follows that the sequence $(\mathcal{D}^{\delta_n})_n$ is pre-compact in the weak-$*$ topology of $L^\infty$, while the measures $(\mu^{\delta_n})_n$ are pre-compact in the weak-$*$ topology of $\mathcal{M}([0,T])$.
    These  observations establish the first and the third convergences in \eqref{eq:convergences}, along a proper and not relabelled subsequence. Finally, by passing to the limit in \cref{eq:def_F_ancill}, we get the pointwise convergence of the energy stated in \eqref{eq:convergences}.
    To prove the energy balance for $\bar \E$, we argue as in \cite{AR17}. Namely, given $0< t_1\leq t_2 < T$, we compute
    \begin{equation}\label{eq:portmant_inf}
        \begin{split}
            \bar \mu ([t_1,t_2]) & =
            \lim_{k\to\infty}
            \bar \mu((t_1-1/k, t_2+1/k))  \leq
            \lim_{k\to\infty}
            \liminf_{n\to\infty} \mu^{\delta_n}((t_1-1/k, t_2+1/k))                                            \\
                                 & \leq
            \lim_{k\to\infty}
            \liminf_{n\to\infty} \mu^{\delta_n}((t_1-1/k, t_2+1/k])                                            \\
            %        & = \lim_{k\to\infty}        \liminf_{n\to\infty}       \left( \E_{t_1-1/k}^+(u^{\delta_n}) -        \E_{t_2+1/k}^+(u^{\delta_n}) + \int_{t_1-1/k}^{t_2+1/k}\mathcal{D}^{\delta_n}(\tau)\diff \tau \right)\\        
                                 & = \lim_{k\to\infty}
            \left( \bar\E_{t_1-1/k} -
            \bar\E_{t_2+1/k} + \int_{t_1-1/k}^{t_2+1/k}\mathcal{D}^{\delta_n}(\tau)\diff \tau \right)             \\
                                 & =\bar\E_{t_1}^- -\bar\E_{t_2}^+ + \int_{t_1}^{t_2}\mathcal{D}(\tau)\diff \tau.
        \end{split}
    \end{equation}
    Moreover, we have
    \begin{equation}
        \label{eq:portmant_sup}
        \begin{split}
            \bar \mu ([t_1,t_2]) & =
            \lim_{k\to\infty}
            \bar \mu([t_1-1/k, t_2+1/k])  \geq
            \lim_{k\to\infty}
            \limsup_{n\to\infty} \mu^{\delta_n}([t_1-1/k, t_2+1/k])                                            \\
                                 & \geq
            \lim_{k\to\infty}
            \limsup_{n\to\infty} \mu^{\delta_n}((t_1-1/k, t_2+1/k])                                            \\
            %        & = \lim_{k\to\infty}        \liminf_{n\to\infty}       \left( \E_{t_1-1/k}^+(u^{\delta_n}) -        \E_{t_2+1/k}^+(u^{\delta_n}) + \int_{t_1-1/k}^{t_2+1/k}\mathcal{D}^{\delta_n}(\tau)\diff \tau \right)\\        
                                 & = \lim_{k\to\infty}
            \left( \bar\E_{t_1-1/k} -
            \bar\E_{t_2+1/k} + \int_{t_1-1/k}^{t_2+1/k}\mathcal{D}^{\delta_n}(\tau)\diff \tau \right)             \\
                                 & =\bar\E_{t_1}^- -\bar\E_{t_2}^+ + \int_{t_1}^{t_2}\mathcal{D}(\tau)\diff \tau,
        \end{split} 
    \end{equation}
    and this concludes the proof.
    %    From the energy balance 
    %    \begin{equation*}
    %        \E_t(u^{\delta_n}) - \E_0(u^{\delta_n}) =     \int_0^t \mathcal{D}^{\delta_n}(\tau)\diff \tau - \mu^{\delta_n} ([0,t])
    %    \end{equation*}
    %    and from the last two convergences in \eqref{eq:convergences}, it follows that $\mu^{\delta_n}([0,t]) \to \bar \mu([0,t])$ as $n\to\infty$ (in general we can only write $\bar \mu([0,t]) \geq \limsup_{n\to\infty}\mu^{\delta_n}([0,t])$) \textcolor{red}{QUI HO CAPPELLATO, SITEMARE!}, and that
    %    \begin{equation*}
    %        \bar \E_t - \bar \E_0 =        \int_0^t \mathcal{D}(\tau)\diff \tau - \bar \mu ([0,t]).
    %    \end{equation*}
    %    \textcolor{red}{Ricontrollare per $t=0$!!}    If we take a sequence of instants $t_m\nearrow t$, from the previous identity we get
    %    \begin{equation} \label{eq:leftlim_energy}        \bar \E_t^- \coloneqq \lim_{m\to\infty}         \bar \E_{t_m} =  \bar \E_0 + \int_0^t \mathcal{D}(\tau)\diff \tau - \bar \mu([0,t)),
    %    \end{equation}
    %    and, similarly with $t_m\searrow t$, we obtain that
    %    \begin{equation} \label{eq:rightlim_energy}
    %    \bar \E_t = \bar \E_t^+ \coloneqq \lim_{m\to\infty}\bar \E_{t_m} =  \bar \E_0 + \int_0^t \mathcal{D}(\tau)\diff \tau - \bar \mu([0,t]).
    %    \end{equation}
    %    The previous identities imply that $\bar \E_\cdot$ is continuous from the right (i.e., $\bar\E_t = \bar \E^+_t$ for every $t\in (0,T)$), that the limit from the left $\bar\E^-_t$ exists for every $t\in (0,T]$, and that \eqref{eq:ener_bal_limit} is satisfied.
\end{proof}

%\begin{remark}\label{rmk:comparison-AR}
%    The limit function $\bar \E:[0,T]\to\R$ that we obtain in Proposition~\ref{prop:limit_energies} is more regular than the one obtained in \cite{AR17}. Indeed, there the Authors proved that the corresponding function admits left and right limits at every instant, but it was not possible to conclude it is continuous from the right.    \textcolor{blue}{I should be able to explain why with a picture!!}
%\end{remark}

%\begin{remark}\label{rmk:cadlag}
%    \textcolor{blue}{I wonder if it is possible to prove the previous result in a slight more elegant way. Indeed, $\E^{\delta_n}$ are continous from the right and they have limit from the left (c\`adl\`ag), as well as the limiting function $\bar \E$. There should be a compactness criterion for this kind of functions, widely used in probability. See \texttt{https://eventuallyalmosteverywhere.wordpress.com/tag/pre-compact/}. }
%\end{remark}

In the next corollary we report some further properties of the limiting function $\bar \E$ constructed above.

\begin{corollary} \label{cor:reg_limit_ener}
    Under the same assumptions and notations as in \cref{prop:limit_energies}, we deduce that the function $\bar \E \colon [0,T]\to \R$ is of bounded variation, that its distributional derivative satisfies $d\bar\E = \mathcal{D L}^1 -\bar \mu$, and that
    \begin{equation*}
        \bar \E_t^+ - \bar \E_t^- = - \bar \mu(\{t\})  \qquad \text{for every $t \in [0, T]$}. 
    \end{equation*}
    Finally, the discontinuity points of $\bar \E$ (i.e., the atoms of $\bar \mu$) are at most countably many.
\end{corollary}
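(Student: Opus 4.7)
The plan is to harvest everything from the energy balance identity~\eqref{eq:ener_bal_limit} of \cref{prop:limit_energies} together with the uniform bound on the total variation coming from \cref{lem:bound_var}. None of the steps appear to require a new idea; the task is essentially one of bookkeeping around one-sided limits and atoms.

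First, I would establish that $\bar\E \in BV([0,T],\R)$. \cref{lem:bound_var} gives a constant $K>0$, independent of~$\delta$, bounding the pointwise variation of $\E_\cdot(\eta^{\delta_n})$ on any partition. Since $\E_t(\eta^{\delta_n}) \to \bar \E_t$ pointwise on $[0,T]$ by \cref{prop:limit_energies}, lower semicontinuity of the total variation under pointwise convergence yields $\mathrm{Var}(\bar\E;[0,T]) \leq K$, hence $\bar\E \in BV([0,T],\R)$.

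Second, I would specialize the energy balance~\eqref{eq:ener_bal_limit} to $t_1 = t_2 = t$: the integral of $\mathcal{D}$ on $\{t\}$ vanishes and $\bar\mu([t,t]) = \bar\mu(\{t\})$, so
\begin{equation*}
    \bar \E_t^+ - \bar \E_t^- = -\bar\mu(\{t\}) \quad \text{for all } t\in[0,T],
\end{equation*}
with the boundary conventions $\bar\E_0^- := \bar\E_0$ and $\bar\E_T^+ := \bar\E_T$ taken into account. To identify the distributional derivative, I would combine this jump identity at~$s$ with~\eqref{eq:ener_bal_limit} for $0 \leq s < t \leq T$:
\begin{equation*}
    \bar\E^+_t - \bar\E^+_s = \bigl(\bar\E^+_t - \bar\E^-_s\bigr) - \bigl(\bar\E^+_s - \bar\E^-_s\bigr) = \int_s^t \mathcal{D}(\tau)\diff\tau - \bar\mu((s,t]).
\end{equation*}
Since $\bar\E$ and its right-continuous representative differ only on the at most countable jump set and induce the same distributional derivative, this identity characterizes $d\bar\E = \mathcal{D}\mathcal{L}^1 - \bar\mu$ as signed measures on $[0,T]$.

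Third, the countability of the discontinuity set is immediate: a $BV$ function on a compact interval has at most countably many jump points, and by the jump identity above these are exactly the atoms of the finite Radon measure $\bar\mu$, which form an at most countable set. The only delicate point in this whole argument is keeping the boundary conventions $\bar\E_0^-$, $\bar\E_T^+$ consistent when applying~\eqref{eq:ener_bal_limit} at endpoints, but this is purely formal; no genuine obstacle is expected.
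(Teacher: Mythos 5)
Your proposal is correct, and the three claims in the corollary all follow by essentially valid reasoning. There is, however, a slight difference from the paper's route that is worth noting. The paper's own proof is a one-liner that points back to the chain of inequalities \eqref{eq:portmant_inf}--\eqref{eq:portmant_sup}; more substantively, inside the proof of \cref{prop:limit_energies}, the paper already decomposes $\bar\E = \bar\F + \int_0^\cdot \mathcal{D}(\tau)\diff\tau$ with $\bar\F$ non-increasing (produced by Helly's theorem). Once that decomposition is in hand, $\bar\E$ is immediately BV, its distributional derivative is $d\bar\F + \mathcal{D}\mathcal{L}^1 = -\bar\mu + \mathcal{D}\mathcal{L}^1$, and the at-most-countable set of jumps is the jump set of the monotone $\bar\F$. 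You instead get BV from the uniform variation bound of \cref{lem:bound_var} combined with lower semicontinuity of total variation under pointwise convergence, then recover the derivative by reorganizing the energy balance into a right-continuous form $\bar\E^+_t - \bar\E^+_s = \int_s^t \mathcal{D}\diff\tau - \bar\mu((s,t])$. Both arguments work; yours is more self-contained in the sense that it only uses the \emph{statements} of \cref{prop:limit_energies} and \cref{lem:bound_var}, whereas the paper's one-liner implicitly relies on the internal structure of its own proof (the auxiliary function $\bar\F$). The only step to articulate with a little care is the passage from the identity on $\bar\E^+$ to the claim about $d\bar\E$ (you need, as you note, that modifying a BV function on a countable set does not change its distributional derivative), but that is standard and you flag it correctly.
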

\begin{proof}
    The proof directly descends from \eqref{eq:portmant_inf} and \eqref{eq:portmant_sup}.
\end{proof}

\subsection{Limiting construction of quasi\-static evolutions}
In this part, we show how we can obtain quasi\-static evolutions using a family of curves $(\eta^\delta)_{\delta}$ constructed according to \cref{def:discr_evol}.
We introduce below the second axiom.

\begin{axiom}\label{ax:gradient}
    Let the family of mappings $\bar{\omega}_t\colon \Xx \to \Xx$ indexed by $t \in [0, T]$ be the transition rule as in \cref{def:discr_evol}.
    Then, we require that for every $x \in \Xx$ and for every $t \in [0, T]$, we have that $|\partial E_t|\big(\bar \omega_t(x)\big) = 0$.
\end{axiom}

%\begin{remark}\label{rmk:ax_grad}
%    We stress the fact that the precise behavior of the function $\delta\mapsto\epsilon_\delta$ near $0$ is not playing any role in our analysis.
%\end{remark}

In the next result, we show that the curves $\eta^{\delta}$ are somehow close to be made of critical points, even for $t\not\in J^\delta$.
More precisely, we provide an estimate uniform in $\delta$ for the magnitude of $|\partial E_t|$ along $\eta^\delta$.

\begin{lemma}\label{lem:est_grad_ener}
    Let us assume \cref{ass:metric_space,ass:reg_E,ass:der_time,ass:unif_coerc}.
    For every $\delta>0$, let $\eta^\delta \colon [0,T]\to \R^d$ be a discrete quasistatic evolution constructed according to \cref{def:discr_evol} and starting from $x_0\in \Xx$.
    Then, if \cref{ax:decreasing,ax:gradient} are satisfied, for every $\e>0$ there exists  $\bar \delta >0$ such that for every $\delta \in (0,  \bar \delta]$
    \begin{equation*}
        \sup_{t\in [0,T]} |\partial E_t| \big( \eta^\delta(t) \big) \leq \e.
    \end{equation*}
\end{lemma}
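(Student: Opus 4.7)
The plan is to exploit the piecewise-constant structure of $\eta^\delta$ together with the fact that, at each update time $i\delta$, the value of $\eta^\delta$ is a critical point of $E_{i\delta}$, and then use the local-uniform Lipschitz continuity of $t \mapsto |\partial E_t|(x)$ in the time variable (part of \cref{ass:PL+time_der_slope}) to control the slope on the whole interval $[i\delta,(i+1)\delta)$ via the short length $\delta$.

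First I would invoke \cref{lem:bound_traj}: under the listed assumptions and \cref{ax:decreasing}, there is a compact set $K\subset\Xx$ such that $\eta^\delta(t)\in K$ for every $t\in[0,T]$ and every $\delta>0$. This compactness will let me promote the \emph{locally uniform} Lipschitz estimate of \cref{ass:PL+time_der_slope} to a \emph{uniform} one on $K$. More precisely, for every $x\in K$ there exist a neighborhood $U_x$ and a Lipschitz constant $L_x>0$ such that $|\,|\partial E_t|(y)-|\partial E_s|(y)\,|\le L_x|t-s|$ for every $y\in U_x$ and every $s,t\in[0,T]$. A finite subcover $U_{x_1},\dots,U_{x_N}$ of $K$ yields a global constant $L\coloneqq \max_j L_{x_j}$ such that
\begin{equation*}
    \bigl|\,|\partial E_t|(y)-|\partial E_s|(y)\,\bigr|\le L\,|t-s|\qquad\forall\,y\in K,\ \forall\,s,t\in[0,T].
\end{equation*}

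Next I would combine this with \cref{ax:gradient}. By \cref{def:discr_evol}, for $t\in[i\delta,(i+1)\delta)$ (or $t\in[M\delta,T]$ for $i=M$) one has $\eta^\delta(t)=w_i$, where $w_i=\bar\omega_{i\delta}(w_{i-1})$ for $i\ge 1$ and $w_0=x_0$ with $|\partial E_0|(x_0)=0$ by assumption. In either case, \cref{ax:gradient} (or the initial condition for $i=0$) gives $|\partial E_{i\delta}|(w_i)=0$. Since $w_i\in K$, the uniform Lipschitz bound above yields
\begin{equation*}
    |\partial E_t|(\eta^\delta(t)) = \bigl|\,|\partial E_t|(w_i) - |\partial E_{i\delta}|(w_i)\,\bigr|\le L\,|t-i\delta|\le L\,\delta
\end{equation*}
for every $t\in[i\delta,(i+1)\delta)$ (respectively $[M\delta,T]$, whose length is at most $2\delta$). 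Taking the supremum over $t\in[0,T]$ gives $\sup_{t\in[0,T]}|\partial E_t|(\eta^\delta(t))\le 2L\,\delta$. Setting $\bar\delta\coloneqq \varepsilon/(2L)$ concludes the argument.

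The only mildly delicate step is the passage from the local Lipschitz statement in \cref{ass:PL+time_der_slope} to a global Lipschitz constant on the compact set $K$ that contains the trajectories for \emph{every} $\delta>0$; this is where \cref{lem:bound_traj} (which is already available in the excerpt and only uses \cref{ax:decreasing}) is essential. Everything else is essentially an elementary consequence of $\eta^\delta$ being piecewise constant with updates exactly at the points where the slope vanishes.
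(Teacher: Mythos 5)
Your argument follows the same structure as the paper's proof---compactness of the trajectory via \cref{lem:bound_traj}, then exploiting that $\eta^\delta$ is constant on each time step and that $|\partial E_{i\delta}|(w_i)=0$ by \cref{ax:gradient} (or the initial condition)---but there is one genuine hypothesis mismatch. You explicitly invoke the ``locally uniform Lipschitz continuity of $t \mapsto |\partial E_t|(x)$ in the time variable'' which is part of \cref{ass:PL+time_der_slope}; however, \cref{ass:PL+time_der_slope} is \emph{not} among the hypotheses of the lemma, which assumes only \cref{ass:metric_space,ass:reg_E,ass:der_time,ass:unif_coerc}. Under those assumptions alone, you do not have a Lipschitz estimate in time, so the bound $\le L\,\delta$ and the concrete choice $\bar\delta=\varepsilon/(2L)$ cannot be justified.

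The fix is small: \cref{ass:reg_E} already assumes that $|\partial E_\cdot|\colon[0,T]\times\Xx\to\R$ is jointly continuous, hence uniformly continuous on the compact set $[0,T]\times K$ provided by \cref{lem:bound_traj}. Denoting by $\xi$ a joint modulus of continuity on that set, the same computation gives $|\partial E_t|(\eta^\delta(t))=\bigl||\partial E_t|(w_i)-|\partial E_{i\delta}|(w_i)\bigr|\le \xi(\delta,0)$, which tends to $0$ as $\delta\to 0$; this is precisely the paper's argument. One gets a qualitative ``$\exists\,\bar\delta$'' instead of your quantitative $2L\delta$ rate, but that is all the statement asks for. (As a minor point, the last interval $[M\delta,T]$ has length at most $\delta$, not $2\delta$, by the definition of $M$, though your cruder bound was harmless.)
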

\begin{proof}
    Recalling the definition of $J^\delta$ in \cref{eq:def_J_delta}, we observe that the set $J^\delta \cup \{0\}=\{i\delta\mid i=0,\ldots,M\}$ is  a $\delta$-net for the interval $[0,T]$. By virtue of \cref{lem:bound_traj}  and of \cref{ass:unif_coerc}, there exists a compact set $K\subset \Xx$ such that $\eta^\delta(t)\in K$ for every $t\in[0,T]$ and for every $\delta>0$.
    Moreover, owing to \cref{ass:reg_E}, it descends that the space-gradient $|\partial E_\cdot|\colon  [0,T]\times \Xx \to \R$ is uniformly continuous when restricted to $[0,T]\times K$, and we denote with $\xi \colon [0,T]\times [0,\mathrm{diam}(K)]\to \R_+$ a modulus of continuity. We recall that $\xi$ is a function non-decreasing in each argument and that satisfies $\xi(0,0)=\lim_{(s,r)\to(0^+,0^+)}\xi(s,r) =0$.
    Let us fix $t\in [0,T]$. Then, there exists $\hat i\in \{0,\ldots,M\}$ such that $\hat i \delta\leq t<(\hat i + 1)\delta$. By \cref{ax:gradient} and recalling that $\eta^\delta$ is piecewise constant, we observe that $|\partial E_{\hat i \delta}| \big(\eta^\delta(t)\big)=0 $. Therefore, we have that
    \begin{equation*}
        |\partial E_{t}| \big(\eta^\delta(t)\big) \leq |\partial E_{\hat i \delta}| \big(\eta^\delta(t)\big) + \left|
        |\partial E_{t}| \big(\eta^\delta(t)\big) - |\partial E_{\hat i \delta}| \big(\eta^\delta(t)\big) \right|
        %\leq \epsilon_\delta + \xi(t-\hat i\delta, 0 ) 
        \leq \xi(\delta,0),
    \end{equation*}
    and this concludes the proof.
\end{proof}

\begin{remark}\label{rmk:almost_critic}
    A similar result is reported in \cite[Notation~5.3]{AR17} for the class of curves obtained by solving a properly rescaled gradient flow.
    However, in the construction in \cite{AR17}, it is possible to prove that the set of instants where the gradients converge to $0$ has \emph{full Lebesgue measure}, but not that it is the whole interval $[0,T]$.
\end{remark}

Before proceeding, we recall the definition of the quotient space $\X$ given in \cref{eq:def_quot_space}:
\begin{equation*}
    \X \coloneqq \big([0,T]\times \Xx \big)/\sim,
\end{equation*}
where $\sim$ is the equivalence relation on $[0,T]\times\Xx$ given by (cf.~\cref{eq:def_equiv_rel})
\begin{equation*}
    \begin{split}
        (t_1,x_1)\sim (t_2,x_2) \iff &
        t_1=t_2 \mbox{ and $x_1,x_2$ belong to the same} \\ & \mbox{path-connected component of }\{x \in \Xx \mid |\partial E_{t_1}|(x) = 0\}.
    \end{split}
\end{equation*}
The space $\X$ is equipped with the quotient topology.  We denote with $q\colon [0,T] \times \Xx \to\X$ the quotient map, and we use the notation $[(t,x)]$ to describe the elements of $\X$, i.e., the equivalence classes induced by \eqref{eq:def_equiv_rel}. We will often denote by $\hat{x}$ an element $[(t, x)]$ of~$\X$.

\begin{lemma}\label{lem:q_space_haus}
Let us assume \cref{ass:metric_space,ass:conn_comp}. Then, the quotient space $\X$ defined in \eqref{eq:def_quot_space} with the relation \eqref{eq:def_equiv_rel} is a Hausdorff space, i.e., for every $\hat{x}_{1}, \hat{x}_{2}\in \X$ with $\hat{x}_{1} \neq \hat{x}_{2}$ there exist two disjoint open sets $U_1,U_2 \subset \X$ such that  $\hat{x}_{1}\in U_1$ and $\hat{x}_{2}\in U_2$.
\end{lemma}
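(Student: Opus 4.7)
The plan is to construct, for any two distinct points $\hat{x}_{1}, \hat{x}_{2} \in \X$, disjoint saturated open subsets $W_1, W_2 \subset [0, T] \times \Xx$ containing representatives $(t_i, x_i)$ of $\hat{x}_i$; by the definition of the quotient topology the images $q(W_i)$ are then disjoint open neighborhoods of $\hat{x}_i$ in $\X$, yielding the Hausdorff property.

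The easy case is $t_1 \neq t_2$. Since the equivalence relation identifies only points sharing the same first coordinate, every set of the form $I \times \Xx$ with $I \subset [0,T]$ open is automatically saturated. Choosing disjoint open intervals $I_1, I_2 \subset [0,T]$ with $t_i \in I_i$ and setting $W_i = I_i \times \Xx$ gives the required separation.

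Now suppose $t_1 = t_2 = t$. Denote by $K_i$ the representative of the class $\hat{x}_i$ inside $\Xx$: the path-connected component of $\Crit_t$ containing $x_i$ if $x_i \in \Crit_t$, or the singleton $\{x_i\}$ otherwise. In both cases $K_i$ is compact and $K_1 \cap K_2 = \emptyset$. Using the well-separatedness of the components of $\Crit_t$ from \cref{ass:conn_comp} together with the local compactness from \cref{ass:metric_space}, I pick disjoint open neighborhoods $A_1, A_2 \subset \Xx$ of $K_1, K_2$ with $\overline{A_i}$ compact and $\overline{A_i} \cap \Crit_t \subset K_i$; in particular $\partial A_i \cap \Crit_t = \emptyset$.

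The main work is then to show that, for some $\varepsilon > 0$, the open set $W_i := ((t - \varepsilon, t + \varepsilon) \cap [0, T]) \times A_i$ is saturated; this amounts to ruling out the pathological situation in which a path-component of $\Crit_s$ for some $s$ arbitrarily close to $t$ meets both $A_i$ and its complement. This is the hard part: were it to fail, one would find $s_n \to t$ and path-components $C_n \subset \Crit_{s_n}$ straddling $\partial A_i$, and by path-connectedness via rectifiable paths (\cref{ass:conn_comp}) together with an intermediate value argument one can select $z_n \in C_n \cap \partial A_i$. Since $\partial A_i$ is compact, a subsequence converges to some $z_* \in \partial A_i$, and the continuity of $|\partial E_\cdot|$ (inherited from \cref{ass:reg_E}, a standing regularity of the paper) forces $|\partial E_t|(z_*) = 0$, i.e., $z_* \in \Crit_t$, contradicting $\partial A_i \cap \Crit_t = \emptyset$. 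With such an $\varepsilon$ fixed, $W_1$ and $W_2$ are open, disjoint and saturated, closing the proof.
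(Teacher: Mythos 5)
Your proof is correct, and it takes a genuinely different route from the paper's. The paper's argument is a one-liner: observe that $q^{-1}(\hat x) = \{t\}\times A$ with $A$ compact (by \cref{ass:conn_comp}), recall that $[0,T]\times\Xx$ is Hausdorff, and invoke a textbook theorem from [Kos] on decomposition spaces. Your proof, instead, constructs the separating saturated open sets by hand: disjoint time slices when $t_1\neq t_2$, and the "tube" sets $W_i=((t-\varepsilon,t+\varepsilon)\cap[0,T])\times A_i$ when $t_1=t_2$, with a compactness-plus-contradiction argument (essentially a verification that the decomposition is upper semicontinuous near $t$) showing that $\varepsilon$ can be taken small enough for $W_i$ to be saturated. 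This is more work than citing the theorem, but it is self-contained and it makes explicit exactly which structural features of the equivalence relation are used. The intermediate value step needs only connectedness of the components, not rectifiability, so that clause can be dropped.

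There is, however, one discrepancy worth flagging. Your argument invokes the joint continuity of $(s,z)\mapsto|\partial E_s|(z)$ to pass to the limit $|\partial E_t|(z_*)=0$ along $s_n\to t$, $z_n\to z_*$. That continuity is \cref{ass:reg_E}, which is \emph{not} among the hypotheses listed in \cref{lem:q_space_haus} (only \cref{ass:metric_space,ass:conn_comp} are assumed). You are right that this continuity seems genuinely needed: if one allowed $\Crit_t$ to jump discontinuously in $t$ (e.g. $\Crit_0=\{-1\}\cup\{1\}$ but $\Crit_s=[-1,1]$ for $s\neq 0$), then no saturated neighborhoods of $[(0,-1)]$ and $[(0,1)]$ could be disjoint, so the quotient would fail to be Hausdorff even though \cref{ass:metric_space,ass:conn_comp} hold. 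In other words, your proof appears to expose an omission in the lemma's stated hypotheses rather than a gap in your own reasoning; the paper's citation presumably packages the needed closedness/upper-semicontinuity condition into the black-box theorem, but the continuity of $|\partial E_\cdot|$ is still silently doing work. You should state \cref{ass:reg_E} as an additional hypothesis, or at least acknowledge that you are using it.
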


\begin{proof}
    For every $ \hat{x} = [(t,x)] \in\X$, let us introduce the set $A\coloneqq \{w\in\Xx \mid (t,w)\sim (t,x)\}$, which satisfies either $A=\{x\}$ in the case $|\partial E_t|(x)\neq 0$, or, if $|\partial E_t|(x)=0$, $A$ does coincide with the path-connected component of critical points of $E_t$ that contains $x$.
    Moreover, from \cref{ass:conn_comp} it descends that $A$ is compact.
    Then, observing that  $q^{-1} ( \hat{x} )  = \{t\} \times A$, we deduce the thesis by applying \cite[Theorem~8.11 and Exercise~8.13.(l)]{Kos}, and recalling that $[0,T]\times\Xx$ is a Hausdorff space.
\end{proof}

We observe that \cref{lem:q_space_haus} implies the uniqueness of the limit for every converging sequence in $\X$.
Furthermore, we notice that the energy $E\colon [0,T]\times\Xx \to\R$ can be defined also on the quotient space $\X$. Namely, given $\hat x = [(t,x)]\in\X$, we set
\begin{equation} \label{eq:energy_hat}
    \hat E(\hat x) = \hat E\big( [(t,x)] \big)\coloneqq  E_{t} (x).
\end{equation}
Let us show that this definition does not depend on the representative of the class. Indeed, if $|\partial E_t|(x)\neq 0$, we have that $[(t,x)] = \{(t,x)\}$. Otherwise, if  $|\partial E_t|(x) = 0$, let us consider another element of the same class $(t,x')\in[(t,x)]$ (i.e., in the same path-connected component of the set of critical points), and we observe that $ E_{t} ( x) = E_{t} (x')$ by virtue of \cref{ass:conn_comp}. 
This shows that $\hat E\colon \X\to\R$ is well-defined, and we have that $E=\hat E \circ q$.
The last identity (together with the continuity of $E$) implies that $\hat E \colon \X\to\R$ is continuous as well (see the \textit{universal property of quotients} \cite[Theorem~5.2]{Kos}).

In the next fundamental axiom, we state a property for the limits of the sequences $\left(\eta^{\delta_n}(t_1^n)\right)_n,\left(\eta^{\delta_n}(t_2^n)\right)_n$ when $t_1^n, t_2^n\to t$ as $n\to\infty$.
In the limiting construction, this axiom plays the same role as \cite[Lemma~5.1]{AR17}.

\begin{axiom}\label{ax:limits_traject}
    Given a non-negative decreasing sequence $(\delta_n)_n$ such that $\delta_n\to 0$ as $n\to\infty$, let $\eta^{\delta_n}\colon [0,T]\to\Xx$ be discrete quasistatic evolutions constructed according to \cref{def:discr_evol} and starting from $x_0\in \Xx$. Let us further assume that along $\left(\eta^{\delta_n}\right)_n$ the convergences reported in~\eqref{eq:convergences} hold.
    For every $t\in [0,T]$, let us consider sequences $(t_1^n)_n,(t_2^n)_n\subset[0,T]$ and $x_{1}, x_{2} \in \Xx$ such that $t_1^n,t_2^n\to t$ as $n\to\infty$ with $t_1^n\leq t_2^n$ for every $n$, and such that $\eta^{\delta_n}(t_1^n)\to x_1$ and $\eta^{\delta_n}(t_2^n)\to x_2$ as $n\to\infty$.
    If $x_1$ and $x_2$ belong to \emph{different path-connected components} of the set $\{x\in \Xx \mid |\partial E_t|(x) = 0\}$, then there exists $c>0$ such that
    \begin{equation*}
        \bar \mu(\{t\}) \geq c, \qquad E_t( x_1 ) - E_t( x_2 )\geq c.
    \end{equation*}
\end{axiom}

\begin{remark}
    If compared to \cref{ax:decreasing,ax:gradient}, we notice that \cref{ax:limits_traject} sounds intrinsically different and less elegant, as it does not directly involve the transition rules $( \bar{\omega}_t )_{t\in [0,T]}$, but it is rather formulated in terms of the subsequence constructed in \cref{prop:limit_energies}.
    We shall devote \cref{sec:actions_abstract} to amend this point: We formulate \cref{ax:action_triangular_time_stable}, we show that \cref{ax:decreasing,ax:gradient,ax:action_triangular_time_stable} imply \cref{ax:limits_traject} (see \cref{lemma:action_implies_traj}), and finally in \cref{def:generated_by_curves} we describe a class of transition rules that comply with \cref{ax:action_triangular_time_stable} (see \cref{lemma:good_curves_fulfill_continuous_triangular_inequality}). Examples of such transition rules are provided in \cref{sec:examples_of_evolution_rules}. 
\end{remark}

\begin{remark}\label{rmk:crit_limits}
    Using the same notations as in \cref{ax:limits_traject}, the points $x_1,x_2$ are automatically critical points.
    Indeed, from the construction of $\eta^{\delta_n}$ and from \cref{lem:est_grad_ener}, it follows that
    \begin{equation*}
        \lim_{n\to\infty} \sup_{s\in[0,T]}|\partial  E_s| \big(\eta^{\delta_n}(s) \big)=0.
    \end{equation*}
    Hence, from the continuity of $|\partial E_\cdot|\colon [0,T]\times\Xx\to \R$, we conclude that $|\partial E_t|(x_1)=0$. The argument for $x_2$ is exactly the same.
\end{remark}

\begin{remark}\label{rmk:discont_points}
    The previous axiom is particularly meaningful for the instants $t\in[0,T]$ such that $\bar \mu(\{t\}) =0$. Indeed, in this case we conclude that the points $x_1,x_2\in \Xx$ obtained as above must belong to the same path-connected component of $\{x\in \Xx \mid |\partial E_t| (x) = 0\}$. In particular, in  case that the critical points of $E_t$ are isolated (as assumed in \cite{AR17}), it descends that $x_1=x_2$.
\end{remark}

\begin{remark}\label{rmk:atoms_discontinuity}
    Under \cref{ax:decreasing}, it is possible to prove a sort of converse of \cref{ax:limits_traject}. Namely, using the same notations, let us assume that along $\big(\eta^{\delta_n}\big)_n$ the convergences reported in \eqref{eq:convergences} hold, and that there exists $t\in(0,T)$ such that $\bar \mu(\{t\}) =c>0$.
    Then, following the arguments of \cite[Proposition~4.1]{AR17} it is possible to construct a subsequence $\big(\eta^{\delta_{n_k}}\big)_k$ and sequences $(t_1^k)_k, (t_2^k)_k$ satisfying $t_1^k\leq t_2^k$ and $t_1^k,t_2^k\to t$ as $k\to\infty$, such that
    $u^{\delta_{n_k}}(t_1^k)\to x_1$ and $u^{\delta_{n_k}}(t_2^k)\to x_2$, where $x_1,x_2$ belongs to different connected components of the set of critical points of $E_t$.
\end{remark}

We are finally in a position to construct a solution $\hat \eta\colon [0,T]\to\X$ obtained as the pointwise limit of (the graphs of) discrete quasistatic evolutions.
Our construction of $\hat \eta$ follows steps similar to the ones detailed in \cite{AR17} for their framework.

\begin{theorem} \label{thm:traj_convergence}
    Let us assume  \cref{ass:metric_space,ass:reg_E,ass:der_time,ass:unif_coerc,ass:conn_comp}.
    Given a non-negative decreasing sequence $(\delta_n)_n$ such that $\delta_n\to 0$ as $n\to\infty$, let $\eta^{\delta_n} \colon [0,T]\to\Xx$ be discrete quasistatic evolutions constructed according to \cref{def:discr_evol} and starting from $x_0\in \Xx$. Let us suppose that the convergences \eqref{eq:convergences} hold  and that \cref{ax:decreasing,ax:gradient,ax:limits_traject} are satisfied. Then, there exists a curve $\hat \eta \colon [0,T]\to \X$ such that
    \begin{itemize}
        \item The right limit $\hat \eta^+(t)$ exists for every $t\in(0,T]$ and left limit $\hat \eta^-(t)$ exists for every $t\in(0,T]$.
        \item For every $t\in [0,T]$, $\hat \eta(t) \subset \{ (t,x) \in [0,T]\times\Xx \mid | \partial  E_t|(x) = 0\}$.
        \item For every $0\leq s\leq t\leq T$, the following energy balance holds:
              \begin{equation} \label{eq:ener_balanc_limit_traj}
                  \hat E\big(\hat \eta^+(t)\big) -  \hat E\big(\hat \eta^-(s)\big) =
                  \int_s^t \mathcal{D}(\tau) \diff \tau - \bar\mu([s,t]),
              \end{equation}
            where $\mathcal{D} \in L^{\infty} ([0, T, \mathbb{R})$ and $\bar \mu \in \mathcal{M}^+([0,T])$ are defined in \cref{prop:limit_energies}. In particular, the set $J$ of atoms of~$\bar{\mu}$ is at most countable.
        \item $\hat \eta$ is continuous in $[0,T]\setminus J$, $J$ coincides with the jump set of $\hat \eta$, and for every $t\in [0,T]$ we have $\hat E \big(\hat \eta^-(t)\big) -  \hat E \big(\hat \eta^+(t) \big) = \bar\mu(\{t\})$.
        \item There exists a subsequence $(\delta_{n_k})_k$ such that $\big(t, \eta^{\delta_{n_k}}(t)\big) \rightarrow_\X \hat\eta(t)$ for all $t \in [0, T]$.
    \end{itemize}
\end{theorem}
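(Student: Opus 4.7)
\emph{Step 1: Compactness and diagonal extraction.}
By \cref{lem:bound_traj} there is a compact $K \subset \Xx$ with $\eta^{\delta_n}([0,T]) \subset K$ for every $n$; since $q$ is continuous, $q([0,T]\times K) \subset \X$ is compact, and by \cref{lem:q_space_haus} Hausdorff. Fix a countable dense set $D \subset [0,T]$ containing $0$, $T$ and all atoms of $\bar\mu$ (at most countably many by \cref{cor:reg_limit_ener}). A Cantor diagonal argument produces a subsequence, still denoted $(\delta_{n_k})_k$, such that $\hat\eta(t) := \lim_k q\bigl(t, \eta^{\delta_{n_k}}(t)\bigr)$ exists in $\X$ for every $t \in D$. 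By \cref{lem:est_grad_ener} and the continuity of $|\partial E_\cdot|$ from \cref{ass:reg_E}, every such limit lies in the image under $q$ of $\{(t,x) \mid |\partial E_t|(x) = 0\}$, establishing the ``critical point'' property.

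\emph{Step 2: One-sided limits everywhere via \cref{ax:limits_traject}.}
Fix $t \in [0,T)$ and any $s_j \searrow t$ with $s_j \in D$. The sequence $\bigl(\hat\eta(s_j)\bigr)_j$ has cluster points in the compact $\X$-set above. If two distinct cluster points $\hat x_1 \neq \hat x_2$ arose from sequences $s_j, s_j' \searrow t$, I would align the double index $(k,j)$ by a further diagonal extraction, yielding indices $k_j \uparrow \infty$ and sequences $t_1^j := s_j$, $t_2^j := s_j'$ with $\eta^{\delta_{n_{k_j}}}(t_i^j) \to x_i$ in $\Xx$ for representatives $x_i$ of $\hat x_i$ in different path-components of $\Crit_t$. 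Then \cref{ax:limits_traject} would force $\bar\mu(\{t\}) \geq c > 0$, contradicting the fact that $t$ is not an atom. Hence at non-atomic $t$ all cluster points agree, defining $\hat\eta^+(t)$ unambiguously; the atoms are already in $D$ and handled by Step 1. A symmetric argument produces $\hat\eta^-(t)$ for $t \in (0,T]$. Set $\hat\eta(t) := \hat\eta^+(t)$ for $t \in [0,T)$ and $\hat\eta(T) := \hat\eta^-(T)$.

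\emph{Step 3: Energy balance and jump characterization.}
Along the extracted subsequence, $E_s(\eta^{\delta_{n_k}}(s)) = \hat E\bigl(q(s, \eta^{\delta_{n_k}}(s))\bigr)$ converges pointwise to $\bar\E_s$ by \cref{prop:limit_energies}; the continuity of $\hat E$ on $\X$ then transfers one-sided limits, giving $\hat E(\hat\eta^\pm(t)) = \bar\E_t^\pm$. Substituting into \cref{eq:ener_bal_limit} produces \cref{eq:ener_balanc_limit_traj}, and \cref{cor:reg_limit_ener} supplies the jump identity $\hat E(\hat\eta^-(t)) - \hat E(\hat\eta^+(t)) = \bar\mu(\{t\})$. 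At each $t \notin J$, Step 2 gives $\hat\eta^-(t) = \hat\eta^+(t)$ in $\X$, so $\hat\eta$ is continuous there; the converse---each atom of $\bar\mu$ is an actual jump of $\hat\eta$---is exactly \cref{rmk:atoms_discontinuity}.

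\emph{Main obstacle.} The delicate part is Step 2: the quotient $\X$ is Hausdorff and compact but in general neither metrizable nor first-countable, so one cannot work with distances and has to analyse sequential cluster points directly. The rigidity preventing two distinct cluster points at non-atomic $t$ is precisely the content of \cref{ax:limits_traject}, but invoking it requires a careful double-index diagonalization to match the single $n$-index appearing in that axiom with the $(k,j)$-indices produced by the successive extractions. Crucially, the reduction is possible only because the atoms of $\bar\mu$ are at most countable (\cref{cor:reg_limit_ener}), so that an additional diagonal extraction can cover them exhaustively and $\hat\eta^\pm$ is defined at every $t \in [0,T]$.
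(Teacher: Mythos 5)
Your proof follows the same overall route as the paper's (diagonal extraction on a countable dense set containing all atoms, uniqueness of cluster points via \cref{ax:limits_traject}, continuity of $\hat E$ to pass the energy balance from \cref{eq:ener_bal_limit}). However, there are two genuine issues.

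First, the final redefinition ``Set $\hat\eta(t) := \hat\eta^+(t)$ for $t \in [0,T)$'' is dangerous. If it is applied also at atoms $t \in J \subset D$, the theorem's last bullet (pointwise convergence $q\bigl(t,\eta^{\delta_{n_k}}(t)\bigr) \to \hat\eta(t)$) breaks: at a jump point $t$ the limit $\lim_k q\bigl(t,\eta^{\delta_{n_k}}(t)\bigr)$ has $\hat E$-value $\bar\E_t$, while $\hat E\bigl(\hat\eta^+(t)\bigr) = \bar\E_t^+$, and generically $\bar\E_t$ lies strictly between $\bar\E_t^+$ and $\bar\E_t^-$ (the pointwise Helly limit of right-continuous non-increasing functions need not be right-continuous), so the two elements of $\X$ differ. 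You must keep the Step~1 value $\hat\eta(t) = \lim_k q\bigl(t,\eta^{\delta_{n_k}}(t)\bigr)$ at all $t \in D$ (in particular at atoms) and use $\hat\eta^+(t)$ only to \emph{extend} $\hat\eta$ to $t \notin D$. The paper precisely keeps the pointwise-limit definition throughout and treats the one-sided limits $\hat\eta^\pm$ as a separate layer (Step~5 there).

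Second, even with the charitable reading, you have not shown the last bullet at points $t \notin D$. Your Step~2 compares cluster points of $\hat\eta(s_j)$ for $s_j \searrow t$, $s_j \in D$, but says nothing about the cluster points of $q\bigl(t, \eta^{\delta_{n_k}}(t)\bigr)$ itself. To close this, you need one more application of \cref{ax:limits_traject} with the constant sequence $t_2^k \equiv t$ paired against $t_1^k = s_k \in D$: a cluster point $x'$ of $\eta^{\delta_{n_k}}(t)$ and the representative $x$ of $\hat\eta^+(t)$ then lie in the same path-component of $\Crit_t$ because $\bar\mu(\{t\}) = 0$. This is exactly the paper's Step~3, which you omit. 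It is a one-paragraph fix, but without it the claimed pointwise convergence is not proven.

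Minor: you should also note explicitly that once $\hat\eta$ is defined on all of $[0,T]$, the one-sided limits $\hat\eta^\pm(t)$ over arbitrary sequences (not merely $D$-valued ones) exist and agree with the $D$-sequential limits; this requires the same compactness/Axiom-3 argument again with general $t^k_1, t^k_2 \to t^\pm$, as in the paper's Step~5. Your diagonalization to align the single index $n$ of \cref{ax:limits_traject} with your double index $(k,j)$ is correct, as is the observation that the proof works in a non-metrizable quotient only by tracking representatives back in $\Xx$.
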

\begin{proof}
\textbf{Step 1: Definition of the limiting trajectory on a countable dense set.} Let us consider the set $I\coloneqq J \cup A$, where $J\coloneqq\{t\in [0,T] \mid  \bar\mu(\{t\})>0\}$ is the (at most) countable set of discontinuity points of $\bar \E_\cdot$, and $A$ is a countable set dense in $[0,T]$. Then, since by \cref{lem:bound_traj} there exists $K \subset \Xx$ compact such that $\eta^{\delta_n}(t)\in K$ for every $t\in [0,T]$ and every $n\geq1$, we may construct a subsequence $(\eta^{\delta_n})_n$ that is convergent at every $t\in I$.
Therefore, we define $\eta_{\mathrm{temp}} \colon I\to\Xx$ as $\eta_{\mathrm{temp}} (t) \coloneqq \lim_{n\to\infty}\eta^{\delta_n}(t)$.
In order to manage the connected components of the critical points of the driving energy, it is convenient to introduce the curve function $\hat \eta\colon I\to\X$, defined as the composition $  \hat \eta (t)  \coloneqq  q\big( (t,  \eta_{\mathrm{temp}} (t)) \big) = [(t,  \eta_{\mathrm{temp}} (t)) ]$.\\
\textbf{Step 2: Extension of $\hat \eta$ to the whole evolution interval.}
We show that $\hat \eta$ admits a unique extension at any point $t\in[0,T]\setminus I$.
First of all, let us consider a sequence $(t^k)\subset I$ such that $t_k\to t\not\in I$ as $k\to\infty$. We want to show that the sequence $\big( \hat \eta(t^k) \big)_{k}=\big( [(t^k,  \eta_{\mathrm{temp}} (t^k))] \big)_{k}\subset\X$ admits a converging sub\-sequence in $\X$. 
Since  $ \eta_{\mathrm{temp}}$ takes value in  the compact set $K\subset \Xx$, we can extract a (not relabelled) sub\-sequence such that $\big((t^k,  \eta_{\mathrm{temp}}  (t^k))\big)_{k}$ converges to $(t,x)$ in $[0,T] \times \Xx$. 
Therefore, by the continuity of $q\colon [0,T]\times\Xx \to \X$, along such a sub\-sequence we have that $(\hat \eta(t^k) )_k= \left( q \big( t^k,  \eta_{\mathrm{temp}} (t^k) \big) \right)_k$ converges in $\X$ to $[(t,x)]=q((t,x))$. 
Hence, we can set $\hat \eta(t)\coloneqq [(t,x)]$ for $t\not\in I$. We have now to show that such an extension is uniquely defined.\\
To see this, let us assume that there exist $(t_1^k)_k,(t_2^k)_k\subset I$ such that $\lim_{k\to\infty}t_1^k= \lim_{k\to\infty}t_2^k= t\not\in I$, and let us assume that $\big(\hat \eta(t_1^k) \big)_k, \big(\hat \eta(t_2^k)) \big)_k\subset \X$ have limits in $\X$. We shall prove that the two limits coincide by considering for every $k$ the elements $\big( t^k_1,  \eta_{\mathrm{temp}}  (t^k_1)\big) \in \hat \eta(t_1^k)$ and $\big( t^k_2,  \eta_{\mathrm{temp}} (t^k_2)\big) \in\hat \eta(t_2^k)$. Arguing as above, up to a not relabelled sub\-sequence, we may assume that $\left( \big(t^k_1,  \eta_{\mathrm{temp}}  (t_1^k) \big)\right)_k$ and $\left( \big(t^k_1,  \eta_{\mathrm{temp}}  (t_1^k) \big)\right)_k$ converge in $[0,T]\times \Xx$ to $(t,x_1),(t,x_2)$, respectively. The extension is unique if we show that $(t,x_1)\sim(t,x_2)$.\\
Since $ \eta_{\mathrm{temp}} $ is defined as the pointwise limit of $(\eta^{\delta_n})_n$ on $I$, with a diagonal procedure we can extract a sub\-sequence $n_k$ such that $x_1=\lim_{k\to\infty} \eta^{\delta_{n_k}}(t_1^k)$ and $x_2=\lim_{k\to\infty} \eta^{\delta_{n_k}}(t_2^k)$. We may further assume that $t_1^k\leq t_2^k$ or $t_1^k\geq t_2^k$ for every $k\geq1$.
Since $t\not \in I$ and $J \subset I$, we have that $\bar \mu(\{t\})=0$. Therefore, by virtue of \cref{ax:limits_traject}, we deduce that $x_1$ and $x_2$ belong to the same connected component of the critical points of $E_t$, i.e., $(t,x_1)\sim(t,x_2)$ according to \cref{eq:def_equiv_rel}.\\
Hence, we can uniquely extend $\hat \eta$ to $[0,T]$.\\
\textbf{Step 3: Pointwise convergence on the whole evolution interval.} Let $(\eta^{\delta_n})_n$ be the sequence of discrete quasistatic solutions that converge pointwisely to $\hat \eta_{\mathrm{temp}}$ on $I$. For $t\not\in I$, we consider the sequence of points $\big(\eta^{\delta_n}(t) \big)_n \subset K$. We aim to show that, if $x'\in \Xx$ is a limiting point of $\big( \eta^{\delta_n}(t) \big)_n$, then $[(t,x')]=\hat \eta(t)$, so that we can conclude that $[ \big(t,\eta^{\delta_n}(t) \big)] \to_\X \hat \eta(t)$ in $\X$ as $n\to\infty$. 
To see that, let us restrict to a (not relabelled) sub\-sequence such that $x'=\lim_{n\to\infty}\eta^{\delta_n}(t)$.
Moreover, let us take a sequence of instants $I\ni t^k\nearrow t$ as $k\to\infty$ and such that $x=\lim_{k\to\infty} \eta_{\mathrm{temp}} (t^k)$, and, by virtue of Step~2, we have that $\hat \eta(t) = [(t,x)]$ and
$x = \lim_{k\to\infty} \eta^{\delta_{n_k}}(t^k)$, where the last limit is computed along a suitable sub\-sequence of $(\eta^{\delta_n})_n$ obtained with a diagonal procedure.
%If we consider $(u^{\delta_n}(t))_n$ along the same sequence used to recover the limit $\bar u(t)$, then, from
If we use \cref{ax:limits_traject} on the sequences $\big(\eta^{\delta_{n_k}}(t) \big)_k$ and $\big( \eta^{\delta_{n_k}}(t^k) \big)_k$, from the fact that $\bar \mu(\{t\})=0$ (we recall that  $t\not \in I$), we deduce that $x$ and $x'$ must lie in the same connected component of $\Crit_t$, i.e., $[(t,x')]= [(t,x)] =\hat \eta(t)$.\\
Hence, we deduce that the sequence $(\eta^{\delta_n})_n$, which converges to $\eta_{\mathrm{temp}}$ pointwise on $I$, satisfies  as well $[ \big(t,\eta^{\delta_n}(t) \big) ] \to_\X \hat \eta(t)$ in $\X$ as $n\to\infty$ for every $t\in[0,T]$.\\
\textbf{Step 4: Driving energy along $\hat \eta$.}
Owing to the convergences \eqref{eq:convergences} in \cref{prop:limit_energies}, we have that
\begin{equation} \label{eq:identity_energy_traj}
    \bar\E_t = \lim_{n\to\infty} \E_t(\eta^{\delta_n}) = \lim_{n\to\infty} E_t \big(\eta^{\delta_n}(t) \big) = \hat E \big(\hat \eta (t) \big),
\end{equation}
where we used Step~3 in the last identity.\\
\textbf{Step~5: The limiting trajectory $\hat \eta$ admits left and right limits.}
Assume the sequences $(t_1^k)_k,(t_2^k)_k$ are such that $t_1^k\searrow t$, $t_2^k\searrow t$ as $k\to\infty$, and $\hat x_1\coloneqq\lim_{k\to\infty} \hat \eta(t_1^k)$ and $\hat x_2\coloneqq\lim_{k\to\infty}\hat \eta(t_2^k)$.
Without loss of generality, we may assume that for every $k\geq 1$ we have $t_1^k\leq t_2^k$ or $t_1^k\geq t_2^k$.
If $(\eta^{\delta_n})_n$ is the sequence of discrete quasistatic evolutions constructed in Step~1, we recall that, by \cref{lem:bound_traj}, for every $t\in[0,T]$ the sequence $\big( \eta^{\delta_n}(t)\big)_n$ is contained in the compact subset $K \subset \Xx$. 
Moreover, if $y\in\Xx$ is a limiting point of $\big( \eta^{\delta_n}(t) \big)_n$, from Step~3 it follows that $(t,y)\in \hat \eta(t)$.
Let us consider the sets $\big( \eta^{\delta_n}(t_1^k) \big)_{n,k}, \big( \eta^{\delta_n}(t_2^k) \big)_{n,k}$ indexed by $n,k\in\NN$, and, with a diagonal argument on the index $n$, we extract a (not relabelled) sub\-sequence in $n$ such that $\eta^{\delta_n}(t_1^k)\to y_1^k$ and $\eta^{\delta_n}(t_2^k)\to y_2^k$ in $\Xx$ as $n\to\infty$.
As recalled above, we have that $(t_1^k,y_1^k)\in\hat \eta(t_1^k)$ and $(t_2^k,y_2^k)\in\hat \eta(t_2^k)$ for every $k\in\NN$. Moreover, up to the extraction of a sub\-sequence in $k$, we may assume that $y_1^k\to y_1$ and $y_2^k\to y_2$ in $\Xx$ as $k\to\infty$, and we get that $(t,y_1)\in\hat x_1$ and $(t,y_2) \in \hat x_2$.
Therefore, to show that $\hat x_1 = \hat x_2$ (i.e., that the limits of $\hat \eta$ computed along $(t_1^k)_k,(t_2^k)_k$ coincide) it suffices to prove that $(t,y_1)\sim (t,y_2)$.\\
To see that, we first establish the following identities on the energy:
\begin{equation} \label{eq:energ_ident_limit}
    \begin{split}
         \bar \E^+_t= \lim_{\tau\to t^+} \bar \E_\tau 
        & = \lim_{k\to\infty} \bar \E_{t_1^k} = \lim_{k\to\infty} \hat E \big(\hat \eta(t_1^k) \big) = \hat E(\hat x_1) = E_t(y_1),  \\
         \bar \E^+_t= \lim_{\tau\to t^+} \bar \E_\tau   
        & = \lim_{k\to\infty} \bar \E_{t_2^k} = \lim_{k\to\infty} \hat E\big( \hat \eta(t_2^k) \big) = \hat E(\hat x_2) = E_t(y_2).
    \end{split}
\end{equation}
Now, for every $k\geq 1$, we construct $n_k>n_{k-1}$ such that
\begin{equation*}
    \max\left\{
    |\eta^{\delta_{n_k}}(t_1^k)-y_1^k|,
    |\eta^{\delta_{n_k}}(t_2^k)-y_2^k|
    \right\} \leq \frac1k,
\end{equation*}
so that we obtain $\eta^{\delta_{n_k}}(t_1^k) \to y_1$ and $\eta^{\delta_{n_k}}(t_2^k)\to y_2$ in $\Xx$ as $k\to\infty$.
Invoking \cref{ax:limits_traject}, from \cref{eq:energ_ident_limit} we deduce that $y_1$ and $y_2$ belong to the same path-connected component of $\Crit_t$, i.e., $(t,y_1) \sim (t,y_2)$.
Moreover, since the curve $\tau\mapsto \hat \eta (\tau)$ in $\X$ admits limit from the right, we further deduce that
\begin{equation} \label{eq:rl_limits_traj_ener}
    \bar\E_t^+ = \lim_{\tau\to t^+}\hat E \big(\hat \eta(\tau)\big)
\end{equation}
for every $t\in [0,T)$, and we set $\hat \eta^+(t)\coloneqq \lim_{\tau\to t^+}\hat \eta (\tau)$. The same arguments and conclusions hold for the left limits.
Finally, if $\bar \mu (\{ t\})=0$ for $t\in(0,t)$, we obtain that $\hat \eta$ is continuous at $t$.\\
\textbf{Step~6: Energy balance with the limiting trajectory.}
From \cref{eq:ener_bal_limit,eq:rl_limits_traj_ener}, recalling that $\hat E \colon \X\to\R$ is continuous, we deduce \cref{eq:ener_balanc_limit_traj}.
In particular, we obtain that 
$
    \hat E \big(\hat \eta^-(t) \textbf{}) -  \hat E \big(\hat \eta^+(t) \big) = \bar\mu(\{t\})
$
for every $t\in[0,T]$.
\end{proof}

\begin{remark} \label{rmk:compactness_limit_traj}
    As the curve $\hat \eta \colon [0,T]\to \X$ is obtained as point-wise limit of discrete quasi\-static curves $\eta^{\delta_n}$ such that $\eta^{\delta_n}(t)\in K$ for every $t\in [0,T]$ and every $n\geq1$,  for some compact subset $K$ of~$\Xx$, it turns out that $q^{-1}(\hat \eta) \subset [0,T]\times  K  $. Therefore, if we consider any sequence of representatives $(t_i,x_i)\in \hat \eta(t_i)$ for $i\in\NN$, it turns out that the sequence $(x_i)_{i\in\NN}\subset \Xx$ is pre-compact, owing to \cref{ass:metric_space}.
\end{remark}

Let us stress that at this point, we have achieved the existence of a limit curve $\hat \eta$ with an energy balance (see
\cref{eq:ener_balanc_limit_traj}) and
%\begin{equation*}
%    \hat E\big(\hat \eta^+(t)\big) -  \hat E\big(\hat \eta^-(s)\big) =
%    \int_s^t \mathcal{D}(\tau) \diff \tau - \bar\mu([s,t]),
%\end{equation*}
where $\hat E \big(\hat \eta^-(t)\big) -  \hat E \big(\hat \eta^+(t) \big) = \bar\mu(\{t\})$ on the jump set $J$.
Comparing this to the energy balance obtained in~\cite[Eq. 1.9a and 1.9b]{AR17} within a more restrictive set of assumptions,
there are still two refinement steps to be taken:
\begin{enumerate}
    \item\label{enum:D} The function $\mathcal{D}$ which appears in the identity in \eqref{eq:ener_balanc_limit_traj} is defined in \cref{prop:limit_energies} as the weak-$*$ limit $\partial_t E_\cdot(\eta^{\delta_n}(\cdot)) \weak_{L^\infty}^* \mathcal{D}$ as $n\to\infty$, but we have not yet related $\mathcal{D}$ directly to the limit curve $\hat \eta$. In contrast, in~\cite[Eq. 1.9a]{AR17}, inside the integral term of the energy balance we read the evaluation of $\partial_t E_\cdot$ on the limit curve.
    To close this gap, we would like to relate $\mathcal{D}(t)$ to $\partial_t E_t$ and to $\hat \eta(t)$.
    \item\label{enum:energy_barriers} Our energy balance does not yet ensure that $\hat \eta$ does not cross energy barriers. In contrast, in~\cite[Eq. 1.9b]{AR17}, the authors show that
    \begin{equation}\label{eq:energy_barrier_AR17}
        \hat E \big(\hat \eta^-(t)\big) -  \hat E \big(\hat \eta^+(t) \big) = \bar\mu(\{t\}) = c_t(\hat \eta^-(t), \hat \eta^+(t)),
    \end{equation}
    where $c_t\colon \R^d\times \R^d\to \R$ is a cost function defined by minimizing the energy-dissipation integrals; \cref{eq:energy_barrier_AR17} prevents the limit trajectory form traversing energy barriers. Likewise, we would like to establish a similar relation for our energy balance.
\end{enumerate}
In the rest of this section and the next two, we address the points raised above: In \cref{sec:relating_D_E}, we work towards \cref{cor:relating_D_and_time_der}, which shows that $\mathcal{D}(t) \equiv \partial_t E_t(\hat \eta(t))$---in a suitable sense, as $\hat \eta$ takes values in $\X$---if $E$ fulfills \cref{ass:PL+time_der_slope}. 
In \cref{sec:actions_abstract} and \cref{sec:examples_of_evolution_rules_elaborate}, we investigate for which transition rules $\bar \omega_t$ we can find a characterization of the jumps of $\hat \eta$ analogous to~\eqref{eq:energy_barrier_AR17}, and thus rule out that $\hat \eta$ crosses energy barriers.

\subsubsection{Relating $\mathcal{D}(t)$ to $\partial_t E_\cdot$} \label{sec:relating_D_E}
When trying to relate $\mathcal{D}(t)$ to $\partial_t E_t$ and to $\hat \eta(t)$, we run into the limitation that $\hat{\eta}$ takes values in the quotient space $\X$, while the $\partial_t E_\cdot$ a priori does not factor through $\X$---and thus, the quantity $\partial_t E_t(\hat \eta(t))$ is not even well-defined. We will see, however, that $\partial_t E_\cdot$ factors through $\X$ \emph{for almost every} $t\in[0,T]$ if $E$ fulfills \cref{ass:PL+time_der_slope}. To start, let us first provide upper and lower bounds for $\mathcal{D}$---we will see later that those lower and upper bounds coincide for almost all $t \in [0, T]$.
\begin{lemma}\label{lemma:bound_der_lim}
    Let us assume that the requirements of \cref{thm:traj_convergence} are fulfilled and
    let $\hat{\eta}$ be a limit curve constructed according to \cref{thm:traj_convergence}. Then
    \begin{equation} \label{eq:low_bound_der}
                    \sup_{(t,x)\in \hat \eta(t)} \partial_t E_t (x) \geq \mathcal{D}(t) \geq \inf_{(t,x)\in \hat \eta(t)} \partial_t E_t (x)
    \end{equation}
    for a.e. $t\in[0,T]$.
\end{lemma}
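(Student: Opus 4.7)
The plan is to prove the upper bound; the lower bound is symmetric. I work at any $t \in [0,T]$ which is simultaneously (i) not an atom of $\bar\mu$, i.e., $t \notin J$, and (ii) a Lebesgue point of $\mathcal{D} \in L^\infty$. Since $J$ is at most countable (\cref{cor:reg_limit_ener}) and almost every point is a Lebesgue point, this excludes only a null set.

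Fix such a $t$, set $S(t) \coloneqq \sup_{(t,x)\in \hat\eta(t)} \partial_t E_t(x)$, and fix $\varepsilon>0$. The key intermediate step is to show that there exist $h>0$ and $N \in \NN$ such that
\begin{equation}\label{eq:pointwise_bound_for_Dn}
    \partial_t E_\tau\bigl(\eta^{\delta_n}(\tau)\bigr) \leq S(t) + \varepsilon \qquad \text{for every } \tau \in [t-h, t+h] \cap [0,T] \text{ and every } n \geq N.
\end{equation}
Suppose by contradiction that \eqref{eq:pointwise_bound_for_Dn} fails. Then there are sequences $\tau_k \to t$ and $n_k \to \infty$ such that $\partial_t E_{\tau_k}(\eta^{\delta_{n_k}}(\tau_k)) > S(t) + \varepsilon$ for every $k$. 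By \cref{lem:bound_traj} the values $\eta^{\delta_{n_k}}(\tau_k)$ lie in a fixed compact subset of $\Xx$, so up to a subsequence $\eta^{\delta_{n_k}}(\tau_k) \to y$ in $\Xx$. On the other hand, by Step 3 of the proof of \cref{thm:traj_convergence}, also (up to a further diagonal extraction) $\eta^{\delta_{n_k}}(t) \to z$ with $(t,z) \in \hat\eta(t)$. Applying \cref{ax:limits_traject} to the sequences $t_1^k \coloneqq \min(t,\tau_k)$ and $t_2^k \coloneqq \max(t,\tau_k)$, which both tend to $t$, and using that $\bar\mu(\{t\})=0$ since $t\notin J$, we conclude that $y$ and $z$ lie in the same path-connected component of $\Crit_t$, i.e., $(t,y)\in \hat\eta(t)$. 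Passing to the limit via the continuity of $\partial_t E_\cdot$ (\cref{ass:reg_E}) yields $\partial_t E_t(y) \geq S(t) + \varepsilon$, which contradicts the definition of $S(t)$.

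Having established \eqref{eq:pointwise_bound_for_Dn}, I integrate it over $[t-h, t+h]$ (intersected with $[0,T]$) and pass to the limit $n \to \infty$ using the weak-$*$ convergence $\mathcal{D}^{\delta_n} \weak^* \mathcal{D}$ in $L^\infty$ against the indicator of that interval:
\begin{equation*}
    \int_{t-h}^{t+h} \mathcal{D}(\tau)\diff\tau \leq 2h\bigl(S(t) + \varepsilon\bigr).
\end{equation*}
Dividing by $2h$ and sending $h \to 0$, the left-hand side converges to $\mathcal{D}(t)$ by the Lebesgue point property. Hence $\mathcal{D}(t) \leq S(t) + \varepsilon$, and since $\varepsilon > 0$ was arbitrary, $\mathcal{D}(t) \leq S(t)$.

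The lower bound $\mathcal{D}(t) \geq \inf_{(t,x)\in \hat\eta(t)} \partial_t E_t(x)$ is obtained by an entirely analogous argument with reversed inequalities. The main obstacle is the intermediate claim \eqref{eq:pointwise_bound_for_Dn}: one must combine the continuity of $\hat\eta$ at $t\notin J$, \cref{ax:limits_traject} (with the crucial hypothesis $\bar\mu(\{t\})=0$), and the continuity of $\partial_t E_\cdot$ to transfer an overshoot at nearby times $\tau_k$ into a violation of the supremum at $t$ itself.
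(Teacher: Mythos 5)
Your proof is correct, but it follows a noticeably different route from the paper's. The paper argues pointwise at the fixed time $t$: any subsequential limit of $\mathcal{D}^{\delta_n}(t) = \partial_t E_t\big(\eta^{\delta_n}(t)\big)$ lies in $\big[\inf_{(t,x)\in\hat\eta(t)}\partial_t E_t(x),\ \sup_{(t,x)\in\hat\eta(t)}\partial_t E_t(x)\big]$ (by Step~3 of \cref{thm:traj_convergence} and continuity of $\partial_t E$), and then a Fatou-type consequence of weak-$*$ convergence under the uniform bound gives $\limsup_n \mathcal{D}^{\delta_n}(t) \geq \mathcal{D}(t) \geq \liminf_n \mathcal{D}^{\delta_n}(t)$ for a.e.~$t$; the conclusion follows because $\limsup$/$\liminf$ are themselves subsequential limits. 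You instead establish a \emph{uniform} bound on $\partial_t E_\tau\big(\eta^{\delta_n}(\tau)\big)$ for $\tau$ near $t$ and $n$ large by a contradiction argument that invokes \cref{ax:limits_traject} directly (rather than the already-proved Step~3 which encapsulates the same use of the axiom), then integrate against indicators of shrinking intervals and recover $\mathcal{D}(t)$ at Lebesgue points. Both arguments are sound and rest on the same essential ingredients (Axiom~\ref{ax:limits_traject} plus a measure-theoretic passage from the $\mathcal{D}^{\delta_n}$ to $\mathcal{D}$): the paper's version is a bit more economical because it reuses Step~3 and needs only a Fatou-type a.e.~inequality, while yours makes the role of \cref{ax:limits_traject} more transparent at the cost of re-deriving a local variant of what Step~3 already provides and of imposing two a.e.~restrictions ($t \notin J$ and $t$ a Lebesgue point) instead of one. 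A small technical point: before applying \cref{ax:limits_traject} with $t_1^k = \min(t,\tau_k)$, $t_2^k = \max(t,\tau_k)$, you should extract a further subsequence along which $\tau_k$ stays on one side of $t$, so that $\eta^{\delta_{n_k}}(t_1^k)$ and $\eta^{\delta_{n_k}}(t_2^k)$ genuinely converge to $y$ and $z$ (or vice versa); this is the same one-line fix the paper itself performs in Step~2 of the proof of \cref{thm:traj_convergence}.
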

\begin{proof}
    Recalling the pointwise convergence established in Step~3  in the proof of \cref{thm:traj_convergence}, for every $t$ we have that, given a subsequence $\big( \eta^{\delta_{n_k}}(t) \big)_k$ such that $\eta^{\delta_{n_k}}(t) \to x_1$ as $k\to\infty$, then $(t,x_1)\in \hat \eta(t)$ and
    \begin{equation} \label{eq:bound_der_subseq}
                    \lim_{k\to\infty} \mathcal{D}^{\delta_{n_k}}(t) =
                    \lim_{k\to\infty} \partial_t E_t \big( \eta^{\delta_{n_k}}(t) \big) =
                    \partial_t E_t(x_1) \geq
                    \inf_{(t,x)\in\hat \eta(t)} \partial_t E(x).
    \end{equation}
    Likewise, we get that $\lim_{k\to\infty} \mathcal{D}^{\delta_{n_k}}(t) \leq \sup_{(t,x)\in\hat \eta(t)} \partial_t E(x)$. Furthermore, since by \cref{lem:bound_traj} there exists $\rho>0$ such that $\eta^{\delta_n}(t)\in B_\rho(x_0)$, we have that for every $t\in [0,T]$, $\partial_t E_t \big( \eta^{\delta_n}(t) \big)$ is uniformly bounded. Thus, from Fatou's Lemma we deduce that
    \begin{equation*}
                    \limsup_{n\to\infty} \mathcal{D}^{\delta_{n}}(t) \geq \mathcal{D}(t) \geq \liminf_{n\to\infty} \mathcal{D}^{\delta_{n}}(t)
    \end{equation*}
    for a.e. $t\in[0,T]$. Since we can approximate the $\limsup$ with a subsequence, by combining the last inequality with \cref{eq:bound_der_subseq} we conclude the proof.
\end{proof}

In the next result, we show that, if we strengthen the inequality \eqref{eq:liminf_ineq_relaxed} in \cref{ass:PL+time_der_slope} by making it symmetric (see \cref{ass:strengthening_6}), the infimum and the supremum in \cref{lemma:bound_der_lim} coincide almost everywhere.

\begin{lemma} \label{lemma:time_derivative_factors}
    Let us assume \cref{ass:metric_space,ass:der_time,ass:reg_E,ass:unif_coerc,ass:conn_comp,ass:strengthening_6}.
    Let $K \subseteq \Xx$ be compact.
    Then, for all but countably many $t \in [0, T]$, and for all connected components $\{\Crit_i^t\}_{i \in I}$ of $\{x \in \Xx : |\partial E_t(x)| = 0\} \cap K$, there exists some $\kappa_i\in \R$ such that $\partial_t E_t(x) = \kappa_i$ for every $x \in \Crit_i^t$.
\end{lemma}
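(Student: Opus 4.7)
The plan is to decompose the set $N$ of bad times along rational thresholds and match each piece with a countable ``bifurcation'' set.

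By \cref{ass:conn_comp} and compactness of $K$, for every $t\in [0,T]$ the set $\Crit_t\cap K$ decomposes into finitely many well-separated path-connected compact components $C_1^t,\ldots,C_{n(t)}^t$. Setting
\begin{equation*}
N_{p,q} \coloneqq \bigl\{t\in[0,T] : \exists\,i,\ \exists\,x_1,x_2\in C_i^t \text{ with } \partial_t E_t(x_1) < p < q < \partial_t E_t(x_2)\bigr\},
\end{equation*}
the bad set equals $N = \bigcup_{p<q,\,p,q\in\mathbb{Q}} N_{p,q}$, so it suffices to show each $N_{p,q}$ is at most countable.

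For $t\in N_{p,q}$ with witnesses $x_1,x_2$ in a common component $C$, take a rectifiable path $\gamma \subseteq C$ from $x_1$ to $x_2$ (provided by \cref{ass:conn_comp}) and consider $s$ close to $t$. The Lipschitz condition of \cref{ass:strengthening_6} yields $|\partial E_s|(\gamma(\sigma)) \le L|s-t|$, so each point of $\gamma$ is nearly critical at time $s$. The key step is to construct, for each $\sigma$, a Lipschitz continuation $y_\sigma(s) \in \Crit_s$ with $y_\sigma(t)=\gamma(\sigma)$ preserving the component structure, i.e., all $y_\sigma(s)$ lie in the same connected component of $\Crit_s \cap K$ for $s$ in a short interval. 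Granted this, applying \eqref{eq:liminf_ineq_stronger} at the critical point $y_\sigma(s)$ with $\gamma(\sigma)$ as test point gives
\begin{equation*}
\bigl|E_s(\gamma(\sigma)) - E_s(y_\sigma(s))\bigr| \le \varepsilon_{y_\sigma(s)}\bigl(d(\gamma(\sigma),y_\sigma(s))\bigr)\cdot L|s-t| = o(|s-t|)
\end{equation*}
uniformly in $\sigma$, while $E_s(y_\sigma(s))$ is independent of $\sigma$ (same component), hence $|E_s(x_1)-E_s(x_2)|=o(|s-t|)$. Combined with $E_t(x_1)=E_t(x_2)$ and the continuity of $\partial_t E$ from \cref{ass:reg_E}, dividing by $s-t$ and letting $s\to t$ forces $\partial_t E_t(x_1)=\partial_t E_t(x_2)$, contradicting the choice of $p<q$. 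Thus $N_{p,q}$ is contained in the set of ``bifurcation times'' at which the above continuation fails.

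The main obstacle is the construction of this component-preserving continuation and the proof that the set of times where it fails is at most countable. I would identify the exceptional set with the discontinuity points of the integer-valued component count $t\mapsto n(t)$: since $n$ takes values in $\mathbb{N}$ and is bounded (by well-separation on the compact $K$), it has only countably many discontinuities. Away from these, the well-separation in \cref{ass:conn_comp}, the strengthened \cref{ass:strengthening_6}, and the Lipschitz-in-$t$ control of $|\partial E_t|$ should enable a contraction/implicit-function-type construction of $y_\sigma(s)$ in $\Crit_s$ near $\Crit_t$, with the closeness estimate needed to invoke \eqref{eq:liminf_ineq_stronger}. Both existence and component-preservation break down exactly at bifurcations, which is precisely what is captured by the jumps of $n(t)$, concluding the proof.
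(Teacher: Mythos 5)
Your reduction to the family $N_{p,q}$ over rational thresholds is a reasonable start, and the arithmetic at the end (dividing the $o(|s-t|)$ estimate by $s-t$ and using $E_t(x_1)=E_t(x_2)$ to conclude $\partial_t E_t(x_1)=\partial_t E_t(x_2)$) is sound. But the argument has two genuine gaps, one you flag yourself and one you do not.

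First, the step you call ``the main obstacle'' is indeed unresolved and is not a technicality: you need, for each $\sigma$ and each $s$ near $t$, a point $y_\sigma(s)\in\Crit_s$ with $d(\gamma(\sigma),y_\sigma(s))$ small (so that $\varepsilon_{y_\sigma(s)}(d(\gamma(\sigma),y_\sigma(s)))\to 0$) and, crucially, all $y_\sigma(s)$ in a \emph{single} component of $\Crit_s$ so that $E_s(y_\sigma(s))$ is $\sigma$-independent. In a locally compact metric space with no linear or smooth structure and no non-degeneracy of critical points, there is no implicit-function-type result that produces such a continuation. \cref{ass:strengthening_6} only gives you $|\partial E_s|(\gamma(\sigma))\leq L|s-t|$ (the point is nearly critical at time $s$); it does not give you a nearby \emph{exact} critical point at time $s$, let alone a coherent family of them. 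Degenerate critical points are precisely the setting of the paper, and they are exactly where a critical manifold at time $t$ can fail to persist as critical for $s\neq t$.

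Second, the claim that an integer-valued bounded function $t\mapsto n(t)$ on $[0,T]$ has at most countably many discontinuities is false. The indicator of a dense $G_\delta$, or of the Cantor set, is a bounded $\{0,1\}$-valued function with an uncountable discontinuity set. Countability of discontinuities requires some regularity such as monotonicity or bounded variation, and you have neither: the number of components of $\Crit_t\cap K$ can oscillate wildly in $t$. So even if the continuation could be built away from ``bifurcation times,'' identifying those times with the discontinuities of $n(t)$ does not give countability.

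The paper's proof is organized to avoid both issues. Rather than continuing critical points in time, it argues by contradiction: if $B_\varepsilon=\{t:H(t)>\varepsilon\}$ were uncountable, one chooses a condensation instant $\bar t$ approached from the left by $t_n\in B_\varepsilon$ carrying witnessing pairs $(x_n,y_n)$ (critical at time $t_n$, equal energy, time-derivatives separated by $\varepsilon$), extracts a convergent subsequence $(x_n,y_n)\to(\bar x,\bar y)$, and then applies \eqref{eq:liminf_ineq_stronger} at the \emph{fixed limit} point $(\bar t,\bar x)$ with $x_n$ as the nearby test point. That gives $E_{\bar t}(x_n)-E_{\bar t}(\bar x)\leq \varepsilon_{\bar x}(d(x_n,\bar x))\cdot L|\bar t - t_n| = o(|\bar t - t_n|)$, and together with the uniform lower bound on $\partial_t E$ near $(\bar t,\bar x)$ this forces $E_{t_n}(x_n) < E_{\bar t}(\bar x)+P(t_n-\bar t)$ for large $n$, while the symmetric argument at $\bar y$ gives the reverse inequality; this contradicts $E_{t_n}(x_n)=E_{t_n}(y_n)$. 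No continuation of critical points and no component-count argument are needed.
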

Before proving \cref{lemma:time_derivative_factors}, let us note that \cref{lemma:time_derivative_factors} together with \cref{lemma:bound_der_lim} fully characterize $\mathcal{D}$ almost everywhere. To this end, \cref{lem:bound_energ_evol} and \cref{ass:unif_coerc} ensure that we can
find a compact $K$ as required in \cref{lemma:time_derivative_factors}.

\begin{corollary} \label{cor:relating_D_and_time_der}
    Let us assume \cref{ass:metric_space,ass:der_time,ass:reg_E,ass:unif_coerc,ass:conn_comp,ass:strengthening_6}, and furthermore that the requirements of \cref{thm:traj_convergence} are fulfilled.
    Let $\hat{\eta}\colon [0,T]\to \X$ be a curve constructed according to \cref{thm:traj_convergence}. Then, we have
    \begin{equation}\label{eq:characterize_der}
        \mathcal{D}(t) =  \inf_{(t, x) \in \hat \eta(t)} \partial_t E_t(x) = \sup_{(t, x) \in \hat \eta(t)} \partial_t E_t(x) 
    \end{equation}
    for a.e.~$t \in [0, T]$.
    In particular, we can rewrite the energy balance \cref{eq:ener_balanc_limit_traj} as
    \begin{equation*}
        \hat E\big(\hat \eta^+(t_2)\big) -  \hat E\big(\hat \eta^-(t_1)\big) + \bar \mu([t_1,t_2]) = \int_{t_1}^{t_2} \mathfrak{D}^+_t E_\tau\big(\hat \eta(\tau)\big) \diff \tau  = \int_{t_1}^{t_2} \mathfrak{D}^-_t E_\tau\big(\hat \eta(\tau)\big) \diff \tau ,
    \end{equation*}
    where $\mathfrak{D}^+_t E_\tau(\hat \eta(\tau)) \coloneqq \sup_{ (\tau, x)  \in \hat\eta(\tau)}\partial_t E_{ \tau } (x)$ and $\mathfrak{D}^-_t E_\tau(\hat \eta(\tau)) \coloneqq \inf_{ (\tau, x )  \in \hat\eta(\tau)}\partial_t E_{ \tau } (x)$.
\end{corollary}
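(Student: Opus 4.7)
The plan is to combine \cref{lemma:bound_der_lim} (which sandwiches $\mathcal{D}(t)$ between the infimum and supremum of $\partial_t E_t$ over the representatives of $\hat\eta(t)$) with \cref{lemma:time_derivative_factors} (which states that $\partial_t E_t$ is constant on each path-connected component of $\Crit_t \cap K$, for all but countably many $t$).

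First, I would fix a compact set $K\subset\Xx$ that traps all of $\hat\eta$. By \cref{lem:bound_energ_evol} the energies $\E_t(\eta^{\delta_n})$ are uniformly bounded; by \cref{rmk:coerc_Et} (consequence of \cref{ass:unif_coerc}) this bound produces a compact $K\subset\Xx$ with $\eta^{\delta_n}(t)\in K$ for all $n$ and all $t$. By \cref{rmk:compactness_limit_traj}, the pointwise limit $\hat\eta$ satisfies $q^{-1}(\hat\eta(t))\subset\{t\}\times K$ for every $t\in[0,T]$. Thus, for each $t$, the set of representatives $\{x\in\Xx : (t,x)\in \hat\eta(t)\}$ is contained in a single path-connected component of $\Crit_t\cap K$ (using \cref{ass:conn_comp} together with \cref{eq:def_equiv_rel} and the fact, from \cref{rmk:crit_limits}, that these representatives lie in $\Crit_t$).

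Next, I would apply \cref{lemma:time_derivative_factors} to this $K$: there is an at most countable set $N\subset[0,T]$ outside of which $\partial_t E_t$ is constant on every path-connected component of $\Crit_t\cap K$. For $t\in[0,T]\setminus N$, the representatives of $\hat\eta(t)$ lie in a single such component, so $x\mapsto \partial_t E_t(x)$ is constant on them, i.e.
\begin{equation*}
    \inf_{(t,x)\in\hat\eta(t)}\partial_t E_t(x) \;=\; \sup_{(t,x)\in\hat\eta(t)}\partial_t E_t(x).
\end{equation*}
Combining this equality with the sandwich from \cref{lemma:bound_der_lim}, which holds for a.e.~$t\in[0,T]$, gives \cref{eq:characterize_der} for a.e.~$t\in[0,T]$, since the union of $N$ with a null set is still null.

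Finally, the rewriting of the energy balance is a direct substitution: by \cref{eq:characterize_der}, both $\mathfrak{D}^+_t E_\tau(\hat\eta(\tau))$ and $\mathfrak{D}^-_t E_\tau(\hat\eta(\tau))$ equal $\mathcal{D}(\tau)$ for a.e.~$\tau$, so the integrals
\[
\int_{t_1}^{t_2}\mathfrak{D}^\pm_t E_\tau(\hat\eta(\tau))\diff\tau
\]
both coincide with $\int_{t_1}^{t_2}\mathcal{D}(\tau)\diff\tau$, and plugging this into \cref{eq:ener_balanc_limit_traj} yields the claimed identity. The only genuine difficulty is the set-theoretic verification that the representatives of $\hat\eta(t)$ lie entirely in one connected component of $\Crit_t\cap K$ (so that \cref{lemma:time_derivative_factors} can be applied cleanly); this is handled by the compact trapping argument together with \cref{rmk:compactness_limit_traj} and \cref{rmk:crit_limits}.
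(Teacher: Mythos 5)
Your proof is correct and follows exactly the route the paper sketches (the paper offers no separate proof of this corollary, only the remark before \cref{lemma:time_derivative_factors} that it follows from combining \cref{lemma:bound_der_lim} with \cref{lemma:time_derivative_factors} once a suitable compact $K$ is located via \cref{lem:bound_energ_evol} and \cref{ass:unif_coerc}). You supply the details the paper leaves implicit—using \cref{rmk:compactness_limit_traj} to trap $q^{-1}(\hat\eta)$ in $[0,T]\times K$ and noting the representatives at each $t$ form a single component of $\Crit_t\cap K$—which is the right way to fill that gap.
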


\begin{proof}[Proof of \cref{lemma:time_derivative_factors}]
    We set
    $\mathfrak{C}_t$ to be the set of connected components of $\{x \in \Xx : |\partial E_t|(x) = 0\} \cap K$.
    Defining
    \begin{equation*}
        H(t) = \sup_{\Crit \in \mathfrak{C}_t} \sup_{x, y \in \Crit} \big( \partial_t E_t(x) - \partial_t E_t(y) \big),
    \end{equation*}
    we thus need to show that for all but countably many $t$, $H(t) = 0$.
    Since we have $\{t \in [0,  T  ] : H(t) > 0\} = \bigcup_{n \in \mathbb{N}} \{t \in [0,  T  ] : H(t) > 1/n\}$,
    it suffices to show that for each $\varepsilon > 0$, we have that $H(t) \leq \varepsilon$ for all but countably many $t$.
    We will proceed by contradiction and assume that there is some $\varepsilon > 0$ which  does not satisfy such condition, i.e., $B_\varepsilon \coloneqq \{t \in [0,  T  ] : H(t) > \varepsilon\}$ is \emph{not}  at most  countable. Within $B_\varepsilon$, we can choose an increasing sequence which converges to $ \bar{t} \coloneqq \inf\{t \in [0, T] \mid B_\varepsilon \cap [t, T] \text{ is countable}\}$. Thus, we can find a sequence of  triples $(t_n, x_n, y_n) \in [0, T] \times \Xx \times \Xx$ such that
    \begin{enumerate}
        \item $t_n \uparrow  \bar{t} $
        \item $t_n \neq  \bar{t} $
        \item $|\partial E_{t_n}|(x_n) = | \partial E_{t_n}|(y_n) = 0$
        \item $E_{t_n}(x_n) = E_{t_n}(y_n)$  (by \cref{ass:conn_comp})\label{item:x_y_same_energy}
        \item $\partial_t E_{t_n}(x_n) > \partial_t E_{t_n}(y_n) + \varepsilon$\label{item:diff_in_time_derivative}
        \item $x_n \in K, y_n \in K$.
    \end{enumerate}
    Because of the last condition, we can---without relabeling---extract a subsequence such that, in addition to the conditions above, it holds that
    $(x_n, y_n) \to ( \bar{x}, \bar{y} )$ for some $ \bar{x}, \bar{y}  \in \Xx$. Passing to the limit in condition~\eqref{item:diff_in_time_derivative} above
    and setting $P \coloneqq \frac{\partial_t E_{ \bar{t} }( \bar{x}  ) + \partial_t E_{ \bar{t} }( \bar{y} )}{2} >0 $,
    we get that 
    \begin{equation}\label{eq:time_partial_diff_difference}
        \partial_t E_{ \bar{t} }( \bar{x} ) \geq P + \frac{\varepsilon}{2} \qquad \text{and} \qquad \partial_t E_{ \bar{t} }( \bar{y} ) \leq P - \frac{\varepsilon}{2}.
    \end{equation}
    We claim that there exists $N \in \mathbb{N}$ such that for every $n > N$ 
    \begin{equation}
         E_{t_n}(x_n) < E_{\bar{t}}(\bar{x}) + P (t_n -  \bar{t} ) \qquad \text{and} \qquad E_{t_n}(y_n) > E_{ \bar{t} }(\bar{y}) + P (t_n - \bar{t} ).  \label{eq:energy_difference}
    \end{equation}
    By symmetry, it is enough to prove the first inequality in~\eqref{eq:energy_difference}. 
    We then have
    \begin{equation*}
        E_{t_n}(y_n) > E_{ \bar{t} }( \bar{y} ) + P (t_n -  \bar{t} ) = E_{ \bar{t} }( \bar{x}  ) + P (t_n -  \bar{t} ) > E_{t_n}(x_n),
    \end{equation*}
    which contradicts condition~\eqref{item:x_y_same_energy} and thus finishes the proof. \\
    To prove \cref{eq:energy_difference}, we choose some $\Delta_{\bar x}, \Delta_{ \bar{t} }, \varepsilon', D > 0$ such that for all
    $t \in [ \bar{t}  - \Delta_{ \bar{t} },  \bar{t} ]$, $x \in  B_{\Delta_{\bar x}}(\bar{x}) $ we have that
    \begin{align}
        \partial_t E_{t}(x)                           & > P + \varepsilon', \label{eq:energy_negative}                                         \\
        \big||\partial E_t|(x) - |\partial E_{ \bar{t} }|(x)\big|                         & \leq D  |t -  \bar{t} | , \label{eq:hessian_small}                                                    \\
        %|E_{\hat{t}}(\hat{x}) - E_{\hat{t}}(x) | & < C \cdot |\partial E_{\hat{t}}(x)| ^{\frac{1}{\theta}},
        E_{ \bar{t} }(x) - E_{ \bar{t} }( \bar x ) 
        &\leq  \varepsilon_{\bar x}\big( d(x, \bar x)  \big) |\partial E_{ \bar{t} }|(x).
        \label{eq:lojasiewicz}
    \end{align}
    Here we can demand~\eqref{eq:energy_negative} from continuity of $\partial_t E_t$ (see \cref{ass:reg_E}) together with \cref{eq:time_partial_diff_difference}; we can demand~\eqref{eq:hessian_small} from the Lipschitz continuity of $|\partial E_t|$ in \cref{ass:strengthening_6}; finally, we can demand~\eqref{eq:lojasiewicz} from the condition \eqref{eq:liminf_ineq_stronger} in \cref{ass:strengthening_6} applied at the instant $ \bar{t} $ and at the point $ \bar x $. 
    For large enough $n$, we have that $t_n \in [ \bar{t}  -\Delta_{ \bar{t} },  \bar{t} ]$ and $x_n \in B_{\Delta_{\bar x}}( \bar{x} )$.
    We will now show that $E_{ \bar{t} }(\hat{x}) + P (t_n -  \bar{t} ) - E_{t_n}(x_n)$ is positive for large $n$. Indeed, using conditions (1)--(5) we have that
    \begin{align*}
        E_{ \bar{t} }( \bar{x} ) + P (t_n -  \bar{t} ) - E_{t_n}(x_n) & 
        = \big(E_{ \bar{t} }(x_n) - E_{t_n}(x_n)\big) - P ( \bar{t}  - t_n) - \big( E_{ \bar{t} }(x_n) - E_{ \bar{t} }( \bar{x} ) \big)                                                                    \\
         & \geq (P + \varepsilon') ( \bar{t}  - t_n) - P ( \bar{t}  - t_n) - \varepsilon_{\bar x}\big( d(x_n, \bar x ) \big) |\partial E_{ \bar{t} }|(x_n)                                                 \\
        & \geq \varepsilon' ( \bar{t}  - t_n) -  \varepsilon_{\bar x}\big( d(x_n,\bar x)  \big) D ( \bar{t}  - t_n).
    \end{align*}
    For large $n$,  $\varepsilon_{\bar x}\big( d(x_n,\bar x)  \big) D$ becomes arbitrarily small, so that the first term dominates the second. Since $\varepsilon' > 0$, this proves inequality~\eqref{eq:energy_difference},which results in the contradiction and finishes the proof.
\end{proof}

\begin{remark}
    We observe that, in the proof of \cref{lemma:time_derivative_factors}, we can deduce the existence of $N$ such that $\forall n > N: E_{t_n}(y_n) > E_{ \bar{t} }( \bar{y} ) + P (t_n -  \bar{t} )$ by using the condition \eqref{eq:liminf_ineq_relaxed} in \cref{ass:PL+time_der_slope}.
    However, the latter does not suffice for establishing the relation in \cref{eq:energy_difference}, and we need the strengthen version reported in \cref{ass:strengthening_6}.
\end{remark}

\begin{remark}
    It is interesting to observe that in \cite{Fornasier-Tradeoff} the authors employed an argument similar to the one used in the proof of \cref{lemma:time_derivative_factors} to establish a property of critical points of regularized functionals of the form $H_\alpha = F + \alpha G$, with $F,G$ defined over a Banach space $U$, and with $\alpha\in [0,+\infty)$ (for more details, see \cite[Theorem~5.1]{Fornasier-Tradeoff}).
\end{remark}

\section{Characterizing energy jumps with actions}\label{sec:actions_abstract}
\newcommand{\Pcheap}{P_{\mathrm{cheap}}}
\newcommand{\Pexp}{P_{\mathrm{exp}}}
As the last step in describing the limit curve $\hat \eta$ and its energy balance, we would like to characterize the energy jumps $E_t(\hat \eta^+(t)) - E_t(\hat \eta^-(t))$ and thus ensure that
$\hat \eta$ does not cross energy barriers. The authors of~\cite{AR17} achieved this goal in their setting by describing the energy jumps through a cost function $c_t$, which is defined by minimizing the energy-dissipation integrals.
In this section, we prove an analogous result for certain transition rules $\bar{\omega}$ by characterizing the energy jumps through \emph{actions}, which play a similar role as the cost function $c$  in~\cite[Equation (2.4)]{AR17}, but whose value depends on the specific transition rule $\bar{\omega}$ used in the construction.
\subsection{Generalizing from the gradient flow}
\newcommand{\GF}{^{^{_F}}}
\newcommand{\cGF}{c\GF}
To investigate transition rules which admit a characterization of the energy jumps, we start by considering transition rules which bring our framework as close as possible to the one in \cite{AR17}, i.e., transition rules arising from the gradient flow, which fulfill the following condition:
\begin{equation}\label{eq:evolution_rule_of_gradient_flow}
    \bar{\omega}_t(x) \in \omega\mathrm{-}\lim_{s\to\infty}\gamma(s) 
    \coloneqq \bigcap_{S>0}\overline{\bigcup_{s\geq S} \{\gamma(s)\} },
    %\lim_{s \rightarrow \infty} \gamma(s),
\end{equation}
where $\gamma$ is a gradient flow trajectory of $E_t$ starting at $x$, i.e., $\gamma(0) = x$
and, for all $s > 0$,
\begin{equation}\label{eq:gradient_flow}
    E_t(x) - E_t(\gamma(s)) = \frac12 \int_0^s  |\partial E_t|^{2}  (\gamma(\tau)) + |\dot \gamma(\tau)|^2 \diff\tau.
\end{equation}
We recall that \cref{eq:evolution_rule_of_gradient_flow} requires $\bar{\omega}_t(x)$ to belong to the $\omega$-limit of the gradient flow line $s\mapsto \gamma(s)$.
We observe that, whenever the limit $\lim_{s\to\infty} \gamma(s)= x_\infty$ exists, it turns out that $\bar{\omega}_t(x) = x_\infty$.
For such transition rules arising from the gradient flow, we have, for $x \in \R^n$, $t \in (0, T)$, $t_1^n \uparrow t$ and $t_2^n \downarrow t$,
\begin{alignat}{2}
    E_t(x) - E_t( \bar{\omega}_{t} (x)) & = \cGF_t(x,  \bar{\omega}_{t}  (x)) \quad \forall x \in \R^n, & \quad & \text{and hence (by \cref{eq:def_mu_delta})} \nonumber \\
    \mu^\delta(\{t\})             & = \cGF_t \big( \lim_{n \to \infty}  \eta^\delta(t_1^n) ,  \lim_{n \to \infty}   \eta^\delta(t_2^n) \big), &       & \label{eq:action_equality}
\end{alignat}
where $\cGF_t$ is defined as follows:
\begin{equation}\label{eq:gradient_flow_action}
    \cGF_t(u_1,u_2)\coloneqq \inf\left\{
    \frac{1}{2}\int_a^b \left(  |\partial E_t|^{2}  \big(\gamma(s)\big)
    + |\dot \gamma (s)|^2\right) \diff s \;\middle|\;
    \substack{a < b \in \R \\ \gamma \in AC([a,b],\R^n) \\ \gamma(a) = u_1, \gamma(b) = u_2}
    \right\}.
\end{equation}
Let us assume for a moment that we can pass \cref{eq:action_equality} to the limit as $\delta \rightarrow 0$,
i.e., that we can prove that, for all $\delta_n \rightarrow 0$,
$t_1^n \rightarrow t$, $t_2^n \rightarrow t$,
\begin{equation}\label{eq:limit_action_equality}
    \begin{aligned}
        E_t\left( \lim_{n \to \infty}  \eta^{\delta_n}(t^n_1)\right) - E_t\left( \lim_{n \to \infty}  \eta^{\delta_n}(t^n_2)\right) & = \cGF_t\left( \lim_{n \to \infty}  \eta^{\delta_n}(t^n_1),  \lim_{n \to \infty} \eta^{\delta_n}(t^n_2)\right),  & \quad \text{and} \\
        \bar{\mu}(\{t\})                                                               & = \cGF_t\left( \lim_{n \to \infty}  \eta^{\delta_n}(t^n_1) ,  \lim_{n \to \infty} \eta^{\delta_n}(t^n_2)\right), &
    \end{aligned}
\end{equation}
where we recall that $\mu^{\delta_n} \weak^* \bar{\mu}$ as in \cref{eq:convergences}.
In the case~\eqref{eq:limit_action_equality} held, we would immediatly gain two results:
\begin{enumerate}
    \item Since $\cGF_t(x, x')$ is nonzero
            whenever $x$ and $x'$ belong to different path-connected components
            of critical points of $E_t$,
            \cref{eq:limit_action_equality} would immediatly imply that \cref{ax:limits_traject} holds.
            This would, in turn, guarantee the existence of a limit curve $\hat{\eta}$ through \cref{thm:traj_convergence}.
    \item Through the energy balance~\eqref{eq:ener_bal_limit} and \cref{eq:limit_action_equality},
            we would obtain the equality
            \begin{equation*}
                \hat{E}\left( \hat{\eta}^-(t)  \right) - \hat{E}\left( \hat{\eta}^+  (t)\right) = \cGF_t\left( {\eta}^-(t), {\eta}^+ (t)\right),
            \end{equation*}
             where $\hat{\eta}^{\pm} (t) = [(t, \eta^{\pm} (t))]$.
            Informally, this implies that the limit curve $\hat{\eta}$ does not jump through energy barriers.
\end{enumerate}
Some other interesting transition rules---e.g., those which can be derived from certain discretizations
of \cref{eq:gradient_flow}---allow for equalities similar to \cref{eq:action_equality,eq:gradient_flow_action} as well, with some modifications.
In this section, we develop a framework to deal with those transition rules in a unified way,
and to pass the equalities of the form of \cref{eq:action_equality} to the limit as $\delta \rightarrow 0$,
as in \cref{eq:limit_action_equality}.
On our way to do so, we first describe a sufficient condition to be able to pass to the limit for $\delta \rightarrow 0$,
which has the form
of a continuous-in-time triangle inequality of the action $c_t$ in \cref{ax:action_triangular_time_stable}.
Afterwards, we see that this continuous-in-time triangle inequality is fulfilled by actions tailored around \cref{eq:gradient_flow_action}.
%In the next section, we investigate a few examples of such actions and show that they fit in this framework.
\begin{definition}\label{def:action}
    An action $(c_t)_{t \in [0, T]}$, henceforth simply written as $c_t$, is a family of lower semicontinuous functions ${c_t\colon \Xx \times \Xx \rightarrow [0,   +\infty )}$,
    parametrized by $t \in [0, T]$, such that for all $x, x' \in \Xx$ and $t \in [0, T]$:
    \begin{equation} \label{eq:action_apriori_inequality}
        E_t(x) - E_t(x') \leq c_t(x, x').
    \end{equation}
     A transition rule $\bar{\omega}_t$ is \emph{compatible with an action $c_t$},
    if  for every $x \in \Xx$ and every $t \in [0, T]$:
    \begin{equation*}
        E_t(x) - E_t\big(\bar{\omega}_t(x)\big) = c_t\big(x, \bar{\omega}_t(x)\big).
    \end{equation*}
\end{definition}
\renewcommand{\theaxiom}{3'}
\begin{axiom}\label{ax:action_triangular_time_stable}
    Let the family of mappings $\bar{\omega}_t\colon \Xx \to \Xx$ indexed by $t \in [0, T]$ be the transition rule as in \cref{def:discr_evol}.
    Then, we require that  $\bar{\omega}_{t}$ is compatible with an action $c_t$, which has
    the following property: For all $\varepsilon, C > 0, t \in [0, T], K \subseteq \Xx$ compact,
    there exists $\Delta > 0$ such that:
    \begin{align}
        \forall x_1, \dots, x_n \in K,\,\forall t_1, \dots, t_n \in [t - \Delta, t + \Delta]:\ 
        c_t(x_1, x_n) \leq \sum_{i=1}^{n-1} c_{t_i}(x_i, x_{i+1}) + \varepsilon \label{eq:triangular_inequality}
    \end{align}
    whenever
    \begin{equation} \label{eq:upper_bound_traingular_time_stable}
        \sum_{i=1}^{n-1} c_{t_i}(x_i, x_{i+1}) \leq C.
    \end{equation}
    Furthermore, for $x, x' \in \Crit \coloneqq \{y \in \Xx \mid |  \partial E_t  |(y) = 0\}$,
    we have that $c_t(x, x') = 0$  if and only if $x$ and $x'$ belong to the same path-connected component of $\Crit$.
\end{axiom}
\addtocounter{axiom}{-1}

\begin{remark}
    We observe that \cref{def:action} and \cref{ax:action_triangular_time_stable} implicitly enforce in the transition rule some regularity with respect to the time variable.
\end{remark}
 
The following lemmas hold.

\begin{lemma}\label{lemma:axiom3'-axiom1}
\cref{ax:action_triangular_time_stable} implies \cref{ax:decreasing}.
\end{lemma}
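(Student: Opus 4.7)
The plan is essentially a one-line unpacking of the definitions. I would begin by recalling that \cref{ax:action_triangular_time_stable} requires $\bar\omega_t$ to be compatible with some action $c_t$ in the sense of \cref{def:action}. The notion of compatibility directly gives the identity
\begin{equation*}
    E_t(x) - E_t\big(\bar\omega_t(x)\big) = c_t\big(x, \bar\omega_t(x)\big)
\end{equation*}
for every $x \in \Xx$ and every $t \in [0,T]$.

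Next, I would invoke the fact that, by \cref{def:action}, every action takes values in $[0, +\infty)$; in particular $c_t(x, \bar\omega_t(x)) \geq 0$. Combining this non-negativity with the identity above yields $E_t(x) - E_t(\bar\omega_t(x)) \geq 0$, which is exactly the content of \cref{ax:decreasing}. No use of the triangular inequality \eqref{eq:triangular_inequality} nor of the characterization of zeros of $c_t$ on the critical set is needed: the conclusion follows purely from the a priori bound \eqref{eq:action_apriori_inequality} together with the compatibility relation.

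There is no real obstacle here; the lemma is a bookkeeping statement whose sole purpose is to confirm that \cref{ax:action_triangular_time_stable} is genuinely a strengthening of \cref{ax:decreasing}, so that all the consequences derived in \cref{sec:quasist_constr} from \cref{ax:decreasing} (energy balance, uniform energy bounds, compactness of trajectories, etc.) remain available whenever one works under \cref{ax:action_triangular_time_stable}.
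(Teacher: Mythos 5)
Your proposal is correct and follows essentially the same argument as the paper: compatibility gives the identity $E_t(x) - E_t(\bar\omega_t(x)) = c_t(x, \bar\omega_t(x))$, and nonnegativity of the action $c_t$ then yields \cref{ax:decreasing}. (Your closing remark attributing the conclusion to the a priori bound \eqref{eq:action_apriori_inequality} is a slight misstatement---what is actually used is the nonnegativity of $c_t$, as you correctly invoke earlier---but this does not affect the validity of the argument.)
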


\begin{proof}
As the compatibility of $\bar \omega_t$ with the action gives $E_t(x) - E_t\big(\bar{\omega}_t(x)\big) = c_t\big(x, \bar{\omega}_t(x)\big)$ for every $x\in \Xx$ and for every $t\in [0,T]$ (cf.~\cref{def:action}),  \cref{ax:decreasing} follows from the nonnegativity of $c_t$.
%It is an immediate consequence of \cref{def:action} and the nonnegativity of $c_t$.
\end{proof}

\begin{lemma}\label{lemma:action_implies_traj}
    Let us assume \cref{ass:metric_space,ass:reg_E,ass:der_time,ass:unif_coerc}.
    Given a non-negative decreasing sequence $(\delta_n)_n$ such that $\delta_n\to 0$ as $n\to\infty$, let $ \eta^{\delta_n} \colon [0,T]\to\Xx$ be the discrete quasistatic evolutions constructed according to Definition~\ref{def:discr_evol} and starting from $ x_0 \in \Xx$.
    Let us further assume that along $( \eta^{\delta_n} )_n$ the convergences reported in~\eqref{eq:convergences} hold and that  \cref{ax:gradient,ax:decreasing,ax:action_triangular_time_stable} are satisfied.
    For every $t\in [0,T]$, let us consider  $x_{1}, x_{2} \in \Xx$ and two sequences $(t_1^n)_n,(t_2^n)_n\subset[0,T]$ such that $t_1^n\leq t_2^n$ for every $n$, $t_1^n,t_2^n\to t$ as $n\to\infty$, and such that $ \eta^{\delta_n}  (t_1^n)\to  x_1 $ and $ \eta^{\delta_n}  (t_2^n)\to  x_2$ as $n\to\infty$.
    Then,
    \begin{equation} \label{eq:action_mu_inequality}
        \bar \mu(\{t\}) \geq c_t( x_1, x_2 ), \qquad E_t( x_1 ) - E_t( x_2 )\geq c_t( x_1, x_2 ),
    \end{equation}
    i.e., \cref{ax:limits_traject} holds.
\end{lemma}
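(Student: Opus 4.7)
The plan is to apply the continuous-in-time triangular inequality in \cref{ax:action_triangular_time_stable} to the chain of intermediate states of $\eta^{\delta_n}$ between times $t_1^n$ and $t_2^n$, and then pass to the limit $n\to\infty$. The link between the discrete jump measure $\mu^{\delta_n}$ and the action $c_t$ comes from the compatibility of $\bar\omega_t$ with $c_t$ in \cref{def:action}: each elementary step in \cref{def:discr_evol} gives $E_{k\delta_n}(w_{k-1})-E_{k\delta_n}(w_k)=c_{k\delta_n}(w_{k-1},w_k)$. Writing $\eta^{\delta_n}(t_1^n)=w_{i_0}$ and $\eta^{\delta_n}(t_2^n)=w_{i_1}$ for the appropriate indices $i_0\le i_1$, \cref{eq:def_mu_delta} then yields
\begin{equation*}
\mu^{\delta_n}\big((t_1^n,t_2^n]\big)=\sum_{k=i_0+1}^{i_1}c_{k\delta_n}(w_{k-1},w_k).
\end{equation*}

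Fix $\varepsilon>0$. To apply \cref{ax:action_triangular_time_stable} to this chain uniformly in $n$, I observe that the intermediate points $w_{i_0},\dots,w_{i_1}$ all lie in the compact set $K$ provided by \cref{lem:bound_traj}; the times $k\delta_n$ for $i_0<k\le i_1$ lie in $[t_1^n,t_2^n+\delta_n]$, which is contained in a prescribed $[t-\Delta,t+\Delta]$ for $n$ large; and the total sum is uniformly bounded by $\mu^{\delta_n}([0,T])$, which is itself uniformly bounded by \cref{lem:bound_var}. Consequently, for $n$ sufficiently large,
\begin{equation*}
c_t\big(\eta^{\delta_n}(t_1^n),\eta^{\delta_n}(t_2^n)\big)\le\mu^{\delta_n}\big((t_1^n,t_2^n]\big)+\varepsilon.
\end{equation*}

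To prove $\bar\mu(\{t\})\ge c_t(x_1,x_2)$, I pass to the liminf using the lower semicontinuity of $c_t$ on the left. For the right-hand side, for any $\rho>0$ one has $(t_1^n,t_2^n]\subseteq[t-\rho,t+\rho]$ for $n$ large, and the Portmanteau inequality for closed sets under weak-$*$ convergence of Radon measures gives $\limsup_n\mu^{\delta_n}([t-\rho,t+\rho])\le\bar\mu([t-\rho,t+\rho])$; letting first $\rho\downarrow 0$ and then $\varepsilon\downarrow 0$ concludes. To prove $E_t(x_1)-E_t(x_2)\ge c_t(x_1,x_2)$ I instead rewrite $\mu^{\delta_n}\big((t_1^n,t_2^n]\big)$ via the half-open variant of the discrete energy balance recorded in \cref{rmk:ener_bal_variants},
\begin{equation*}
\mu^{\delta_n}\big((t_1^n,t_2^n]\big)=E_{t_1^n}\big(\eta^{\delta_n}(t_1^n)\big)-E_{t_2^n}\big(\eta^{\delta_n}(t_2^n)\big)+\int_{t_1^n}^{t_2^n}\partial_tE_\tau\big(\eta^{\delta_n}(\tau)\big)\diff\tau.
\end{equation*}
Continuity of $E$ sends the two boundary terms to $E_t(x_1)-E_t(x_2)$, while \cref{ass:der_time} combined with \cref{lem:bound_energ_evol} shows the integrand is uniformly bounded, so the integral vanishes as $t_2^n-t_1^n\to 0$. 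Letting $\varepsilon\downarrow 0$ yields the claim.

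I expect no serious obstacle. The only delicate point is bookkeeping which atoms of $\mu^{\delta_n}$ lie in $(t_1^n,t_2^n]$ versus the other half-open intervals, which is precisely why the several variants of the energy balance were recorded in \cref{rmk:ener_bal_variants}; everything else is a direct translation of \cref{ax:action_triangular_time_stable} followed by standard semicontinuity and Portmanteau arguments.
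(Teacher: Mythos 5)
Your proof is correct and follows essentially the same path as the paper's: use the compatibility of $\bar\omega_t$ with $c_t$ to rewrite $\mu^{\delta_n}((t_1^n,t_2^n])$ as a sum of actions, invoke \cref{ax:action_triangular_time_stable} with $\Delta$ fixed uniformly in $n$ (justified by \cref{lem:bound_traj} for the compact $K$ and the $\delta$-uniform bound $\mu^{\delta_n}((0,T])\leq C$), and then combine lower semicontinuity of $c_t$ with upper Portmanteau for the first inequality and with the half-open discrete energy balance of \cref{rmk:ener_bal_variants} for the second. The only cosmetic difference is that you cite \cref{lem:bound_var} for the uniform bound on $\mu^{\delta_n}$—that estimate actually appears inside the proof of \cref{lem:bound_var} rather than in its statement—whereas the paper invokes \cref{prop:energy_balance} directly, but the underlying fact is identical.
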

Before we prove \cref{lemma:action_implies_traj}, let us remark how it characterizes the energy jumps of the limit curve $\hat \eta$.
\begin{corollary}\label{cor:action_implies_traj}
    Let us assume  \cref{ass:metric_space,ass:reg_E,ass:der_time,ass:unif_coerc,ass:conn_comp} and that \cref{ax:decreasing,ax:gradient,ax:action_triangular_time_stable} are satisfied. 
    Then, the limit trajectory $\hat \eta$ constructed according to \cref{thm:traj_convergence}
    fulfills, for all $t \in [0, T]$:
    \begin{equation*}
        \hat E\big(\hat \eta^+(t)\big) -  \hat E\big(\hat \eta^-(t)\big) = \bar \mu(\{t\}) = \hat c_t\big(\hat \eta^+(t), \hat \eta^-(t)\big),
    \end{equation*}
    where $\hat c_t\big([(t, x)], [(t, x')]\big) \coloneqq c_t(x, x')$.
\end{corollary}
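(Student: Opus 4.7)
The plan begins by assembling the existing machinery. The hypothesis \cref{ax:action_triangular_time_stable} implies \cref{ax:decreasing} via \cref{lemma:axiom3'-axiom1} and \cref{ax:limits_traject} via \cref{lemma:action_implies_traj}; together with the assumed \cref{ax:gradient}, this provides the full set of hypotheses needed to invoke \cref{thm:traj_convergence}. From that theorem I already have the energy jump identity $\hat E(\hat \eta^-(t)) - \hat E(\hat \eta^+(t)) = \bar\mu(\{t\})$ for every $t \in [0, T]$, so it remains only to identify this common value with $\hat c_t$ evaluated at the left and right limits.

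Before doing so, I would verify that $\hat c_t$ is well-defined on $\X \times \X$, i.e., that $c_t(x_1, x_2)$ depends only on the equivalence classes of $(t, x_1)$ and $(t, x_2)$. If $(t, x_1) \sim (t, x_1')$ and $(t, x_2) \sim (t, x_2')$, then the respective pairs lie in the same path-connected component of $\Crit_t$, so the last clause of \cref{ax:action_triangular_time_stable} gives $c_t(x_1, x_1') = c_t(x_2', x_2) = 0$. Applying the triangular inequality \eqref{eq:triangular_inequality} at the single instant $t_i = t$ to the chain $x_1 \to x_1' \to x_2' \to x_2$ then yields $c_t(x_1, x_2) \leq c_t(x_1', x_2') + \varepsilon$ for every $\varepsilon > 0$; the symmetric estimate and $\varepsilon \to 0$ give equality.

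Now I fix $t \in [0, T]$ and choose sequences $t_1^n \uparrow t$, $t_2^n \downarrow t$ (handling $t \in \{0, T\}$ by a one-sided adjustment). By the uniform compactness of \cref{lem:bound_traj} together with a diagonal extraction, I may pass to a subsequence along which $\eta^{\delta_n}(t_1^n) \to x_1$ and $\eta^{\delta_n}(t_2^n) \to x_2$ in $\Xx$; Step~5 of the proof of \cref{thm:traj_convergence} ensures that $(t, x_1) \in q^{-1}(\hat\eta^-(t))$ and $(t, x_2) \in q^{-1}(\hat\eta^+(t))$, so these are representatives of the left and right limits. Applying \cref{lemma:action_implies_traj} to the pair $(t_1^n, t_2^n)$ supplies
\[
    \bar\mu(\{t\}) \geq c_t(x_1, x_2), \qquad E_t(x_1) - E_t(x_2) \geq c_t(x_1, x_2),
\]
while the a priori bound \eqref{eq:action_apriori_inequality} from \cref{def:action} gives the reverse inequality $E_t(x_1) - E_t(x_2) \leq c_t(x_1, x_2)$.

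Finally, recalling from \eqref{eq:energy_hat} that $\hat E(\hat\eta^-(t)) = E_t(x_1)$ and $\hat E(\hat\eta^+(t)) = E_t(x_2)$, the energy jump identity of \cref{thm:traj_convergence} reads $\bar\mu(\{t\}) = E_t(x_1) - E_t(x_2)$. Chaining this with the three inequalities above forces equality throughout and produces
\[
    \bar\mu(\{t\}) = E_t(x_1) - E_t(x_2) = c_t(x_1, x_2) = \hat c_t\bigl(\hat\eta^-(t), \hat\eta^+(t)\bigr),
\]
which is the claim. The whole argument is mostly bookkeeping on top of \cref{lemma:action_implies_traj} and \cref{thm:traj_convergence}; the only slightly delicate step is the well-definedness of $\hat c_t$, which is dispatched by the continuous-in-time triangular inequality specialised to the constant choice $t_i = t$.
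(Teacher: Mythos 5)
Your proof is correct and follows essentially the same route as the paper's own proof: invoke \cref{lemma:action_implies_traj} to obtain \cref{ax:limits_traject} (hence \cref{thm:traj_convergence} applies), use the triangular inequality at fixed $t$ together with the last clause of \cref{ax:action_triangular_time_stable} to show that $\hat c_t$ factors through $\X$, combine the a~priori bound \eqref{eq:action_apriori_inequality} with the energy-jump identity of \cref{thm:traj_convergence}, and close with the reverse inequality from \cref{lemma:action_implies_traj}. The paper's proof is more terse and leaves the subsequence extraction and the identification of representatives of $\hat\eta^{\pm}(t)$ implicit; you supply those details, and you also silently correct what appears to be a $\pm$ transposition in the corollary's displayed equation (your conclusion $\hat c_t(\hat\eta^-(t),\hat\eta^+(t))$ matches the sign conventions of \cref{thm:traj_convergence} and the simplified theorem in the introduction, whereas the corollary's statement has the arguments swapped). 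One small technical point worth making explicit in the well-definedness argument: the triangular inequality \eqref{eq:triangular_inequality} in \cref{ax:action_triangular_time_stable} only applies under the bound \eqref{eq:upper_bound_traingular_time_stable}, so one should note that the chain $x_1 \to x_1' \to x_2' \to x_2$ has total cost $c_t(x_1',x_2')$, which is finite by \cref{def:action}, and hence one may take $C$ to be any number exceeding it.
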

\begin{proof}
    The conclusion of \cref{lemma:action_implies_traj} is a strengthening of \cref{ax:limits_traject}.
    %Furthermore, \cref{ax:decreasing} is an immediate consequence of \cref{def:action} and the nonnegativity of $c_t$.
    For the last part of the corollary,
    note that \cref{ax:action_triangular_time_stable} implies in particular that $c_t$
    fulfills the triangle inequality for a fixed $t$. Together
    with the last part of \cref{ax:action_triangular_time_stable}, this ensures that $\hat c_t$ factors through $\Xx$ as described by $\hat c_t$.
    Combining \cref{eq:action_apriori_inequality} with
    \cref{thm:traj_convergence}, we get that
    \begin{equation*}
        \hat E\big(\hat \eta^+(t)\big) -  \hat E\big(\hat \eta^-(t)\big) = \bar \mu(\{t\}) \leq \hat c_t\big(\hat \eta^+(t), \hat \eta^-(t)\big).
    \end{equation*}
    Further combining this inequality with the reverse inequality in \cref{eq:action_mu_inequality} finishes the proof.
\end{proof}
\begin{proof}[Proof of \cref{lemma:action_implies_traj}]
     Let us fix $t \in [0, T]$ and $\varepsilon >0$. From \cref{lem:bound_traj} we have that there exists a compact set $K \subset \Xx$ such that ${\rm Im} (\eta^{\delta_{n}}) \subset K$ for every $n \in \mathbb{N}$. Furthermore, by \cref{prop:energy_balance} and since $E$ and $\partial_t E$ are bounded in $K$, we can choose $C$ such that $\mu^{\delta_n}((0, T]) \leq C$ for all $n$. In particular, we notice that for every $n$
    \begin{displaymath}
         \sum_{t'_{i} \in J^{\delta_{n}}} c_{t_i'}\left(\eta^{\delta_n}(t'_i), \eta^{\delta_n}(t'_{i+1})\right) \leq \mu^{\delta_n}((0, T]) \leq C\,.        
    \end{displaymath}
    Let $\Delta>0$ be as in \cref{ax:action_triangular_time_stable}.For any $t_1, t_2$ such that
    $t - \Delta \leq t_1 < t_2 \leq t + \Delta$  we thus have that 
    \begin{align*}
        c_t\left( \eta^{\delta_n}(t_1), \eta^{\delta_n} (t_2)\right) & \leq \sum_{i=1}^{n-1} c_{t_i'}\left( \eta^{\delta_n}(t'_i), \eta^{\delta_n}(t'_{i+1})  \right) + \varepsilon \\
                                                        & = \mu^{\delta_n}((t_1, t_2]) + \varepsilon,
    \end{align*}
    where $t'_i \in J^{\delta_n}$ are the jump points of  $\eta^{\delta_n}$  in the interval $(t_1, t_2]$.
    %By \cref{lem:bound_traj}, we can choose $K$ such that $u^{\delta_n} \subseteq K$ for all $n$.
    %Furthermore, by \cref{prop:energy_balance} and since $E$ and $\partial_t E$ are bounded in $K$, we can choose $C$ such that
    %$\mu^{\delta_n}((0, T]) \leq C$ for all $n$.
    Hence, for all $0 < \Delta' \leq \Delta$ it holds
    \begin{align*}
        \bar\mu([t-\Delta',t+\Delta']) &
        \geq \limsup_{n\to\infty} \mu^{\delta_n}([t-\Delta',t+\Delta'])
        \geq \limsup_{n\to\infty}
        \mu^{\delta_n}((t_1^n,t_2^n])                                          \\
                                        & \geq \limsup_{n\to\infty}
        c_t\left( \eta^{\delta_n}(t_1^n), \eta^{\delta_n}(t_2^n)  \right)  - \varepsilon \\
                                        & \geq c_t( x_1,x_2 ) - \varepsilon,
    \end{align*}
    where we used the lower semicontinuity of $c_t$ in the last line.
    If we let $\Delta' \rightarrow 0$, we see that $\bar\mu(\{t\}) \geq c_t(  x_1,x_2 ) - \varepsilon$.
    Taking the limit as $\varepsilon \rightarrow 0$, we get the first inequality in~\eqref{eq:action_mu_inequality}.
    For the second one, we observe that
    \begin{equation*}
        \begin{split}
            E_t( x_2) - E_t( x_1 ) & = \lim_{n\to\infty} \left( E_{t_2^n}\big( \eta^{\delta_n} (t_2^n)\big) - E_{t_1^n}\big( \eta^{\delta_n}  (t_1^n)\big) \right)= \lim_{n\to\infty} \left( \E_{t_2^n}( \eta^{\delta_n} ) - \E_{t_1^n}( \eta^{\delta_n}  ) \right) \\
                                & = \lim_{n\to\infty}
            \left( \int_{t_1^n}^{t_2^n} \mathcal{D}^{\delta_n}(s)\diff s -  \mu^{\delta_n}((t_1^n,t_2^n]) \right) =
            \lim_{n\to\infty} -\mu^{\delta_n}((t_1^n,t_2^n]),
        \end{split}
    \end{equation*}
    and we conclude by using the same arguments as before.
\end{proof}
To\label{part:start_of_informal_proof} construct actions fulfilling \cref{ax:action_triangular_time_stable},
we will now investigate a possible proof strategy to show that the transition rule arising from
the gradient flow, as in \cref{eq:evolution_rule_of_gradient_flow,eq:gradient_flow,eq:gradient_flow_action,eq:action_equality},
fulfills \cref{ax:action_triangular_time_stable}.
Discussing the proof strategy will allow us to distill sufficient properties
for more general transition rules for which \cref{ax:action_triangular_time_stable} holds true. \\
The resemblance of \cref{eq:triangular_inequality} in \cref{ax:action_triangular_time_stable} to the triangle inequality
already suggests a strategy to show that the transition rule~\eqref{eq:evolution_rule_of_gradient_flow} corresponding to the gradient flow fulfills \cref{ax:action_triangular_time_stable}:
Similar to the usual approach for proving the triangle inequality, we could attempt to
concatenate near-optimal curves from $x_i$ to $x_{i+1}$ to construct a competitor curve from
$x_1$ to $x_n$.
At first glance, this na\"ive proof strategy appears promising. Suppose that near-optimal curves
$\gamma_i \colon  [a_i, b_i] \rightarrow \mathbb{R}^n$ are chosen such that $b_i = a_{i+1}$ and denote their
concatenation by $\bar{\gamma}$. From \cref{eq:gradient_flow_action}, we have
\begin{align*}
    c_t(x_1, x_n) \leq    & \, \frac{1}{2} \int_{a_1}^{b_{n-1}} \left(  |\partial E_t|^{2}  (\bar{\gamma}(s))
    + |\dot{\bar{\gamma}}(s)|^2 \right) \diff s                                                            \\
    = \sum_{i=1}^{n-1}    & \, \frac{1}{2} \int_{a_i}^{b_i} \left(  |\partial E_t|^{2}  (\gamma_i(s))
    + |\dot{\gamma}_i(s)|^2 \right) \diff s                                                                     \\
    = \sum_{i=1}^{n-1}    & \, \frac{1}{2} \int_{a_i}^{b_i} \left(  |\partial E_{t_i}|^{2}  (\gamma_i(s))
    + |\dot{\gamma}_i(s)|^2 \right) \diff s                                                                     \\
                            & \hfill +
    \underbrace{ \sum_{i=1}^{n-1} \frac{1}{2} \int_{a_i}^{b_i} (  |\partial E_{t}|^{2}  (\gamma_i(s)) -  |\partial E_{t_i}|^{2}  (\gamma_i(s)) )
    \diff s }_{ \varepsilon' }                                                                                  \\
    \leq \sum_{i=1}^{n-1} & \, \left( c_{t_i}(x_i, x_{i+1}) + \varepsilon_i \right) + \varepsilon'.
\end{align*}
Here, $\varepsilon_i$ can be controlled by selecting near-optimal curves $\gamma_i$, and
$\varepsilon'$ can be managed by choosing $\Delta$ sufficiently small. However, it is impossible
to select a $\Delta$ that makes $\varepsilon'$ \emph{uniformly} small because we cannot control
the time spans $b_i - a_i$ of the curves $\gamma_i$. Moreover, $\varepsilon'$ is expected to grow with $n$,
yet we require a $\Delta$ that is effective for all $n$.
To be able to control $\varepsilon'$, we need to modify $\bar{\gamma}$
such that it does not spend too much time in areas where
$|\partial E_t|^2$ is significantly larger than $|\partial E_{t_i}|^2$.
This way, we hope to ensure that $\varepsilon'$ is bounded by a constant. \\
To modify $\bar{\gamma}$ to enable such a bound, our initial idea is to partition $K' \subseteq \Xx$---where $K'$ is some compact such that $\im(\bar{\gamma}) \subseteq K'$---into two
distinct regions:
\begin{itemize}[label={}]
    \item \textbf{Expensive Region}: Areas where $\frac{1}{2}  |\partial E_t|  (x)$---and, for sufficiently
            small $\Delta$, also $\frac{1}{2}  |\partial E_{t_i}|^{2}  (x)$---exceeds a certain threshold
            $P_{\mathrm{exp}}$.
    \item \textbf{Cheap Region}: Areas where $\frac{1}{2}  |\partial E_t|^{2}  (x)$ is below a threshold
            $P_{\mathrm{cheap}}$.
\end{itemize}
We leave $\bar{\gamma}$ unchanged in the expensive region,
following along the curves $\gamma_i$. Whenever $\bar{\gamma}$ enters the cheap region,
we take a shortcut to the point where $\bar{\gamma}$ exits the cheap region (see \cref{fig:proof_shortcut_candidate_curve}).
% Our goal is to construct a candidate curve by navigating through the expensive region along the
% $\gamma_i$ curves and taking shortcuts through the cheap region.
The total time spent in the
expensive region by $\bar{\gamma}$ is bounded, which consequently bounds $\varepsilon'$.
This is because, if we call $S_{\text{exp}}^i \subseteq [a_i, b_i]$ the time spent in the expensive region
by $\gamma_i$, we have, using the bound $C$ assumed in \cref{ax:action_triangular_time_stable} in \cref{eq:upper_bound_traingular_time_stable}:
\begin{align*}
    \left|\bigcup_i S_{\mathrm{exp}}^i\right| & = \frac{1}{P_{\mathrm{exp}}} \sum_i \int_{S_{\mathrm{exp}}^i} P_{\mathrm{exp}}\diff s
    \leq \frac{1}{P_{\mathrm{exp}}} \sum_i \int_{S_{\mathrm{exp}}^i} \frac{1}{2}  |\partial E_{t_i}|^{2}  (\gamma_i(s)) \diff s                                                                       \\
    & \leq \frac{1}{P_{\mathrm{exp}}} \sum_i \frac{1}{2}\int_{a_i}^{b_i}  \left( |\partial E_{t_i}|^{2}  (\gamma_i(s)) + |\dot\gamma_i(s)|^2\right)\diff s \\
    & \leq \frac{1}{P_{\mathrm{exp}}} \sum_i \big( c_{t_i}(x_i, x_{i+1}) + \varepsilon_i\big)
    \leq \frac{C + \sum_i \varepsilon_i}{P_{\mathrm{exp}}}.
\end{align*}
Since we can control $\varepsilon_i$ by choosing near-optimal curves $\gamma_i$, we can find a bound of the total time spent
in the expensive region---independendly of $n$.
Thus, we can make $\varepsilon'$ arbitrarily small by choosing $\Delta$ sufficiently small.
However, controlling the cost of shortcuts through the cheap region remains challenging.

To address this, we introduce a strategy of how to choose the two regions: We enlarge the set of critical
points in $K$---where $|\partial E_t| = 0$---by a radius $r$, forming a cheap region $\underline{\Crit}_r$
where $\frac{1}{2}  |\partial E_t|^{2}   (x) \leq P_{\text{cheap}}  \coloneqq \max_{x \in \underline{\Crit}_r} \frac{1}{2}  |\partial E_t|^{2}  (x)$. Traversing through sets of critical
points incurs no cost, which means that the cost of each shortcut is
determined by the part of the shortcut which does \emph{not} go through of critical points.
By taking the shortest route to and from the next critical point at the beginning and the end
of each shortcut, respectively, we can control the cost of each shortcut by $r$ and $P_{\text{cheap}}$.
Furthermore, we can make both $P_{\mathrm{cheap}}$ and $r$ small by choosing $r$: Due to the continuity of
$|\partial E_t|$, $P_{\text{cheap}} \rightarrow 0$ as $r \rightarrow 0$. \\
What remains to be bounded is the \emph{number} of shortcuts.
Because the set of critical points can be expressed as the disjoint union of well-seperated compacts by \cref{ass:conn_comp},
$K$ contains a finite number of connected components of critical points.
Enlarging each of these components by $r$ does not increase the number of connected components,
so $\underline{\Crit}_r$ contains a finite number of connected components as well.
Additionally, by consistently taking a shortcut to the last point where $\bar{\gamma}$ exits a connected component of $\underline{\Crit}_r$, we ensure that each
connected component of $\underline{\Crit}_r$ is traversed at most once---limiting the number of shortcuts.\\
Finally, leveraging the continuity of $|\partial E_t|$, $\frac{1}{2}  |\partial E_t|^{2} (x) $ is
bounded below in $K' \setminus \underline{\Crit}_r$, which makes $K' \setminus \underline{\Crit}_r$ an expensive region and enables the application of our initial strategy.\label{part:end_of_informal_proof}
\begin{figure}[H] % Placement options: h=here, t=top, b=bottom, p=page
    \centering % Centers the TikZ diagram
    \pgfdeclaredecoration{raise close}{start}{
  \state{start}[width=+0pt,next state=iterate,persistent precomputation=\pgfdecoratepathhascornerstrue]
  {\pgfpathmoveto{\pgfpoint{0pt}{\pgfdecorationsegmentamplitude}}}
  \state{iterate}[width=+\pgfdecorationsegmentlength]{\pgfpathlineto{\pgfpoint{0pt}{\pgfdecorationsegmentamplitude}}}
  \state{final}{\pgfpathclose}
}
\pgfdeclaredecoration{raise open}{start}{
  \state{start}[width=+0pt,next state=iterate,persistent precomputation=\pgfdecoratepathhascornerstrue]
  {\pgfpathmoveto{\pgfpoint{0pt}{\pgfdecorationsegmentamplitude}}}
  \state{iterate}[width=+\pgfdecorationsegmentlength]{\pgfpathlineto{\pgfpoint{0pt}{\pgfdecorationsegmentamplitude}}}
  \state{final}{}
}
\begin{tikzpicture}[scale=1.5, every node/.style={font=\small}, pics/legend entry/.style={code={%   
            \draw[pic actions]
            (-0.5,0.25) sin (-0.25,0.4) cos (0,0.25) sin (0.25,0.1) cos (0.5,0.25);}}]

  % Parameters
  \def\radius{0.5} % The radius r for enlargement

  % Define colors
  \definecolor{Dcolor}{RGB}{100,149,237} % Cornflower Blue for D
  \definecolor{Drcolor}{RGB}{173,216,230} % Light Blue for D_r
  \definecolor{outercolor}{RGB}{220,220,220} % Light Gray for background
  \definecolor{resultcurve}{RGB}{220,0,0} % Light Gray for background

  % Background (optional)
  \fill[outercolor, rounded corners=1.5cm] (-3.5,-1.7) rectangle (1.7,2.7);

  \def\KcoordsFirst{
    (-1,0.5)
    (-0.2,0.9)
    (0.2,1.7)
    (-0.2,2.1)
    (-1,1.7)
    (-1.8,2.1)
    (-2.2,1.7)
    (-1.8,1.3)
    (-2.2,0.9)
    (-1.8,0.5)
  }
  \def\KcoordsSecond{
    (2.7,-1.0)
    (2.7,0.2)
    (1.9,0.6)
    (1.5,0.2)
    (1.5,-0.6)
  }
  \foreach \Kcoords in {\KcoordsFirst} {
      % Define the enlargement D_r by stroking D with line width 2r
      \begin{scope}
        \draw[Drcolor, line width=2*\radius cm, opacity=0.8]
        plot [smooth cycle, tension=0.8] coordinates {
            \Kcoords
          };
      \end{scope}
      % Define the arbitrary smooth shape D
      \begin{scope}
        \fill[Dcolor, opacity=1]
        plot [smooth cycle, tension=0.8] coordinates {
            \Kcoords
          };
      \end{scope}
    }

  \coordinate (X1) at (-3, -1);
  \coordinate (X1in) at (-1.45, 0.1);
  \coordinate (X1inBar) at (-1.45, 0.438);
  \coordinate (X2) at (-1.1, 1);
  \coordinate (X3) at (0.5, 0.2);
  \coordinate (X1out) at (0.308, 1);
  \coordinate (X1outBar) at (0.03, 1.18);
  \coordinate (X4) at (1, 1);
  % Draw a basic grid from (-5,-5) to (5,5) with 1cm steps
  % \draw[step=1cm, gray, very thin] (-5,-5) grid (5,5);

  \draw[
    decorate,
    decoration={raise open, amplitude=0.017cm,segment length=1pt},
  ] (X1) to[in=-50] (X1in)
  to[out=130, in=-90] (-2, 0.5)
  to[out=90, in=180] (X2);
  \draw[
    thick,
    decorate,
    decoration={raise open, amplitude=-0.017cm,segment length=1pt},
    color=resultcurve
  ] (X1) to[in=-50] (X1in)
  to (X1inBar)
  to (X1outBar)
  to (X1out)
  to[out=0](X4);

  \draw[
    decorate,
    decoration={raise open, amplitude=0.017cm,segment length=1pt},
    smooth cycle, tension=0.8
  ]
  (X2)
  to[out=-70, in=180] (0.0, -0.5)
  to[out=0, in=-90] (1.2, -0.5)
  to[out=90, in=-90] (X3)
  ;

  \draw[
    decorate,
    decoration={raise open, amplitude=0.017cm,segment length=1pt},
  ] (X3) to[out=0, in=180] (X1out) to[out=0] (X4);

  \draw[fill=outercolor] (X1) circle (0.05);
  \node[below] at (X1) {$X_1$};
  \draw[fill=Dcolor]  (X2) circle (0.05);
  \node[above] at (X2) {$X_2$};
  \draw[fill=outercolor] (X3) circle (0.05);
  \node[left] at (X3) {$X_3$};
  \draw[fill=outercolor]  (X4) circle (0.05);
  \node[right] at (X4) {$X_4$};
  \draw (X1in) circle (0.02);
  \node[right] at (X1in) {$x_{in}^1$};
  \draw (X1inBar) circle (0.02);
  \node[left] at (X1inBar) {$\overline{x}_{in}^1$};
  \draw (X1out) circle (0.02);
  \node[above right] at (X1out) {$x_{out}^1$};
  \draw (X1outBar) circle (0.02);
  \node[above] at (X1outBar) {$\overline{x}_{out}^1$};

  \node[above left] at (-2, -0.5) {$\gamma_1$};
  \node[below] at (0, -0.5) {$\gamma_2$};
  \node[right] at (0.4, 0.6) {$\gamma_3$};
  \matrix [below right] at (2.3,2) {
    \fill[Dcolor] (-0.1,-0.36) rectangle (0.1,-0.16) ;              & \node[color=black] {$K_1\colon \frac{1}{2} |\partial E_t|^{2} (x) = 0$};                                                                         \\
    \fill[Drcolor] (-0.1,-0.36) rectangle (0.1,-0.16) ;             & \node[color=black] {$K_r\colon \frac{1}{2} |\partial E_t|^{2}(x) \leq P_{cheap}$};                                          \\
    \fill[outercolor] (-0.1,-0.36) rectangle (0.1,-0.16) ;          & \node[color=black] {$K' \backslash K_r\colon \frac{1}{2} |\partial E_{t_i}|^{2}(x) \geq P_{exp}$}; \\
    \draw[thick, color=resultcurve] (-0.1, -0.26) to (0.1, -0.26) ; & \node[color=black] {candidate curve};                                                                                                                    \\
  };
\end{tikzpicture}
    \captionsetup{justification=centering, width=\textwidth}
    \caption{
        The proof strategy for \cref{lemma:good_curves_fulfill_continuous_triangular_inequality} and the scheme
        for shortening competitor curves: We create a shortcut from the first point where a curve enters a
        neighborhood $B_r(\Crit_i)$ to the last point where one exits $B_r(\Crit_i)$, where $\Crit_i$ is a connected component of critial points. This approach ensures
        that $B_r(\Crit_i)$ is traversed at most once. In the formal proof, rather than constructing
        a new competitor curve by concatination, we employ the triangle inequality to estimate
        $c_t(X_1, X_4) \leq \Cc_t(X_1, p_{\mathrm{in}}^1) + \Cc_t(p_{\mathrm{in}}^1, \overline{p}_{\mathrm{in}}^1) + \Cc_t(\overline{p}_{\mathrm{in}}^1, \overline{p}_{\mathrm{out}}^1) + \Cc_t(\overline{p}_{\mathrm{out}}^1, p_{\mathrm{out}}^1) + \Cc_t(p_{\mathrm{out}}^1, X_4)$.
    }\label{fig:proof_shortcut_candidate_curve}
\end{figure}
From this discussion, we are now in a position to distill the essential properties of the gradient flow
that allow for a proof of \cref{ax:action_triangular_time_stable}.
As those properties are satisfied by a range of transition rules---whether they are discrete or additionally incorporate momentum---, the application of the resulting framework is not limited to the gradient flow.
Aiming at a unified exposition of such actions, we consider curves in the phase space.
As a guide to the formal proof, \cref{fig:proof_shortcut_candidate_curve} should be
understood as a schematic representation of this phase space. \\
In the following, we denote $(a, b] \cap \Ii$ and $[a, b] \cap \Ii$ as $(a, b]_{\Ii}$ and $[a, b]_{\Ii}$, respectively, whenever
$\Ii \in \{\R, \Z\}$ and $a, b \in \Ii$.
\begin{definition}\label{def:generated_by_curves}
    An action $c_t$ is \emph{generated by curves} if there exists an index set
    ${\Ii \in \{\Z, \R\}}$, a \emph{phase space} $\Pp$ equipped with a metric $d_{\Pp}$ and functions
    \begin{equation*}
        \begin{array}{rlrlrlrl}
            \Gamma: &  & \Pp \times \Pp                                          & \, &  & \rightarrow & \, & 2^{\{ \gamma: [a, b]_\Ii \rightarrow \Pp \mid a < b \in \Ii\}} \\
                    &  & (p_1, p_2)                                              &    &  & \mapsto     &    & \Gamma(p_1, p_2),                                              \\
            i:      &  & \Xx                                                     &    &  & \rightarrow &    & \Pp,                                                           \\ [0.3em]
            \Cost:  &  & \Pp \times [0, T]                                            &    &  & \rightarrow &    & [0, +\infty)                                                   \\
                    &  & (p, t)                                                  &    &  & \mapsto     &    & \Cost_t(p),                                                    \\
        \end{array}
    \end{equation*}
     such that  for every $\gamma \in \Gamma(p_1, p_2)$ it holds $\gamma(a) = p_1$ and $\gamma(b) = p_2$, $i$ is a closed immersion, $\Cost$ is continuous, and $c_t \colon \Xx \times \Xx \to [0, \infty)$ is given as
    \begin{equation} \label{eq:action_infimum}
    \begin{split}
        & c_t(x_1, x_2) = \mathfrak{c}_{t} (i(x_{1}), i(x_{2}))\qquad \text{for $x_{1}, x_{2} \in \Xx$},\\
        & \mathfrak{c}_{t} (p_{1}, p_{2}) \coloneqq \inf_{\gamma \in \Gamma(p_{1}, p_{2})} \int_{(a, b]_\Ii} \Cost_t(\gamma_s) \diff \nu(s) \qquad \text{for $p_{1}, p_{2} \in \Pp$},\\
        \end{split}
    \end{equation}
    where $\nu$ is the Lebesgue measure if $\Ii = \R$ and the counting measure if $\Ii = \mathbb{Z}$.
    We call the value of $\Gamma$ the \emph{set of admissible curves} for $p_1,p_2 \in \Pp$, and the value of $\Cost$ the \emph{price}
    % and we write $c_t(p_1, p_2) \coloneqq \inf_{\gamma \in \Gamma(p_1, p_2)} \int_{(a, b]_\Ii} \Cost_t(\gamma_s) \diff\nu(s)$
    for $p \in \Pp$ and $t\in [0,T]$. \\
    Moreover, we require that the following properties hold:
    \begin{properties}
        \item \label{prop:curve_restriction} Restrictions, concatenations and shifts of admissible curves are again admissible curves.
        In other words, for $p_1, p_2, p_3 \in \Pp$ and $a \leq s \leq b \in \Ii$, the following holds:
        \begin{itemize}
            \item \emph{Restrictions:} For all $\gamma \in \Gamma(p_1, p_3)$ such that $\dom(\gamma) = [a, b]_\Ii$ and $\gamma(s) = p_2$, we have that $\gamma|_{[a, s]_\Ii} \in \Gamma(p_1, p_2)$ and $\gamma|_{[s, b]_\Ii} \in \Gamma(p_2, p_3)$.
            \item \emph{ Concatenations:} For all $\gamma \in \Gamma(p_1, p_2)$ and $\gamma' \in \Gamma(p_2, p_3)$ such that $\dom(\gamma) = [a, s]_\Ii$ and $\dom(\gamma') = [s, b]_\Ii$, we have that $\gamma \cup \gamma' \in \Gamma(p_1, p_3)$,
                  where $\gamma \cup \gamma'\colon [a, b]_\Ii \rightarrow \Pp$ is defined as
                  \begin{equation*}
                      \gamma \cup \gamma'(s) \coloneqq
                      \begin{cases}
                          \gamma(s)  & \text{if } s \in [a, s]_\Ii, \\
                          \gamma'(s) & \text{if } s \in (s, b]_\Ii.
                      \end{cases}
                  \end{equation*}
            \item \emph{Shifts:} For all $\gamma \in \Gamma(p_1, p_2)$ such that $\dom(\gamma) = [a, b]$ and all $r \in \Ii$, we have that $\gamma'\colon  [a - r, b - r] \rightarrow \Pp$ with $\gamma'(t) = \gamma(t + r)$ is in $\Gamma(p_1, p_2)$.
        \end{itemize}
        \item\label{prop:cost_nonzero} For $p \in \Pp$, $\Cost_t(p)=0$ if and only if there exists $x \in \Xx$ such that $i(x) = p$ and
        $ |\partial E_t|(x) = 0$.
        \item\label{prop:no_cost_in_within_conn_component}
        For $x_1 , x_2 \in \Xx$ with $x_1\neq x_2$, $c_t(x_1, x_2) = 0$ if and only if $i(x_1)$ and $i(x_2)$
        lie in the same connected component of $\{p \in \Pp \mid \Cost_t(p) = 0\}$.
        \item\label{prop:curves_coercive} For all $K \subseteq \Pp$ compact, there exists some $K' \subseteq \Pp$ compact
        such that: For all $p_1, p_2 \in K$ and all $\varepsilon > 0$,
        there exists $\gamma \colon [a, b]_\Ii \rightarrow \Pp$,  $\gamma \in \Gamma(p_1, p_2)$ such that $\im(\gamma) \subseteq K'$ and 
        \begin{equation*}
            \int_{(a, b]_\Ii} \Cost_t\big(\gamma(s)\big) \diff\nu(s) \leq  \mathfrak{c}_t  (p_1, p_2) + \varepsilon.
        \end{equation*}
        This means that for points $p_1, p_2 \in K$, we may take the infimum in \cref{eq:action_infimum}
        over only those admissible curves whose image lies in $K'$.
        \item\label{prop:cheap_fuel_cheap_action} For all $K \subseteq \Pp$ compact and $\varepsilon > 0$, there exists some $L, P > 0$ such that: For all $p_1, p_2 \in K$
        with $\Cost_t(p_1) \leq P$, $\Cost_t(p_2) \leq P$, and $d_\Pp(p_1, p_2) \leq L$, we have that $ \Cc_t  (p_1, p_2) \leq \varepsilon$.
        Informally, this means that the value of the action is small between two points which are close to each other and where $\Cost_t$ is small.
    \end{properties}
\end{definition}

\begin{remark}
    Notice that in what follows, the expressions of the functions appearing in~\cref{def:generated_by_curves} will be energy-dependent.
\end{remark}

\subsection{Examples of transition rules} \label{sec:examples_of_evolution_rules}
Before we show how the abstract \cref{def:generated_by_curves} relates to \cref{ax:action_triangular_time_stable},
we present some examples of actions---and the corresponding transition rules---that are generated by curves.
The fact that those actions indeed satisfy the required properties is contained in \cref{lemma:gradient_flow_summary_lemma,lemma:MMS_summary_lemma,lemma:BDF2_summary_lemma}, whose proofs are postponed to \cref{sec:examples_of_evolution_rules_elaborate}.
\subsubsection{The gradient flow}
\newcommand{\GFprime}{^{^{_{F'}}}}
\newcommand{\ruleGF}{\bar{\omega}\GF}
\newcommand{\SpeedGF}{\Speed\GF}
\newcommand{\PriceGF}{\Price\GF}
\newcommand{\CostGF}{\Cost\GF}
\newcommand{\PpGF}{\Pp\GF}
\newcommand{\OLCostGF}{\bar{\Cost}\GF}
\newcommand{\cGFprime}{c\GFprime}
\begin{definition} \label{def:gradient_flow_evolution_rule}
    A \emph{transition rule of the gradient flow} is a transition rule $\ruleGF_t\colon \Xx\to \Xx$ for which there
    exists, for each $x \in \Xx$, a curve  $\phi \in AC([0, \infty) , \Xx)$ and a sequence $(s_k)_{k\geq 1}$ with $s_k\nearrow+\infty$ as $k\to\infty$ such that
    $\phi(0) = x$, $\lim_{k \to \infty} d\left( \phi(s_k), \ruleGF_t(x) \right) = 0$, and, for all $s \in [0, \infty)$,
    \begin{equation*}
        E_t(x) - E_t(\phi(s)) = \frac12 \int_0^s  |\partial E_t|^{2}\big(\phi(\sigma ) \big)  + |\dot{\phi}( \sigma  )|^2 \diff  \sigma.
    \end{equation*}
    The \emph{action of the gradient flow} is the action $\cGF_t$ given by
    \begin{equation}\label{eq:def_gradient_flow_action}
        \cGF_t(x_1,x_2)\coloneqq \inf\left\{
        \frac{1}{2}\int_a^b \left(  |\partial E_t|^{2} \big(\phi(s)\big) 
        + |\dot \phi (s)|^2\right) \diff s \;\middle|\;
        \substack{a < b \in \R \\ \phi \in AC([a,b],\Xx) \\ \phi(a) = x_1, \phi(b) = x_2}
        \right\}.
    \end{equation}
\end{definition}
\begin{proposition} \label{lemma:gradient_flow_summary_lemma}
    Let us assume \cref{ass:unif_coerc,ass:metric_space,ass:reg_E,ass:path+Lipsch,ass:conn_comp,ass:path+Lipsch}. A transition rule $\ruleGF_t$ of the gradient flow is
    compatible---as defined in \cref{def:action}---with the action $\cGF_t$ of the gradient flow.
    Furthermore, the action $\cGF_t$ is generated by curves as per \cref{def:generated_by_curves},
    where $\Ii = \R$ and where we use the following definitions:
    \begin{align}
        \PpGF                     & \coloneqq \Xx \times \R^+ \text{, where } d_{\PpGF}\big((x_1, v_1), (x_2, v_2)\big) = d(x_1, x_2) + |v_1 - v_2|           \label{eq:def_phase_space_gf} \\
        i\GF (x)                       & \coloneqq (x, 0)                                                                                                \label{eq:embedding_gf}       \\
        \Gamma(p_1, p_2)          & \coloneqq \left\{ \gamma \colon [a, b] \rightarrow \PpGF \,\middle|\, \substack{a < b \in \R, \gamma(a) = p_1,\, \gamma(b) = p_2,                              \\ \gamma_x \in AC([a, b]; \Xx),\, |\dot{\gamma}_x| \equiv  \gamma_v \text{a.e.}}\right\} \label{eq:gamma_gradient_flow}\\
        \CostGF_t\big((x, v)\big) & \coloneqq  \frac{1}{2}  |\partial E_t|^{2}  (x)  + \frac{1}{2} v^2 \label{eq:def_cost_gf}\\
         \Cc^{F}_{t} (p_{1}, p_{2}) & \coloneqq \inf_{\gamma \in \Gamma (p_{1}, p_{2})} \int_{a}^{b} \CostGF_{t} (\gamma(s)) \diff s \,. \label{eq:action-lift-GF}
    \end{align}
    Here, for every interval $I \subseteq \R$, we denote the components of a curve $\gamma\colon I \to \PpGF$ by $\gamma_x\colon I \to \Xx$ and $\gamma_v\colon I \to \R^+$.
\end{proposition}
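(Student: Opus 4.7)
The plan is to establish compatibility of $\ruleGF_t$ with $\cGF_t$ first, then verify the five properties of \cref{def:generated_by_curves} for the specified phase space data. For compatibility, I would prove two inequalities separately. The bound $\cGF_t(x, \ruleGF_t(x)) \geq E_t(x) - E_t(\ruleGF_t(x))$ follows from a competitor-wise argument: for any $\phi \in AC([a,b], \Xx)$ joining $x$ to $\ruleGF_t(x)$, Young's inequality yields $\frac12(|\partial E_t|^2(\phi) + |\dot\phi|^2) \geq |\partial E_t|(\phi)|\dot\phi|$, and since $|\partial E_t|$ is a strong upper gradient (\cref{rmk:slope_strong_upper_gradient}), integrating over $[a, b]$ dominates $E_t(x) - E_t(\ruleGF_t(x))$. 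For the reverse direction, I would take the gradient-flow curve $\phi$ and subsequence $s_k \nearrow \infty$ provided by \cref{def:gradient_flow_evolution_rule}; the gradient-flow identity gives $\frac12 \int_0^{s_k}(|\partial E_t|^2(\phi) + |\dot\phi|^2)\diff\sigma = E_t(x) - E_t(\phi(s_k))$, whose right-hand side tends to $E_t(x) - E_t(\ruleGF_t(x))$ by continuity of $E_t$; gluing a short connecting piece from $\phi(s_k)$ to $\ruleGF_t(x)$---whose action vanishes by Property 5 below because $|\partial E_t|(\phi(s_k)) \to 0$ and $d(\phi(s_k), \ruleGF_t(x)) \to 0$---upgrades this estimate to an upper bound on $\cGF_t(x, \ruleGF_t(x))$ itself.

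For the generated-by-curves property, Properties 1 and 2 are immediate from \eqref{eq:gamma_gradient_flow} and \eqref{eq:def_cost_gf}. The ``if'' direction of Property 3 uses \cref{ass:conn_comp} to obtain a rectifiable path joining $x_1, x_2$ inside their common path-connected component of $\Crit_t$; reparametrizing on a long interval makes $|\dot\phi|$ uniformly small while $|\partial E_t|(\phi) \equiv 0$, so the action vanishes. The ``only if'' direction uses Property 4 to confine near-optimal curves of action at most $1/n$ to a fixed compact $K' \subseteq \Xx$. A Cauchy-Schwarz estimate then shows that if $x_1 \notin \Crit_t$, a ball $B_r(x_1)$ on which $|\partial E_t| \geq \rho > 0$ would force $\int |\partial E_t|^2(\gamma_x^{(n)})\diff s \gtrsim \rho^2 r^2 n$, contradicting the action bound; a similar separation argument, using $d(C_1, C_2) > 0$ between distinct path-connected components of $\Crit_t \cap K'$ (\cref{ass:conn_comp}) together with continuity of $|\partial E_t|$ on the complement of tubular neighborhoods, forces $x_1, x_2$ into the same component.

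Properties 4 and 5 reduce to explicit curve constructions. For Property 4, I would use that $\Cc^F_t((x_1, v_1), (x_2, v_2))$ depends only on $x_1, x_2$ (endpoint values of $\gamma_v$ form a null set), so we effectively work with $\cGF_t(x_1, x_2)$; a bound on the action together with the upper-gradient inequality controls $\sup_s E_t(\phi(s))$, which by \cref{ass:unif_coerc} and \cref{rmk:coerc_Et} confines $\gamma_x$ to a fixed compact. The pointwise bound on $\gamma_v$ is obtained by replacing the near-optimal curve with a modification that follows the gradient flow from $x_1$ down to a neighborhood of $\Crit_t$, travels along a rectifiable path inside $\Crit_t$, and follows the reverse gradient flow up to $x_2$; along the gradient-flow pieces, $v = |\partial E_t|(\gamma_x)$ is bounded on the enlarged compact. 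Property 5 follows by connecting $x_1, x_2$ with an almost-geodesic from \cref{ass:path+Lipsch} of length at most $L$, parametrized with constant speed on an interval of length $T_0$: the Lipschitz bound from \cref{ass:path+Lipsch} yields $|\partial E_t|(\phi) \leq \sqrt{2P} + L_{\mathrm{Lip}} L$ along the path, and choosing $T_0$ of order $L/(\sqrt{P}+L)$ gives an action of order $L(\sqrt{P}+L)$, which drops below any prescribed $\varepsilon$ when $P, L$ are small.

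I expect the main obstacles to be the ``only if'' direction of Property 3, which requires converting a quantitative action bound into topological information about the critical set, and the pointwise bound on $\gamma_v$ in Property 4, where the available $L^2$ control does not translate directly to a pointwise bound, necessitating an explicit modification of near-optimal curves via the gradient flow.
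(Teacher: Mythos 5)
Your overall architecture matches the paper's: compatibility via two inequalities (Young's inequality plus the strong upper gradient for $\geq$, the gradient-flow identity passed to the limit for $\leq$), then the five properties of \cref{def:generated_by_curves}. Properties 1, 2, 5, your "if" direction of Property 3, and the compactness half of Property 4 all work as you sketch. Your Property 3 argument for the "if" direction is in fact a bit cleaner than the paper's (you exploit that $|\partial E_t| \equiv 0$ along the rectifiable path, so the only remaining term is $\frac12\int|\dot\gamma|^2$, killed by stretching the parameter interval, whereas the paper routes this through the general Hausdorff-dimension criterion of \cref{lemma:zero_cost_in_conn_comp_hd_dim}). Your Cauchy--Schwarz version of the "only if" separation estimate is also valid, where the paper instead exploits the reparametrization-invariant form $\cGFprime_t$ of the action.

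The genuine gap is in the velocity bound of Property 4. You propose to replace a near-optimal curve by one that follows the gradient flow from $x_1$ down to $\Crit_t$, travels inside $\Crit_t$, and rises via reverse gradient flow to $x_2$. That curve has action $\big(E_t(x_1) - E_t(w_1)\big) + \big(E_t(x_2) - E_t(w_2)\big)$ for critical points $w_1, w_2$, which can be much larger than $\cGF_t(x_1, x_2)$ — for instance when $x_1$ and $x_2$ are nearby points at high energy, where a short direct path is far cheaper than descending to the critical set and climbing back out. So this modification does not preserve near-optimality and cannot be used to witness Property 4. The paper's solution is precisely the identity $\cGF_t = \cGFprime_t$ of \cref{lemma:equivalent_minimization}: the proof of that lemma constructs, from any near-optimal competitor for the reparametrization-invariant functional $\int |\partial E_t|(\phi)|\dot\phi|\diff s$, a reparametrization $\phi_\varepsilon$ with $\big||\dot\phi_\varepsilon| - |\partial E_t|(\phi_\varepsilon)\big| \leq \varepsilon'$ pointwise. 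This is what simultaneously keeps the $\cGF_t$-action close to the infimum (Young's inequality is nearly tight) and pins $\gamma_v$ to the compact $[0, \max_{K''}|\partial E_t| + \varepsilon']$. You would need either to prove the $\cGF_t = \cGFprime_t$ equivalence yourself (a nontrivial change-of-time-variable estimate under \cref{ass:path+Lipsch}) or find a different reparametrization that both bounds $\gamma_v$ and preserves near-optimality; the specific construction you describe does neither.
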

\subsubsection{The minimizing movement scheme}
\newcommand{\MMS}{^{^{_M}}}
\newcommand{\superscriptleft}[2]{%
    \mathllap{\raisebox{0.35ex}{$^{#1}$}\!\!}#2%
}
\newcommand{\raisedTau}[1]{\superscriptleft{\tau}{#1}}
\newcommand{\ruleM}{\superscriptleft{\tau}{\bar{\omega}\MMS}}
\newcommand{\SpeedM}{\Speed\MMS}
\newcommand{\PriceM}[1][\tau]{\superscriptleft{#1}{\Price\MMS}}
\newcommand{\CostM}[1][\tau]{\superscriptleft{#1}{\Cost}\MMS}
\newcommand{\PpM}{\Pp\MMS}
\newcommand{\OLCostM}[1][\tau]{\superscriptleft{#1}{\bar{\Cost}}\MMS}
\newcommand{\cM}[1][\tau]{\superscriptleft{#1}{c}\MMS}
\newcommand{\cMg}[1][\tau]{\superscriptleft{#1}{\mathfrak{c}}\MMS}
\newcommand{\EM}{\raisedTau{E}\MMS}
\newcommand{\ttau}{_{t,\tau}}
\newcommand{\vx}{\underline{x}}
\begin{definition}\label{def:evolution_rule_MMS}
    For $\tau > 0$, $x \in \Xx$ and $t \in [0, T]$, a \emph{minimizing movement scheme sequence starting at point $x$ at time $t$}
    is a sequence $(u_s)_{s \in \NN}$ in $\Xx$
    such that $u_0 = x$, and,
    for all $s \in \NN$,
    \begin{equation*}
        u_{s+1} \in \argmin_{y \in \Xx} \left\{  E_t(y) + \frac{1}{2\tau} d(y, u_s)^2 \right\}.
    \end{equation*}
    A \emph{transition rule of the minimizing movement scheme} is a transition rule $\ruleM_t \colon \Xx\to \Xx$
    for which there exists, for each $x \in \Xx$ and $t \in [0, T]$, a minimizing movement scheme sequence $(u_s)_{s \in \NN}$ in $\Xx$ starting at point $x$ at time $t$
    such that $\ruleM_t(x)$ is a cluster point of $(u_s)_{s \in \NN}$.
    The \emph{action of the minimizing movement scheme} is the function  $\cM_t \colon \Xx \times \Xx \to [0, \infty)$ given by
    \begin{align}\label{eq:action_infimum_MMS}
        \cM_t(x_1,x_2)\coloneqq \inf\left\{
        \sum_{s=a}^b E_t(u_s) - E\ttau\MMS(u_s) + \frac{1}{2\tau} d(u_{s+1}, u_s)^2
        \;\middle|\;
        \substack{a \leq b \in \Z, \\ (u_s)_{s \in [a, b]_\Z}, \\ u_a = x_1, u_b = x_2}
        \right\},
    \end{align}
    where we set $u_{b+1} = u_b = x_{2}$ and
    \begin{equation}\label{eq:def_E_MMS}
        E\ttau\MMS(x) \coloneqq \inf_{y \in \Xx} \left\{ E_t(y) + \frac{1}{2\tau} d(x, y)^2 \right\}.
    \end{equation}
\end{definition}

We mention that in \cite[Definition~5.14]{Scilla_2018} the authors introduced an action for the minimizing movement scheme in Euclidean setting which is fully compatible with \cref{eq:action_infimum_MMS,eq:def_E_MMS}.

\begin{proposition} \label{lemma:MMS_summary_lemma}
    Let us assume \cref{ass:metric_space,ass:reg_E,ass:unif_coerc,ass:der_time,ass:conn_comp,ass:path+Lipsch}. For $\tau > 0$, a transition rule $\ruleM_t$ of the minimizing movement scheme is compatible---as defined in \cref{def:action}---with the action $\cM_t$ of the minimizing movement scheme.\\    
     Moreover, if $\tau \leq \frac{1}{L}$,
    the  map $\cM_t\colon \Xx \times \Xx \to [0, \infty)$ is an action generated by curves as per \cref{def:generated_by_curves},
    where $ \mathbb{I} = \Z$ and where we use the following definitions:
    \begin{align}
        \PpM              & \coloneqq \Xx \times \Xx \text{, where } d_{\PpM}(\vx, \vx') = \frac{1}{2} \left(d(x_0, x'_0) + d(x_1, x'_1)\right) \label{eq:def_phase_space_mms} \\
         i\MMS (x) & \coloneqq  (x, x)                                                                                      \label{def:embedding_MMS}        \\
        \Gamma(\vx, \vx') & \coloneqq \left\{ \gamma\colon  [a, b]_\Z \to \PpM \,\middle|\, \substack{a < b \in \Z,\, \gamma(a) = \vx, \gamma(b) = \vx',                                    \\\gamma_1(s) = \gamma_0(s + 1) \text{ for } s \in [a, b-1]_\Z}\right\} \label{eq:def_GammaM}\\
        \CostM_t(\vx)     & \coloneqq E_t(x_0) - E_{t, \tau}\MMS(x_0) + \frac{1}{2\tau} d(x_0, x_1)^2,\label{eq:CostM_def}\\
         \cMg_{t} (\underline{x}, \underline{x}') & \coloneqq \inf_{\gamma \in \Gamma (\underline{x}, \underline{x}')} \, \sum_{s=a+1}^{b} \CostM_{t} (\gamma(s)), \label{eq:extended-cost-MMS}
    \end{align}
    where $E_{t, \tau}\MMS$ is defined as in \cref{eq:def_E_MMS}. Here, for every $I\subseteq \Z$ we denote the components of a curve $\gamma\colon I  \to \PpM$ by $\gamma_0$ and $\gamma_1$, and we denote the components of points $\vx, \vx' \in \PpM = \Xx \times \Xx$ by $(x_0, x_1) \coloneqq \vx$ and $(x'_0, x'_1) \coloneqq \vx'$.
\end{proposition}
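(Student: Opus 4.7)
The proof splits into two parts: the compatibility of $\ruleM_t$ with $\cM_t$, and verification of the five structural properties of \cref{def:generated_by_curves}. The central computational ingredient is the \emph{telescoping identity} holding for every minimizing-movement step: since $u_{s+1}$ is a minimizer of $y \mapsto E_t(y) + \frac{1}{2\tau} d(y, u_s)^2$, one has $E\ttau\MMS(u_s) = E_t(u_{s+1}) + \frac{1}{2\tau} d(u_{s+1}, u_s)^2$, and therefore $E_t(u_s) - E\ttau\MMS(u_s) + \frac{1}{2\tau} d(u_{s+1}, u_s)^2 = E_t(u_s) - E_t(u_{s+1})$. Summing from $s = 0$ to $N-1$ and adding the terminal term $E_t(u_N) - E\ttau\MMS(u_N)$ (from the convention $u_{N+1} = u_N$) yields total cost equal to $E_t(x) - E\ttau\MMS(u_N)$. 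Passing to the limit along a subsequence $u_{s_k} \to \ruleM_t(x)$, exploiting continuity of $E_t$ and of $E\ttau\MMS$, and noting that any cluster point $\ruleM_t(x)$ must be a critical point (so $E\ttau\MMS(\ruleM_t(x)) = E_t(\ruleM_t(x))$ under $\tau \leq 1/L$, cf.~\cref{rmk:lipschitz_tau_relation}), gives $\cM_t(x, \ruleM_t(x)) \leq E_t(x) - E_t(\ruleM_t(x))$. The reverse inequality is the a priori bound \eqref{eq:action_apriori_inequality}, which follows from $E\ttau\MMS(u_s) \leq E_t(u_{s+1}) + \frac{1}{2\tau} d(u_{s+1}, u_s)^2$ and the same telescoping.

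For the second part, the identification $\cM_t(x_1, x_2) = \cMg_t(i\MMS(x_1), i\MMS(x_2))$ follows from a routine reindexing bijection between admissible sequences $(u_s)$ and admissible curves $\gamma(s) = (u_{s-1}, u_s)$ in $\PpM$, where the terminal condition $u_{b+1}=u_b$ corresponds to the stagnation $\gamma(b) = i\MMS(x_2)$. Properties~\cref{prop:curve_restriction} and~\cref{prop:curves_coercive} are immediate from the discrete index structure and from \cref{ass:unif_coerc} combined with \cref{rmk:coerc_Et}. For~\cref{prop:cost_nonzero}, the vanishing $\CostM_t(x_0, x_1) = 0$ forces $x_0 = x_1$ and that $x_0$ self-minimizes $y \mapsto E_t(y) + \frac{1}{2\tau} d(x_0, y)^2$, whence $|\partial E_t|(x_0) = 0$ directly from the slope definition; conversely, for critical $x_0$, the Lipschitz bound $|\partial E_t|(y) \leq L\,d(x_0, y)$ from \cref{ass:path+Lipsch} integrated along length-minimizing paths yields $E_t(y) \geq E_t(x_0) - \frac{L}{2} d(x_0, y)^2$, which combined with $\tau \leq 1/L$ makes $x_0$ a self-minimizer. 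For~\cref{prop:no_cost_in_within_conn_component}, if $x_1, x_2$ lie in the same connected component of critical points we subdivide a rectifiable path from \cref{ass:conn_comp} into $N$ pieces, producing a sequence whose cost is $O(1/N) \to 0$; in the opposite case, the well-separation of distinct components forces any admissible sequence either to contain at least one step at a point with slope bounded below by a uniform positive constant, or to contain at least one jump of size comparable to the inter-component distance, both yielding a strictly positive lower bound on the cost. Finally, for~\cref{prop:cheap_fuel_cheap_action}, given $p_1 = (x_0, x_1),\, p_2 = (x'_0, x'_1) \in K$ with $\CostM_t(p_i) \leq P$ and $d_{\PpM}(p_1, p_2)$ small, we use the explicit three-point interpolation curve $\gamma(0) = p_1,\, \gamma(1) = (x_1, x'_0),\, \gamma(2) = p_2$ in $\PpM$; its total cost is dominated by $\CostM_t(p_2) \leq P$, a distance contribution controlled by $d(x_0, x_1) \leq \sqrt{2\tau P}$ and $d(x_0, x'_0) \leq 2 d_{\PpM}(p_1, p_2)$, plus the Moreau-gap term $E_t(x_1) - E\ttau\MMS(x_1)$, which we bound via the two-sided slope-gap estimate combined with the Lipschitz drift $|\partial E_t|(x_1) \leq |\partial E_t|(x_0) + L\,d(x_0, x_1)$.

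The main technical obstacle is the quantitative two-sided control $\frac{\tau}{2} |\partial E_t|(x)^2 \leq E_t(x) - E\ttau\MMS(x) \leq \frac{|\partial E_t|(x)^2}{2(1/\tau - L)}$ of the Moreau--Yosida gap in terms of the slope. This estimate---which simultaneously underpins the self-minimizer characterization in~\cref{prop:cost_nonzero} and the smallness argument in~\cref{prop:cheap_fuel_cheap_action}---is obtained by testing the infimum defining $E\ttau\MMS$ with paths whose $E_t$-variation is dictated by the Lipschitz slope bound, and it is precisely here that both the path-space structure of \cref{ass:path+Lipsch} and the step-size restriction $\tau \leq 1/L$ enter in an essential way.
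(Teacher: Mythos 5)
Your overall strategy mirrors the paper's: compatibility via the telescoping identity $E_t(u_s) - E\ttau\MMS(u_s) + \frac{1}{2\tau}d(u_{s+1},u_s)^2 = E_t(u_s) - E_t(u_{s+1})$ for optimal steps (the content of \cref{lemma:minimizing_scheme_energy_diff}), followed by verification of \cref{prop:curve_restriction,prop:cost_nonzero,prop:no_cost_in_within_conn_component,prop:curves_coercive,prop:cheap_fuel_cheap_action}. Your Property~2 argument (self-minimizer $\Leftrightarrow$ critical) is correct and, as you note, a direct slope argument that does \emph{not} require the Moreau-gap estimate you flag later. Your Property~3 argument via $N$-fold subdivision of a rectifiable path, with per-step cost $\tfrac{1}{2\tau}(l/N)^2$, is simpler and more direct than the paper's route through the Hausdorff-dimension \cref{lemma:action_zero_hausdorff}; both work. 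Your three-point interpolation for Property~5 is exactly the competitor the paper uses in \cref{lemma:bounded_cost_minimizing_scheme}.

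However, the ``main technical obstacle'' you identify contains a genuine error. The claimed lower bound $\frac{\tau}{2}|\partial E_t|(x)^2 \leq E_t(x) - E\ttau\MMS(x)$ is \emph{false} in general: take $E(x) = \varepsilon x + \frac{k}{2}x^2$ on $\R$ near $0$ with $k>0$; then $|\partial E|(0)=\varepsilon$ and a direct computation gives $E(0)-E^\tau(0) = \frac{\tau\varepsilon^2}{2(1+k\tau)} < \frac{\tau\varepsilon^2}{2}$. The correct constant is $\frac{\tau}{2(1+L\tau)}$, and proving even that requires the existence of curves of maximal slope emanating from $x$ (this is exactly what the paper establishes in the appendix, \cref{lemma:instanteneous_cost_relation}, which is \emph{not} used in the proof of this proposition). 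You invoke the faulty lower bound to control $|\partial E_t|(x_0)$ in terms of the $P$-smallness of $\CostM_t(p_1)$ for the Property~5 argument, so that step as written does not close. The paper avoids the slope--gap comparison altogether here: \cref{lemma:bounded_cost_minimizing_scheme} simply \emph{assumes} $\CostM_t(i\MMS(x_1)) \le P$ as an extra hypothesis, which in the actual application (inside \cref{lemma:good_curves_fulfill_continuous_triangular_inequality}) is automatically satisfied because one of the two points being compared is $i\MMS(z)$ for a critical $z$, so its cost vanishes. Your approach can be repaired either by replacing your constant with the correct $\frac{\tau}{2(1+L\tau)}$ and invoking curves of maximal slope, or, more cheaply, by a compactness/continuity argument: on $K$ the set $\{\CostM_t \le P\}$ shrinks to $\{\CostM_t = 0\}$ as $P\to 0$, forcing $(x_0,x_1)$ close to a diagonal critical point, whence $|\partial E_t|(x_1)$ is small by Lipschitz continuity of the slope, and then the (correct, one-sided) upper bound $E_t(x_1)-E\ttau\MMS(x_1) \le \frac{\tau}{2(1-L\tau)}|\partial E_t|(x_1)^2$ suffices.
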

\subsubsection{The BDF2 method for the gradient flow}
\newcommand{\BDF}{^{^{_B}}}
\newcommand{\ruleB}{\superscriptleft{\tau}{\bar{\omega}\BDF}}
\newcommand{\SpeedB}{\Speed\BDF}
\newcommand{\PriceB}{\raisedTau{\Price\BDF}}
\newcommand{\CostB}{\raisedTau{\Cost}\BDF}
\newcommand{\PpB}{\Pp\BDF}
\newcommand{\OLCostB}{\raisedTau{\bar{\Cost}}\BDF}
\newcommand{\cB}{\raisedTau{c}\BDF}
\newcommand{\cBg}{\raisedTau{\mathfrak{c}}\BDF}
\newcommand{\EB}{\raisedTau{E}\BDF}
\begin{definition}\label{def:evolution_rule_BDF2}
    For $\tau > 0$, $x \in \Xx$ and $t \in [0, T]$, a \emph{BDF2 discretization sequence starting at point $x$ at time $t$} is a sequence $(u_s)_{s \in \NN}$ in $\Xx$
    such that $u_0 = x$ and,
    for all $s \in \NN$,
    \begin{equation*}
        u_{s+1} \in \argmin_{y \in \Xx} \left\{  E_t(y) + \frac{1}{\tau} d(y, u_s)^2 - \frac{1}{4\tau} d(y, u_{s-1})^2\right\},
    \end{equation*}
    where we set $u_{-1} = x$.  A \emph{transition rule of the BDF2 discretization} is a transition rule $\ruleB_t \colon \Xx\to \Xx$ for which there exists, for each $x \in \Xx$ and $t \in [0, T]$, a BDF2 discretization sequence $(u_s)_{s \in \NN}$ in $\Xx$ starting at point $x$ at time $t$
    such that $\ruleB_t(x)$ is a cluster point of $(u_s)_{s \in \NN}$.
    The \emph{action of the BDF2 discretization} is the map $\cB_t \colon \Xx \times \Xx \to [0, \infty)$ given by
    \begin{equation}\label{eq:action_infimum_BDF2}
    \begin{split}
        \cB_t(x_1,x_2)\coloneqq 
        \inf\left\{
        \sum_{s=a}^{b+1} \right.
        \bigg( &
        E_t(u_s) - E\BDF_{t, \tau}(u_s, u_{s-1}) + \frac{1}{2\tau} d(u_{s+1}, u_s)^2  \\
        & + \frac{1}{2\tau} d(u_{s}, u_{s-1})^2 - \frac{1}{4\tau} d(u_{s+1}, u_{s-1})^2 \bigg)
        %\mathlarger{\sum_{s=a}^{b+1}} \left(  \begin{array}{c}
        %        E_t(u_s) - E\BDF_{t, \tau}(u_s, u_{s-1}) + \frac{1}{2\tau} d(u_{s+1}, u_s)^2 \vspace{0.2em} \\
         %       + \frac{1}{2\tau} d(u_{s}, u_{s-1})^2 - \frac{1}{4\tau} d(u_{s+1}, u_{s-1})^2
         %   \end{array}
        %\right)
        \left. \;\middle|\;
        \substack{a \leq b \in \Z, \\ (u_s)_{s \in [a, b]_\Z}, \\ u_a = x_1, u_b = x_2}
        \right\},
        \end{split}
    \end{equation}
    where where we set $u_{a-1} = x_1$, $u_{b+1} = u_{b+2} = x_2$ and
    \begin{equation}\label{eq:def_E_BDF}
        E\BDF_{t, \tau}(x, x') \coloneqq \inf_{y \in \Xx} \left\{ E_t(y) + \frac{1}{\tau} d(y, x)^2 - \frac{1}{4\tau} d(y, x')^2 \right\}.
    \end{equation}
\end{definition}

To the best of our knowledge, the action for the BDF2 scheme provided by \cref{eq:action_infimum_BDF2,eq:def_E_BDF} is completely original.

\begin{proposition} \label{lemma:BDF2_summary_lemma}
    Let us sassume \cref{ass:metric_space,ass:reg_E,ass:unif_coerc,ass:der_time,ass:conn_comp,ass:path+Lipsch}. For $\tau > 0$, a transition rule $\ruleB_t$ of the BDF2 discretization is compatible---as defined in \cref{def:action}---with the action $\cB_t$ of the BDF2 discretization.\\    
    Moreover, for every $\tau \leq \frac{1}{L}$,
    the map $\cB_t \colon \Xx \times \Xx \to [0, \infty)$ is an action generated by curves as per \cref{def:generated_by_curves},
    where $ \mathbb{I} = \Z$ and where we use the following definitions:
    \begin{align}
         \PpB              & \coloneqq \Xx \times \Xx \times \Xx \text{, where } d_{\PpB}(\vx, \vx') = \frac{1}{3} \sum_{i=-1}^{1} d(x_i, x'_i)                                              \label{eq:def_phase_space_BDF} \\
         i^{B} (x)             & \coloneqq  (x, x, x)                                                                                                                          \label{eq:embedding_BDF}              \\
        \Gamma(\vx, \vx') & \coloneqq \left\{ \gamma \colon [a, b]_\Z \to  \PpB  \,\middle|\, \substack{a < b \in \Z,\, \gamma(a) = \vx, \gamma(b) = \vx',                                                                                \\\gamma_1(s-1) = \gamma_0(s) = \gamma_{-1}(s+1) \text{ for } s \in [a+1, b-1]_\Z}\right\} \label{eq:def_GammaB} \\
        \CostB_t(\vx)     & \coloneqq E_t(x_0) - E\BDF_{t, \tau}(x_0, x_{-1}) + \frac{1}{2\tau} \left(d(x_0, x_1)^2 + d(x_{-1}, x_0)^2\right) - \frac{1}{4\tau}d(x_{-1}, x_{1})^2,\label{eq:def_costB}\\
        \cBg_{t} ( \vx, \vx') & \coloneqq \inf_{\gamma \in \Gamma (\underline{x}, \underline{x}')} \, \sum_{s = a+1}^{b} \CostB_{t} (\gamma(s)), \label{eq:extendded-cost-BDF} 
    \end{align}
    where $E\BDF_{t, \tau}$ is defined as in \cref{eq:def_E_BDF}. Here, for every $I\subseteq \Z$, we denote the components of a curve $\gamma\colon I \to \PpB$ by $\gamma_{-1}$, $\gamma_0$ and $\gamma_1$, and we denote the components of points $\vx, \vx' \in \PpB = \Xx \times \Xx \times \Xx$ by $(x_{-1}, x_0, x_1) \coloneqq \vx$ and $(x'_{-1}, x'_0, x'_1) \coloneqq \vx'$.
\end{proposition}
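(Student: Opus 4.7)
The plan is to first verify the compatibility of $\ruleB_t$ with $\cB_t$ and then check, one by one, the five properties of \cref{def:generated_by_curves}. For compatibility, pick a BDF2 sequence $(u_s)_s$ starting at $x$ along which a subsequence converges to $\ruleB_t(x)$, and evaluate the sum in \eqref{eq:action_infimum_BDF2} along a finite truncation ending near $\ruleB_t(x)$. At every step the infimum in $E\BDF_{t,\tau}(u_s, u_{s-1})$ is attained at $u_{s+1}$, so the $s$-th summand collapses to
\begin{equation*}
E_t(u_s) - E_t(u_{s+1}) - \tfrac{1}{2\tau} d(u_{s+1}, u_s)^2 + \tfrac{1}{2\tau} d(u_s, u_{s-1})^2,
\end{equation*}
and the total telescopes, up to vanishing boundary distance terms, to $E_t(x) - E_t(\ruleB_t(x))$. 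The converse inequality, which is also the a priori bound \eqref{eq:action_apriori_inequality} required of any action, follows for arbitrary competitor sequences by testing $y = u_{s+1}$ in \eqref{eq:def_E_BDF} and telescoping as above.

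Continuity of $\CostB_t$ is inherited from continuity of $E_t$ and of $E\BDF_{t,\tau}$, the latter obtained by localizing the infimum in \eqref{eq:def_E_BDF} to a sublevel set of $E_t$, compact by \cref{rmk:coerc_Et}. The identification $\cB_t = \cBg_t \circ (i^B \times i^B)$ is then a relabeling, since each competitor sequence $(u_s)$ lifts to the curve $\gamma(s) = (u_{s-1}, u_s, u_{s+1})$ satisfying \eqref{eq:def_GammaB}, and \cref{prop:curve_restriction} becomes immediate from the combinatorial shape of $\Gamma$. For \cref{prop:curves_coercive}, we observe that each of the three building blocks of $\CostB_t$ is individually non-negative: testing $y = x_0$ in \eqref{eq:def_E_BDF} yields $E_t(x_0) - E\BDF_{t,\tau}(x_0, x_{-1}) \geq \tfrac{1}{4\tau} d(x_0, x_{-1})^2$, while the quadratic cluster is non-negative via $d(x_{-1}, x_1)^2 \leq 2 d(x_{-1}, x_0)^2 + 2 d(x_0, x_1)^2$. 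Consequently any near-minimizer keeps $E_t(u_s)$ uniformly bounded by the telescoping argument, which confines the iterates to a compact sublevel set thanks to \cref{ass:unif_coerc,rmk:coerc_Et}.

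For \cref{prop:cost_nonzero}, if $\CostB_t(\vx) = 0$ the bound above forces $x_{-1} = x_0$; the quadratic cluster then collapses to $\tfrac{1}{4\tau} d(x_0, x_1)^2 = 0$, giving $x_0 = x_1$; and the residual identity $E_t(x_0) = \inf_y\{E_t(y) + \tfrac{3}{4\tau} d(y, x_0)^2\}$ implies $|\partial E_t|(x_0) = 0$ via the very definition of the slope. Conversely, if $|\partial E_t|(x) = 0$, integrating the Lipschitz bound on the slope from \cref{ass:path+Lipsch} along a minimizing path yields $E_t(y) \geq E_t(x) - \tfrac{L}{2} d(x, y)^2$; combined with the restriction $\tau \leq 1/L$ this forces $E\BDF_{t,\tau}(x, x) = E_t(x)$, whence $\CostB_t(i^B(x)) = 0$.

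The main obstacles are \cref{prop:no_cost_in_within_conn_component,prop:cheap_fuel_cheap_action}. For the ``if'' direction of \cref{prop:no_cost_in_within_conn_component}, one samples the rectifiable path provided by \cref{ass:conn_comp} inside a component of critical points at a fine discretization and shows that the resulting sum is bounded by $C \tau^{-1} \sum_k d(x_k, x_{k+1})^2$, which vanishes as the mesh size shrinks. The ``only if'' direction exploits that, along a minimizing sequence, each summand is uniformly small, so each triple $(u^n_{s-1}, u^n_s, u^n_{s+1})$ has neighbouring distances bounded by $O(\sqrt{\tau\varepsilon})$ and $|\partial E_t|(u^n_s)$ arbitrarily small by continuity, forcing the $u^n_s$ into an arbitrarily narrow neighbourhood of the critical set; a direct contradiction with the positive separation of distinct components from \cref{ass:conn_comp} then shows that $x_1$ and $x_2$ must lie in the same component. \cref{prop:cheap_fuel_cheap_action} is the most delicate step: given $p_1, p_2$ in a compact $K$ with $\CostB_t(p_i)$ and $d_{\PpB}(p_1, p_2)$ small, we build an explicit connecting sequence by interleaving the coordinates of $p_1$ and $p_2$ with finely sampled points along a length-minimizing path of \cref{ass:path+Lipsch} between the principal components. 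The constraint $\tau \leq 1/L$ enters crucially to dominate the energy deficit $E_t - E\BDF_{t,\tau}$ by a multiple of $d^2$ via the same Lipschitz-slope estimate, turning the indefinite cross term $-\tfrac{1}{4\tau} d(u_{s+1}, u_{s-1})^2$ into a positivity margin and yielding an aggregate cost that vanishes with $\CostB_t(p_i)$ and $d_{\PpB}(p_1, p_2)$.
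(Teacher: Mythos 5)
Your proposal charts a genuinely different route from the paper. The paper's central device for Proposition~3.19 is a \emph{sandwich relation} between the BDF2 action and the minimizing-movement action (Lemma~5.13): one proves
\begin{equation*}
\tfrac{1}{5}\, {}^{\tau/2}\mathfrak c^{M}_t \leq {}^{\tau}\mathfrak c^{B}_t \leq 3\, {}^{\tau}\mathfrak c^{M}_t + {}^{\tau}\Cost^{B}_t(i^B(\cdot)),
\qquad
{}^{\tau}\Cost^{B}_t(i^B(x)) = {}^{2\tau/3}\Cost^{M}_t(i^M(x)),
\end{equation*}
and then Properties~2--5 for $\cB_t$ are \emph{inherited} from the corresponding, already proven, properties of $\cM_t$ (Lemmas~5.9, 5.10, 5.11) with essentially no further work. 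You instead attempt a self-contained, direct proof of every property. Some of your direct arguments are correct in spirit: the compatibility telescoping (with the boundary terms vanishing), the reduction $\CostB_t(\vx)=0 \Rightarrow x_{-1}=x_0=x_1$ followed by the slope argument for Property~2, and the bound $E_t(x_0)-E\BDF_{t,\tau}(x_0,x_{-1}) \lesssim d(x_0,x_{-1})^2$ along critical points for the ``if'' direction of Property~3 all work if fleshed out. Your approach is more elementary in that it avoids invoking the MMS machinery, but it is also substantially longer and re-derives estimates the paper has already established for $\cM_t$.

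There are, however, two concrete gaps. First, for the ``only if'' direction of \cref{prop:no_cost_in_within_conn_component} you claim that a near-optimal competitor with small total cost forces every intermediate point into a narrow neighbourhood of the critical set, giving ``a direct contradiction with the positive separation.'' This is not quite a proof: you have not ruled out a single \emph{large} step $d(\gamma_0(s), \gamma_1(s))$ jumping directly from one component to another, and the passage from ``small step cost'' to ``small $|\partial E_t|$ at intermediate points'' is not immediate (it is precisely the nontrivial content of Lemma~5.9, which needs the path-space \cref{ass:path+Lipsch} and the restriction $\tau \leq 1/L$). The paper handles both issues at once via the lower bound $\tfrac15\,{}^{\tau/2}\mathfrak c^M_t \leq {}^{\tau}\mathfrak c^B_t$ combined with the already-established Property~3 for MMS. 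Second, your sketch of \cref{prop:cheap_fuel_cheap_action} (``interleaving the coordinates of $p_1$ and $p_2$ with finely sampled points along a length-minimizing path'') does not match the actual constraint: in $\PpB = \Xx^3$ the competitor curve is pinned by $\gamma_1(s-1)=\gamma_0(s)=\gamma_{-1}(s+1)$, so fine resampling is not available; the paper instead uses the \emph{fixed} four-point competitor $\vx, (x_0,x_1,x'_{-1}), (x_1,x'_{-1},x'_0), \vx'$ and controls each summand by continuity of $\CostB_t$. As written your argument would need to be reworked to respect the combinatorial structure of $\Gamma$.
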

\subsection{A rigorous proof of \texorpdfstring{\cref{ax:action_triangular_time_stable}}{Axiom \ref{ax:action_triangular_time_stable}}}
With those examples in mind, we now show that actions generated by curves fulfill \cref{ax:action_triangular_time_stable}.
We postpone the proof of the following ancillary lemma to the end of this section.
\begin{lemma}\label{lemma:triangular_inequ_on_P}
    Let $c_t \colon \Xx \times \Xx \to [0, \infty)$ be an action generated by curves as per \cref{def:generated_by_curves} and let $\Cc_{t} \colon \Pp \times \Pp \to [0, \infty)$ be the map defined in \eqref{eq:action_infimum}.
    Then, $\Cc_t$ and $c_{t}$ fulfill the triangle inequality,
    i.e., for every $p_1, p_2, p_3 \in \Pp$, every $x_{1}, x_{2}, x_{3} \in \Xx$, and every $t \in [0, T]$ we have
    \begin{equation*}
        \Cc_t(p_1, p_3) \leq \Cc_t(p_1, p_2) + \Cc_t(p_2, p_3) \qquad \text{and} \qquad c_{t} (x_{1}, x_{3}) \leq c_{t} (x_{1}, x_{2}) + c_{t} (x_{2}, x_{3})\,.
    \end{equation*}
\end{lemma}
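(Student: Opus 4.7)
The plan is to prove the inequality for $\Cc_t$ first by direct concatenation of near-optimal curves, and then deduce the inequality for $c_t$ from the identity $c_t(x, y) = \Cc_t(i(x), i(y))$ and the triangular inequality for $\Cc_t$ applied at $i(x_1), i(x_2), i(x_3)$.

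For the $\Cc_t$ inequality, I would fix $\varepsilon > 0$ and, by definition of the infimum in \cref{eq:action_infimum}, pick admissible curves $\gamma \in \Gamma(p_1, p_2)$ with $\dom(\gamma) = [a_1, b_1]_\Ii$ and $\gamma' \in \Gamma(p_2, p_3)$ with $\dom(\gamma') = [a_2, b_2]_\Ii$ whose $\Cost_t$-integrals lie within $\varepsilon/2$ of $\Cc_t(p_1, p_2)$ and $\Cc_t(p_2, p_3)$, respectively. Using the shift part of \cref{prop:curve_restriction}, I would reparametrize $\gamma'$ by the shift $r = a_2 - b_1$ to obtain a curve $\tilde\gamma' \in \Gamma(p_2, p_3)$ with $\dom(\tilde\gamma') = [b_1, b_1 + (b_2 - a_2)]_\Ii$. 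Since $\Cost_t$ depends only on the phase-space position and not on the parameter, we have
\begin{equation*}
\int_{(b_1, b_1 + (b_2 - a_2)]_\Ii} \Cost_t(\tilde\gamma'(s)) \diff \nu(s) = \int_{(a_2, b_2]_\Ii} \Cost_t(\gamma'(s)) \diff \nu(s).
\end{equation*}

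Next, because $\tilde\gamma'(b_1) = \gamma(b_1) = p_2$, the concatenation part of \cref{prop:curve_restriction} tells us that $\bar\gamma := \gamma \cup \tilde\gamma' \in \Gamma(p_1, p_3)$ and $\dom(\bar\gamma) = [a_1, b_1 + (b_2 - a_2)]_\Ii$. Splitting this interval as the disjoint union $(a_1, b_1]_\Ii \sqcup (b_1, b_1 + (b_2 - a_2)]_\Ii$ (valid in both the Lebesgue and counting measure cases, since in $\Ii = \R$ the single boundary point is null and in $\Ii = \Z$ the half-open intervals partition exactly), the integral of $\Cost_t$ along $\bar\gamma$ equals the sum of the two integrals and hence is bounded by $\Cc_t(p_1, p_2) + \Cc_t(p_2, p_3) + \varepsilon$. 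Taking the infimum over admissible curves in $\Gamma(p_1, p_3)$ and letting $\varepsilon \to 0$ yields $\Cc_t(p_1, p_3) \leq \Cc_t(p_1, p_2) + \Cc_t(p_2, p_3)$.

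For the second part, since $c_t(x, y) = \Cc_t(i(x), i(y))$ by the definition of actions generated by curves, the triangular inequality for $c_t$ follows at once by applying the first part to the triple $i(x_1), i(x_2), i(x_3) \in \Pp$. There is essentially no obstacle in this proof: the shift and concatenation clauses of \cref{prop:curve_restriction} together with the parameter-independence of $\Cost_t$ are tailored exactly to make the subadditivity of $\Cc_t$ a direct verification, and the identification $c_t = \Cc_t \circ (i \times i)$ transfers it to $c_t$ with no extra work.
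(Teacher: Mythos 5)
Your proposal is correct and follows essentially the same route as the paper's proof: both pick admissible curves whose $\Cost_t$-integrals are within $\varepsilon/2$ of the respective infima, shift the second curve to align the domains, concatenate using \cref{prop:curve_restriction}, split the integral at the junction point, and then deduce the inequality for $c_t$ directly from the identity $c_t(x,y) = \Cc_t(i(x), i(y))$ in \eqref{eq:action_infimum}. (Your explicit check of measure additivity across the half-open intervals in both the $\Ii = \R$ and $\Ii = \Z$ cases is a slightly more careful version of a step the paper states without comment.)
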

We are now ready to prove the key lemma of this section.
\begin{lemma}\label{lemma:good_curves_fulfill_continuous_triangular_inequality}
    Let us assume \cref{ass:metric_space,ass:reg_E,ass:der_time,ass:unif_coerc,ass:conn_comp}. If a transition rule $\bar{\omega}_{t}$ is compatible with an action $c_{t}$ generated by curves, then \cref{ax:action_triangular_time_stable} holds.
\end{lemma}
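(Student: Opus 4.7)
The plan is to make rigorous the informal argument given just before \cref{def:generated_by_curves}. Since $i\colon \Xx \to \Pp$ is a closed immersion and $c_t = \Cc_t \circ (i \times i)$, and since Property~3 of \cref{def:generated_by_curves} already yields the last condition of \cref{ax:action_triangular_time_stable}, it suffices to prove the continuous-in-time triangular inequality for $\Cc_t$ on the compact set $i(K) \subseteq \Pp$. Fixing $\varepsilon, C > 0$, $t \in [0, T]$, and $K \subseteq \Xx$ compact, Property~4 furnishes a compact $K' \subseteq \Pp$ such that the infimum in \eqref{eq:action_infimum} between any two points of $i(K)$ is approximated, up to any prescribed tolerance, by admissible curves whose image lies in $K'$.

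For $r > 0$, I would define the cheap region $\underline{\Crit}_r := \{ p \in K' \mid d_\Pp(p, \{\Cost_t = 0\} \cap K') \leq r\}$ and set $P_{\mathrm{cheap}}(r) := \sup_{\underline{\Crit}_r} \Cost_t$ and $P_{\mathrm{exp}}(r) := \inf_{K' \setminus \underline{\Crit}_r} \Cost_t$. Property~2 and continuity of $\Cost$ give $P_{\mathrm{cheap}}(r) \to 0$ as $r \to 0$ and $P_{\mathrm{exp}}(r) > 0$ for every $r > 0$. Moreover, by \cref{ass:conn_comp}, $\{\Cost_t = 0\} \cap K'$ splits into a finite number $N$ of path-connected components, and for $r$ small $\underline{\Crit}_r$ has the same count.

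Given $x_1, \dots, x_n \in K$ and $t_1, \dots, t_n \in [t-\Delta, t+\Delta]$ with $\sum_j \Cc_{t_j}(i(x_j), i(x_{j+1})) \leq C$, by Property~4 I would pick near-optimal admissible curves $\gamma_j \in \Gamma(i(x_j), i(x_{j+1}))$ with $\im(\gamma_j) \subseteq K'$ and $\int \Cost_{t_j}(\gamma_j) \diff \nu \leq \Cc_{t_j}(i(x_j), i(x_{j+1})) + \varepsilon/(4n)$. For $\Delta$ small, $\Cost_{t_j} \geq P_{\mathrm{exp}}(r)/2$ on $K' \setminus \underline{\Crit}_r$, hence the total $\nu$-measure spent by the $\gamma_j$ in the expensive region is bounded by $2(C + \varepsilon/4)/P_{\mathrm{exp}}(r)$, \emph{independently of $n$}. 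By uniform continuity of $\Cost$ on $[0, T] \times K'$, further shrinking $\Delta$ renders the error
\[
    E_1 := \sum_j \int_{\gamma_j^{-1}(K' \setminus \underline{\Crit}_r)} \bigl(\Cost_t - \Cost_{t_j}\bigr)(\gamma_j) \diff \nu
\]
at most $\varepsilon/4$. Whenever $\gamma_j$ enters a connected component $B$ of $\underline{\Crit}_r$, I would apply \cref{lemma:triangular_inequ_on_P} and bound the traversal from the entry $p_{\mathrm{in}}$ to the \emph{last} exit $p_{\mathrm{out}}$ by $\Cc_t(p_{\mathrm{in}}, \bar{p}_{\mathrm{in}}) + \Cc_t(\bar{p}_{\mathrm{in}}, \bar{p}_{\mathrm{out}}) + \Cc_t(\bar{p}_{\mathrm{out}}, p_{\mathrm{out}})$, where $\bar{p}_{\mathrm{in}}, \bar{p}_{\mathrm{out}}$ are nearby critical points in $\{\Cost_t = 0\} \cap K'$ (at distance $\leq r$) in the same connected component; Property~3 kills the middle term, while Property~5 ensures the outer terms are each $\leq \varepsilon/(4N)$ when $r$ is small. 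The policy of always cutting to the last exit guarantees each component is traversed at most once, so the total shortcut cost $E_2$ is at most $\varepsilon/2$.

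Combining the estimates via the triangular inequality for $\Cc_t$ along expensive segments and shortcuts yields $\Cc_t(i(x_1), i(x_n)) \leq \sum_j \Cc_{t_j}(i(x_j), i(x_{j+1})) + \varepsilon/4 + E_1 + E_2 \leq \sum_j \Cc_{t_j}(i(x_j), i(x_{j+1})) + \varepsilon$. The main obstacle is the ordering of quantifiers: $r$ must be fixed \emph{first}, small enough that Property~5 bounds each shortcut by $\varepsilon/(4N)$ and pins down $N$; only \emph{then} $\Delta$ can be chosen in function of $r$, $C$, $K'$, and $\varepsilon$ to activate uniform continuity on the expensive region. A secondary technical point is that each component of $\underline{\Crit}_r \cap K'$ has diameter bounded by $\mathrm{diam}(K')$, so the hypothesis ``$d_\Pp(p_{\mathrm{in}}, \bar{p}_{\mathrm{in}}) \leq L$'' of Property~5 is satisfied by choosing $r \leq L$.
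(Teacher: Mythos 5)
Your proposal is correct and follows essentially the same strategy as the paper's proof: choose a compact $K'$ via \cref{prop:curves_coercive}, split it into cheap and expensive regions with the quantifier ordering you identify ($r$ fixed first, then $\Delta$), bound the time spent in the expensive region using the cost cap $C$, bound the shortcuts through the (finitely many, by \cref{ass:conn_comp}) cheap components via the last-exit policy together with \cref{prop:no_cost_in_within_conn_component,prop:cheap_fuel_cheap_action}, and combine everything through the triangular inequality of \cref{lemma:triangular_inequ_on_P}. The one detail to carry through explicitly when making the expensive-region estimate $E_1$ precise is that for $\Ii = \Z$ the breakpoint indices themselves lie in the cheap region but still appear in the closed segments fed into the triangular inequality, so the paper absorbs an additional $2(m{+}1)$ boundary term into the choice of $\varepsilon_{\Cost}$ rather than bounding the measure of $\gamma_j^{-1}(K'\setminus\underline{\Crit}_r)$ alone.
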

\begin{proof}
    We follow the idea outlined on pages~\pageref{part:start_of_informal_proof}--\pageref{part:end_of_informal_proof}
    and illustrated in \cref{fig:proof_shortcut_candidate_curve}.
    In the first part of the proof, we obtain a $\Delta>0$ as demanded by \cref{ax:action_triangular_time_stable}.
    In the second part, we confirm that with such a choice of $\Delta>0$ and for any $x_1, \dots, x_n$,
    $t_1, \dots, t_n$ as in \cref{ax:action_triangular_time_stable}, \cref{eq:triangular_inequality} is indeed fulfilled. \\
    Let $\varepsilon, C >0$, $t \in [0, T]$ and $K \subseteq \Xx$ compact be given as in \cref{ax:action_triangular_time_stable}.
    We first notice that $i(K)$ is compact, as $i$ is continuous.  Hence, we may find a compact set $K' \subseteq \Pp$ related to $i(K)$ as prescribed by \cref{prop:curves_coercive}. 
    By \cref{prop:cost_nonzero} and \cref{ass:conn_comp}, since $i$ is a closed immersion, the connected components of $\{ p \in K' \mid \Cost_t(p) = 0\}$ are well-separated,
    and hence there are only finitely many of them.
    We enumerate those connected components as $\Crit_1, \dots, \Crit_m$ and use
    \cref{prop:cheap_fuel_cheap_action} to choose $L, \Pcheap > 0$ such that for
    all $p_1, p_2 \in K'$ with $\Cost_t(p_1) \leq \Pcheap$, $\Cost_t(p_2) \leq \Pcheap$ and $ d_{\Pp}  (p_1, p_2) \leq L$, we have that
    \begin{equation}
         \Cc_t ( p_{1}, p_{2}) \leq \frac{\varepsilon}{6(m+1)}. \label{eq:cheap_fuel_cheap_action}
    \end{equation}
    For $r > 0$, we write $\underline{\Crit}_r \coloneqq \bigcup_{j=1}^m B_r(\Crit_j)$,  where $B_{r} (\Crit_{j}) \coloneqq \{ p \in \Pp \mid d_{\Pp} (p, \Crit_{j}) <r \}$.
    By the continuity of $\Cost$ and the compactness of $K'$,
    $\sup_{p \in \overline{(\underline{\Crit}_r)}} \Cost_t(p) \rightarrow 0$ as $r \rightarrow 0$.
    Thus, we can choose $0 < r < L$ such that $\sup_{p \in \overline{(\underline{\Crit}_r)}} \Cost_t(p) \leq \Pcheap$. Moreover, by further reducing $r$ if needed, we can assume that $\overline{B_r(\Crit_j)}\cap \overline{B_r(\Crit_{j'})} = \emptyset$ whenever $j\neq j'$.
    Using a similar argument, we see that
    \begin{equation*}
        \inf_{p \in K' \setminus \underline{\Crit}_r} \Cost_t(p) > 0.
    \end{equation*}
    Even more, we can choose some $\Delta''$ such that
    \begin{equation}\label{eq:P_exp}
        \Delta_{\Cost} \coloneqq \inf_{\substack{ p  \in K' \setminus \underline{\Crit}_r \\ t' \in [t - \Delta'', t + \Delta'']}} \Cost_{t'}( p ) > 0.
    \end{equation}
    Using continuity one last time, we can choose some $\Delta'>0$ such that for all $p \in K'$ and $t' \in [t - \Delta', t + \Delta']$,
    we have that
    \begin{equation}\label{eq:delta_prime}
        |\Cost_{t'}(p) - \Cost_t(p)| \leq \varepsilon_{\Cost} \coloneqq  \frac{\varepsilon}{3} \cdot \left(\frac{C + \frac{\varepsilon}{3}}{\Delta_{\Cost}} + 2(m+1)\right)^{-1}.
    \end{equation}
    We claim that $\Delta \coloneqq \min \{ \Delta', \Delta'' \}$ is the $\Delta>0$ desired by \cref{ax:action_triangular_time_stable}.\\
    To prove this, we fix points $x_1, \dots, x_n \in K$ and instants $t_1, \dots, t_n \in [t -  \Delta  , t +  \Delta  ]$
    such that $\sum_{i=1}^{n-1} c_{t_i}(x_i, x_{i+1}) \leq C$.
    We first choose $\gamma_i \in \Gamma \big( i(x_i), i(x_{i+1}) \big)$ according to \cref{prop:curves_coercive}
    such that $\im(\gamma_i) \subseteq K'$  and
    \begin{equation} \label{eq:choice_of_curve}
        \int_{(a_i, b_i]_\Ii} \Cost_{t_i}(\gamma_i(s)) \diff \nu(s) \leq c_{t_i}(x_i, x_{i+1}) + \frac{\varepsilon}{3n},
    \end{equation}
    where $a_i < b_i \in  \mathbb{I} $  are such that $\dom(\gamma_i) = [a_i, b_i]_{ \mathbb{I}}$, so that
    \begin{equation} \label{eq:curve_triangle}
        C + \frac\e3 \geq \sum_{i=1}^{n-1} \int_{(a_i, b_i]_\Ii} \Cost_{t_i}(\gamma_i(s)) \diff \nu(s).
    \end{equation}
    We set $IS \coloneqq \{(i, s) \mid i \in \{1, \dots, n\}, s \in [a_i, b_i]_\Ii\}$ and
    \begin{align*}
        IS_{\mathrm{cheap}} \coloneqq \left\{(i, s) \in IS \;\middle|\;\gamma_i(s) \in \overline{(\underline{\Crit}_r)} \right\}.
    \end{align*}
    Note that for all $i$, $\{s \in \mathbb{I} \mid (i, s) \in IS_{\mathrm{cheap}}\}$ is closed
    and that for all $(i, s) \in IS \backslash IS_{c}$, we have by \cref{eq:P_exp} that
    \begin{equation}\label{eq:cost_is_high_in_non_cheap}
        \Cost_{t_i}(\gamma_i(s)) \geq \Delta_{\Cost}.
    \end{equation}
    For the ease of notation, we use the lexicographic order on the pairs $(i, s) \in IS$,
    where
    $(i, s) < (j, s')$ if $i < j$ or if $i = j$ and $s < s'$.
    For $(i, s) \in IS_{\mathrm{cheap}}$, we also set $J(i, s)$ to be the unique index $j\in \{1, \ldots, m \}$ such that $\gamma_i(s) \in B_r(\Crit_j)$.\\
    As a next step, we inductively define breakpoints, which we will eventually use to get an upper bound
    on $c_t(x_1, x_2)$ by repeatedly using the triangle inequality.
    We start by setting
    \begin{equation} \label{eq:breakpoint_initial}
        (i_{\mathrm{out}}^0, s_{\mathrm{out}}^0) \coloneqq \begin{cases}
            \max\{(i, s) \in IS_{\mathrm{cheap}} \mid J(i, s) = J(1, a_1)\} & \text{if } (1, a_1) \in IS_{\mathrm{cheap}}, \\
            (1, a_1)                                               & \text{otherwise},
        \end{cases}
    \end{equation}
    and inductively set, for each $k$
    \begin{align*}
        (i_{\mathrm{in}}^{k+1}, s_{\mathrm{in}}^{k+1})   & \coloneqq \min\{(i, s) \in IS_{\mathrm{cheap}} \mid (i, s) > (i_{\mathrm{out}}^{k}, s_{\mathrm{out}}^{k})\}, \\
        (i_{\mathrm{out}}^{k+1}, s_{\mathrm{out}}^{k+1}) & \coloneqq \max\{(i, s) \in IS_{\mathrm{cheap}} \mid J(i, s) = J(i_{\mathrm{in}}^k, s_{\mathrm{in}}^k)\}.
    \end{align*}
    We terminate this scheme at step $d$ if there is no possible choice for $(i_{\mathrm{in}}^{d+1}, s_{\mathrm{in}}^{d+1})$.
    Formally, we set $(i_{\mathrm{in}}^{d+1}, s_{\mathrm{in}}^{d+1}) = (n, b_n)$ and $(i_{\mathrm{in}}^0, s_{\mathrm{in}}^0) = (1, a_1)$.
    The resulting breakpoints have the following properties:
    \begin{enumerate}
        \item $d \leq m$ (where $m$ is the number of components $\Crit_1,\ldots,\Crit_m$).
        \item $(i_{\mathrm{out}}^k, s_{\mathrm{out}}^k) \in IS_{\mathrm{cheap}}$ and $(i_{\mathrm{in}}^{k}, s_{\mathrm{in}}^{k}) \in IS_{\mathrm{cheap}}$
              for all $1 \leq k \leq d$.
        \item $J(i_{\mathrm{in}}^k, s_{\mathrm{in}}^k) = J(i_{\mathrm{out}}^k, s_{\mathrm{out}}^k)$ for all $1 \leq k \leq d$.
        \item For all $(i_{\mathrm{out}}^k, s_{\mathrm{out}}^k) < (i, s) < (i_{\mathrm{in}}^{k+1}, s_{\mathrm{in}}^{k+1})$, we have that
              $(i, s) \notin IS_{\mathrm{cheap}}$.\label{item:between_breakpoints_not_cheap}
    \end{enumerate}
    We set, for $0 \leq k \leq d+1$, $p_{\mathrm{in}}^k \coloneqq \gamma_{i_{\mathrm{in}}^k}(s_{\mathrm{in}}^k)$ and $p_{\mathrm{out}}^k \coloneqq \gamma_{i_{\mathrm{out}}^k}(s_{\mathrm{out}}^k)$.
    In the rest of this proof, we first estimate $ \Cc_t  (p_{\mathrm{out}}^k, p_{\mathrm{in}}^{k+1})$, then $ \Cc_t  (p_{\mathrm{in}}^k, p_{\mathrm{out}}^k)$,
    and then we combine those estimates using the triangle inequality (provided by \cref{lemma:triangular_inequ_on_P}) to get an upper bound on $c_t(x_1, x_n)$.\\
    \textbf{Estimating $ \Cc_t  (p_{\mathrm{out}}^k, p_{\mathrm{in}}^{k+1})$:}
    We set, for $1 \leq i \leq n-1$ and $0 \leq k \leq d$,
    \begin{align*}
        S_{\mathrm{exp}}^{i,k}            & \coloneqq \{s \in (a_i, b_i]_\Ii \mid (i_{\mathrm{out}}^k, s_{\mathrm{out}}^k) < (i, s) < (i_{\mathrm{in}}^{k+1}, s_{\mathrm{in}}^{k+1})\}        \\
        \bar{S}_{\mathrm{exp}}^{i,k} & \coloneqq \{s \in (a_i, b_i]_\Ii \mid (i_{\mathrm{out}}^k, s_{\mathrm{out}}^k) \leq (i, s) \leq (i_{\mathrm{in}}^{k+1}, s_{\mathrm{in}}^{k+1})\},
    \end{align*}
    where some of the $S_{\mathrm{exp}}^{i,k}$ and $\bar{S}_{\mathrm{exp}}^{i,k}$ might be empty.
    By item~\ref{item:between_breakpoints_not_cheap} above and \cref{eq:cost_is_high_in_non_cheap},
    we see that for all $1 \leq i \leq n-1$,  $0 \leq k \leq d$ and $s \in S_{\mathrm{exp}}^{i,k}$, we have that
    $\Cost_{t_i}(\gamma_i(s)) \geq \Delta_{\Cost}$. Thus, recalling \cref{eq:curve_triangle}, we have
    \begin{align*}
        C + \frac{\varepsilon}{3}
         & \geq \sum_{i=0}^{n-1} \int_{(a_i, b_i]_\Ii} \Cost_{t_i}(\gamma_i(s)) \diff \nu(s)
        \geq \sum_{k=0}^{d} \sum_{i=i_{\mathrm{out}}^k}^{ i_{\mathrm{in}}^{k+1}} \int_{S_{\mathrm{exp}}^{i, k}} \Cost_{t_i}(\gamma_i(s)) \diff \nu(s) \\
         & \geq \sum_{k=0}^{d} \sum_{i=i_{\mathrm{out}}^k}^{ i_{\mathrm{in}}^{k+1}} \int_{S_{\mathrm{exp}}^{i, k}} \Delta_{\Cost} \diff \nu(s)
        = \Delta_{\Cost} \cdot \sum_{k=0}^{d} \sum_{i=i_{\mathrm{out}}^k}^{ i_{\mathrm{in}}^{k+1}} \nu(S_{\mathrm{exp}}^{i, k}).
    \end{align*}
    From the chain of inequalities above and the fact that for a fixed $0 \leq k \leq d$, we have that $\sum_{i=i_{\mathrm{out}}^k}^{i_{\mathrm{in}}^{k+1}} \nu(\overline{S}_{\mathrm{exp}}^{i,k}) - \nu(S_{\mathrm{exp}}^{i,k}) \leq 2$,we get
    \begin{equation} \label{eq:cost_estimate}
        \begin{split}
            \varepsilon_{\Cost} \cdot \left(\sum_{k=0}^{d} \sum_{i=i_{\mathrm{out}}^k}^{i_{\mathrm{in}}^{k+1}} \nu(\overline{S}_{\mathrm{exp}}^{i, k})\right)
             & \leq \varepsilon_{\Cost} \cdot \left(\sum_{k=0}^{d} \left( \sum_{i=i_{\mathrm{out}}^k}^{i_{\mathrm{in}}^{k+1}} \nu(S_{\mathrm{exp}}^{i, k}) + 2\right)\right) \\
             & \leq \varepsilon_{\Cost} \cdot \left(\frac{C + \frac{\varepsilon}{3}}{\Delta_{\Cost}} + 2(m+1)\right) = \frac{\varepsilon}{3}.
        \end{split}\end{equation}
    With this estimate, we have that
    \begin{align*}
             & \sum_{k=0}^{d} \Cc_t (p_{\mathrm{out}}^k, p_{\mathrm{in}}^{k+1})
        \leq \sum_{k=0}^{d}\left( \sum_{i=i_{\mathrm{out}}^k}^{i_{\mathrm{in}}^{k+1}} \int_{\overline{S}_{\mathrm{exp}}^{i, k}} \Cost_{t}(\gamma_i(s)) \diff \nu(s)\right)                                                                                                                                                                                 \\
        \leq & \sum_{i=1}^{n-1} \underbrace{\int_{(a_i, b_i]_\Ii} \Cost_{t_i}(\gamma_i(s)) \diff \nu(s)}_{\leq \frac{\varepsilon}{3n} +  c_{t_{i}}  (x_i, x_{i+1}) \text{ by~\eqref{eq:choice_of_curve}}}
        + \underbrace{\sum_{k=0}^{d}\left(\sum_{i=i_{\mathrm{out}}^k}^{i_{\mathrm{in}}^{k+1}} \int_{\overline{S}_{\mathrm{exp}}^{i, k}} \underbrace{|\Cost_{t_i}(\gamma_i(s)) - \Cost_{t}(\gamma_i(s))|}_{\leq \varepsilon_{\Cost} \text{ by~\eqref{eq:delta_prime}}}\diff\nu(s)\right)}_{\leq \frac{\varepsilon}{3} \text{ by~\eqref{eq:cost_estimate}}} \\
         \leq   & \frac{\varepsilon}{3} + \sum_{i=1}^{n-1} c_{t_i}(x_i, x_{i+1}) + \frac{\varepsilon}{3} =\sum_{i=1}^{n-1}  c_{t_{i}} (x_i, x_{i+1}) + \frac{2\varepsilon}{3}.
    \end{align*}
    \textbf{Estimating $ \Cc_t (p_{\mathrm{in}}^k, p_{\mathrm{out}}^{k})$:}
    We fix $0 \leq k \leq d$. If $k=0$,
    we only consider the case where  $p_{\mathrm{in}}^0 \neq p_{\mathrm{out}}^0$, and we are thus in the
    first case of \cref{eq:breakpoint_initial}, i.e., $i_{\mathrm{in}}^k, s_{\mathrm{in}}^k \in IS_{\mathrm{cheap}}$.
    If $p_{\mathrm{in}}^0 = p_{\mathrm{out}}^0$, we may simply ignore this term in the triangle inequality at the end.
    We set $j \coloneqq J(i_{\mathrm{in}}^k, s_{\mathrm{in}}^k) = J(i_{\mathrm{out}}^k, s_{\mathrm{out}}^k)$.
    We can choose $\overline{p}_{\mathrm{in}}^k, \overline{p}_{\mathrm{out}}^k \in \Crit_j$ such that
    \begin{equation*}
        d_{\Pp}  (\overline{p}_{\mathrm{in}}^k, p_{\mathrm{in}}^k) \leq r \leq L \quad \text{and} \quad  d_{\Pp} (\overline{p}_{\mathrm{out}}^k, p_{\mathrm{out}}^k) \leq r \leq L,
    \end{equation*}
    so that we can use \cref{eq:cheap_fuel_cheap_action} to see that
    \begin{equation*}
        \Cc_t (\overline{p}_{\mathrm{in}}^k, p_{\mathrm{in}}^k) \leq  \frac{\varepsilon}{6(m+1)} \quad \text{and} \quad  \Cc_t (\overline{p}_{\mathrm{out}}^k, p_{\mathrm{out}}^k) \leq  \frac{\varepsilon}{6(m+1)}.
    \end{equation*}
    Using the triangle inequality and \cref{prop:no_cost_in_within_conn_component}, we have that
    \begin{equation*}
        \Cc_t (p_{\mathrm{in}}^k, p_{\mathrm{out}}^k) \leq \Cc_t (p_{\mathrm{in}}^k, \overline{p}_{\mathrm{in}}^k) +  \Cc_t  (\overline{p}_{\mathrm{in}}^k, \overline{p}_{\mathrm{out}}^k) +  \Cc_t  (\overline{p}_{\mathrm{out}}^k, p_{\mathrm{out}}^k) \leq \frac{\varepsilon}{3(m+1)}.
    \end{equation*}
    \textbf{Conclusion:}
    Since $(i_{\mathrm{in}}^{d+1}, s_{\mathrm{in}}^{d+1}) = (n, b_n)$ and $(i_{\mathrm{in}}^0, s_{\mathrm{in}}^0) = (1, a_1)$, we have that
    \begin{align*}
        c_t(x_1, x_n) & \leq \sum_{k=0}^{d}  \Cc_t (p_{\mathrm{in}}^k, p_{\mathrm{out}}^{k}) + \sum_{k=0}^{d}  \Cc_t (p_{\mathrm{out}}^k, p_{\mathrm{in}}^{k+1})                                                            \\
                      & \leq (m+1) \frac{\varepsilon}{3(m+1)} + \sum_{i=1}^{n-1}  c_{t_{i}}  (x_i, x_{i+1}) + \frac{2\varepsilon}{3} = \sum_{i=1}^{n-1}  c_{t_{i}}  (x_i, x_{i+1}) + \varepsilon.
    \end{align*}
    The fact that for $x, x' \in \Crit \coloneqq \{y \in \Xx \mid |\partial_t E_t|(y) = 0\}$,  $c_t(x, x') = 0$ if and only if $x$ and $x'$ belong to the same path-connected components of $\Crit$
    follows from \cref{prop:no_cost_in_within_conn_component} together with \cref{prop:cost_nonzero}.
\end{proof}
To end this section, we proof the ancillary lemma \cref{lemma:triangular_inequ_on_P}.
\begin{proof}[Proof of \cref{lemma:triangular_inequ_on_P}]
    Let $c_t$ be an action generated by curves as per \cref{def:generated_by_curves}.
    We fix $p_1, p_2, p_3 \in \Pp$ and $\varepsilon > 0$.
    We choose $\gamma^{(i)} \in \Gamma(p_i, p_{i+1})$ for $i \in \{1, 2\}$ such that
    \begin{equation}\label{eq:gamma_i_choice}
        \Cc_t (p_i, p_{i+1}) + \frac{\varepsilon}{2} \geq \int_{(a_i, b_i]_\Ii} \Cost_t\big(\gamma^{(i)}(s)\big) \diff \nu(s),
    \end{equation}
    where $a_i < b_i \in  \mathbb{I}$ are  such that $\dom(\gamma^{(i)}) = [a_i, b_i]_\Ii$.  
    In particular, $\gamma^{(i)}(a_i) = x_i$, $\gamma^{(i)}(b_i) = x_{i+1}$ and $\gamma^{(1)}(b_1) = \gamma^{(2)}(a_2)$.
    We set $\underline{\gamma}^{(2)}$ to be the curve $\gamma^{(2)}$ reparametrized to $[b_1, b_2 + b_1 - a_2]_\Ii$, i.e,
    $\underline{\gamma}^{(2)}(s) = \gamma^{(2)}(s + b_1 - a_2)$. By \cref{prop:curve_restriction},
    $\underline{\gamma}^{(2)} \in \Gamma(p_2, p_3)$ and likewise,
    the  concatenation $\gamma^{(1)} \cup \underline{\gamma}^{(2)}$ as defined in \cref{prop:curve_restriction} is in $\Gamma(p_1, p_3)$.
    We have that
    \begin{align*}
        \Cc_t (p_1, p_3) & \leq \int_{(a_1, b_2 + b_1 - a_2]_\Ii} \Cost_t\big(\gamma^{(1)} \cup \underline{\gamma}^{(2)}(s)\big) \diff \nu(s)                          \\
                      & \leq \int_{(a_1, b_1]_\Ii} \Cost_t\big(\gamma^{(1)}(s)\big) \diff \nu(s) + \int_{(a_2, b_2]_\Ii} \Cost_t\big(\gamma^{(2)}(s)\big) \diff \nu(s) \\
                      & \leq  \Cc_t  (p_1, p_2) +  \Cc_t (p_2, p_3) + \varepsilon.
    \end{align*}
    Since $\varepsilon > 0$ was arbitrary, we have that $\Cc_t$ fulfills the triangle inequality. The triangle inequality for $c_{t}$ follows by definition of~$c_{t}$ in~\eqref{eq:action_infimum}.
\end{proof}
We are now ready to state the main result of this paper, recalling the definition of $\X$ and $q \colon [0, T] \times \Xx \mapsto \X$ from \cref{eq:def_quot_space}.
{
    \renewcommand{\thetheorem}{1~(complete)}
    \begin{theorem}\label{thm:main_result_complete}
        Let us assume \cref{ass:metric_space,ass:reg_E,ass:der_time,ass:unif_coerc,ass:conn_comp}.
        Furthermore, let the family of mappings $\bar{\omega}_t\colon \Xx \to \Xx$ indexed by $t \in [0, T]$ be the transition rule as in \cref{def:discr_evol}, corresponding to an action $c_t$ and
        complying with \cref{ax:decreasing,ax:gradient,ax:limits_traject}.
        Then, for any positive vanishing sequence $(\delta_n)_{n \in \NN}$, and for the corresponding discrete quasistatic evolutions
        $\eta^{\delta_n}$ constructed according to \cref{def:discr_evol} we can---without relabeling---extract a subsequence
        such that:
        \begin{enumerate}
            \item\label{item:main_thm_weak_convergences} There exists a positive Radon measure $\bar\mu \in \mathcal{M}([0, T])$ and  $\mathcal{D}\in L^\infty([0,T],\R)$ such that
            \begin{itemize}
                \item $\mu^{\delta^n} \weak^* \bar\mu$, where $\mu^{\delta_n}$ is as defined in \cref{eq:def_mu_delta}, and
                \item $\mathcal{D}^{\delta_n} \weak^* \mathcal{D} \quad \mbox{in } L^\infty([0,T],\R)$, where $\mathcal{D}^{\delta_n} (t) \coloneqq \partial_t E_t\big( \eta^{\delta_n}(t) \big)$.
            \end{itemize}
            \item\label{item:main_thm_pw_convergence} The compositions $q \circ (\mathrm{id} \times \eta^{\delta_n})$ converge pointwise to a piecewise continuous limiting curve $\hat\eta\colon [0, T] \to \X$.
            \item\label{item:main_thm_sided_limits} The left and right limits $\hat\eta^-(t)$ and $\hat\eta^+(t)$ of $\hat\eta$ exist for every $t \in (0, T)$, and so do the limits $\hat\eta^+(0)$ and $\hat\eta^-(T)$.
            \item\label{item:main_thm_energy_balance} The limiting curve $\hat\eta$ fulfills, for all $0 \leq s < t \leq T$, the energy balance identiy
            \begin{equation}\label{eq:energy_balance_complete_theorem}
                \hat E\big(\hat \eta^+(t)\big) -  \hat E\big(\hat \eta^-(s)\big) =
                \int_s^t \mathcal{D}(\tau) \diff \tau - \bar\mu([s,t]),
            \end{equation}
            \item \label{item:main_thm_critical_points} For all $t \in [0, T]$, $\hat{\eta} (t) \subset \{ (t, x) \in [0, T] \times \Xx \mid |\partial E_{t}| (x) = 0\}$.
            \item \label{item:main_thm_piecewise_cont} The limiting curve $\hat \eta$ is continuous in $[0, T] \setminus J$, where we define $J \coloneqq \{ t \in [0, T] \mid \bar{\mu} (\{ t\} ) >0\}$.
        \end{enumerate}
        %If we further assume \cref{ass:conn_comp,ass:path+Lipsch,ass:PL+time_der_slope} and 
        If we assume \cref{ass:strengthening_6} in addition to \cref{ass:der_time,ass:metric_space,ass:reg_E,ass:unif_coerc,ass:conn_comp}---here, \cref{ax:decreasing,ax:gradient,ax:limits_traject} are sufficient---, we have that
        \begin{enumerate}[start=7]
            \item \label{item:main_thm_energy_balance_with_lift} For almost all $t \in [0, T]$, for all $(t, x) \in \hat\eta(t)$, we have that
            $\partial_t E_t(x) = \mathcal{D}(t)$. Picking any lifts $h_t\colon \X \rightarrow [0, T], h_\Xx\colon \X \rightarrow \Xx$ such that $q \circ (h_t \times h_\Xx) \equiv \mathrm{id}_\X$, we can thus rewrite the energy balance~\eqref{eq:energy_balance_complete_theorem} as
            \begin{equation}
                \hat E(\hat\eta^+(t)) -  \hat E(\hat\eta^-(s)) = \int_s^t \partial_t E_\tau\Big(h_\Xx\big(\hat\eta(\tau)\big)\Big) \diff \tau - \bar \mu([s, t]).
            \end{equation}
        \end{enumerate}
        If the transition rule fulfills the stronger \cref{ax:action_triangular_time_stable} in place of \cref{ax:limits_traject},---which is the case whenever we assume in addition that the transition rule is compatible with an action generated by curves---we have, furthermore,
         \begin{enumerate}[start=8]
        \item\label{item:main_thm_jump_point_characterization} For every $t \in J$, 
        \begin{equation*}
            \hat{E} (\hat\eta^{-} (t)) - \hat{E}(\hat \eta^{+} (t)) = \bar{\mu} (\{t\}) = \hat{c}_{t} (\hat{\eta} ^{-} (t) , \hat{\eta}^{+} (t)),
        \end{equation*}
        where $\hat{c}_{t} ([(t, x_{1}] , [(t, x_{2})] ) \coloneqq c_{t} (x_{1}, x_{2})$.       
         \end{enumerate}
        If we assume \cref{ass:PL+time_der_slope,ass:path+Lipsch} in addition to \cref{ass:der_time,ass:metric_space,ass:reg_E,ass:unif_coerc,ass:conn_comp,ass:t_der_quotient}---here, \cref{ax:decreasing,ax:gradient,ax:limits_traject} are sufficient and \cref{ass:strengthening_6} is not needed---, we even have that $\partial_t E_{\cdot}\colon [0, T] \times \Xx \to \R$ factors through $\X$; we call the resulting functional $\partial_t \hat E\colon \X \to \R$. In this case, we even have
        \begin{enumerate}[start=9]
            \item \label{item:main_thm_energy_balance_with_factorization} The limiting curve $\hat\eta$ fulfills, for all $0 \leq s < t \leq T$, the energy balance identiy
            \begin{equation} \label{eq:energy_balance_with_factorization}
                  \hat E  (\hat\eta^+(t)) -   \hat E (\hat\eta^-(s)) = \int_s^t \partial_t \hat E(\hat\eta(\tau)) \diff \tau - \bar \mu([s, t]).
            \end{equation}
            Furthermore, we have that $\supp\, \bar \mu = J$, i.e., $\bar \mu$ is purely atomic.
        \end{enumerate}
    \end{theorem}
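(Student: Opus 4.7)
The plan is to assemble results established in the previous sections, since this theorem is a consolidated statement of them, together with a last step of analysis for the purely atomic claim.

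Items~\eqref{item:main_thm_weak_convergences}--\eqref{item:main_thm_piecewise_cont} follow directly by combining \cref{prop:limit_energies}---which supplies the weak-$*$ convergences of item~\eqref{item:main_thm_weak_convergences}, the pointwise convergence of the energies, and the integral energy balance~\eqref{eq:ener_bal_limit}---with \cref{thm:traj_convergence}, which produces the limit curve $\hat\eta$ and establishes items~\eqref{item:main_thm_pw_convergence}, \eqref{item:main_thm_sided_limits}, \eqref{item:main_thm_critical_points}, \eqref{item:main_thm_piecewise_cont} and the energy balance of item~\eqref{item:main_thm_energy_balance}. The hypotheses of \cref{thm:traj_convergence} are satisfied since \cref{ax:decreasing,ax:gradient} are given and \cref{ax:limits_traject} is either assumed directly or, when \cref{ax:action_triangular_time_stable} is assumed, provided by \cref{lemma:action_implies_traj}. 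A standard diagonal extraction produces a single subsequence realizing all the convergences simultaneously.

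Items~\eqref{item:main_thm_energy_balance_with_lift} and \eqref{item:main_thm_jump_point_characterization} are direct invocations of earlier results. Item~\eqref{item:main_thm_energy_balance_with_lift} is \cref{cor:relating_D_and_time_der}: under \cref{ass:strengthening_6}, \cref{lemma:time_derivative_factors} shows that $\partial_t E_t$ is constant on path-connected components of $\Crit_t$ for all but countably many $t$, and combined with \cref{lemma:bound_der_lim} yields $\mathcal{D}(\tau)=\partial_t E_\tau\bigl(h_\Xx(\hat\eta(\tau))\bigr)$ for a.e.\ $\tau$, allowing the integral in~\eqref{eq:energy_balance_complete_theorem} to be rewritten for any lift $h_\Xx$. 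Item~\eqref{item:main_thm_jump_point_characterization} is exactly \cref{cor:action_implies_traj}, which applies whenever \cref{ax:action_triangular_time_stable} is satisfied; in the case where the transition rule is compatible with an action generated by curves, \cref{lemma:good_curves_fulfill_continuous_triangular_inequality} supplies \cref{ax:action_triangular_time_stable}.

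For item~\eqref{item:main_thm_energy_balance_with_factorization}, \cref{ass:t_der_quotient} directly provides the continuous factorization $\partial_t \hat E\colon \X \to \R$. By continuity of $\partial_t \hat E$ together with the pointwise convergence $q\bigl(t,\eta^{\delta_n}(t)\bigr) \to \hat\eta(t)$ in $\X$ from item~\eqref{item:main_thm_pw_convergence}, one obtains the pointwise convergence $\mathcal{D}^{\delta_n}(t) \to \partial_t \hat E(\hat\eta(t))$; dominated convergence---applied via the uniform bound granted by \cref{lem:bound_energ_evol} and \cref{ass:der_time}---identifies the weak-$*$ limit $\mathcal{D}$ with $\partial_t \hat E\circ\hat\eta$ a.e., and substitution into~\eqref{eq:energy_balance_complete_theorem} yields~\eqref{eq:energy_balance_with_factorization}.

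The main obstacle is the claim $\supp\bar\mu = J$, i.e.\ that $\bar\mu$ is purely atomic. The strategy is to combine the BV decomposition $d\bar{\mathcal E} = \partial_t \hat E(\hat\eta)\,d\mathcal{L}^1 - d\bar\mu$ from \cref{cor:reg_limit_ener} with item~\eqref{item:main_thm_piecewise_cont}, which gives continuity of $t \mapsto \bar{\mathcal E}_t = \hat E(\hat\eta(t))$ on $[0,T]\setminus J$. Between consecutive atoms of $\bar\mu$, the curve $\hat\eta$ evolves continuously through critical components on which $E$ is spatially constant, so one expects $\bar{\mathcal E}$ to be absolutely continuous with derivative $\partial_t \hat E(\hat\eta)$. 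Matching the Lebesgue--Radon decompositions of both sides of $d\bar{\mathcal E} = \partial_t \hat E(\hat\eta)\,d\mathcal{L}^1 - d\bar\mu$ then forces the continuous singular and absolutely continuous parts of the non-negative measure $\bar\mu$ to vanish, leaving only the atomic part concentrated on $J$ and concluding the proof.
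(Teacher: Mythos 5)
Your handling of items 1 through 8 matches the paper's route: items 1--6 are assembled from Proposition~\ref{prop:limit_energies} and Theorem~\ref{thm:traj_convergence}, with Lemma~\ref{lemma:action_implies_traj} supplying Axiom~\ref{ax:limits_traject} when starting from Axiom~\ref{ax:action_triangular_time_stable}; item~7 is Corollary~\ref{cor:relating_D_and_time_der}; and item~8 is Corollary~\ref{cor:action_implies_traj} together with Lemma~\ref{lemma:good_curves_fulfill_continuous_triangular_inequality}. For the first part of item~9, your identification of $\mathcal{D}$ with $\partial_t\hat E\circ\hat\eta$ via the continuity of $\partial_t\hat E$ from Assumption~\ref{ass:t_der_quotient}, pointwise convergence in $\X$, and a dominated-convergence argument is correct.

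The gap is in the last claim, that $\supp\bar\mu = J$. You assert that between atoms $\bar{\mathcal E}$ is absolutely continuous with derivative $\partial_t\hat E(\hat\eta)$ because $\hat\eta$ is continuous on $[0,T]\setminus J$ and $E$ is constant on critical components --- but this is exactly what must be proved and does not follow from continuity. A continuous BV function can carry a nonzero Cantor part, and nothing in the continuity of $\hat\eta$ rules out a singular-continuous component of $\bar\mu$ concentrated on a Lebesgue-null set disjoint from $J$; in that case the curve would still be continuous off~$J$ while $\bar\mu$ would not be purely atomic. The paper closes this gap via Propositions~\ref{prop:dini_der} and~\ref{prop:mu_atomic}: it introduces the non-increasing auxiliary function $f(t)=\hat E(\hat\eta(t))-\int_0^t\partial_t\hat E(\hat\eta(\tau))\,\diff\tau$, uses Assumption~\ref{ass:PL+time_der_slope} (the local Polyak--\L{}ojasiewicz-type inequality~\eqref{eq:liminf_ineq_relaxed} together with the Lipschitz-in-time control of $|\partial E_t|$) to prove that the Dini lower right derivative $D_+f_+(t)\ge 0$ for \emph{every} $t$, and then applies Lemma~\ref{lem:monotn_Dini} to conclude that the continuous part $f_+-f^J$ is non-decreasing, which forces $\bar\mu=\bar\mu^J$. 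This quantitative, pointwise Dini estimate --- not a matching of Lebesgue--Radon decompositions --- is what kills the Cantor part, and your sketch omits it.
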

}
\begin{proof}
    \cref{item:main_thm_weak_convergences} follows from \cref{prop:limit_energies} under \cref{ass:metric_space,ass:reg_E,ass:der_time,ass:unif_coerc,ass:conn_comp}. 
    \cref{item:main_thm_pw_convergence,item:main_thm_sided_limits,item:main_thm_energy_balance,item:main_thm_piecewise_cont,item:main_thm_critical_points} follows from \cref{thm:traj_convergence} under \cref{ass:metric_space,ass:reg_E,ass:der_time,ass:unif_coerc,ass:conn_comp} whenever the action fulfills \cref{ax:decreasing,ax:gradient,ax:limits_traject}.
    \cref{item:main_thm_jump_point_characterization} and that \cref{ax:action_triangular_time_stable} implies \cref{ax:decreasing} follows from \cref{lemma:axiom3'-axiom1} and \cref{cor:action_implies_traj} under \cref{ass:metric_space,ass:reg_E,ass:der_time,ass:unif_coerc,ass:conn_comp}.
    The fact that \cref{ax:action_triangular_time_stable} is guaranteed whenever we assume in addition that the transition rule is compatible with an action which is generated by curves is the content of \cref{lemma:good_curves_fulfill_continuous_triangular_inequality}.\\
    \cref{item:main_thm_energy_balance_with_lift} follows from \cref{cor:action_implies_traj} under  \cref{ass:metric_space,ass:reg_E,ass:der_time,ass:unif_coerc,ass:conn_comp} whenever the action fulfills \cref{ax:decreasing,ax:gradient,ax:limits_traject}.\\
    The fact that assuming \cref{ass:t_der_quotient,ass:path+Lipsch,ass:PL+time_der_slope} in addition to \cref{ass:der_time,ass:metric_space,ass:reg_E,ass:unif_coerc,ass:conn_comp}, we can rewrite the energy balance as in \cref{eq:energy_balance_with_factorization} follows from \cref{lemma:bound_der_lim}. In \cref{prop:mu_atomic}, we show that under the same assumptions, we have that $\supp \, \bar \mu = J$.
\end{proof}

\section{Examples of transition rules} \label{sec:examples_of_evolution_rules_elaborate}
The goal of this section is to show that the transition rules of the examples given in \cref{sec:examples_of_evolution_rules} are indeed actions which are generated by curves
as per \cref{def:generated_by_curves}, i.e., to prove \cref{lemma:gradient_flow_summary_lemma,lemma:MMS_summary_lemma,lemma:BDF2_summary_lemma}.
Before we dive into the specific examples, we first show a general lemma which will help us to prove \cref{prop:no_cost_in_within_conn_component} for all three examples.
\begin{lemma}\label{lemma:zero_cost_in_conn_comp_hd_dim}
    Assume \cref{ass:unif_coerc,ass:conn_comp}. Let $c_t\colon \Xx \times \Xx \to [0, \infty)$ be an action which fulfills the triangle inequality, i.e., for which, for $x, x', x'' \in \Xx$, we have that
    \begin{equation*}
        c_t(x, x'') \leq c_t(x, x') + c_t(x', x'').
    \end{equation*}
    Let us furthermore assume that there exists $\alpha > 0$ such that for each compact $K \subseteq \Xx$ there exists a constant $C_K > 0$ for which,
    for all $x, x' \in K$ with $x, x' \in \{y \in \Xx \mid | \partial  E_t|(y) = 0 \} \eqqcolon \Crit$, we have that $c_t(x, x') \leq C_K \cdot d(x, x')^\alpha$.
    If $x, x'$ lie in the same component of $\Crit$, and there exists some connected subset $U \subseteq \Crit$ such that
    $x, x' \in U$ and $U$ has Hausdorff dimension $\dim_H(U) < \alpha$, then $c_t(x, x') = 0$. \\
    Moreover, if we have that $\frac{c_t( y, y')}{d(y, y')^\alpha} \rightarrow 0$ as $d(y, y') \rightarrow 0$, then the same condition holds if the set $U$ to have finite $\alpha$-dimensional Hausdorff measure.
\end{lemma}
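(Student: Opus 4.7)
The plan is to bound $c_t(x, x')$ by a chain sum via the triangular inequality, controlling the sum through a Hausdorff-type cover of $U$. Since $\dim_H(U) < \alpha$ implies $\mathcal{H}^\alpha(U) = 0$, for every $\eta > 0$ and every $\rho > 0$ there exists a countable cover of $U$ by balls $\{B(z_i, r_i)\}_{i \in I}$ with $z_i \in U \subseteq \Crit$, $r_i < \rho$, and $\sum_i r_i^\alpha < \eta$; the restriction $z_i \in U$ is harmless, since one may intersect any covering set with $U$ and take one of its points as the center while enlarging the radius by at most a factor of two.

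Next I would exploit the connectedness of $U$ to produce a chain from $x$ to $x'$. Define a graph on $I$ by $i \sim j$ whenever $B(z_i, r_i) \cap B(z_j, r_j) \neq \emptyset$. If this graph decomposed into more than one component, the unions of balls over each component would form disjoint open sets covering $U$ and each containing a center $z_i \in U$, contradicting the connectedness of $U$. Hence, picking $i_0, i_N \in I$ with $x \in B(z_{i_0}, r_{i_0})$ and $x' \in B(z_{i_N}, r_{i_N})$, a finite repetition-free path yields a chain $z_{i_0}, z_{i_1}, \dots, z_{i_N} \in U \subseteq \Crit$ with $d(z_{i_k}, z_{i_{k+1}}) \leq r_{i_k} + r_{i_{k+1}}$.

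Let $K$ denote the compact path-connected component of $\Crit$ containing $\overline{U}$, whose existence is guaranteed by \cref{ass:conn_comp}, and let $C_K$ be the associated H\"older constant. Using the triangular inequality and the hypothesis $c_t(y, y') \leq C_K\, d(y, y')^\alpha$ along each link of the chain (together with the initial and terminal links from $x$ and to $x'$), I obtain
\begin{equation*}
c_t(x, x') \leq C_K \Bigl( r_{i_0}^\alpha + \sum_{k=0}^{N-1} (r_{i_k} + r_{i_{k+1}})^\alpha + r_{i_N}^\alpha \Bigr) \leq \kappa(\alpha)\, C_K \sum_{i \in I} r_i^\alpha < \kappa(\alpha)\, C_K\, \eta,
\end{equation*}
where the middle inequality uses subadditivity of $s \mapsto s^\alpha$ when $\alpha \leq 1$ or the inequality $(a+b)^\alpha \leq 2^\alpha(a^\alpha + b^\alpha)$ when $\alpha > 1$, combined with the absence of repeats in the chain (so each $r_i^\alpha$ appears boundedly often). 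Sending $\eta \to 0$ gives $c_t(x, x') = 0$. The delicate point in the argument is guaranteeing that the chain stays inside $\Crit$, which is exactly why one must select $z_i \in U$ when building the cover.

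For the "moreover" part, the refinement $c_t(y, y')/d(y, y')^\alpha \to 0$ as $d(y, y') \to 0$ allows one to replace the fixed constant $C_K$ by an arbitrarily small one on sufficiently short displacements. Given $\eta > 0$, I fix $\rho > 0$ so that $c_t(y, y') \leq \eta\, d(y, y')^\alpha$ for all $y, y' \in K \cap \Crit$ with $d(y, y') < 2\rho$. Since $\mathcal{H}^\alpha(U) < \infty$, the definition of Hausdorff measure furnishes a $\rho$-cover with $\sum_i r_i^\alpha \leq \mathcal{H}^\alpha(U) + 1$. Reusing the chain construction and estimate above yields $c_t(x, x') \leq \kappa(\alpha)\, \eta\,(\mathcal{H}^\alpha(U) + 1)$, and letting $\eta \to 0$ concludes.
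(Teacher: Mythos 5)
Your proof is correct and follows essentially the same strategy as the paper's: cover $U$ finely, extract a finite chain of points in $U$ from the connectedness of the intersection graph of the cover, then combine the triangular inequality with the H\"older-type bound and the smallness of $\sum_i r_i^\alpha$. One small point in your favor: by centering the balls at points $z_i \in U$ you make explicit that all chain points lie in $\Crit$ (so the bound $c_t \leq C_K d^\alpha$ applies), whereas the paper's version picks $y_i \in U_i \cap U_{i+1}$ and glosses over the need for $y_i \in U$; your handling of this is cleaner.
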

\begin{proof}
    Let us pick $x, x'$ and $U$ as in the assumptions. We can assume, without loss of generality, that
    $U$ is contained in a single component of $\Crit$, and thus, by \cref{ass:conn_comp,ass:unif_coerc},
    $U \subseteq K$ for some compact $K \subseteq \Xx$. We pick $C_K$ as in the assumptions.
    Next, we note that if $U_1, \dots, U_n$ is an open cover of $U$, there
    exists---possibly after reordering---a sequence of distinct $y_1, \dots, y_{n'} \in U$
    such that $n' \leq n + 1$, $\{y_i, y_{i+1}\} \subseteq U_i$ and $y_1 = x$, $y_{n'} = x'$.
    To see this, we first may assume without loss of generality that $x \in U_1$ and
    pick $n' \in \NN$ such that $x' \in U_{n'}$.
    We consider the graph $G$ whose vertices are given as $N \coloneqq \{U_1, \dots, U_n\}$
    and whose edges are given by $\mathcal{E} \coloneqq \{(U_i, U_j) \mid U_i \cap U_j \neq \emptyset\}$.
    Since $U$ is connected, $G$ is connected and we can find a minimal path from $U_1$ to $U_{n'}$ in $G$.
    After reordering, we may assume that this path is given as $U_1, \dots, U_{n'}$.
    Setting $y_1 = x$, $y^{n'} = x'$ and picking $y^i \in U_i \cap U_{i+1}$ for $i \in \{1, \dots, n'-1\}$,
    we have found the desired sequence.
    By applying the triangle inequality repeatedly, we have that
    \begin{align*}
        c_t(x, x') & \leq \sum_{i=1}^{n'-1}c_t(y_i, y_{i+1})
        \leq \sup_{1 \leq j < n'} \frac{c_t( y_j, y_{j+1})}{d( y_j, y_{j+1})^\alpha} \sum_{i=1}^{n'-1} d(y_i, y_{i+1})^\alpha.
    \end{align*}
    If either the Hausdorff dimension of $U$ is smaller than $\alpha$ or if  $\frac{c_t(y, y')}{d(y, y')^\alpha} \rightarrow 0$ as $d(y, y') \rightarrow 0$ and the $\alpha$-dimensional Hausdorff measure of $U$ is finite, we can make the right hand side arbitrarily small, proving the claim.
\end{proof}
\subsection{The gradient flow} \label{subsec:gradient_flow}
In this section, we prove \cref{lemma:gradient_flow_summary_lemma}. We use freely  \cref{eq:def_phase_space_gf,eq:embedding_gf,eq:gamma_gradient_flow,eq:def_cost_gf,eq:action-lift-GF} in \cref{lemma:gradient_flow_summary_lemma}. 
Substituting those definitions into \cref{eq:action_infimum} and simplifying, we obtain the  function $\cGF_t \colon \Xx \times \Xx \to [0, \infty)$ defined in \cref{eq:def_gradient_flow_action}.
\begin{lemma} \label{lemma:equivalent_minimization}
    The map $\cGF_t$ coincides with  $\cGFprime_t \colon \Xx \times \Xx \to [0, \infty)$ defined as
    \begin{equation} \label{eq:equivalent_minimization}
        \cGFprime_t (x_1, x_2) \coloneqq \inf_{\phi \in AC([0, 1]; \Xx)} \int_{[0, 1]} |\partial E_t| (\phi(s))|\dot{\phi}(s)| \diff s. 
    \end{equation}
    Furthermore, the integral on the right-hand side is invariant to reparametrization of the curve:
    For all $\phi \in AC([0, 1]; \Xx)$ and all continuously differentiable and monotone increasing $f\colon [a, b] \rightarrow [0, 1]$, setting
    $\overline{\phi} \coloneqq \phi \circ f$ we have that for every $t \in [0, T]$
    \begin{equation} \label{eq:equivalent_minimization_reparametrization}
        \int_{[0, 1]}  |\partial E_t| (\phi(s))  |\dot{\phi}(s)| \diff  s  = \int_{[a, b]}  |\partial E_t|  (\overline{\phi}(s)) |\dot{\overline{\phi}}(s)| \diff  s.
    \end{equation}
    As a result, we have that
    \begin{equation} \label{eq:equivalent_minimization_2}
        \cGF_t(x_1,x_2)\coloneqq \inf\left\{
        \int_a^b  |\partial E_t| \big(\phi(s)\big) \diff s \;\middle|\;
        \substack{a < b \in \R \\ \phi \in AC([a,b],\Xx) \\ \phi(a) = x_1,\, \phi(b) = x_2,\, |\dot \phi| \equiv 1}
        \right\}.
    \end{equation}
\end{lemma}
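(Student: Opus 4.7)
The overall plan hinges on Young's inequality in the form $ab \le \tfrac12(a^2+b^2)$, with equality iff $a=b$. Applied pointwise with $a = |\partial E_t|(\phi(s))$ and $b = |\dot\phi(s)|$, this relates the integrand of $\cGFprime_t$ to that of $\cGF_t$, so the main task is to show that one can always reparametrize an admissible curve to essentially achieve the equality case. The proof will thus proceed in three stages: reparametrization invariance of the $\cGFprime_t$-integrand, the easy inequality $\cGFprime_t\le\cGF_t$, and finally the more subtle reverse inequality $\cGF_t\le\cGFprime_t$; the unit-speed formula \eqref{eq:equivalent_minimization_2} then follows immediately.

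For the reparametrization invariance \eqref{eq:equivalent_minimization_reparametrization}, I would invoke the standard chain rule for the metric derivative of an AC curve, namely $|\dot{\overline\phi}(t)| = |\dot\phi(f(t))|\,f'(t)$ for $f$ increasing and $C^1$, and apply the change of variables $s=f(t)$ directly to the integral. For the easy direction $\cGFprime_t\le\cGF_t$, I would take any admissible $\phi\in AC([a,b];\Xx)$ for $\cGF_t$, apply Young's inequality pointwise, and then use the reparametrization invariance just established to rewrite $\int_a^b|\partial E_t|(\phi)|\dot\phi|\,ds$ as an integral over $[0,1]$, bounding $\cGFprime_t(x_1,x_2)$ from above by $\tfrac12\int_a^b(|\partial E_t|^2(\phi)+|\dot\phi|^2)\,ds$; passing to the infimum closes this direction.

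The hard step is $\cGF_t\le\cGFprime_t$. Given $\phi\in AC([0,1];\Xx)$ connecting $x_1$ to $x_2$, I first reparametrize $\phi$ by arc length on some $[0,L]$ so that $|\dot\phi|\equiv 1$ (the degenerate case $L=0$ forces $x_1=x_2$ and both sides vanish). To approach the Young equality case $|\dot\psi|=|\partial E_t|(\psi)$ without getting stuck at points where $|\partial E_t|(\phi)$ vanishes, I introduce the $\varepsilon$-perturbed time change $\tau^\varepsilon\colon[0,L^\varepsilon]\to[0,L]$ solving
\begin{equation*}
(\tau^\varepsilon)'(r)=|\partial E_t|\bigl(\phi(\tau^\varepsilon(r))\bigr)+\varepsilon,\qquad \tau^\varepsilon(0)=0,
\end{equation*}
which is globally solvable since the right-hand side is continuous and bounded above and below by positive constants on the compact image of $\phi$. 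Setting $\psi^\varepsilon(r)\coloneqq\phi(\tau^\varepsilon(r))$ gives $|\dot\psi^\varepsilon(r)|=|\partial E_t|(\psi^\varepsilon(r))+\varepsilon$, and a direct change of variables $s=\tau^\varepsilon(r)$ yields
\begin{equation*}
\tfrac12\!\int_0^{L^\varepsilon}\!\!\bigl(|\partial E_t|^2(\psi^\varepsilon)+|\dot\psi^\varepsilon|^2\bigr)dr
=\tfrac12\!\int_0^L\!\!\frac{|\partial E_t|^2(\phi)}{|\partial E_t|(\phi)+\varepsilon}ds+\tfrac12\!\int_0^L\!\!\bigl(|\partial E_t|(\phi)+\varepsilon\bigr)ds.
\end{equation*}
Both terms tend to $\int_0^L|\partial E_t|(\phi)\,ds=\int_0^1|\partial E_t|(\phi)|\dot\phi|\,ds$ as $\varepsilon\to 0$ by dominated convergence, so $\cGF_t(x_1,x_2)\le\int_0^1|\partial E_t|(\phi)|\dot\phi|\,ds$, and the infimum over $\phi$ completes the proof. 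Finally, the unit-speed form \eqref{eq:equivalent_minimization_2} follows from combining $\cGF_t=\cGFprime_t$ with reparametrization invariance: any admissible $\phi$ can be reparametrized to arc length on $[0,L(\phi)]$, reducing the integrand to $|\partial E_t|(\phi)$.

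The main obstacle is the construction in stage three; handling the possible vanishing of $|\partial E_t|\circ\phi$ is what motivates the $\varepsilon$-regularization of the ODE, and the careful passage to the limit needs the uniform integrable domination (the integrand $|\partial E_t|^2(\phi)/(|\partial E_t|(\phi)+\varepsilon)$ is bounded by $|\partial E_t|(\phi)$, which lies in $L^1([0,L])$ by continuity and compactness of the image).
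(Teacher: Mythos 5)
Your proof is correct, and it takes a genuinely different --- and in my view cleaner --- route for the hard inequality $\cGF_t \le \cGFprime_t$, while the reparametrization invariance and the easy direction via $ab \le \tfrac12(a^2+b^2)$ match the paper. The paper normalizes only to $|\dot\phi| \le 1$ and then introduces the time change $r_\varepsilon(s) = \int_0^s \frac{|\dot\phi|+\varepsilon}{|\partial E_t|(\phi)+\varepsilon^{1+\kappa}}\,d\sigma$ with an auxiliary exponent $\kappa\in(0,\tfrac13)$; the mismatched powers of $\varepsilon$ in numerator and denominator, together with a split of the domain into the ``bad'' set $D'$ (where the ratio $|\dot\phi|/|\partial E_t|\circ\phi$ is small) and its complement, are what make the estimate close, at the cost of non\-trivial bookkeeping and a uniform bound $N$ on $D'$. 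By reparametrizing first to constant unit speed you remove the $|\dot\phi|$-dependence from the time change entirely, so a single additive regularization via the ODE $(\tau^\varepsilon)' = |\partial E_t|(\phi\circ\tau^\varepsilon) + \varepsilon$ turns the cost integral of $\psi^\varepsilon$ into the exact identity
\begin{equation*}
\tfrac12\int_0^{L^\varepsilon}\bigl(|\partial E_t|^2(\psi^\varepsilon)+|\dot\psi^\varepsilon|^2\bigr)\,dr
= \tfrac12\int_0^L \frac{|\partial E_t|^2(\phi)}{|\partial E_t|(\phi)+\varepsilon}\,ds + \tfrac12\int_0^L\bigl(|\partial E_t|(\phi)+\varepsilon\bigr)\,ds,
\end{equation*}
after which dominated convergence closes the argument with no domain splitting and no exponent tuning; the ODE is globally and uniquely solvable because $g(s):=|\partial E_t|(\phi(s))+\varepsilon$ is continuous and pinned between $\varepsilon$ and $\max_{[0,L]}|\partial E_t|\circ\phi + \varepsilon$, so $\tau^\varepsilon=h^{-1}$ with $h(\sigma)=\int_0^\sigma ds/g(s)$. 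One small gap shared by both your argument and the paper's: the reparametrization invariance \eqref{eq:equivalent_minimization_reparametrization} is stated only for $C^1$ monotone $f$, which is not enough to justify replacing a competitor by its arc-length (or $|\dot\phi|\le 1$) reparametrization --- the arc-length map is merely absolutely continuous and monotone. One should appeal to the standard AC-reparametrization invariance in metric spaces (e.g.~\cite[Lemma~1.1.4]{greenbook}), which is what both proofs implicitly use; it would be worth saying so explicitly.
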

\begin{proof}
    \cref{eq:equivalent_minimization_reparametrization} is a simple application of the chain rule.
    For \cref{eq:equivalent_minimization}, we note that
    for $x, y \in \R$, we have that
    \begin{equation}\label{eq:quadratic_formula}
        \frac{1}{2} x^2 + \frac{1}{2} y^2 = xy + \frac{1}{2} (x-y)^2 \geq xy.
    \end{equation}
    From~\eqref{eq:quadratic_formula}, it follows directly that $\cGF_t \geq \cGFprime_t$.
    For the other direction, let us assume that $c_{t}^{F} (x_{1}, x_{2})>0$. We first fix some $x_{1}, x_{2} \in \Xx \times \Xx$ and  $\kappa \in (0, \frac{1}{3})$. Next, we choose some $\varepsilon' > 0$, $ b> 0$, and a curve $\phi \in AC([0, b] , \Xx)$  such that 
    \begin{equation}
          \int_{[0, b]} |\partial E_t| (\phi(s))|\dot{\phi}(s)| \diff s \leq \cGFprime_t (x_1, x_2) + \varepsilon'.
    \end{equation}
    By \cref{eq:equivalent_minimization_reparametrization}, we can assume that  $|\dot \phi(s)| \leq 1$ for a.e.~$s \in [0, b]$. Let us fix $\varepsilon \in (0, 1) $ and set 
    \begin{equation*}
        r_\varepsilon (s) \coloneqq  \int_{0}^s \frac{|\dot \phi(\sigma)| + \varepsilon}{|\partial E_t|(\phi(\sigma)) + \varepsilon^{1+\kappa}} \diff \sigma \qquad \text{for $s \in  [0, b]$.}
    \end{equation*} 
    We define $\phi_\varepsilon \coloneqq \phi \circ r^{-1}_\varepsilon$ and note that $D \coloneqq \dom(\phi_\varepsilon) \subseteq  [0, \frac{(M + 1)b}{\varepsilon^{1+\kappa}}] $. 
    If we set $D' \coloneqq \{s \in D \mid \frac{|\dot \phi(r_\varepsilon^{-1}(s))|}{ |\partial E_t|  (\phi(r_\varepsilon^{-1}(s)))} \leq  b  \varepsilon^\kappa\}$, we see that for all $\sigma \in D'$, $ \dot{r}_\varepsilon  (r_\varepsilon^{-1}(\sigma)) \leq \frac{b+1}{\varepsilon^\kappa}$,
    and thus $|D'| \leq \varepsilon^\kappa$, where $|D'|$ denotes the Lebesgue measure of~$D'$. On the other hand, for $s \in D \setminus D'$, we have---setting $s' \coloneqq r_\varepsilon^{-1}(s)$---that
    \begin{align}
    \label{eq:some-estimate}
        \bigg||\partial E_t|\big(\phi_\varepsilon(s)\big) - |\dot\phi_\varepsilon|(s)\bigg| &= \bigg| |\partial E_t|\big(\phi(s')\big) - |\dot \phi|(s') \cdot \frac{|\partial E_t|\big(\phi(s')\big) + \varepsilon^{1+\kappa}}{|\dot \phi|(s') + \varepsilon}\bigg| \\
        &= \bigg||\partial E_t|(\phi(s')) - \left(1 - \frac{\varepsilon}{|\dot \phi|(s') + \varepsilon}\right) \Big(|\partial E_t|(\phi(s')) + \varepsilon^{1+\kappa}\Big)\bigg| \nonumber\\
        &\leq 2 \varepsilon + \varepsilon \frac{|\partial E_t|\big(\phi(s')\big)}{|\dot \phi|(s')} \leq  2\varepsilon^{1 + \kappa} + \frac{ \varepsilon^{1-\kappa}}{b}. \nonumber
    \end{align}
    Furthermore, we can find a bound $N$ such that $N \geq \big||\partial E_t|\big(\phi_\varepsilon(s)\big) - |\dot\phi_\varepsilon|(s)\big|$ for all $s \in D$, independendly of the chosen $\varepsilon \in (0, 1)$, as both $|\dot \phi|$ and $|\partial E_t|\big(\phi(s')\big)$ are bounded.
    Using \cref{eq:some-estimate} and \cref{eq:quadratic_formula}, we see that
    \begin{align}
        \cGF_t(x_1, x_2) & \leq \frac{1}{2} \int_{D}  |\partial E_t|^{2} (\phi_\varepsilon(s)) + |\dot\phi_\varepsilon(s)|^2 \diff  s  \nonumber\\
                            & = \int_{D}  |\partial E_t| (\phi_\varepsilon(s)) \,|\dot\phi_\varepsilon(s)| \diff  s  + \int_{D} \frac{1}{2} \big( |\partial E_t|(\phi_\varepsilon(s))  - |\dot\phi_\varepsilon(s)|\big)^2 \diff  s  \nonumber\\
                            & \leq \cGFprime_t(x_1, x_2) + \varepsilon' + \int_{D \setminus D'} \frac{1}{2}  \bigg( 2\varepsilon^{1 + \kappa} + \frac{ \varepsilon^{1-\kappa}}{b}\bigg)^{2}  \diff  s   + \int_{D'} \frac{1}{2} N^2 \diff  s  \nonumber\\
                            & \leq \cGFprime_t(x_1, x_2) + \varepsilon' +  \frac{(M+1)b  \varepsilon ^{2 - 2\kappa} ( 2\varepsilon^{2\kappa} + \frac{1}{b})^{2}}{2 \varepsilon^{1+\kappa}}  + \frac{N^2 \varepsilon^\kappa}{2}. \label{eq:last-grad-flow}
    \end{align}
    %\UUU For $\varepsilon>0$ small enough, we deduce from the previous inequality that
    %\begin{align}
    %\label{eq:last-grad-flow}
     %  \cGF_t(x_1, x_2) &\leq \cGFprime_t(x_1, x_2) + \varepsilon' + \frac{2(M+1)}{b} \varepsilon^{1-3\kappa}  + \frac{N^2}{2}\varepsilon^\kappa
     %  \\
     %  &
     %  \leq \cGFprime_t(x_1, x_2) + \varepsilon' +  \frac{2(M+1)}{d(x_{1}, x_{2})} \varepsilon^{1-3\kappa}  + \frac{N^2}{2}\varepsilon^\kappa. \nonumber
    %\end{align}
    As $\kappa$ was chosen in $(0, \frac{1}{3})$, we may pass to the limit in \eqref{eq:last-grad-flow} as $\varepsilon \to 0$, to obtain the inequality $\cGF_t(x_1, x_2) \leq \cGFprime_t(x_1, x_2) + \varepsilon'$. Letting $\varepsilon' \to 0$ in a second step, we obtain the thesis.
\end{proof}
\begin{lemma} \label{lemma:gradient_flow_ineq_energy_diff}
    Let us assume \cref{ass:reg_E,ass:metric_space}. Let us consider $a < b \in \R$, $x_1, x_2 \in \Xx$, and $\phi \in AC([a, b]; \Xx)$ such that
    $\phi(a) = x_1$, $\phi(b) = x_2$.
    Then
    \begin{equation} \label{eq:gradient_flow_ineq_energy_diff}
        \int_a^b \CostGF_{t}\big(\big(\phi(s), \dot{\phi}(s)\big)\big)  \diff  s \geq |E_t(x_1) - E_t(x_2)|.
    \end{equation}
    In particular, for any two points $x_1, x_2 \in \Xx$ and $v_1, v_2 \in  \R^{+}$, we have that
    \begin{equation} \label{eq:gradient_flow_ineq_energy_diff_2}
        \Cc^{F}_t \big((x_1, v_1), (x_2, v_2)\big) \geq |E_t(x_1) - E_t(x_2)|.
    \end{equation}
\end{lemma}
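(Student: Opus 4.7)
The plan is to derive \cref{eq:gradient_flow_ineq_energy_diff} from the fact that the slope $|\partial E_t|$ serves as a strong upper gradient for $E_t$ in the sense of \cite[Def.~1.2.1]{greenbook}. This is already noted in \cref{rmk:slope_strong_upper_gradient}, which is precisely the reason \cref{ass:metric_space,ass:reg_E} are imposed here.

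The first step is to apply the elementary inequality $\frac{1}{2}\alpha^2 + \frac{1}{2}\beta^2 \geq \alpha \beta$ (cf.~\cref{eq:quadratic_formula}) pointwise in $s$ to the integrand:
\begin{equation*}
    \CostGF_t\big((\phi(s), \dot\phi(s))\big) = \tfrac{1}{2}|\partial E_t|^2(\phi(s)) + \tfrac{1}{2}|\dot\phi(s)|^2 \geq |\partial E_t|(\phi(s)) \, |\dot\phi(s)|.
\end{equation*}
Integrating over $[a,b]$ and invoking the strong upper gradient property of $|\partial E_t|$ along the absolutely continuous curve $\phi$, we obtain
\begin{equation*}
    \int_a^b \CostGF_t\big((\phi(s), \dot\phi(s))\big) \diff s \geq \int_a^b |\partial E_t|(\phi(s))\, |\dot\phi(s)| \diff s \geq |E_t(\phi(b)) - E_t(\phi(a))|,
\end{equation*}
which yields \cref{eq:gradient_flow_ineq_energy_diff}.

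For the second assertion \cref{eq:gradient_flow_ineq_energy_diff_2}, I would simply unfold the definition of $\Cc^F_t$ in \cref{eq:action-lift-GF} and of the admissible curves in \cref{eq:gamma_gradient_flow}: any $\gamma \in \Gamma\big((x_1,v_1), (x_2, v_2)\big)$ is of the form $\gamma(s) = (\gamma_x(s), \gamma_v(s))$ with $\gamma_x \in AC([a,b], \Xx)$, $\gamma_x(a)=x_1$, $\gamma_x(b)=x_2$, and $|\dot \gamma_x| \equiv \gamma_v$ a.e. Substituting into \cref{eq:def_cost_gf}, the integral $\int_a^b \CostGF_t(\gamma(s)) \diff s$ reduces exactly to the left-hand side of \cref{eq:gradient_flow_ineq_energy_diff} applied to $\phi \coloneqq \gamma_x$. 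Taking the infimum over all such admissible $\gamma$ then preserves the lower bound $|E_t(x_1) - E_t(x_2)|$, proving \cref{eq:gradient_flow_ineq_energy_diff_2}.

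No significant obstacle is expected here: the argument is essentially a one-line application of Young's inequality combined with the upper gradient property, which is already recorded in the preliminaries. The only technicality is to check that $|\partial E_t|$ is genuinely a strong upper gradient in our setting, but this is handled by \cref{rmk:slope_strong_upper_gradient} under \cref{ass:metric_space,ass:reg_E}.
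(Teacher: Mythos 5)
Your proposal is correct and coincides with the paper's own argument: both apply Young's inequality (\cref{eq:quadratic_formula}) pointwise, invoke the strong upper gradient property of $|\partial E_t|$ from \cref{rmk:slope_strong_upper_gradient}, and obtain \cref{eq:gradient_flow_ineq_energy_diff_2} by taking the infimum over admissible curves.
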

\begin{proof}
    By \cref{eq:quadratic_formula}, we have that
    \begin{equation*}
        \int_a^b \CostGF_{t}\big(\big(\phi(s), \dot{\phi}(s)\big)\big)  \diff  s \geq \int_a^b  |\partial E_t|(\phi(s))  |\dot{\phi}(s)| \diff  s.
    \end{equation*}
    Hence, \cref{eq:gradient_flow_ineq_energy_diff} follows from $|\partial E_t| (\phi(s))$ being a strong upper gradient of $E_t$ as per \cref{rmk:slope_strong_upper_gradient} (see~\cite[Definition 1.2.1]{greenbook}).\ \cref{eq:gradient_flow_ineq_energy_diff_2} follows by taking the infimum over all curves $\phi \in \Gamma\big((x_1, v_1), (x_2, v_2)\big)$ in \cref{eq:gradient_flow_ineq_energy_diff}.
\end{proof}
Before we show that $\cGF_t$ is an action generated by curves, we will show an intermediate result.
\begin{lemma}\label{lemma:cheap_fuel_cheap_action_gradient_flow}
    Let us assume \cref{ass:path+Lipsch}, and let $x \in \Xx$ and $P, r \in \R$ such that $ |\partial E_t| (y) \leq P$ for all $y \in B_r(x)$.
    Then, for all $x' \in B_r(x)$, we have that
    \begin{equation*}
        \cGF_t(x, x') \leq P\cdot d(x, x').
    \end{equation*}
\end{lemma}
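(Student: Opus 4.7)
\medskip

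\noindent\textbf{Proof plan.} The strategy is to exploit the equivalent reformulation of $\cGF_t$ from \cref{lemma:equivalent_minimization}, namely
\begin{equation*}
    \cGF_t(x_1,x_2) = \inf\left\{\int_a^b |\partial E_t|\big(\phi(s)\big) \diff s \,\middle|\, \phi \in AC([a,b],\Xx),\, \phi(a)=x_1,\, \phi(b)=x_2,\, |\dot\phi|\equiv 1 \right\},
\end{equation*}
and to construct an admissible competitor curve lying entirely in $B_r(x)$, so that the integrand is bounded by $P$ pointwise.

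First, I would dispose of the trivial case $x=x'$. In the general case, since $x'\in B_r(x)$ we have $d(x,x')<r$, so we may pick $\varepsilon>0$ small enough that $d(x,x')+\varepsilon<r$. Using the path space structure guaranteed by \cref{ass:path+Lipsch}, choose $\eta\in AC([0,1],\Xx)$ with $\eta(0)=x$, $\eta(1)=x'$, and total length $L:=\int_0^1|\dot\eta(s)|\diff s\leq d(x,x')+\varepsilon$. Reparametrizing by arc length (a standard procedure for absolutely continuous curves in metric spaces), I obtain $\phi\colon[0,L]\to\Xx$ with $|\dot\phi|\equiv 1$, $\phi(0)=x$, $\phi(L)=x'$.

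The key observation is that this reparametrized curve stays inside $B_r(x)$: for every $s\in[0,L]$,
\begin{equation*}
    d\big(x,\phi(s)\big)\leq\int_0^s|\dot\phi(\sigma)|\diff\sigma=s\leq L<r,
\end{equation*}
so that $\phi(s)\in B_r(x)$ and consequently $|\partial E_t|\big(\phi(s)\big)\leq P$ for all $s\in[0,L]$. Using $\phi$ as a competitor in the formulation of $\cGF_t$ from \cref{eq:equivalent_minimization_2} and applying the pointwise bound yields
\begin{equation*}
    \cGF_t(x,x')\leq\int_0^L|\partial E_t|\big(\phi(s)\big)\diff s\leq P\cdot L\leq P\cdot\big(d(x,x')+\varepsilon\big).
\end{equation*}
Letting $\varepsilon\to 0^+$ concludes the proof. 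No step presents a serious obstacle; the only mild subtlety is ensuring the arc-length reparametrization is admissible in the metric setting, which is standard (see, e.g., \cite[Thm.\ 1.1.2]{greenbook}), and ensuring the chosen path actually remains in $B_r(x)$, which is automatic once its total length is strictly less than $r$.
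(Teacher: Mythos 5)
Your proof is correct and follows essentially the same route as the paper: pick an $\varepsilon$-almost geodesic of length at most $d(x,x')+\varepsilon < r$, reparametrize by arc length so $|\dot\phi|\equiv 1$, note it stays in $B_r(x)$ so the slope is bounded by $P$, apply the reformulation of $\cGF_t$ from \cref{lemma:equivalent_minimization}, and let $\varepsilon\to 0$. You spell out the arc-length reparametrization and the containment in $B_r(x)$ a bit more explicitly than the paper does, but the argument is identical.
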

\begin{proof}
    We pick $\varepsilon>0$ arbitrary and, using \cref{ass:path+Lipsch}, an $\varepsilon$-almost geodesic with natural parametrization $\gamma$ between $x$ and $x'$ ---where $d(x, x') + \varepsilon \leq r$---, i.e., $ | \dot\gamma| \equiv 1$,
    $\gamma(0) = x$ and $\gamma(b) = x'$, where $b = d(x, x') + \varepsilon$.
    Since $ |\partial E_t| \big(\gamma(s)\big) \leq P$ for $s \in [0, b]$,
    we can follow $\cGF_t\big(x, x') \leq P \big(d(x, x') + \varepsilon\big)$ directly by using \cref{lemma:equivalent_minimization}. Since $\varepsilon$ was arbitrary, the claim follows.
\end{proof}
Let us show that $\cGF_t$ is actually an action.
\begin{lemma}\label{lemma:gradient_flow_is_action}
    Let us assume \cref{ass:metric_space,ass:reg_E,ass:path+Lipsch}. Then $\cGF_t$ is an action according to \cref{def:action}. In fact, $\cGF_t$ is even continuous.
\end{lemma}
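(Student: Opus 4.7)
The plan is to verify the three conditions in \cref{def:action} (nonnegative, finite-valued lower semicontinuous, and satisfying $E_t(x) - E_t(x') \leq c_t(x,x')$) and then to strengthen the lower semicontinuity to full continuity of $\cGF_t$ in $(x, x')$. Three ingredients are already available: the a priori energy estimate is \cref{lemma:gradient_flow_ineq_energy_diff_2}, the local ``cheap fuel'' estimate $\cGF_t(x, x') \leq P \cdot d(x, x')$ on balls where $|\partial E_t|$ is controlled by $P$ is \cref{lemma:cheap_fuel_cheap_action_gradient_flow}, and the continuity and finiteness of $|\partial E_t|$ is given by \cref{ass:reg_E}. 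Nonnegativity is immediate from \cref{eq:def_gradient_flow_action}. Finiteness follows from \cref{ass:path+Lipsch}: pick an almost-geodesic $\phi \colon [0, b] \to \Xx$ from $x_1$ to $x_2$ with $|\dot\phi| \equiv 1$; then $\phi([0, b])$ is compact, so by continuity of $|\partial E_t|$ the integrand $\tfrac12(|\partial E_t|^2(\phi) + 1)$ is uniformly bounded on $[0, b]$, yielding a finite competitor.

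The key preliminary step is to establish the triangular inequality $\cGF_t(x_1, x_3) \leq \cGF_t(x_1, x_2) + \cGF_t(x_2, x_3)$ directly from the definition. Given $\varepsilon > 0$, choose near-optimal competitors $\phi_1 \in AC([a_1, b_1]; \Xx)$ and $\phi_2 \in AC([a_2, b_2]; \Xx)$ joining $x_1 \to x_2$ and $x_2 \to x_3$, respectively, each with cost within $\varepsilon/2$ of the corresponding action value. After a time-shift (which leaves the cost unchanged) we may assume $b_1 = a_2$. Define $\phi \colon [a_1, b_2] \to \Xx$ by concatenation; since $\phi_1(b_1) = x_2 = \phi_2(a_2)$ the curve is continuous at the junction, and since absolute continuity is a local property with $L^1$ upper gradients, $\phi$ is AC with cost equal to the sum of the individual costs. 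Letting $\varepsilon \to 0$ yields the triangular inequality.

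Continuity then follows by a standard sandwich. Fix $t \in [0, T]$ and sequences $x_n \to x$, $x'_n \to x'$ in $\Xx$. By continuity of $|\partial E_t|$, choose $r > 0$ such that $|\partial E_t|(y) \leq |\partial E_t|(x) + 1$ for every $y \in B_r(x)$; for $n$ large enough, $x_n \in B_r(x)$, so \cref{lemma:cheap_fuel_cheap_action_gradient_flow} gives
\begin{equation*}
\cGF_t(x_n, x) \leq \bigl(|\partial E_t|(x) + 1\bigr) \cdot d(x_n, x) \xrightarrow[n \to \infty]{} 0,
\end{equation*}
and analogously $\cGF_t(x', x'_n) \to 0$. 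Two applications of the triangular inequality yield
\begin{equation*}
|\cGF_t(x_n, x'_n) - \cGF_t(x, x')| \leq \cGF_t(x_n, x) + \cGF_t(x', x'_n) \to 0,
\end{equation*}
proving sequential continuity and hence, since $\Xx \times \Xx$ is metric, continuity of $\cGF_t$ on $\Xx \times \Xx$. Lower semicontinuity is an immediate consequence, so all conditions of \cref{def:action} hold.

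The only nontrivial point I anticipate is the verification that the concatenation $\phi_1 \cup \phi_2$ really is absolutely continuous rather than merely piecewise AC; but this is a direct consequence of gluing the $L^1$ upper gradients together and using the triangle inequality in $\Xx$ across the junction, and so presents no genuine obstacle.
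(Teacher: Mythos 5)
Your proof is correct and follows essentially the same route as the paper's: establish the triangular inequality by concatenating near-optimal curves, combine it with \cref{lemma:cheap_fuel_cheap_action_gradient_flow} to obtain a local modulus of continuity, and obtain \cref{eq:action_apriori_inequality} from \cref{lemma:gradient_flow_ineq_energy_diff_2}. The only small point worth making explicit is that $\cGF_t$ is symmetric (by time-reversal of competitor curves, since the integrand in \cref{eq:def_gradient_flow_action} is invariant under $s\mapsto a+b-s$); this is used implicitly in your inequality $|\cGF_t(x_n, x'_n) - \cGF_t(x, x')| \leq \cGF_t(x_n, x) + \cGF_t(x', x'_n)$, and also implicitly in the paper's own Lipschitz bound.
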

\begin{proof}
    To prove the continuity of $\cGF_t$,
    we first note that in the spirit of \cref{lemma:triangular_inequ_on_P},
    the triangle inequality follows from the fact that $\Gamma$ is closed under
    restrictions, shifts and concatenations.
    Combining the triangle inequality with \cref{lemma:cheap_fuel_cheap_action_gradient_flow},
    we see that $|c_t(x_1, x_2) - c_t(x_1', x_2')| \leq C_1 d(x_1, x_1') + C_2 d(x_2, x_2')$ for $(x_1', x_2')$ close to $(x_1, x_2)$, where $C_1 =  |\partial E_t| (x_1) + 1$ and $C_2 = |\partial E_t| (x_2) + 1$.
    Finally, \cref{eq:action_apriori_inequality}, which requires that $E_t(x) - E_t(x') \leq c_t(x, x')$, follows as a special case of \cref{eq:gradient_flow_ineq_energy_diff_2}.
\end{proof}
Before we prove that the action $\cGF_t$ is generated by curves, we first investigate under which circumstances we can apply \cref{lemma:zero_cost_in_conn_comp_hd_dim} to $\cGF_t$.
\begin{lemma} \label{lemma:zero_cost_in_conn_comp_gradient_flow}
    Let us assume \cref{ass:metric_space,ass:reg_E,ass:path+Lipsch,ass:unif_coerc,ass:conn_comp}, and let $x \mapsto | \partial E_t| (x)$ be locally  $\alpha$-H\"older continuous with $\alpha \in (0, 1]$. Furthermore, assume that for each $x, x'$ which lie in the same component of $\Crit \coloneqq \{x \in \Xx \mid  |\partial E_t| (x)  = 0\}$,
    there exists some connected $U \subseteq \Crit$ such that $x, x' \in U$ and $\dim_H(U) < 1+\alpha$.
    Then, $\cGF_t(x, x') = 0$ whenever $x, x'$ lie in the same connected component of $\Crit$. \\
    Furthermore, if $| \partial E_t|$ is only continuous, we have that $\cGF_t(x, x') = 0$ whenever $x, x'$ lie in some connected $U \subseteq \Crit$ with
    finite $1$-dimensional Hausdorff measure.
\end{lemma}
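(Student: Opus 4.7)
The plan is to reduce this lemma to the abstract \cref{lemma:zero_cost_in_conn_comp_hd_dim} applied to the action $\cGF_t$: for the first assertion I would invoke it with exponent $1+\alpha$ in place of $\alpha$, and for the second one with $\alpha=1$ combined with its sharper $o(d(x,x'))$ refinement. The triangular inequality for $\cGF_t$ is already provided by \cref{lemma:gradient_flow_is_action}, so the only substantive step is to produce the quantitative cost estimate $\cGF_t(x,x')\le C_K d(x,x')^{1+\alpha}$ on a compact $K\subseteq \Xx$, uniformly for $x,x'\in K\cap\Crit$.

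To establish that estimate in the Hölder case, I would first enlarge $K$ to a compact $\tilde K\supseteq K$ that contains $B_{r_0}(x)$ for every $x\in K$ and some uniform $r_0>0$; this is possible by \cref{ass:metric_space} (local compactness) together with a standard Lebesgue-number argument over a finite cover of $K$ by compact neighborhoods. Letting $L$ denote the Hölder constant of $|\partial E_t|$ on $\tilde K$, and using that $|\partial E_t|(x)=0$ for $x\in\Crit$, the Hölder inequality yields $|\partial E_t|(y)\le L\,d(y,x)^\alpha\le L r^\alpha$ on $B_r(x)\subseteq\tilde K$ for every $r\in(d(x,x'),r_0)$. Then \cref{lemma:cheap_fuel_cheap_action_gradient_flow} immediately gives $\cGF_t(x,x')\le L r^\alpha\,d(x,x')$, and sending $r\downarrow d(x,x')$ produces $\cGF_t(x,x')\le L\,d(x,x')^{1+\alpha}$. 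Pairs with $d(x,x')\ge r_0$ are absorbed into a larger constant $C_K$ via the continuity of $\cGF_t$ on $K\times K$ from \cref{lemma:gradient_flow_is_action}. Combining this bound with the hypothesis $\dim_H(U)<1+\alpha$, \cref{lemma:zero_cost_in_conn_comp_hd_dim} with exponent $1+\alpha$ gives $\cGF_t(x,x')=0$.

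For the second assertion I would run exactly the same scheme, but with only uniform continuity of $|\partial E_t|$ on $\tilde K$ in place of Hölder regularity: given $\varepsilon>0$, pick $\delta\in(0,r_0)$ such that $|\partial E_t|(y)\le\varepsilon$ whenever $d(y,\tilde K\cap\Crit)\le\delta$, and then \cref{lemma:cheap_fuel_cheap_action_gradient_flow} with $P=\varepsilon$ yields $\cGF_t(x,x')\le\varepsilon\,d(x,x')$ for all $x,x'\in K\cap\Crit$ with $d(x,x')<\delta$. This is precisely the ratio condition $\cGF_t(x,x')/d(x,x')\to 0$ as $d(x,x')\to 0$ required by the second part of \cref{lemma:zero_cost_in_conn_comp_hd_dim} with $\alpha=1$, so that lemma combined with the assumption that $U$ has finite $1$-dimensional Hausdorff measure completes the proof. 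The only delicate point throughout is the \emph{uniformity} in $x\in K\cap\Crit$ of the pointwise bounds on $|\partial E_t|$; this is exactly what the enlargement to $\tilde K$ and the passage to the Hölder/uniform-continuity modulus of $|\partial E_t|$ on that compact set are designed to secure.
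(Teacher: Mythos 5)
Your proof is correct and produces the same quantitative estimate $\cGF_t(x,x')\le C_K\,d(x,x')^{1+\alpha}$ for $x,x'\in K\cap\Crit$ that feeds \cref{lemma:zero_cost_in_conn_comp_hd_dim}, but by a genuinely different route. The paper builds a competitor curve for the sum-of-squares functional in \cref{eq:def_gradient_flow_action} directly: it takes an almost-geodesic $\phi$ from $x$ to $x'$ with $|\dot\phi|\equiv 1$ and reparametrizes it by $m(s)=\bigl((1-\alpha)s\bigr)^{1/(1-\alpha)}$, a choice tuned so that $|\partial E_t|^2(\phi\circ m)$ and $|\dot{(\phi\circ m)}|^2$ grow at the same rate; this forces separate reparametrizations for $\alpha=1$ (namely $m(s)=e^s$) and for the merely continuous case ($m=\mathrm{id}$). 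You instead route everything through \cref{lemma:cheap_fuel_cheap_action_gradient_flow}, which internally relies on the product form $\cGF_t=\cGFprime_t$ of \cref{lemma:equivalent_minimization}: the H\"older bound $|\partial E_t|\le L r^\alpha$ on $B_r(x)\subseteq\tilde K$ (using $|\partial E_t|(x)=0$) gives $\cGF_t(x,x')\le Lr^\alpha\,d(x,x')$ for every admissible $r>d(x,x')$, and letting $r\downarrow d(x,x')$ yields the exponent $1+\alpha$. Your version avoids the bespoke reparametrization and treats $\alpha\in(0,1)$, $\alpha=1$ and the continuous case uniformly, at the mild cost of an explicit dependence on \cref{lemma:cheap_fuel_cheap_action_gradient_flow,lemma:equivalent_minimization} (both already available). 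As a side note, the paper's proof of the continuous case cites ``$\alpha=0$'' when invoking \cref{lemma:zero_cost_in_conn_comp_hd_dim}, which must be a typo since that lemma assumes $\alpha>0$ and the stated hypothesis is finite $1$-dimensional Hausdorff measure; your reading with $\alpha=1$ and the ratio condition $\cGF_t(y,y')/d(y,y')\to 0$ is the intended one.
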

\begin{proof}
    We start with the case $ \alpha \in (0, 1)$.
    By virtue of \cref{lemma:zero_cost_in_conn_comp_hd_dim}, we only need to show that for all compact $K \subseteq \Xx$,
    there exists a constant $C_K > 0$ such that for all $x, x' \in K \cap \Crit$ we have that $c_t(x, x') \leq C_K \cdot d(x, x')^{\alpha+1}$. \\
    Let us fix a compact $K \subset \Xx$ and let $ D>0$ be such that the $\alpha$-Hölder inequality holds for all $y, y' \in K_{1} \coloneqq \overline{B_{1}(K)}$:
    \begin{equation*}
        \big|  |\partial E_t| (y) - |\partial E_t | (y') \big| \leq D \cdot d(y, y')^\alpha.
    \end{equation*}
    Let $x, x' \in \Crit \cap K$. We set $\underline{d} \coloneqq d(x, x')$ and pick $\varepsilon \in ( 0, 1)$ and a curve $\phi$ such that
    $\dom(\phi) = [0, \underline{d} + \varepsilon]$, $\phi(0) = x$, $\phi(\underline{d} + \varepsilon) = x'$ and $|\dot\phi| \equiv 1$. 
    In particular, $\phi (t) \in K_{1}$ for every $t \in [0, \overline{d} + \varepsilon]$.
    We furthermore set the constants $\underline{\alpha} \coloneqq 1 - \alpha$ and $ s^\star \coloneqq \left(\frac{\underline{d}+ \varepsilon}{\underline{\alpha}}\right)^{\underline{\alpha}}$, the function $m\colon [0,  s^\star ] \to [0, \underline{d} + \varepsilon]$ defined as $m(s) \coloneqq (\underline{\alpha} \cdot s)^{\frac{1}{\underline{\alpha}}}$, and $\underline{\phi} \coloneqq \phi \circ m$.
    Then, for every $s \in [0, s^\star]$ we have that
    \begin{equation*}
        |\partial E_t| (\underline{\phi}(s)) \leq D \cdot d(x, \underline{\phi}(s))^\alpha \leq  D \cdot m( s )^\alpha = D \cdot (\underline{\alpha} \cdot  s )^{\frac{\alpha}{\underline{\alpha}}},
    \end{equation*}
    and that
    \begin{equation*}
        |\dot{\underline{\phi}}( s )| \equiv m'( s ) = (\underline{\alpha} \cdot s )^{\frac{1}{\underline{\alpha}} - 1} = (\underline{\alpha} \cdot s )^ \frac{\alpha}{\underline{\alpha}}.
    \end{equation*}
    We now calculate
    \begin{align*}
        \cGF(x, x') &\leq \int_0^{ s^\star  }  |\partial E_t|^{2} (\underline{\phi}(s)) + |\dot{\underline{\phi}}(s)|^2 \diff  s  
        \leq \underbrace{(D^2 + 1) \cdot \underline{\alpha}^{\frac{2\alpha}{\underline{\alpha}}}}_{\eqqcolon D} \int_0^{ s^\star }  s^{\frac{2\alpha}{\underline{\alpha}}} \diff s  \\
        &= \frac{\overline{\alpha}D'}{1 + \alpha}  s^\frac{1+\alpha}{\underline{\alpha}} \Big|_0^{ s^\star} =  \frac{\overline{\alpha}D'}{1 + \alpha}  ( s^\star )^\frac{1 + \alpha}{\underline{\alpha}} = \underbrace{ \frac{\overline{\alpha}D'}{1 + \alpha} \cdot \frac{1}{\underline{\alpha}^{1 + \alpha}}}_{\eqqcolon C_K} (\underline{d} + \varepsilon)^{1 + \alpha}.
    \end{align*}
    Since $\varepsilon > 0$ was arbitrary, we have that $c_t(x, x') \leq C_K \cdot d(x, x')^{1 + \alpha}$. \\
    For $\alpha = 1$, we can do the same calculation as above, but setting $m$ to be the function $ s \mapsto e^s$ instead. \\
    To show the second part of the lemma, we again do a similar calculation as above with $m$ being the identity function to see
    that $c_t(y, y') \to 0$ as $d(y, y') \to 0$ for $y, y' \in U$. If we apply the second part of \cref{lemma:zero_cost_in_conn_comp_hd_dim} with $\alpha=0$,
    the thesis follows.
\end{proof}
We are now ready to prove that the action $\cGF_t$ is generated by curves.
\begin{lemma}\label{lemma:generated_by_curves_gradient_flow}
    Let us assume \cref{ass:unif_coerc,ass:metric_space,ass:reg_E,ass:path+Lipsch,ass:conn_comp,ass:path+Lipsch}.
    Then, the action $\cGF_t$ is generated by curves as in \cref{def:generated_by_curves}.
\end{lemma}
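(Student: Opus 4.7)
The plan is to identify the five data items of \cref{def:generated_by_curves}---index set $\Ii = \R$, phase space $\PpGF$, family $\Gamma$, immersion $i^F$, price $\CostGF$---with the expressions \eqref{eq:def_phase_space_gf}--\eqref{eq:action-lift-GF}, and then to verify \cref{prop:curve_restriction,prop:cost_nonzero,prop:no_cost_in_within_conn_component,prop:curves_coercive,prop:cheap_fuel_cheap_action}. The embedding $i^F(x) = (x,0)$ is a closed immersion (continuous injection with closed image $\Xx \times \{0\}$), and $\CostGF$ is continuous by \cref{ass:reg_E}. Substituting the constraint $|\dot\gamma_x| \equiv \gamma_v$ from \eqref{eq:gamma_gradient_flow} into \eqref{eq:action_infimum}, and using a reparametrization argument in the spirit of \cref{lemma:equivalent_minimization} to pass between curves with free and with zero-speed endpoints, recovers \eqref{eq:def_gradient_flow_action}, identifying the abstract action with $\cGF_t$. \cref{prop:curve_restriction} follows from the stability of \eqref{eq:gamma_gradient_flow} under restriction, domain shift, and splicing of compatible curves, and \cref{prop:cost_nonzero} is immediate from the form of $\CostGF_t$.

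For \cref{prop:no_cost_in_within_conn_component}, \cref{ass:path+Lipsch} makes $|\partial E_t|$ Lipschitz, so \cref{lemma:cheap_fuel_cheap_action_gradient_flow} applied on the ball of radius $r = 2 d(y_1, y_2)$ centered at a critical point yields $\cGF_t(y_1, y_2) \leq 2L\, d(y_1, y_2)^2$ for nearby critical $y_1, y_2$, whence the ratio $\cGF_t(y_1, y_2)/d(y_1, y_2)$ tends to $0$ as $d(y_1, y_2)\to 0$. Combined with the rectifiable---hence finite $\mathcal{H}^1$-measure---paths provided by \cref{ass:conn_comp}, the second half of \cref{lemma:zero_cost_in_conn_comp_gradient_flow} at $\alpha = 1$ gives $\cGF_t(x_1, x_2) = 0$ within a component of $\Crit_t$. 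Conversely, well-separatedness of the compact components of $\Crit_t$ forces any competitor curve connecting points in distinct components to traverse a ``bridge'' region where $|\partial E_t|$ is bounded below; a compactness argument in the sublevel sets of $E_t$ (justifiable via \cref{lemma:gradient_flow_ineq_energy_diff} and \cref{rmk:coerc_Et}) then produces a uniform positive lower bound for $\cGF_t$ in that case, excluding vanishing action between distinct components.

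\cref{prop:curves_coercive,prop:cheap_fuel_cheap_action} are the main technical steps. For the first, \cref{lemma:gradient_flow_ineq_energy_diff} bounds $|E_t(\gamma_x(s)) - E_t(\gamma_x(a))|$ by the total cost of $\gamma$, so near-optimal $\gamma_x$ remains in a sublevel set of $E_t$ which is compact by \cref{rmk:coerc_Et}; a reparametrization via \eqref{eq:equivalent_minimization_reparametrization}, adjusted near the endpoints so as to preserve $\gamma_v(a) = v_1, \gamma_v(b) = v_2$, yields a bound on $\gamma_v$ by the sum of $\sup|\partial E_t|$ over that compact and the endpoint speeds, producing the desired compact $K' \subset \PpGF$. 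For the second, given $p_i = (x_i, v_i)$ with $\CostGF_t(p_i) \leq P$ and $d_{\PpGF}(p_1, p_2) \leq L$, the bounds $|\partial E_t|(x_i), v_i \leq \sqrt{2P}$ and $d(x_1, x_2) \leq L$ hold. One joins $x_1$ to $x_2$ by an almost-geodesic (\cref{ass:path+Lipsch}) parametrized over an interval of length $T$, with $\gamma_v$ following a tent profile $v_1 \to v^\star \to v_2$; Lipschitz continuity of $|\partial E_t|$ gives $|\partial E_t|(\gamma_x) \lesssim \sqrt{P} + L$, so the $|\partial E_t|^2$-contribution is $O((\sqrt P + L)^2 T)$ and the $|\dot\gamma_x|^2$-contribution is $O(v^\star d(x_1,x_2))$. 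Balancing $v^\star \sim \sqrt L$ and $T \sim d(x_1,x_2)/v^\star$ yields a total cost $O(L^{3/2})$, which lies below $\varepsilon$ once $P, L$ are sufficiently small.

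The principal obstacle is \cref{prop:cheap_fuel_cheap_action}: the constraint $|\dot\gamma_x| \equiv \gamma_v$ forbids constant-speed competitors from matching the endpoint velocities, and the degenerate case $v_1 = v_2 = 0$ with $x_1 \neq x_2$ forces the peaked tent profile described above---this $(v^\star, T)$ balance is the most delicate ingredient of the argument. A secondary technical point is the endpoint matching needed in \cref{prop:curves_coercive}, where reparametrizations must simultaneously be near-optimal and respect the prescribed endpoint values of $\gamma_v$.
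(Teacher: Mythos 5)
Your proposal covers all five properties and arrives at the right conclusions, but it is overcomplicated on \cref{prop:cheap_fuel_cheap_action} and (to a lesser extent) \cref{prop:curves_coercive} because it overlooks a structural simplification that the paper uses. The lifted action $\Cc^{F}_{t}(p_1,p_2)$ in \eqref{eq:action-lift-GF} does \emph{not} depend on the second (velocity) components of $p_1, p_2 \in \PpGF$: the compatibility constraint $|\dot\gamma_x|\equiv\gamma_v$ in \eqref{eq:gamma_gradient_flow} is imposed only almost everywhere, and $\gamma_v$ is not required to be continuous, so for any admissible $\gamma$ one can modify $\gamma_v$ on the null set $\{a,b\}$ to match arbitrary prescribed endpoint velocities without affecting either admissibility or the value of the cost integral. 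Consequently, what you single out as the ``principal obstacle'' --- that the constraint $|\dot\gamma_x|\equiv\gamma_v$ forbids matching the endpoint speeds --- is in fact a non-issue, and the tent profile with its $(v^\star,T)$ balance, while not wrong, is unnecessary machinery.

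With that observation, \cref{prop:cheap_fuel_cheap_action} reduces entirely to the scalar estimate of \cref{lemma:cheap_fuel_cheap_action_gradient_flow}: given $K\subseteq\PpGF$ compact and $\varepsilon>0$, project to $K_\Xx\subseteq\Xx$, use continuity of $|\partial E_t|$ to choose $r',P$ such that $d(x,x')\leq r'$ and $|\partial E_t|(x)\leq P$ imply $|\partial E_t|(x')\leq\sqrt\varepsilon$, and set $L\coloneqq\min(r',\sqrt\varepsilon)$. For \cref{prop:curves_coercive}, the near-optimal curves produced in the proof of \cref{lemma:equivalent_minimization} already obey $\big||\dot\gamma_x(s)|-|\partial E_t|(\gamma_x(s))\big|\leq\varepsilon'$, which bounds $\gamma_v$ over the compact sublevel set without needing any endpoint-preserving adjustment of the reparametrization. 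The rest of your argument --- the closed-immersion and continuity checks, \cref{prop:curve_restriction,prop:cost_nonzero}, and deriving \cref{prop:no_cost_in_within_conn_component} from \cref{lemma:zero_cost_in_conn_comp_gradient_flow} together with \cref{ass:conn_comp}'s rectifiable paths and a bridge-crossing lower bound for distinct components --- matches the paper's route.
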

\begin{proof}
    The continuity of $\CostGF_t$ on $\PpGF \times [0, T]$
    follows immediately from the continuity of $|\partial E_t|$ on $\Xx \times [0, T]$;
    likewise, the fact that $i$ is a closed immersion is immediate.
    As for the properties, \cref{prop:curve_restriction,prop:cost_nonzero} are immediate;
    \cref{prop:cheap_fuel_cheap_action} follows from \cref{lemma:cheap_fuel_cheap_action_gradient_flow}:
    For $\varepsilon > 0$ and $K \subseteq \PpGF$ compact, $K_\Xx \coloneqq p_\Xx(K)$, the projection
    of $K$ onto $\Xx$, is compact as well. 
    Due to the continuity of $x \mapsto |\partial E_t|(x)$, we can find some $r', P$ such that for all $x \in K_\Xx$, $x' \in \Xx$ for which $d(x, x') \leq r'$ and $ |\partial E_t| (x) \leq P$,
    we have that $|\partial E_t| (x') \leq \sqrt{\varepsilon}$ and we can set $L \coloneqq \min(r', \sqrt{\varepsilon})$.
    To finish, we note that $ \Cc^{F}_{t} (p_1, p_2)$ does \emph{not} depend on the second components of $p_1, p_2 \in \PpGF$.
    Let us now prove the other two properties. \\
    \textbf{\cref{prop:no_cost_in_within_conn_component}: }
    To see that $\cGF_t(x_1, x_2) = 0$ whenever $x_1$ and $x_1$ lie in the same
    connected component of the set of critical points $\Crit \coloneqq \{x \in \Xx \mid  |\partial E_t| (x) = 0\}$, we note that $x_1$ and $x_2$ are connected by a rectifiable curve by $\cref{ass:conn_comp}$. Since a rectifiable curve has finite 1-dimensional Hausdorff measure, we can
    apply \cref{lemma:zero_cost_in_conn_comp_gradient_flow} to see that $\cGF_t(x_1, x_2) = 0$.
    For the converse, let $x_1, x_2 \in \Xx$ such that they do \emph{not} lie in the same connected component of
    $K'$.
    Then either there is an $i \in \{1, 2\}$ such that  $\CostGF_t(i\GF(x_i)) > 0$, or $x_1$ and
    $x_2$ lie in different connected components of $\Crit$.
    In the former case, there exists an $r \leq d(x_1, x_2)$ such that $P \coloneqq \min_{x \in B_r(x_i)}|\partial E_t(x) | > 0$
    and, by reparametrizing any curve $\gamma$ from $x_1$ to $x_2$ such that $|\dot{\gamma}| \equiv 1$
    and using~\eqref{eq:equivalent_minimization_reparametrization}, we see that $\cGF_t(x_1, x_2) = \cGFprime_t(x_1, x_2) \geq r P > 0$.
    If $x_1, x_2$ lie in
    different components of $\Crit$, we set $\Crit_1$ to be the component containing $x_1$ and use \cref{ass:conn_comp} to choose $r$ such that
    $|\partial E_t(x)| > 0$ for all $x \in \overline{B_r(\Crit_1)} \setminus \Crit_1$. We set $D \coloneqq \overline{B_r(\Crit_1)} \setminus B_{\frac{r}{2}}(\Crit_1)$
    and $P \coloneqq \min_{x \in D} |\partial E_t(x) | > 0$. Since any curve $\gamma$ connecting $x_1$ and $x_2$ has to pass through $D$,
    we can use~~\eqref{eq:equivalent_minimization_reparametrization} again to reparametrize $\gamma$ such that $|\dot\gamma| = 1$
    and see that $\cGFprime_t(x_1, x_2) \geq \frac{r}{2}P > 0$.\\
    \textbf{\cref{prop:curves_coercive}: }
    Fix some $K \subseteq \PpGF$ compact. Then $K_\Xx \coloneqq p_\Xx(K)$, the projection
    of $K$ onto $\Xx$, is compact as well.
    Since $\cGF_t$ is continuous by \cref{lemma:gradient_flow_is_action}, we can set $C \coloneqq \max_{x_1, x_2 \in K_\Xx} \cGF_t(x_1, x_2)$.
    We furthermore set $E_{max} \coloneqq \max_{x \in K_\Xx} E_t(x)$.
    For each $\varepsilon>0$ and $p_1, p_2 \in K$, we set $x_1\coloneqq p_\Xx(p_1)$ and $x_2\coloneqq p_\Xx(p_1)$
    the projection onto $\Xx$ of $p_1$ and $p_2$, respectively.
    With this notation, let us show that the first component of every quasi-optimal curve $\gamma \in \Gamma(p_1, p_2)$, $\gamma \colon [a, b] \to \PpGF$ such that
    \begin{equation*}
        \int_a^b  \CostGF_{t}  (\gamma(s))  \diff  s  \leq  \Cc_t^{F}(p_1, p_2) + \varepsilon
    \end{equation*}
    stays within $K'' \coloneqq E_t^{-1}((-\infty, Z]) \subseteq \Xx$, where  $Z \coloneqq E_{\max} + \frac{1}{2} (C + \varepsilon + 1)$.
    To see this, assume otherwise and pick $s^\star \in (a, b)$ such that $E_t( \gamma_x  (s^\star)) > Z$.
    Using \cref{eq:gradient_flow_ineq_energy_diff}, we would then have the following contradiction:
    \begin{align*}
        C + \varepsilon & \geq \int_a^b \CostGF_{t}(\gamma(s))  \diff  s 
        = \int_a^{s^\star} \CostGF_{t}(\gamma(s))  \diff  s  + \int_{s^\star}^b \CostGF_{t}(\gamma(s))  \diff  s  \\
                        %& \geq \cGF_t(x_1, \UUU \gamma_{x} \EEE (s^\star)) + \cGF_t(\UUU \gamma_{x} \EEE (s^\star), x_2)
        & > (Z - E_{max}) + (Z - E_{max}) = 2Z - 2E_{max} = C + \varepsilon + 1.
    \end{align*}
    By \cref{ass:unif_coerc}, $K''$ is compact.
    What is left to show is that we can choose quasi-optimal curves with bounded velocity.
    To this end, note that in the proof of \cref{lemma:equivalent_minimization},
    we chose near-optimal curves $\gamma$ whose velocity fulfilled
    \begin{equation*}
        \big||\dot{\gamma}(s)| -  |\partial E_t|  (\gamma(s)) \big| \leq \varepsilon'
    \end{equation*}
    for arbitrarily $\varepsilon'>0$. Thus we can set
    $v_{max} \coloneqq \max_{x \in K'}  |\partial E_t|  (x) + \varepsilon'$, for any $\varepsilon' > 0$,
    and $K' \coloneqq K'' \times [0, v_{max}] \subseteq \PpGF$.
\end{proof}
To finish this section, we prove \cref{lemma:gradient_flow_summary_lemma}.
\begin{proof}[Proof of \cref{lemma:gradient_flow_summary_lemma}]
    The fact that $\cGF_t$ is an action as in \cref{def:action} is the content of \cref{lemma:gradient_flow_is_action};
    the fact that $\cGF_t$ is generated by curves as in \cref{def:generated_by_curves} is the content of the above \cref{lemma:generated_by_curves_gradient_flow}.
    What is left to prove is that $\ruleGF_t$ as defined in \cref{def:gradient_flow_evolution_rule} is compatible with $\cGF_t$ as in \cref{def:action},
    i.e., that for all $x_1, x_2 \in \Xx$ and $\phi \in AC([0, \infty), \Xx)$
    such that $\phi(0) = x_1$, $\lim_{s \rightarrow \infty} \phi(s) = x_2$ and, for all $s \in [0, \infty)$,
    \begin{equation*}
        E_t(x) - E_t(\phi(s)) =  \frac12  \int_0^s  |\partial E_t|^{2}  (\phi(s)) + |\dot{\phi}(s)|^2 \diff s,
    \end{equation*}
    we have that
    \begin{equation*}
        E_t(x_1) - E_t(x_2) = \cGF_t(x_1, x_2).
    \end{equation*}
    This follows directly from the continuity of $E_t$ and $\cGF_t$.
\end{proof}

\subsection{The minimizing movement scheme} \label{subsec:elaborate_mms}
In this section, we will explore the minimizing movement scheme and
prove \cref{lemma:MMS_summary_lemma}.  We use freely the definitions \cref{eq:def_phase_space_mms,def:embedding_MMS,eq:def_GammaM,eq:CostM_def,eq:extended-cost-MMS}, and in the rest of this chapter adopt the convention where we denote the components of a curve $\gamma\colon I \subseteq \Z \to \PpM = \Xx \times \Xx$ by $\gamma_0$ and $\gamma_1$, and we denote the components of points $\vx, \vx' \in \PpM$, by $(x_0, x_1) \coloneqq \vx$ and $(x'_0, x'_1) \coloneqq \vx'$.
Substituting \cref{eq:def_phase_space_mms,def:embedding_MMS,eq:def_GammaM,eq:CostM_def} into \cref{eq:action_infimum} and simplifying, we obtain the expression of $\cM_t \colon \Xx \times \Xx \to [0, \infty)$ written in \cref{eq:action_infimum_MMS}.
As we describe in \cref{rmk:lipschitz_tau_relation}, whenever we assume that \cref{ass:path+Lipsch} holds, we also assume that $\tau \leq \frac{1}{L}$, where $L$ is the Lipschitz constant mentioned in \cref{ass:path+Lipsch}.
Finally, we report that in \cite[Definition~5.14]{Scilla_2018} the authors introduced an action for the minimizing movement scheme in Euclidean setting which is fully compatible with $\cM_t$.

\begin{remark}\label{rmk:cost_lambda_continuous}
    If we assume \cref{ass:metric_space,ass:reg_E,ass:der_time,ass:unif_coerc}, then
    the function $(t, x, y) \mapsto  E_t(y) + \frac{1}{2 \tau}d(x,y)^2$ is continuous in $(t, x, y)$.
    Furthermore, only points in the relatively compact sublevel set $\{y \in \Xx \mid E_t(y) \leq E_t(x)\}$ contribute to the infimum in~\eqref{eq:def_E_MMS}.
    Thus, $E\ttau\MMS$ is itself continuous in $(t, x)$, as it is locally an infimum of equicontinuous functions. Hence, also $\CostM_t$ is continuous in $(t, \vx) \in [0, T] \times \PpM$.
    Lastly, we note that $E\ttau\MMS(x) \leq E_t(x)$ for all $x \in \Xx$:
    To see this, we use $y=x$ as a competitor in \cref{eq:def_E_MMS}.
    We clarify the relation between the conditions $E\ttau\MMS(x) = E_t(x)$ and $|\partial E_t| (x) = 0$ for
    $x \in \Xx$ in \cref{lemma:zero_gradient_zero_cost}.
    % It follows that the infimum in \cref{eq:def_E_MMS} is attained.
    % Furthermore, we see by setting $x' = x$ that $\Cost^\lambda_t(x) \geq 0$ for all $x \in \Xx$.
    % Lastly, only points in the relative compact sublevel set $\{x' \in \Xx \mid E_t(x') \leq E_t(x)\}$ contribute to the maximum in~\eqref{eq:cost_lambda}.
    % Thus, $\Cost^\lambda_t$ is, as it is locally a supremum of equicontinous functions, itself continous.
\end{remark}
Let us first see how this definition relates to the minimizing movement scheme.
\begin{lemma} \label{lemma:minimizing_scheme_energy_diff}
    Let us assume \cref{ass:metric_space,ass:reg_E,ass:der_time,ass:unif_coerc}. Then, for all $\tau > 0$, $\vx, \vx' \in \PpM$ and $\gamma \in \Gamma(\vx, \vx')$, we have that
    \begin{equation} \label{eq:energy_bound_minimizing_scheme_curve_phase_space}
        \sum_{s=a+1}^{b} \CostM_{t}\big(\gamma(s)\big) \geq E_t(x_1) - E_t(x'_1),
    \end{equation}
    If $\vx =  i\MMS (x)$ and $\vx' =  i\MMS (x')$ for some $x, x' \in \Xx$, then
    \begin{equation}\label{eq:energy_bound_minimizing_scheme_curve}
        \sum_{s=a+1}^{b} \CostM_{t}\big(\gamma(s)\big) \geq E_t(x) - E\ttau\MMS(x'),
    \end{equation}
    where equality is attained if and only if
    \begin{equation}\label{def:minimizing_movement}
        \gamma_0(s+1) = \gamma_1(s) \in \argmin_{y \in \Xx} \left\{ E_t(y) + \frac{1}{2\tau} d(\gamma_0(s), y)^2 \right\}
    \end{equation}
    for all $s \in [a+1, b-1]_\Z$.
    In particular, for every $x, x' \in \Xx$ we have
    \begin{equation} \label{eq:energy_bound_minimizing_scheme}
        \cM_t(x, x') \geq E_t(x) - E\ttau\MMS(x') \geq E_t(x) - E_t(x').
    \end{equation}
\end{lemma}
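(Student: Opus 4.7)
The plan is to prove the chain of inequalities by first establishing a pointwise lower bound on $\CostM_{t}(\gamma(s))$ and then telescoping the sum thanks to the admissibility constraint $\gamma_0(s+1) = \gamma_1(s)$ built into the definition of $\Gamma$.

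First I would observe that the definition of $E\ttau\MMS$ as an infimum directly provides the pointwise inequality
\begin{equation*}
    E\ttau\MMS\big(\gamma_0(s)\big) \leq E_t\big(\gamma_1(s)\big) + \frac{1}{2\tau} d\big(\gamma_0(s), \gamma_1(s)\big)^2,
\end{equation*}
which, after rearrangement and unfolding the definition~\eqref{eq:CostM_def} of $\CostM_t$, yields
\begin{equation*}
    \CostM_t\big(\gamma(s)\big) \geq E_t\big(\gamma_0(s)\big) - E_t\big(\gamma_1(s)\big).
\end{equation*}
Equality in this step is characterized by $\gamma_1(s)$ being a minimizer in the defining infimum of $E\ttau\MMS\big(\gamma_0(s)\big)$, which will provide the equality condition claimed in the statement.

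Next, I would sum this pointwise inequality over $s \in [a+1, b]_\Z$ and use $\gamma_0(s) = \gamma_1(s-1)$ to telescope: the sum collapses to $E_t(\gamma_1(a)) - E_t(\gamma_1(b)) = E_t(x_1) - E_t(x'_1)$, giving~\eqref{eq:energy_bound_minimizing_scheme_curve_phase_space}. For the refined inequality~\eqref{eq:energy_bound_minimizing_scheme_curve} under the assumption $\vx = i\MMS(x)$, $\vx' = i\MMS(x')$, I would split off the last term $s = b$: since $\gamma(b) = (x', x')$, the distance term vanishes and $\CostM_t\big(\gamma(b)\big)$ equals \emph{exactly} $E_t(x') - E\ttau\MMS(x')$. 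Applying the pointwise bound only on $s \in [a+1, b-1]_\Z$ and telescoping these terms via $\gamma_0(s) = \gamma_1(s-1)$ yields $E_t(x) - E_t(\gamma_1(b-1)) = E_t(x) - E_t(x')$, since $\gamma_1(b-1) = \gamma_0(b) = x'$. Adding the $s=b$ contribution produces the desired lower bound $E_t(x) - E\ttau\MMS(x')$, with equality iff the minimization condition~\eqref{def:minimizing_movement} holds for each $s \in [a+1, b-1]_\Z$ (the endpoint $s=b$ automatically contributes exactly).

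Finally, taking the infimum over $\gamma \in \Gamma(i\MMS(x), i\MMS(x'))$ in~\eqref{eq:energy_bound_minimizing_scheme_curve} produces the first inequality of~\eqref{eq:energy_bound_minimizing_scheme}, while the second inequality follows from the elementary observation (recorded in \cref{rmk:cost_lambda_continuous}) that $E\ttau\MMS(x') \leq E_t(x')$, obtained by plugging $y = x'$ as a competitor into~\eqref{eq:def_E_MMS}. No step in this proof is a genuine obstacle; the main subtlety is simply bookkeeping the index range and carefully handling the boundary term $s = b$ separately from the telescoped interior, so that the refinement from $E_t(x')$ to $E\ttau\MMS(x')$ is captured.
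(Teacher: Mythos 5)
Your proof is correct and follows essentially the same route as the paper: the identical pointwise bound $\CostM_t(\gamma(s)) \geq E_t(\gamma_0(s)) - E_t(\gamma_1(s))$ obtained by plugging $y = \gamma_1(s)$ into the infimum defining $E_{t,\tau}^{M}$, the same telescoping via the admissibility constraint, the same separation of the boundary term $s=b$ (which the paper phrases as applying \eqref{eq:energy_bound_minimizing_scheme_curve_phase_space} to the restriction $\gamma|_{[a,b-1]}$ before adding $\CostM_t(\gamma(b)) = E_t(x') - E_{t,\tau}^M(x')$ exactly), and the same infimum-plus-$E_{t,\tau}^M \leq E_t$ step for \eqref{eq:energy_bound_minimizing_scheme}.
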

\begin{proof}
    To prove \cref{eq:energy_bound_minimizing_scheme_curve_phase_space}, it suffices to show that
    $\CostM_{t}\big(\gamma(s)\big) \geq E_t\big(\gamma_0(s)\big) - E_t\big(\gamma_1(s)\big)$ for all $s \in [a+1, b-1]_\Z$.
    To this end, we write
    \begin{equation}\label{eq:energy_bound_minimizing_scheme_curve_proof}
        \begin{split}
                   & \CostM_t\big(\gamma(s)\big) - \bigg( E_t\big(\gamma_0(s)) - E_t\big(\gamma_1(s)\big)\bigg)                         \\
            = \,   & E_t\big(\gamma_0(s)\big) - \inf_{y \in \Xx} \left\{ E_t(y) + \frac{1}{2\tau} d\big(\gamma_0(s), y\big)^2 \right\}  \\
                   & + \frac{1}{2\tau} d\big(\gamma_1(s), \gamma_0(s)\big)^2 - E_t\big(\gamma_0(s)\big) + E_t\big(\gamma_1(s)\big)      \\
            \geq\, & E_t\big(\gamma_0(s)\big) - E_t\big(\gamma_1(s)\big) - \frac{1}{2\tau} d\big(\gamma_1(s), \gamma_0(s)\big)^2        \\
                   & + \frac{1}{2\tau} d\big(\gamma_1(s), \gamma_0(s)\big)^2 - E_t\big(\gamma_0(s)\big) + E_t\big(\gamma_1(s)\big) = 0,
        \end{split}
    \end{equation}
    where the inequality follows by using $y = \gamma_1(s)$ as a competitor for the infimum.
    \cref{eq:energy_bound_minimizing_scheme_curve} follows by applying \cref{eq:energy_bound_minimizing_scheme_curve_phase_space}
    to $\gamma|_{[a, b-1]}$ and noting that $\CostM_{t}\big(\gamma(b)\big) = \CostM_t\big((x', x')\big) = E_t(x') - E_{t, \tau}\MMS(x')$.
    In the lower bound \cref{eq:energy_bound_minimizing_scheme_curve}, equality is attained if and only if $\gamma_0(s)$ fulfills~\eqref{def:minimizing_movement}.
    Finally, \cref{eq:energy_bound_minimizing_scheme} follows by taking the infimum
    over $\Gamma( i\MMS(x), i\MMS(x'))$ in \cref{eq:energy_bound_minimizing_scheme_curve} and using \cref{rmk:cost_lambda_continuous} for the second inequality.
\end{proof}
Before we can show that $\cM_{t}$ is an action according to \cref{def:action}, we need to show two intermediate results. The first lemma is related to \cref{prop:cheap_fuel_cheap_action}, the second lemma concerns \cref{prop:curves_coercive} of \cref{def:generated_by_curves}.

\begin{lemma} \label{lemma:bounded_cost_minimizing_scheme}
    Let us assume \cref{ass:metric_space,ass:reg_E,ass:der_time,ass:unif_coerc}.
    For all $\vx, \vx' \in \PpM$ and $L, P > 0$
    such that $d_{\PpM}(\vx, \vx') \leq L$, $\CostM_t(\vx) \leq P$, $\CostM_t(\vx') \leq P$ and $\CostM_t( i\MMS (x_1)) \leq P$
    we have that $ \cMg_t (\vx, \vx') \leq \frac{1}{2\tau}(2L + \sqrt{2\tau P})^2 + 3P$.
\end{lemma}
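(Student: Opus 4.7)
The plan is to build an explicit three-point admissible curve connecting $\vx$ to $\vx'$ and estimate its cost directly, so the main obstacle is really bookkeeping rather than anything conceptual.

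First, I would set $a=0$, $b=2$ and define $\gamma\colon [0,2]_{\Z}\to\PpM$ by
\begin{equation*}
\gamma(0)=(x_0,x_1)=\vx,\qquad \gamma(1)=(x_1,x'_0),\qquad \gamma(2)=(x'_0,x'_1)=\vx'.
\end{equation*}
The chaining condition $\gamma_1(s)=\gamma_0(s+1)$ is immediate by construction, so $\gamma\in\Gamma(\vx,\vx')$, and hence
\begin{equation*}
\cMg_t(\vx,\vx')\;\le\;\CostM_t(\gamma(1))+\CostM_t(\gamma(2))\;=\;\CostM_t\big((x_1,x'_0)\big)+\CostM_t(\vx').
\end{equation*}
The second summand is $\le P$ by hypothesis.

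Next I would control the remaining summand
\begin{equation*}
\CostM_t\big((x_1,x'_0)\big)\;=\;E_t(x_1)-E_{t,\tau}\MMS(x_1)+\tfrac{1}{2\tau}d(x_1,x'_0)^2.
\end{equation*}
The energy piece $E_t(x_1)-E_{t,\tau}\MMS(x_1)$ is exactly $\CostM_t(i\MMS(x_1))$, which is $\le P$ by hypothesis. For the distance piece, the hypothesis $d_{\PpM}(\vx,\vx')\le L$ (with the factor $\tfrac12$ in the definition~\eqref{eq:def_phase_space_mms}) gives $d(x_1,x'_1)\le 2L$, and from $\CostM_t(\vx')\le P$ I read off $\tfrac{1}{2\tau}d(x'_0,x'_1)^2\le P$, i.e.\ $d(x'_0,x'_1)\le\sqrt{2\tau P}$; a triangle inequality then yields
\begin{equation*}
d(x_1,x'_0)\;\le\; d(x_1,x'_1)+d(x'_1,x'_0)\;\le\;2L+\sqrt{2\tau P}.
\end{equation*}

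Combining everything,
\begin{equation*}
\cMg_t(\vx,\vx')\;\le\;P+\tfrac{1}{2\tau}\big(2L+\sqrt{2\tau P}\big)^2+P\;\le\;\tfrac{1}{2\tau}\big(2L+\sqrt{2\tau P}\big)^2+3P,
\end{equation*}
which is the claimed bound (with a $P$ to spare). The only subtle point in this otherwise mechanical argument is noticing that the three hypotheses are exactly what is needed to separately control the three terms: $\CostM_t(\vx')\le P$ bounds both the final step cost and the intra-pair distance $d(x'_0,x'_1)$; $\CostM_t(i\MMS(x_1))\le P$ controls the energy drop at the middle node; and $d_{\PpM}(\vx,\vx')\le L$ controls the long jump $d(x_1,x'_1)$. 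The assumption $\CostM_t(\vx)\le P$ is not actually used, but its presence makes the hypothesis symmetric in $\vx$ and $\vx'$, which is convenient when invoking this lemma.
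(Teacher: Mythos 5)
Your proof is correct and takes essentially the same approach as the paper: both construct the three-point admissible curve through the intermediate state $(x_1, x'_0)$ and estimate its two contributing costs. The only cosmetic difference is in the triangle-inequality path used to bound $d(x_1, x'_0)$ — you go via $x'_1$ using $\CostM_t(\vx') \leq P$, while the paper goes via $x_0$ using $\CostM_t(\vx) \leq P$ — which is why in your version the hypothesis $\CostM_t(\vx) \leq P$ is not needed (a correct observation on your part).
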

\begin{proof}
    Using \cref{rmk:cost_lambda_continuous}, we see that
    \begin{equation}
        \frac{1}{2\tau}d(x_0, x_1)^2 \leq E_t(x_0) - E_{t, \tau}\MMS(x_0) + \frac{1}{2\tau}d(x_0, x_1)^2 = \CostM_t(\vx) \leq  P,
    \end{equation}
    Combining this with $d(x_0, x'_0) \leq 2 d_{\PpM}(\vx, \vx') \leq 2L$, which follows directly from the definition of $d_{\PpM}$ in \cref{eq:def_phase_space_mms},
    we see---using the triangle inequality---that $d(x_1, x'_0) \leq 2L + \sqrt{2\tau P}$.
    Furthermore, we have for $\vx'' \coloneqq (x_1, x'_0)$, that
    $\CostM_t(\vx'') \leq  \frac{1}{2\tau} d(x_1, x'_0)^2 + P$,  since $\CostM_{t} (i\MMS (x_{1})) \leq P$.
    The lemma now follows by considering as a competitor
    for $\cMg_t$ the curve $\gamma\colon [1, 3]_{\mathbb{Z}} \to \PpM$ defined as $\gamma(1)  \coloneqq \vx$, $\gamma(2) \coloneqq \vx''$, $\gamma(3) \coloneqq  \vx'$ and noting that, owing to the hypotheses,
    $\sum_{s=2}^3 \CostM_t(\gamma(s)) \leq \frac{1}{2\tau}(2L + \sqrt{2\tau P})^2 + 3P$.
\end{proof}
\begin{lemma} \label{lemma:coerciveness_minimizing_scheme}
    Let us assume \cref{ass:metric_space,ass:reg_E,ass:der_time,ass:unif_coerc}.
    For all $K \subseteq \PpM$ compact and $C > 0$,
    there exists $K' \subseteq \PpM$ compact such that the following implication holds
    for all $\vx, \vx' \in K$ and $\gamma \in \Gamma(\vx, \vx')$:
    \begin{equation} \label{eq:curves_bounded_coercive_mms}
        \sum_{s=a+1}^b \CostM_{t}(\gamma(s)) \leq C \implies \im(\gamma) \subseteq K'.
    \end{equation}
    In particular, for each $K \subseteq \PpM$ there exists a compact set $K \subseteq \PpM$ such that for every $\varepsilon \in ( 0, 1)$ the following implication holds
    for all $\vx, \vx' \in K$ and $\gamma \in \Gamma(\vx, \vx')$:
    \begin{equation} \label{eq:curves_coercive_implications_mms}
        \sum_{s=a+1}^b \CostM_{t}(\gamma(s)) \leq  \cMg_t (\vx, \vx') + \varepsilon \implies \im(\gamma) \subseteq K'.
    \end{equation}
\end{lemma}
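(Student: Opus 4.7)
The strategy is to apply \cref{eq:energy_bound_minimizing_scheme_curve_phase_space} of \cref{lemma:minimizing_scheme_energy_diff} to suitable right-restrictions of $\gamma$, thereby converting the total-cost bound into a uniform upper bound on the energy of every point on the trajectory, and then to invoke the coercivity of $E_t$ recorded in \cref{rmk:coerc_Et} to enclose those points in a compact set.

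For the first implication \cref{eq:curves_bounded_coercive_mms}, fix $\vx, \vx' \in K$ and $\gamma \in \Gamma(\vx, \vx')$ with $\sum_{s=a+1}^{b} \CostM_t\big(\gamma(s)\big) \leq C$. For every $s_0 \in [a, b-1]_\Z$, \cref{prop:curve_restriction} yields $\gamma|_{[s_0, b]_\Z} \in \Gamma\big(\gamma(s_0), \vx'\big)$, and applying \cref{eq:energy_bound_minimizing_scheme_curve_phase_space} to this sub-curve gives
\begin{equation*}
    E_t\big(\gamma_1(s_0)\big) \,\leq\, E_t(x'_1) + \sum_{s=s_0+1}^{b} \CostM_t\big(\gamma(s)\big) \,\leq\, \sup_{\vx' \in K} E_t(x'_1) + C \,\eqqcolon\, C',
\end{equation*}
with $C'$ finite by continuity of $E_t$ (\cref{ass:reg_E}) and compactness of $K$. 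This bound is uniform in $s_0$, in $\gamma$, and in the endpoints $\vx, \vx' \in K$. By \cref{rmk:coerc_Et}, the sublevel set $K'_\Xx \coloneqq \{y \in \Xx : E_t(y) \leq C'\}$ is compact and contains each $\gamma_1(s_0)$ for $s_0 \in [a, b]_\Z$ (the case $s_0 = b$ being immediate from $\gamma_1(b) = x'_1 \in p_1(K)$). Using the constraint $\gamma_0(s) = \gamma_1(s-1)$ recorded in \cref{eq:def_GammaM} transfers the same bound to $\gamma_0(s_0)$ for $s_0 \geq a+1$, while $\gamma_0(a) = x_0 \in p_0(K)$. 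Hence $K' \coloneqq \big(p_0(K) \cup K'_\Xx\big) \times K'_\Xx \subseteq \PpM$ is compact with $\im(\gamma) \subseteq K'$.

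For the refinement \cref{eq:curves_coercive_implications_mms}, it suffices to exhibit a uniform bound on $\cMg_t$ over $K \times K$. \cref{lemma:bounded_cost_minimizing_scheme} supplies such a bound provided that $d_{\PpM}(\vx, \vx')$, $\CostM_t(\vx)$, $\CostM_t(\vx')$, and $\CostM_t\big(i\MMS(x_1)\big)$ are uniformly controlled on $K$. All four quantities are continuous in their arguments---$\CostM_t$ by \cref{rmk:cost_lambda_continuous}, and the composition $x \mapsto \CostM_t\big(i\MMS(x)\big)$ by continuity of the embedding $i\MMS$---so they are bounded on the compact set $K$, delivering a constant $C_0$ with $\cMg_t(\vx, \vx') \leq C_0$ on $K \times K$. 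For any $\varepsilon \in (0, 1)$ and any $\gamma$ satisfying $\sum \CostM_t\big(\gamma(s)\big) \leq \cMg_t(\vx, \vx') + \varepsilon \leq C_0 + 1$, the first part of the lemma applied with $C = C_0 + 1$ finishes the argument.

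\textbf{Main point.} The one essential observation is that the restriction must be taken on the \emph{right} of $s_0$: \cref{eq:energy_bound_minimizing_scheme_curve_phase_space} provides an upper bound on the initial energy of a sub-curve in terms of the terminal energy plus accumulated cost, and this asymmetric direction---paired with the coercivity of $E_t$---is precisely what constrains the image of $\gamma$. A symmetric left-and-right splitting would fail to yield the needed uniform bound; beyond this observation, no further technical subtleties arise.
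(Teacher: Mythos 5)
Your proof is correct and follows essentially the same route as the paper: you restrict $\gamma$ to $[s_0, b]_\Z$ (the paper uses $s^\star$ in the same role), apply \cref{eq:energy_bound_minimizing_scheme_curve_phase_space} to bound $E_t(\gamma_1(s_0))$ by $E_t(x'_1) + C$, and then invoke coercivity to conclude compactness; the paper likewise derives the second part from \cref{lemma:bounded_cost_minimizing_scheme} and continuity of $\CostM_t$. The only cosmetic difference is that the paper compresses the reduction by noting $\im(\gamma_1|_{[a,b-1]}) = \im(\gamma_0|_{[a+1,b]})$ up front, whereas you spell out the product set $K' = (p_0(K)\cup K'_\Xx)\times K'_\Xx$ explicitly; both are fine. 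Your closing remark about the asymmetry of the estimate (right-restrictions vs.~symmetric splitting) is accurate and captures the key mechanism.
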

\begin{proof}
    We start with~\eqref{eq:curves_bounded_coercive_mms} and fix $K \subseteq \PpM$ and $C > 0$.
    Since $\im(\gamma_1|_{[a, b-1]}) = \im(\gamma_0|_{[a+1, b]})$ for all $\gamma \in \Gamma(\vx, \vx')$,
    it suffices to show that $\im(\gamma_1)$ is compact in $\Xx$.
    To proceed, we use the coercivity of $E_t$ (cf.~\cref{ass:unif_coerc}) and \cref{eq:energy_bound_minimizing_scheme_curve_phase_space}
    to get the following chain of inequalities, for any $s^\star \in [a+1, b-1]_{\mathbb{Z}}$:
    \begin{align*}
        C & \geq \sum_{s=a+1}^b \CostM_{t}\big(\gamma(s)\big) \geq \sum_{s=s^\star + 1}^b \CostM_{t}\big(\gamma(s)\big)
        \geq E_t\big(\gamma_1(s^\star)\big) - E_t(x'_1)                                                                 \\
          & \geq E_t\big(\gamma_1(s^\star)\big) - \max_{(y_0, y_1) \in K} E_t(y_1).
    \end{align*}
    To show \eqref{eq:curves_coercive_implications_mms}, we infer from \cref{lemma:bounded_cost_minimizing_scheme},
    from the continuity of $\CostM_t$---see \cref{rmk:cost_lambda_continuous}--- and from the compactness of $K$ that
    $\cMg_t$ is bounded on $K \times K$ by some $C'$. Applying~\eqref{eq:curves_bounded_coercive_mms} with $C = C' + 1$ finishes the proof.
\end{proof}
We are now in a position to prove that the map $\cM_t \colon \Xx \times \Xx \to [0, \infty)$ is an action.
\begin{lemma}\label{lemma:action_minimizing_scheme}
    Let us assume \cref{ass:metric_space,ass:reg_E,ass:der_time,ass:unif_coerc}. Then, $\cM_t$ is an action according to \cref{def:action}.
\end{lemma}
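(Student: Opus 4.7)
The plan is to verify the three defining properties of an action as in \cref{def:action}: non-negativity, the a priori inequality $E_t(x) - E_t(x') \leq \cM_t(x, x')$, and lower semicontinuity of $\cM_t$. Non-negativity is immediate because each summand $\CostM_t(\vx) = E_t(x_0) - E\ttau\MMS(x_0) + \frac{1}{2\tau}d(x_0, x_1)^2$ in \cref{eq:action_infimum_MMS} is non-negative (take $y = x_0$ as a competitor in \cref{eq:def_E_MMS}). The a priori inequality is precisely \cref{eq:energy_bound_minimizing_scheme} of \cref{lemma:minimizing_scheme_energy_diff}. All the work therefore goes into establishing lower semicontinuity, which will be the focus of the argument.

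To that end, I would take sequences $x_n \to x$ and $x'_n \to x'$ in $\Xx$ with $L \coloneqq \liminf_n \cM_t(x_n, x'_n)$ finite (else there is nothing to prove), pass to a subsequence realising the liminf, and, for each $n$, select a near-optimal curve $\gamma^n \in \Gamma(i\MMS(x_n), i\MMS(x'_n))$ with domain $[0, N_n]_\Z$ (using the shift invariance in \cref{prop:curve_restriction}) and total cost at most $\cM_t(x_n, x'_n) + 1/n$. The main obstacle is that $N_n$ need not be uniformly bounded in $n$: already when $x$ and $x'$ can be joined by a path of critical points of $E_t$, the infimum in \cref{eq:action_infimum_MMS} is approached only as $N \to \infty$. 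A direct pointwise extraction of a limit curve over a common index set is therefore unavailable.

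To circumvent this, I plan to exploit the observation that only the endpoints of $\gamma^n$ need to be adjusted. Setting $u^n_s \coloneqq \gamma^n_0(s)$, so that $u^n_0 = u^n_1 = x_n$ and $u^n_{N_n} = u^n_{N_n+1} = x'_n$, I will construct $\tilde\gamma^n \in \Gamma(i\MMS(x), i\MMS(x'))$ from the sequence $x, x, u^n_2, \ldots, u^n_{N_n-1}, x', x'$, with obvious adaptations when $N_n \leq 2$. By \cref{lemma:coerciveness_minimizing_scheme}, all images $\im(\gamma^n)$ lie in a common compact set $K' \subseteq \PpM$, so every $u^n_s$ stays in a common compact $K'_\Xx \subseteq \Xx$. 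Only the summands indexed by $s \in \{1, N_n - 1, N_n\}$ distinguish $\mathrm{cost}(\tilde\gamma^n)$ from $\mathrm{cost}(\gamma^n)$, and each boundary difference is controlled by expressions such as $|E_t(x_n) - E_t(x)|$, $|E\ttau\MMS(x_n) - E\ttau\MMS(x)|$ and $|d(x_n, u^n_2)^2 - d(x, u^n_2)^2|$, together with their counterparts at the right endpoint. By the continuity of $E_t$, $E\ttau\MMS$ (cf.~\cref{rmk:cost_lambda_continuous}) and $d$ on $K'_\Xx$, each of these vanishes as $n \to \infty$, uniformly in $u^n_2, u^n_{N_n-1} \in K'_\Xx$ and, crucially, independently of $N_n$.

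Combining the estimates, the conclusion will follow from
\begin{equation*}
    \cM_t(x, x') \leq \mathrm{cost}(\tilde\gamma^n) = \mathrm{cost}(\gamma^n) + o(1) \leq \cM_t(x_n, x'_n) + \tfrac{1}{n} + o(1),
\end{equation*}
so letting $n \to \infty$ yields $\cM_t(x, x') \leq L$, which is the sought lower semicontinuity. The delicate step is the quantitative control of the boundary error, which is possible precisely because only finitely many summands (independently of the potentially unbounded length $N_n$) are perturbed by the endpoint replacement.
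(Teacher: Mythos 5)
Your proof is correct and follows essentially the same strategy as the paper's: both exploit the key observation that replacing the endpoints of a near-optimal admissible curve changes only a bounded number of summands (independent of the curve length), whose perturbation is controlled by uniform continuity of $\CostM_t$ on the compact $K' \subseteq \PpM$ supplied by \cref{lemma:coerciveness_minimizing_scheme}. The paper packages this as continuity of an auxiliary action $\cMAlt_t$ (the infimum of a family of equicontinuous functions parametrized by the interior points of the curve), with a separate argument handling the diagonal, while you prove lower semicontinuity directly via a sequential endpoint-replacement argument---a superficial difference in presentation, not in substance.
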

\begin{proof}
    \newcommand{\cMAlt}{\superscriptleft{\tau}\underline{c}\MMS}
    We start by choosing an arbitrary compact subset
    $K \in \Xx$. It suffices to show that $\cM_t$ is lower semicontinuous on $K \times K$.
    To this end, we show the continuity of $\cMAlt_t \colon \Xx \times \Xx \to [0, \infty)$ on $K$, which is defined as follows:
    \begin{equation*}
        \cMAlt_t(x, x') \coloneqq \inf\left\{ \sum_{s=a+1}^b \CostM_{t}\big(\gamma(s)\big) \,\middle|\, \gamma \in \underline{\Gamma}\big( i\MMS(x), i\MMS (x')\big) \right\},
    \end{equation*}
    where $\underline{\Gamma}(p, p') = \{\gamma \in \Gamma(p, p') \mid \dom(\gamma) = [a, b]_\Z, a < b - 1\}$.
    Beside the condition $a < b - 1$, the definition of $\cMAlt_t$ coincides with the definition of $\cM_t$ in~\eqref{eq:action_infimum_MMS}.
    The continuity of $\cMAlt_t$ on $K \times K$  implies  the lower semicontinuity of $\cM_t$ on $K \times K$. 
    Indeed, this follows from the facts that (1.) $\cMAlt_t \geq \cM_t$ and (2.) the set of points where $\cMAlt_t \neq \cM_t$ is the diagonal in $K \times K$, which is closed
    ---and on this diagonal, $\cM_t(x, x) = \CostM_t(x, x)$ is continuous. \\
    For any $a < b -1 \in \Z$, $x, x' \in K$ and $f\colon [a+1, b-1]_\Z \to \Xx$, we set
    $F(f, x, x')\colon  {[a, b]_\Z \to \PpM}$ to be the finite sequence
    \begin{equation} \label{eq:definition_extension_F}
        \bigg( i\MMS (x), \big(x, f(a+1)\big), \big(f(a+1), f(a+2)\big), \dots, \big(f(b), x'\big), i\MMS (x')\bigg).
    \end{equation}
    Note here that we allow the case $a = b-2$, in which case the above display should be read as
    $\big( i\MMS(x), (x, x'), i\MMS (x')\big)$.
    Setting $D \coloneqq \{f \colon [a+1, b-1]_\Z \to \Xx \mid a < b - 1\}$,
    we observe that $F(\cdot, x, x')$ is a bijection from $D$ to $\underline{\Gamma}\big( i\MMS (x), i\MMS (x')\big)$.
    Thus, we can write $\cMAlt_t$ as the infimum over such extensions:
    \begin{equation} \label{eq:alternative_infimum_formulation_mms_1}
        \cMAlt_t(x, x') = \inf\bigg\{ \sum_{s=a+1}^{b} \CostM_{t}\big(F(f, x, x')\big) \,\bigg|\, f \in D, \dom(f) = [a+1, b-1]_\Z\bigg\}.
    \end{equation}
    Furthermore, by virtue of \cref{lemma:coerciveness_minimizing_scheme}, we can take the infimum
    over only those $f$ for which $\im(f) \subseteq K'$ for some compact $K' \subseteq \PpM$.
    Setting $D' \coloneqq \{f\colon  [a+1, b-1]_\Z \to K' \mid a < b-1\}$ and defining the function
    $\mathfrak{C}\colon D' \times K \times K \to \R$ by
    \begin{equation*}
         \mathfrak{C} ( (f, x, x) ) \coloneqq  \sum_{s \in \dom\big(F(f, x, x)\big)} \CostM_{t}\big(F(f, x, x')(s)\big).
    \end{equation*}
    We can thus rewrite \cref{eq:alternative_infimum_formulation_mms_1} as
    \begin{equation} \label{eq:alternative_infimum_formulation_mms_2}
        \cMAlt_t(x, x') = \inf_{f \in D'}\big\{ \mathfrak{C}(f, x, x') \big\}.
    \end{equation}
    For each $f \in D'$, $\mathfrak{C}(f, \cdot, \cdot)$ is continuous on $K \times K$,
    due to the continuity of $\CostM_t$ ---see \cref{rmk:cost_lambda_continuous}.
    If we can show that this continuity is uniform over $f \in D'$,
    then $\cMAlt_t$ is continuous on $K \times K$, as it is the infimum over equicontinuous functions.
    To this end, we pick
    $x, x', \overline{x}, \overline{x}' \in K$, $f \in D'$ with $\dom(f) =  [a+1, b-1]_{\mathbb{Z}}$ and use the definition of $F$ in \cref{eq:definition_extension_F} and write
    \begin{align*}
          & \mathfrak{C}(f, x, x') - \mathfrak{C}(f, \overline{x}, \overline{x}')
        \\
        = &
        \bigg(\CostM_t\big(  i\MMS  (x)\big) - \CostM_t\big( i\MMS (\overline{x})\big)\bigg) +
        \bigg(\CostM_t\big(\big(x, f(a+1)\big)\big) - \CostM_t\big(\big(\overline{x}, f(a+1)\big)\big)\bigg)       \\
          & + \bigg(\CostM_t\big(\big(f(b), x'\big)\big) - \CostM_t\big(\big(f(b), \overline{x}'\big)\big)\bigg) +
        \bigg(\CostM_t\big( i\MMS  (x')\big) - \CostM_t\big( i\MMS (\overline{x}'\big))\bigg).
    \end{align*}
    All the above terms go to zero as $(x, x') \to (\overline{x}, \overline{x}')$,
    \emph{uniformly} over $f(a+1), f(b) \in K'$, which shows the claimed equicontinuity.
\end{proof}
Our next goal is to show that the action $\cM_t$ is generated by curves.
To this end,
we again need to show two intermediate results.
\begin{lemma} \label{lemma:action_zero_hausdorff}
    Let us assume \cref{ass:metric_space,ass:reg_E,ass:der_time,ass:unif_coerc}, and let $\vx, \vx' \in \PpM$ such that there exists a compact connected set $U \subseteq \PpM$ for which
    $dim_H(U) < 2$, $\{\vx, \vx'\} \subseteq U$ and $\CostM_t|_U \equiv 0$.
    Then $ \cMg_t (\vx, \vx') = 0$.
\end{lemma}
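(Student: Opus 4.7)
The plan is to first unpack the geometric content of $\CostM_t|_U\equiv 0$, and then reduce the problem to a chain-and-cover argument in the spirit of \cref{lemma:zero_cost_in_conn_comp_hd_dim}. Because $E\ttau\MMS\le E_t$ by \cref{rmk:cost_lambda_continuous}, the two non-negative summands in
\begin{equation*}
\CostM_t\big((x_0,x_1)\big)=\big(E_t(x_0)-E\ttau\MMS(x_0)\big)+\tfrac{1}{2\tau}d(x_0,x_1)^2
\end{equation*}
must individually vanish on $U$, which forces $x_0=x_1$ and $E_t(x_0)=E\ttau\MMS(x_0)$ for every $(x_0,x_1)\in U$. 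Hence $U$ is contained in the diagonal of $\PpM$. Since $d_{\PpM}\big((y,y),(y',y')\big)=d(y,y')$ by \cref{eq:def_phase_space_mms}, the set $V:=\{y\in\Xx:(y,y)\in U\}$ is compact, connected, has $\dim_H(V)=\dim_H(U)<2$, and is such that $E_t$ and $E\ttau\MMS$ coincide on $V$.

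The key elementary estimate is that $\cMg_t\big((y,y),(y',y')\big)\leq \tfrac{1}{2\tau}d(y,y')^2$ for every $y,y'\in V$. This is witnessed by the three-point competitor $\gamma\colon [0,2]_\Z\to\PpM$ defined by $\gamma(0)=(y,y)$, $\gamma(1)=(y,y')$, $\gamma(2)=(y',y')$, which is admissible in $\Gamma\big((y,y),(y',y')\big)$ because the compatibility condition $\gamma_1(s)=\gamma_0(s+1)$ is trivially satisfied, and whose cost reduces to
\begin{equation*}
\CostM_t(\gamma(1))+\CostM_t(\gamma(2))=\tfrac{1}{2\tau}d(y,y')^2,
\end{equation*}
once the two vanishing $E_t-E\ttau\MMS$ contributions at $y$ and $y'$ are dropped.

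To conclude, I combine this estimate with the triangular inequality for $\cMg_t$ on $\PpM$, which follows by concatenating near-optimal admissible curves exactly as in \cref{lemma:triangular_inequ_on_P}; the required closure of $\Gamma$ under restriction, concatenation and shift is immediate from \cref{eq:def_GammaM}. Fixing $\varepsilon>0$, the condition $\dim_H(V)<2$ gives $\mathcal{H}^2(V)=0$, so I can cover the compact set $V$ by finitely many open balls $B(y_i,r_i)$ with $\sum_i r_i^2<\varepsilon$. The connectedness of $V$, combined with the graph-theoretic selection used in the proof of \cref{lemma:zero_cost_in_conn_comp_hd_dim}, produces a finite chain $z_1,\dots,z_{n'}\in V$ starting at the diagonal-projection of $\vx$ and ending at that of $\vx'$, with $\{z_i,z_{i+1}\}$ lying in a common ball for each $i$. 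Applying the triangular inequality together with the preceding estimate yields
\begin{equation*}
\cMg_t(\vx,\vx')\leq\sum_{i=1}^{n'-1}\cMg_t\big((z_i,z_i),(z_{i+1},z_{i+1})\big)\leq\tfrac{1}{2\tau}\sum_{i=1}^{n'-1}d(z_i,z_{i+1})^2\leq\tfrac{2}{\tau}\sum_i r_i^2<\tfrac{2\varepsilon}{\tau},
\end{equation*}
and letting $\varepsilon\to 0$ gives $\cMg_t(\vx,\vx')=0$.

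The only non-routine component of this argument is the extraction of the chain $(z_i)$ from the open cover, but since this reuses the graph-theoretic step of \cref{lemma:zero_cost_in_conn_comp_hd_dim} essentially verbatim, I do not expect any substantial new difficulty there; the genuinely new ingredient specific to the minimizing movement scheme is simply the two-step interpolation $(y,y)\to(y,y')\to(y',y')$, which converts the quadratic cost of $\CostM_t$ into the exponent $\alpha=2$ that the Hausdorff-dimension threshold $<2$ is tailored to.
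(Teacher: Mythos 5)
Your proof is correct and follows the same overall strategy as the paper's one-line argument, which merely cites \cref{lemma:zero_cost_in_conn_comp_hd_dim} (with $\alpha=2$) and \cref{lemma:bounded_cost_minimizing_scheme}. The one place where you genuinely improve on the paper is the local quadratic estimate: rather than invoking \cref{lemma:bounded_cost_minimizing_scheme} (which, at $P=0$, delivers the shape $\cMg_t(\vx,\vx')\leq \frac{2}{\tau}d_{\PpM}(\vx,\vx')^2$ after a three-segment detour), you exhibit the cleaner two-step competitor $(y,y)\to(y,y')\to(y',y')$ and read off $\cMg_t\big((y,y),(y',y')\big)\leq\frac{1}{2\tau}d(y,y')^2$ directly, which also makes transparent why the threshold $\dim_H<2$ is the right one. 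A second, more substantive, advantage of your argument is that it never passes through the critical set $\Crit$: \cref{lemma:zero_cost_in_conn_comp_hd_dim} as stated requires its points to lie in $\Crit$ and assumes \cref{ass:conn_comp}, and identifying $\{\CostM_t\circ i\MMS=0\}$ with $\Crit$ is the content of \cref{lemma:zero_gradient_zero_cost}, which needs \cref{ass:path+Lipsch} — neither of which is in the hypotheses of the present lemma. Your route, which only uses $E_t(y)=E\ttau\MMS(y)$ on the diagonal projection $V$ of $U$ plus the chain-and-cover mechanism (borrowed verbatim from the proof of \cref{lemma:zero_cost_in_conn_comp_hd_dim}), sidesteps this entirely, so it is both self-contained and slightly sharper in its hypothesis bookkeeping.
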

\begin{proof}
    Since $\CostM_t|_U \equiv 0$ implies that $U \subseteq \im(i)$, we can apply \cref{lemma:zero_cost_in_conn_comp_hd_dim}
    with $\alpha=2$ by using \cref{lemma:bounded_cost_minimizing_scheme}.
\end{proof}
\begin{lemma} \label{lemma:zero_gradient_zero_cost}
    Let us assume \cref{ass:metric_space,ass:reg_E,ass:der_time,ass:unif_coerc,ass:path+Lipsch},
    and let $x, x^\star \in \Xx$ and $R>0$ such that $\{x' \in \Xx \mid E_t(x') \leq E_t(x)\} \subseteq B_{\frac{R}{4}}(x^\star)$.
    Then, we have that
    \begin{equation*}
        |\partial E_t| (x) = 0 \iff \CostM_t( i\MMS (x)) = 0.
    \end{equation*}
\end{lemma}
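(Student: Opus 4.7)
The plan is to unpack the definition of $\CostM_t(i\MMS(x))$ and reduce the claim to a statement about the global infimum of the Moreau-Yosida-type envelope $y\mapsto E_t(y)+\tfrac{1}{2\tau}d(x,y)^2$. Using the defining formula in \cref{eq:CostM_def}, together with \cref{def:embedding_MMS,eq:def_E_MMS}, we have
\begin{equation*}
    \CostM_t\big(i\MMS(x)\big) \;=\; E_t(x) - E\ttau\MMS(x) \;=\; E_t(x) - \inf_{y\in\Xx}\left\{E_t(y) + \frac{1}{2\tau}d(x,y)^2\right\}.
\end{equation*}
Since the choice $y=x$ is admissible, $E\ttau\MMS(x)\leq E_t(x)$ (cf.~\cref{rmk:cost_lambda_continuous}), so $\CostM_t(i\MMS(x))=0$ if and only if $x$ is a global minimizer, i.e.
\begin{equation}\label{eq:plan-prox-global}
    E_t(x) \leq E_t(y) + \frac{1}{2\tau}d(x,y)^2 \qquad \forall\, y\in\Xx.
\end{equation}

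The direction $\CostM_t(i\MMS(x)) = 0 \Rightarrow |\partial E_t|(x)=0$ is easy: from \eqref{eq:plan-prox-global} one obtains $(E_t(x)-E_t(y))^+/d(x,y)\leq d(x,y)/(2\tau)$ for $y\neq x$, so letting $y\to x$ in the definition of the slope yields $|\partial E_t|(x)=0$.

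The main step is the converse implication, and this is where the compactness hypothesis on the sublevel set and the assumption $\tau\leq 1/L$ of \cref{rmk:lipschitz_tau_relation} come in. Assume $|\partial E_t|(x)=0$. For $y$ with $E_t(y)>E_t(x)$, \eqref{eq:plan-prox-global} is trivial, so fix $y\in\Xx$ with $E_t(y)\leq E_t(x)$; by hypothesis both $x$ and $y$ lie in $B_{R/4}(x^\star)$. By \cref{ass:path+Lipsch}, for each $\varepsilon>0$ we can choose an absolutely continuous curve $\phi\colon [0,d(x,y)+\varepsilon]\to \Xx$ with $\phi(0)=x$, $\phi(d(x,y)+\varepsilon)=y$ and $|\dot\phi|\equiv 1$ a.e.. Since $|\partial E_t|$ is $L$-Lipschitz on $\Xx$ (again by \cref{ass:path+Lipsch}) and vanishes at $x$, we obtain $|\partial E_t|(\phi(s))\leq L\cdot s$ for every $s\in[0,d(x,y)+\varepsilon]$. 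Recalling from \cref{rmk:slope_strong_upper_gradient} that the slope is a strong upper gradient, we estimate
\begin{equation*}
    E_t(x)-E_t(y) \;\leq\; \int_0^{d(x,y)+\varepsilon} |\partial E_t|(\phi(s))\,|\dot\phi(s)|\,\mathrm ds \;\leq\; \int_0^{d(x,y)+\varepsilon} L s\,\mathrm ds \;=\; \frac{L\big(d(x,y)+\varepsilon\big)^2}{2}.
\end{equation*}
Letting $\varepsilon\to 0$ and invoking $L\leq 1/\tau$ gives $E_t(x)-E_t(y) \leq \tfrac{1}{2\tau}d(x,y)^2$, which is exactly \eqref{eq:plan-prox-global}.

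The main (minor) obstacle is making sure the argument is \emph{global} in $y$, rather than just local: this is handled by the dichotomy on whether $E_t(y)\lessgtr E_t(x)$, using the containment of sublevel sets only to keep $y$ within a set where the near-geodesic construction is comfortable (though since the Lipschitz constant of $|\partial E_t|$ is global, the bound on $|\partial E_t|(\phi(s))$ actually holds without restriction). The quantitative coupling $\tau\leq 1/L$ is critical: it is precisely what converts the Lipschitz slope estimate into the quadratic penalty needed for \eqref{eq:plan-prox-global}.
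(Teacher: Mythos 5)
Your proof is correct and follows essentially the same route as the paper: the forward direction via the quadratic bound on $(E_t(x)-E_t(y))^+/d(x,y)$ (you argue it directly where the paper argues by contraposition, but the estimate is identical), and the converse via an $\varepsilon$-geodesic, the bound $|\partial E_t|(\phi(s))\leq Ls$ from the global Lipschitz constant, the strong upper gradient property, and the coupling $\tau\leq 1/L$. Your aside that the ball-containment hypothesis is not actually load-bearing here (since the Lipschitz bound of \cref{ass:path+Lipsch} is global) is accurate; the paper's proof invokes it to place the near-geodesic in $B_R(x^\star)$ but never exploits that containment.
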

\begin{proof}
    We start proving the implication $\CostM_t( i\MMS (x)) \implies |\partial E_t|  (x) = 0$.
    If $|\partial E_t| (x) > 0$, then there
    exists a sequence $(x'_n)_{n \in \NN} \to x$ such that
    $\lim_{n \rightarrow \infty} \frac{E_t(x'_n) - E_t(x)}{d(x'_n, x)} > 0$.  We  thus pick $\varepsilon > 0$ and a subsequence without relabeling such that
    $\frac{E_t(x'_n) - E_t(x)}{d(x'_n, x)} \geq \varepsilon$ for all  $n$.
    % See Definition 1.2.3, page 27
    For such a sequence, we have, for $n$ large enough, that
    \begin{equation*}
        \CostM_t\big( i\MMS (x)\big) \geq E_t(x) - E_t(x'_n) - \frac{d(x, x'_n)^2}{2 \tau} \geq d(x, x'_n) \left( \varepsilon - \frac{d(x, x'_n)}{2 \tau} \right) > 0.
    \end{equation*}
    For the  opposite implication, we assume that $|\partial E_t| (x) = 0$.
    Then, for all $y \neq x$ for which $ E_{t} (y) < E_{t}  (x)$, we set $\underline{d} \coloneqq d(x, y)$ and use \cref{ass:path+Lipsch} to pick some $0 < \varepsilon \leq \frac{R}{2}$ and some $\varepsilon$-geodesic $\gamma$ between $x$ and $y$
    with natural parametrization, i.e.,
    such that $\gamma(0) = x$, $\gamma(\underline{d} + \varepsilon) = y$ and $|\dot\gamma| \equiv 1$ a.e.
    Since $x, y \in B_{\frac{R}{4}} (x^{\star})$ by hypothesis, we note that $\im(\gamma) \subseteq B_{R}(x^\star)$ and observe that by \cref{rmk:slope_strong_upper_gradient} and \cref{ass:path+Lipsch}, 
    \begin{equation*}
        \begin{aligned}
            E_t(x) - E_t(y) - \frac{d(x, y)^2}{2 \tau} &\leq \left| \int_0^{\underline{d} + \varepsilon} |\partial E_t| (\gamma(s)) \diff s\right| - \frac{\underline{d}^2}{2 \tau}
            %= \int_0^{\underline{d} + \varepsilon} |\partial E_t(\gamma(s)) | \diff s - \frac{d^2}{2 \tau}\\
            \leq \int_0^{\underline{d} + \varepsilon} L \cdot s \,\diff s - \frac{d^2}{2 \tau}                                                                                             \\
            & \!\!\!\!\!\!\!\!
            \leq \frac{1}{2} L (\underline{d} + \varepsilon)^2 - \frac{d^2}{2 \tau}
            \leq \frac{d^2}{2} \left( L - \frac{1}{\tau} \right) + \varepsilon L \left(\underline{d} + \frac{\varepsilon}{2}\right)\leq \varepsilon L \left(\underline{d} + \frac{\varepsilon}{2}\right).
        \end{aligned}
    \end{equation*}
    Since $\varepsilon > 0$ was arbitrary, we have that $E_t(x) - E_t(y) - \frac{d(x, y)^2}{2 \tau} \leq 0$.
    Since $y$ was arbitrary as well, we have shown that $\CostM_t( i\MMS (x)) = 0$ for such $x$.
\end{proof}
We are now ready to prove that the action $\cM_t$ is generated by curves.
\begin{lemma} \label{lemma:generated_by_curves_MMS}
    Let us assume \cref{ass:metric_space,ass:reg_E,ass:unif_coerc,ass:der_time,ass:conn_comp,ass:path+Lipsch}.
    Then, for all $\tau \leq \frac{1}{L}$,
    the action $\cM_t$ is generated by curves as in \cref{def:generated_by_curves}.
\end{lemma}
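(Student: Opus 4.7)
My plan is to verify the five enumerated properties of \cref{def:generated_by_curves} for the data $(\PpM, i\MMS, \Gamma, \CostM_t)$. The preliminary requirements are already in place: $\CostM_t$ is continuous by \cref{rmk:cost_lambda_continuous}; $i\MMS$ is a closed immersion since \cref{eq:def_phase_space_mms} gives $d_{\PpM}(i\MMS(x), i\MMS(x')) = d(x, x')$, making it an isometric embedding with image the closed diagonal of $\Xx \times \Xx$; and substituting \cref{eq:def_phase_space_mms,def:embedding_MMS,eq:def_GammaM,eq:CostM_def} into \cref{eq:action_infimum} yields exactly the formula \cref{eq:action_infimum_MMS} for $\cM_t$.

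Three of the five properties are essentially at hand. \cref{prop:curve_restriction} is immediate from inspection of \cref{eq:def_GammaM}, since the constraint $\gamma_1(s) = \gamma_0(s+1)$ is stable under restrictions, concatenations and shifts. \cref{prop:curves_coercive} is precisely \cref{lemma:coerciveness_minimizing_scheme}. For \cref{prop:cheap_fuel_cheap_action}, the quantitative estimate of \cref{lemma:bounded_cost_minimizing_scheme} does the job, modulo its side-condition $\CostM_t(i\MMS(x_1)) \leq P$: I would circumvent this by noting that whenever $\CostM_t(\vx) \leq P$ for $\vx = (x_0, x_1)$ in some compact $K$, we have $d(x_0, x_1) \leq \sqrt{2\tau P}$ and $E_t(x_0) - E_{t,\tau}\MMS(x_0) \leq P$, so by uniform continuity of $y \mapsto E_t(y) - E_{t,\tau}\MMS(y)$ on the projection of $K$ one gets $\CostM_t(i\MMS(x_1)) \leq P + \omega(\sqrt{2\tau P})$ for some modulus $\omega$. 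Applying \cref{lemma:bounded_cost_minimizing_scheme} with this enlarged bound in place of $P$ then forces the action below $\varepsilon$ once $L, P$ are chosen small enough.

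For \cref{prop:cost_nonzero}, the identity $\CostM_t(\vx) = E_t(x_0) - E_{t,\tau}\MMS(x_0) + \frac{1}{2\tau} d(x_0, x_1)^2$ forces both non-negative summands to vanish separately: the second gives $\vx = i\MMS(x_0)$, while the first, via \cref{lemma:zero_gradient_zero_cost} (whose sublevel-set hypothesis is supplied by \cref{ass:unif_coerc}), gives $|\partial E_t|(x_0) = 0$. The forward direction of \cref{prop:no_cost_in_within_conn_component} follows from \cref{lemma:action_zero_hausdorff}: by \cref{ass:conn_comp}, two points in the same path-connected component of $\Crit_t$ are joined by a rectifiable path in $\Xx$, whose image under the isometric embedding $i\MMS$ is a compact connected subset of the zero locus of $\CostM_t$ with Hausdorff dimension $1 < 2$.

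The main obstacle is the converse of \cref{prop:no_cost_in_within_conn_component}. If $|\partial E_t|(x_i) > 0$ for some $i \in \{1, 2\}$, then the corresponding endpoint contribution to the sum defining $\cMg_t$ is bounded below by $\CostM_t(i\MMS(x_i)) > 0$, which suffices. Otherwise $x_1, x_2 \in \Crit_t$ lie in distinct path-connected components. Adapting the barrier argument used in the proof of \cref{lemma:generated_by_curves_gradient_flow}, I would use \cref{ass:conn_comp} to pick $r > 0$ such that, with $\Crit_1 \ni x_1$ the relevant component, the annulus $D \coloneqq \overline{B_r(\Crit_1)} \setminus B_{r/2}(\Crit_1)$ is disjoint from every component of $\Crit_t$. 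By compactness of $D$, continuity of $y \mapsto E_t(y) - E_{t,\tau}\MMS(y)$ (\cref{rmk:cost_lambda_continuous}), and \cref{lemma:zero_gradient_zero_cost}, the quantity $\rho \coloneqq \inf_{x \in D}[E_t(x) - E_{t,\tau}\MMS(x)]$ is strictly positive. For any competitor $\gamma$ with $\gamma(a) = i\MMS(x_1)$ and $\gamma(b) = i\MMS(x_2)$, the first component $\gamma_0$ exits $B_{r/2}(\Crit_1)$ at some first step $s^\star \geq a+2$; either $\gamma_0(s^\star) \in D$, yielding $\CostM_t(\gamma(s^\star)) \geq \rho$, or $\gamma_0(s^\star) \notin \overline{B_r(\Crit_1)}$, in which case the minimality of $s^\star$ gives $\gamma_0(s^\star - 1) \in B_{r/2}(\Crit_1)$ and thus $d(\gamma_0(s^\star - 1), \gamma_1(s^\star - 1)) \geq r/2$, whence $\CostM_t(\gamma(s^\star - 1)) \geq r^2/(8\tau)$. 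Either case produces a uniform positive lower bound on $\cM_t(x_1, x_2)$.
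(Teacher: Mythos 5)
Your proposal is correct and follows essentially the same route as the paper's proof, relying on the same ancillary results (\cref{rmk:cost_lambda_continuous,lemma:coerciveness_minimizing_scheme,lemma:bounded_cost_minimizing_scheme,lemma:zero_gradient_zero_cost,lemma:action_zero_hausdorff}). Two places are worth commenting on. First, for \cref{prop:cheap_fuel_cheap_action} you correctly flag that \cref{lemma:bounded_cost_minimizing_scheme} carries a third side-condition $\CostM_t(i\MMS(x_1)) \leq P$ not present in the property; your bridge via the uniform continuity of $y \mapsto E_t(y) - E_{t,\tau}\MMS(y)$ on the (union of the two) projections of $K$ together with the bound $d(x_0, x_1) \leq \sqrt{2\tau P}$ is a valid repair of a step the paper leaves implicit, so this is a genuine improvement in rigor. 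Second, for the converse of \cref{prop:no_cost_in_within_conn_component}, the paper first invokes the coercivity implication \cref{eq:curves_coercive_implications_mms} to confine quasi-optimal competitors to a compact $\mathcal{K} \subset \Xx$ and then builds the barrier $D = \mathcal{K} \setminus \bigcup_i B_{r/4}(\Crit_i)$; you instead take the annulus $D = \overline{B_r(\Crit_1)} \setminus B_{r/2}(\Crit_1)$ directly, which is cleaner because it works for arbitrary competitors, not only quasi-optimal ones, and avoids the coercivity detour. The one lacuna is that you assert compactness of $D$ without argument: in a merely locally compact metric space $\overline{B_r(\Crit_1)}$ need not be compact for an arbitrary $r$. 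This is repaired by noting that $\Crit_1$ is compact (\cref{ass:conn_comp}), so by local compactness (\cref{ass:metric_space}) one can cover $\Crit_1$ by finitely many compact neighborhoods and choose $r$ small enough that $\overline{B_r(\Crit_1)}$ lies inside their union, hence is compact; you should insert this sentence where you claim ``By compactness of $D$.''
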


\begin{proof}
    The fact that $i\MMS$ is a closed immersion is immediate, and the continuity of  $\CostM_{t}$ was shown
    in \cref{rmk:cost_lambda_continuous}.
    As for the properties,
    \cref{prop:curve_restriction} is immediate,
    \cref{prop:cost_nonzero} results from \cref{lemma:zero_gradient_zero_cost},
    \cref{prop:curves_coercive} follows directly from \cref{lemma:coerciveness_minimizing_scheme}
    and \cref{prop:cheap_fuel_cheap_action} descends from \cref{lemma:bounded_cost_minimizing_scheme}.
    We will now show the missing \cref{prop:no_cost_in_within_conn_component}. \\
    The assumptions and \cref{lemma:action_zero_hausdorff} imply that
    $\cM_t(x, x') = 0$ whenever $ i\MMS (x)$ and $ i\MMS (x')$ lie in the same connected component of $ i\MMS (\Crit) \subseteq \PpM$, where $\Crit \coloneqq \{x \in \Xx \mid  |\partial E_t| (x) = 0\}$.
    In this case, by \cref{ass:conn_comp} we have that $x$ and $x'$ are connected by a curve $\gamma$ in $\Xx$ whose image in $\PpM$ under $i\MMS$ has Hausdorff dimension
    smaller than $2$. Thus, we can apply \cref{lemma:action_zero_hausdorff}.
    For the converse, we assume that $ i\MMS (x)$ and $ i\MMS  (x')$ are \emph{not} in the same connected component of $ i\MMS (\Crit)$---because $ i\MMS $ is an isometry, this is equivalent to $x$ and $x'$ not being in the same connected component of $\Crit$.
    We choose $\varepsilon > 0$, $K' \subseteq \PpM$ compact
    such that implication~\eqref{eq:curves_coercive_implications_mms} in \cref{lemma:coerciveness_minimizing_scheme} holds for $K = \{ i\MMS (x), i\MMS  (x')\}$ and define the compact set $ \mathcal{K}  \coloneqq K'_0 \cup K'_1$, where $K'_0, K'_1 \subseteq \Xx$ are the projections of $K'$  over $\Xx$. 
    We set $\Crit_1, \dots, \Crit_n$ to be the connected components of $\Crit$ which intersect  $\mathcal{K}$. 
    We furthermore set $r \coloneqq \min_{i \neq j} \min_{y \in \Crit_i, y' \in \Crit_j} d(y, y')$,
    which is nonzero by \cref{ass:conn_comp}.
    If $x' \not\in \Crit$, we immediately see that $c_t(x, x') \geq \Cost_t\big( i\MMS (x')\big) > 0$.
    So we may assume, after reordering, that $x' \in \Crit_1$ and $x \not \in \Crit_1$.
    We set $D \coloneqq  \mathcal{K} \setminus \bigcup_{i=1}^n B_{\frac{r}{4}}(\Crit_i)$
    and $P_{\min} \coloneqq \min_{y \in D} \Cost_t( i\MMS  (y))$, which is  positive by compactness of $\mathcal{K}$ and continuity of $\CostM_t$. \\
    Now pick any $\gamma \in \Gamma\big( i\MMS (x), i\MMS (x')\big)$ such that
    \begin{equation} \label{eq:almost-optimal}
        \sum_{s= a+1}^b \CostM_{t}(\gamma(s)) \leq  \cM_{t} (x, x') + \varepsilon
    \end{equation} 
    where $a < b \in \Z$  are such that $\dom(\gamma) = [a, b]_\Z$.
    Let $s^\star$ be the maximal $s$ such that $\gamma_0(s) \in B_{\frac{r}{4}}(\Crit_1)$.
    Then $\gamma_1(s^\star) \not\in B_{\frac{r}{4}}(\Crit_1)$, and either $\gamma_1(s^\star) \in D$ or $\gamma_1(s^\star) \in B_{\frac{r}{4}}(\Crit_i)$ for some $i \neq 1$.
    In the former case, we have that 
    \begin{equation*}
       \sum_{s=a}^b \CostM_{t}\big(\gamma(s)\big) \geq \CostM_t\big(\gamma(s^\star)\big) \geq \CostM_t\big( i\MMS  (\gamma_1(s^\star))\big) \geq P_{\min}. 
    \end{equation*}
    In the latter case, we have that 
    \begin{equation*}
        \sum_{s=a}^b \CostM_{t}\big(\gamma(s)\big) \geq \frac{1}{2\tau} d\big(\gamma_0(s^\star) , \gamma_1(s^\star)\big)^2 \geq \frac{r^2}{8\tau}.
    \end{equation*}
    Since \eqref{eq:almost-optimal} holds and $\varepsilon >0$ is arbitrary, we infer from the last two inequalities that $\cM_t(x_1, x_2) \geq \min(\frac{r^2}{8\tau}, P_{\min}) > 0$.
\end{proof}
To finish this section, we prove \cref{lemma:MMS_summary_lemma}.
\begin{proof}[Proof of \cref{lemma:MMS_summary_lemma}]
    Let $\tau > 0$.
    The fact that $\cM_{t}$ is an action as in \cref{def:action} is the content of \cref{lemma:action_minimizing_scheme};
    the fact that for $|\partial E_t|$ Lipschitz continuous with Lipschitz constant $L$ and $\tau \leq \frac{1}{L}$,
    $\cM_t$ is generated by curves as in \cref{def:generated_by_curves} is the content of the above \cref{lemma:generated_by_curves_MMS}.
    What is left to prove is that $\ruleM_{t}$ as defined in \cref{def:evolution_rule_MMS} is compatible with $\cM_t$ as in \cref{def:action},
    i.e., that for all $x, x' \in \Xx$, $(u_j)_{j \in \NN} \in \Xx$
    and increasing sequences $(s_j)_{j \in \NN} \in \NN$ 
    such that $u_0 = x$, $\lim_{j \rightarrow \infty} u_{s_j} = x'$ and
    \begin{equation}
        u_{j+1} \in \argmin_{y \in \Xx} \left\{ E_t(y) + \frac{1}{2\tau} d\big(u_{j}, y\big)^2 \right\}
    \end{equation}
    for all $s \in [0, \infty)_\Z$, we have that
    \begin{equation*}
        E_t(x) - E_t(x') = \cM_t(x, x').
    \end{equation*}
    However, this follows directly by applying \cref{lemma:minimizing_scheme_energy_diff} to the finite curves
    \begin{equation*}
        \big(i(x), (u_{0}, u_{1}), \dots, (u_{s_j-1}, u_{s_j}), i\MMS (u_{s_j})\big) \in \Gamma\big( i\MMS (x), i\MMS (u_{s_j})\big),
    \end{equation*}
    using continuity of $E_t$ and $\cM_t$
    and noticing that the convergence of $E_t(u_{j})$ implies that $E_t(u_{s_i}) - E\ttau\MMS(u_{s_i}) \rightarrow 0$
    as $s \rightarrow \infty$.
\end{proof}

\subsection{The BDF2 method for the gradient flow} \label{subsec:elaborate_BDF2}
In this section, we will explore the BDF2 discretization of the gradient flow and prove \cref{lemma:BDF2_summary_lemma}.
To the best of our knowledge, we derive here for the first time the action related to the BDF2 scheme.
We use freely the notation in \cref{eq:def_phase_space_BDF,eq:embedding_BDF,eq:def_GammaB,eq:def_costB,eq:extendded-cost-BDF}.  
In the rest of this section we denote the components of a curve $\gamma\colon I \subseteq \Z \to \PpB = \Xx \times \Xx \times \Xx$ by $\gamma_{-1}$, $\gamma_0$ and $\gamma_1$, and the components of points $\vx, \vx' \in \PpB$ by $(x_{-1}, x_0, x_1) \coloneqq \vx$ and $(x'_{-1}, x'_0, x'_1) \coloneqq \vx'$. As we described in \cref{rmk:lipschitz_tau_relation}, whenever we assume that \cref{ass:path+Lipsch} holds, we also assume that $\tau \leq \frac{1}{L}$, where $L$ is the Lipschitz constant mentioned in \cref{ass:path+Lipsch}.
Substituting \cref{eq:def_phase_space_BDF,eq:embedding_BDF,eq:def_GammaB,eq:def_costB} into \cref{eq:action_infimum} and simplifying, we obtain the action \cref{eq:action_infimum_BDF2}.

\begin{remark}
In the definitions concerning the minimizing movement scheme in \cref{eq:CostM_def,eq:def_E_MMS},
$\CostM_t$ resembled $E\MMS_{t, \tau}$. In contrast, in the definitions in \cref{eq:def_costB,eq:def_E_BDF},
$\CostB_{t}$ and $E\BDF_{t, \tau}$ look quite different, with differing fractions involving $\tau$.
Based on what we have seen on the minimizing movement scheme, one might instead expect the following definition instead
of \cref{eq:def_costB}:
\begin{equation}\label{eq:def_priceB_alternative}
    \widetilde{\CostB_t}(\vx) \coloneqq E_t(x_0) - E\BDF_{t, \tau}(x_0, x_{-1}) + \frac{1}{\tau} d(x_0, x_1)^2 - \frac{1}{4\tau}d(x_{-1}, x_1)^2.
\end{equation}
In fact, we could have chosen the definition of $\widetilde{\CostB_{t}}$ in~\eqref{eq:def_priceB_alternative}
in place of $\CostB_{t}$ in~\eqref{eq:def_costB}:
For every $x, x' \in \Xx$, every $(\gamma \in \Gamma(i\BDF(x), i\BDF (x'))$ with $\gamma \colon [a, b]_{\mathbb{Z}} \to \PpB$, and every $\tau > 0$,
we can use the fact that $d\big(\gamma_0(s), \gamma_1(s)\big) = d\big(\gamma_{-1}(s+1), \gamma_0(s+1)\big)$ for $s \in [a+1, b-1]_{\mathbb{Z}}$, while $d\big(\gamma_{-1}(a), \gamma_0(a)\big) = d\big(\gamma_0(a), \gamma_1(a)\big) = 0$ ---and likewise for $\gamma(b)$--- to see that
\begin{equation} \label{eq:BDF2_equivalence}
    \sum_{s=a+1}^{b}\CostB_t(\gamma(s)) = \sum_{s=a+1}^{b}\widetilde{\CostB_t}\big(\gamma(s)\big).
\end{equation}
On the other hand, our formulation has the advantage that $\CostB_{t}$ is nonnegative, as we show in the next lemma.
\end{remark} 

\begin{lemma} \label{lemma:BDF2_estimates}
    For any $\vx \in \PpB$, we have that
    \begin{equation} \label{eq:BDF2_nonnegativity}
        \CostB_t(\vx) \geq E_t(x_0) - E\BDF_{t, \tau}(x_0, x_{-1}),
    \end{equation}
    where equality can only be attained if $d(x_{-1}, x_0) = d(x_0, x_1)$.
    Furthermore, we have that
    \begin{equation}\label{eq:BDF2_price_nonnegative}
        E_t(x_0) - E\BDF_{t, \tau}(x_0, x_{-1}) \geq \frac{1}{4\tau} d(x_0, x_{-1})^2 \geq 0.
    \end{equation}
\end{lemma}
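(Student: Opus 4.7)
The plan is to prove the two inequalities separately, since they are essentially independent elementary estimates that do not rely on anything beyond the definitions of $\CostB_t$ and $E^B_{t,\tau}$.

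For inequality \eqref{eq:BDF2_nonnegativity}, I would unfold the definition~\eqref{eq:def_costB} of $\CostB_t(\vx)$ and reduce the claim to proving that
\begin{equation*}
    \frac{1}{2\tau}\bigl(d(x_0,x_1)^2 + d(x_{-1},x_0)^2\bigr) - \frac{1}{4\tau} d(x_{-1},x_1)^2 \;\geq\; 0,
\end{equation*}
which is equivalent to $2\bigl(d(x_0,x_1)^2 + d(x_{-1},x_0)^2\bigr) \geq d(x_{-1},x_1)^2$. This follows by combining the triangle inequality $d(x_{-1},x_1) \leq d(x_{-1},x_0) + d(x_0,x_1)$ with the elementary $(a+b)^2 \leq 2(a^2+b^2)$, which is just the expansion of $(a-b)^2 \geq 0$. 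For the equality characterization, tracing the chain of inequalities backwards, equality in \eqref{eq:BDF2_nonnegativity} forces $2\bigl(d(x_0,x_1)^2 + d(x_{-1},x_0)^2\bigr) = d(x_{-1},x_1)^2$, and combining with the triangle inequality $d(x_{-1},x_1)^2 \leq \bigl(d(x_{-1},x_0) + d(x_0,x_1)\bigr)^2$ yields $\bigl(d(x_{-1},x_0) - d(x_0,x_1)\bigr)^2 \leq 0$, hence the claimed equality of the two distances.

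For inequality \eqref{eq:BDF2_price_nonnegative}, I would use the point $y = x_0$ as a competitor in the infimum defining $E^B_{t,\tau}(x_0, x_{-1})$ in~\eqref{eq:def_E_BDF}. This gives directly
\begin{equation*}
    E^B_{t,\tau}(x_0, x_{-1}) \;\leq\; E_t(x_0) + \frac{1}{\tau} d(x_0,x_0)^2 - \frac{1}{4\tau} d(x_0, x_{-1})^2 \;=\; E_t(x_0) - \frac{1}{4\tau} d(x_0, x_{-1})^2,
\end{equation*}
which upon rearrangement is exactly \eqref{eq:BDF2_price_nonnegative}, and non-negativity is automatic.

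There is no major obstacle here: both assertions are short, entirely algebraic manipulations that only use the definitions~\eqref{eq:def_costB}, \eqref{eq:def_E_BDF}, the triangle inequality in $(\Xx,d)$, and the parallelogram-type inequality $(a+b)^2 \leq 2(a^2+b^2)$. The only mild subtlety is tracking the equality case carefully in the first estimate, which requires observing that both the triangle inequality and the $(a-b)^2\geq 0$ estimate must be saturated simultaneously.
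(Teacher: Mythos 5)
Your proof is correct and follows essentially the same route as the paper's: the first inequality via the triangle inequality combined with $(a+b)^2 \leq 2(a^2+b^2)$, and the second by taking $y=x_0$ as a competitor in the infimum defining $E^B_{t,\tau}$. Your treatment of the equality case is in fact a bit more explicit than the paper's, which only notes that the elementary bound is saturated when the two distances coincide, whereas you trace the chain backwards to show that equality genuinely forces $d(x_{-1},x_0)=d(x_0,x_1)$.
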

\begin{proof}
    The inequality~\eqref{eq:BDF2_nonnegativity} follows directly from the definition of $\CostB_{t}$ in \cref{eq:def_costB}
    and the fact that, for $x, x', x'' \in \Xx$, setting $\underline{d}_1 \coloneqq d(x, x')$
    and $\underline{d}_2 \coloneqq d(x', x'')$:
    \begin{equation}\label{eq:weird_square_formula}
        \frac{1}{4} d(x, x'')^2 \leq \frac{1}{4} (\underline{d}_1 + \underline{d}_2)^2
        %= \frac{1}{2} (\underline{d}_1^2 + \underline{d}_2^2) - \frac{1}{4} (\underline{d}_1 - \underline{d}_2)^2
        \leq \frac{1}{2} (\underline{d}_1^2 + \underline{d}_2^2),
    \end{equation}
    where the first bound comes from the triangle inequality and the second bound
    is attained if $\underline{d}_1 = \underline{d}_2$.
    We get inequality~\eqref{eq:BDF2_price_nonnegative} using $x_{0}$ as a competitor for the infimum in the definition
    of $E\BDF_{t, \tau}$ in \cref{eq:def_E_BDF}.
    %which gives an upper bound
    %on $E\BDF_{t, \tau}(x_0, x_{-1})$ and thus the lower bound in \cref{eq:BDF2_price_nonnegative}.
\end{proof}
Using the reformulation \cref{eq:BDF2_equivalence}, we see that the BDF2 discretization of the gradient flows is indeed compatible with $\cB_t$.
\begin{lemma}\label{lemma:energy_diff_BDF2}
    For any $\vx$ in $\PpB$, we have that
    \begin{equation}\label{eq:BDF2_curve_cost_bigger_than_energy_diff_phase_space}
        \widetilde{\CostB_t}(\vx) \geq E_t(x_0) - E_t(x_1),
    \end{equation}
    where equality is attained if and only if $\vx = (x_{-1}, x_0, x_1)$ satisfies
    \begin{equation}\label{eq:BDF2_definition}
        x_1 \in \argmin_{y \in \Xx} \left\{ E_t(y) + \frac{1}{\tau} d(y, x_0)^2 - \frac{1}{4\tau} d(y,x_{-1})^2 \right\}.
    \end{equation}
    In particular, for any $x, x' \in \Xx$, we have that
    \begin{equation} \label{eq:BDF2_action_bigger_than_energy_diff}
        \cB_{t} (x, x') \geq E_t(x) - E\BDF_{t, \tau}(x', x') \geq E_t(x) - E_t(x').
    \end{equation}
\end{lemma}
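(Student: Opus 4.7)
The plan is to prove the two statements in succession, with the pointwise bound~\eqref{eq:BDF2_curve_cost_bigger_than_energy_diff_phase_space} serving as the key ingredient for the action bound~\eqref{eq:BDF2_action_bigger_than_energy_diff}.

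For~\eqref{eq:BDF2_curve_cost_bigger_than_energy_diff_phase_space}, I would start from the definition~\eqref{eq:def_priceB_alternative} of $\widetilde{\CostB_t}(\vx)$ and exploit the variational characterization of $E\BDF_{t,\tau}(x_0, x_{-1})$ given in~\eqref{eq:def_E_BDF}: plugging in $y = x_1$ as a competitor in the infimum yields
\[
-E\BDF_{t,\tau}(x_0, x_{-1}) \;\geq\; -E_t(x_1) - \frac{1}{\tau} d(x_1, x_0)^2 + \frac{1}{4\tau} d(x_1, x_{-1})^2.
\]
Substituting back into $\widetilde{\CostB_t}(\vx)$, the penalty terms from the competitor exactly cancel the distance terms that appear in $\widetilde{\CostB_t}$, leaving $\widetilde{\CostB_t}(\vx) \geq E_t(x_0) - E_t(x_1)$. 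Equality is attained precisely when $x_1$ realizes the infimum defining $E\BDF_{t,\tau}(x_0, x_{-1})$, which is exactly the Euler condition~\eqref{eq:BDF2_definition}.

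For~\eqref{eq:BDF2_action_bigger_than_energy_diff}, I would fix an arbitrary $\gamma \in \Gamma(i\BDF(x), i\BDF(x'))$ with $\dom(\gamma) = [a, b]_\Z$ and invoke the equivalence~\eqref{eq:BDF2_equivalence} to rewrite $\sum_{s=a+1}^b \CostB_t(\gamma(s)) = \sum_{s=a+1}^b \widetilde{\CostB_t}(\gamma(s))$. For each $s \in [a+1, b-1]$, the pointwise bound just proved gives $\widetilde{\CostB_t}(\gamma(s)) \geq E_t(\gamma_0(s)) - E_t(\gamma_1(s))$. The overlap constraints built into $\Gamma$, together with the boundary values $\gamma(a) = (x, x, x)$ and $\gamma(b) = (x', x', x')$, pin down $\gamma_0(a+1) = \gamma_1(a) = x$ and $\gamma_1(b-1) = \gamma_0(b) = x'$, so the telescoping along $\gamma_1(s) = \gamma_0(s+1)$ yields
\[
\sum_{s=a+1}^{b-1} \widetilde{\CostB_t}(\gamma(s)) \;\geq\; E_t(\gamma_0(a+1)) - E_t(\gamma_1(b-1)) \;=\; E_t(x) - E_t(x').
\]
The last summand, corresponding to $\gamma(b) = (x', x', x')$, has vanishing distance contributions, so $\widetilde{\CostB_t}(\gamma(b)) = E_t(x') - E\BDF_{t,\tau}(x', x')$ directly from~\eqref{eq:def_priceB_alternative}. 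Combining and passing to the infimum over $\gamma$ gives $\cB_t(x, x') \geq E_t(x) - E\BDF_{t,\tau}(x', x')$. The second inequality is immediate by taking $y = x'$ as a competitor in the infimum defining $E\BDF_{t,\tau}(x', x')$, yielding $E\BDF_{t,\tau}(x', x') \leq E_t(x')$.

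The main obstacle is the telescoping in the second step. Unlike the MMS setting, where consecutive states in a curve share a single coordinate, BDF2 triples $\gamma(s) = (u_{s-1}, u_s, u_{s+1})$ share two coordinates between consecutive steps. The delicate point is to combine the overlap constraints of $\Gamma$ with the boundary conditions $\gamma(a), \gamma(b) \in \im i\BDF$ so as to collapse all free boundary components to $x$ and $x'$; once this is done, the telescope closes and the strengthening from $E_t(x')$ to $E\BDF_{t,\tau}(x', x')$ comes for free from the explicit form of $\widetilde{\CostB_t}$ at $\gamma(b)$.
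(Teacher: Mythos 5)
Your argument is correct and follows the same two-step strategy as the paper's: a competitor argument for the pointwise bound (plug $y = x_1$ into the infimum defining $E\BDF_{t,\tau}(x_0, x_{-1})$ and watch the penalty terms cancel), followed by telescoping along the overlap constraint to pass from the pointwise bound to the action bound. Your handling of the last summand is actually slightly more explicit than the paper's: by peeling off $\widetilde{\CostB_t}(\gamma(b)) = E_t(x') - E\BDF_{t,\tau}(x',x')$ and treating the telescope only over $s \in [a+1, b-1]$, you arrive at the stronger bound $\cB_t(x,x') \geq E_t(x) - E\BDF_{t,\tau}(x',x')$ directly, whereas the paper's telescope as written only yields $E_t(\gamma_1(a)) - E_t(\gamma_1(b)) = E_t(x) - E_t(x')$ and is rather terse about how the sharper inequality is recovered. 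One shared caveat worth noting: closing the telescope requires $\gamma_1(b-1) = \gamma_0(b)$, which is not literally supplied by \cref{eq:def_GammaB} as stated (the constraint $\gamma_1(s-1)=\gamma_0(s)$ is imposed only for $s\in[a+1,b-1]_\Z$, so the case $s=b$ is missing); but this is evidently the intended reading, since the paper's own proof, the equality \cref{eq:BDF2_equivalence}, and the bijection $F$ in \cref{eq:BDF2_MMS_curves_bijection} all depend on it. So this is not a defect of your proof but rather a small imprecision in the constraint range as written in the paper.
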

\begin{proof}
    To show~\eqref{eq:BDF2_curve_cost_bigger_than_energy_diff_phase_space}, we start with the following calculation: for every $\underline{x} \in \PpB$
    \begin{equation*}
        \begin{split}
        \widetilde{\CostB_t}(\vx) & = E_t(x_0) - E\BDF_{t, \tau}(x_0, x_{-1}) + \frac{1}{\tau} d(x_1, x_0)^2 - \frac{1}{4\tau}d(x_1, x_{-1})^2                                    \\
        & =                           E_t(x_0) - \inf_{y \in \Xx} \left\{ E_t(y) + \frac{1}{\tau} d(y, x_0)^2 - \frac{1}{4\tau} d(y, x_{-1})^2 \right\} \\
        & \qquad
        + \frac{1}{\tau} d(x_1, x_0)^2 - \frac{1}{4\tau}d(x_1, x_{-1})^2                                                               \\
            & \geq                        E_t(x_0) - E_t(x_1).
    \end{split}
    \end{equation*}
    In particular, equality is attained if and only if the infimum is achieved at $x_1$, i.e., if $\vx$ fulfills \cref{eq:BDF2_definition}.
    To show \cref{eq:BDF2_action_bigger_than_energy_diff}, we first recursively apply
    \cref{eq:BDF2_curve_cost_bigger_than_energy_diff_phase_space} to any $\gamma \in \Gamma(\vx, \vx')$ for
    $\vx, \vx' \in \PpB$ to see that
    \begin{equation*}
        \sum_{s=a+1}^{b}\widetilde{\CostB_t}\big(\gamma(s)\big) \geq E_t\big(\gamma_1(a)\big) - E_t\big(\gamma_1(b)\big).
    \end{equation*}
    \cref{eq:BDF2_action_bigger_than_energy_diff} then follows from the definition of $\cB_{t}$,
    \cref{eq:BDF2_equivalence} and the fact that $ i\BDF (x_1)_1 = x_1$ and $ i\BDF  (x_2)_1 = x_2$.
\end{proof}
Next, we investigate the relation between $\cB_t$ and $\cM_t$. To avoid confusion, we denote the admissible curves for the BDF2 method, defined in \cref{eq:def_GammaB}, by $\Gamma\BDF$,
and we denote the admissible curves for the minimizing movement scheme, defined in \cref{eq:def_GammaM}, by $\Gamma\MMS$.
We start by defining a bijection between certain
sets of admissible curves. For $\vx, \vx' \in \PpB$, we define $F\colon \Gamma\BDF(\vx, \vx') \to \Gamma\MMS\big((x_0, x_1), (x'_{-1}, x'_0)\big)$ as follows:
\begin{equation}\label{eq:BDF2_MMS_curves_bijection}
    F(\gamma) \coloneqq \bigg(\big(\gamma_0(a), \gamma_1(a)\big), \dots, \big(\gamma_0(b-1), \gamma_1(b-1)\big)\bigg),
\end{equation}
where we choose the domain of $F(\gamma)$ to be $[a, b-1]_{\mathbb{Z}}$ and where $a < b \in \Z$ are chosen such that $\dom(\gamma) = [a, b]_\Z$.
From the definitions of $\Gamma\BDF$ and $\Gamma\MMS$, both injectivity and surjectivity of $F$ follow right away.
\begin{lemma}\label{lemma:BDF2_relation_to_minimizing_scheme}
    For every $\tau > 0$, every $\vx, \vx' \in \PpB$, and every $\gamma \in \Gamma\BDF (\underline{x}, \underline{x}')$ we have that
    \begin{equation} \label{eq:BDF2_cost_relation_to_minimizing_scheme}
        \frac{1}{5} \sum_{s=a+1}^{b-1}\,\,\CostM[\frac{\tau}{2}]_t(F(\gamma)(s)) \leq \sum_{s=a+1}^{b}\CostB_t(\gamma(s)) \leq 3 \sum_{s=a+1}^{b-1}\CostM_t(F(\gamma)(s)) + \CostB_t\big( \gamma(a)  \big) .
    \end{equation}
    In particular, for $x, x' \in \Xx$, we have that
    \begin{equation}\label{eq:BDF2_relation_to_minimizing_scheme}
        \frac{1}{5}\,\,\,\cM[\frac{\tau}{2}\,]_{t}(x, x') \leq \cB_{t, \tau}(x, x') \leq 3\, \cM_{t, \tau}(x, x') + \CostB_t( i\BDF  (x)).
    \end{equation}
    Furthermore, for every $x \in \Xx$, we have that
    \begin{equation} \label{eq:BDF2_stillstand_cost_relation_to_minimizing_scheme}
        \CostB_{t}( i\BDF  (x)) = \,\,\, \CostM[\frac{2\tau}{3}]_{t}( i\MMS (x)).
    \end{equation}
   % where $i_{\PpB}$ and $i_{\PpM}$ denote the embeddings of $\Xx$ into $\PpB$ and $\PpM$ from \cref{eq:embedding_BDF,def:embedding_MMS}, respectively.
\end{lemma}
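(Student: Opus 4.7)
The proof naturally decomposes into three parts. The identity~\eqref{eq:BDF2_stillstand_cost_relation_to_minimizing_scheme} follows by direct substitution: evaluating~\eqref{eq:def_costB} at $i^B(x) = (x,x,x)$ kills all three distance terms, and the infimum in~\eqref{eq:def_E_BDF} with $x' = x$ collapses to $\inf_y\{E_t(y) + \tfrac{3}{4\tau}d(y,x)^2\} = E^M_{t,2\tau/3}(x)$, so that both sides of~\eqref{eq:BDF2_stillstand_cost_relation_to_minimizing_scheme} coincide with $E_t(x) - E^M_{t, 2\tau/3}(x)$.

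The pointwise inequalities~\eqref{eq:BDF2_cost_relation_to_minimizing_scheme} rest on the sandwich
\begin{equation*}
E^M_{t,\tau}(x) - \tfrac{1}{2\tau}d(x,x')^2 \;\leq\; E^B_{t,\tau}(x, x') \;\leq\; E^M_{t,\tau/2}(x),
\end{equation*}
obtained by inserting $d(y, x')^2 \leq 2d(y, x)^2 + 2d(x, x')^2$ into~\eqref{eq:def_E_BDF} for the left inequality and by discarding the non-positive summand $-\tfrac{1}{4\tau}d(y, x')^2$ for the right one. Write $d_s := d(\gamma_0(s), \gamma_1(s))$. For the lower bound, combine the triangle estimate $d(\gamma_{-1}(s), \gamma_1(s))^2 \leq 2(d_{s-1}^2 + d_s^2)$ inside~\eqref{eq:def_costB} with the right comparison above to get $\CostB_t(\gamma(s)) \geq E_t(\gamma_0(s)) - E^M_{t,\tau/2}(\gamma_0(s))$ for interior $s$. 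Together with $\tfrac{1}{\tau}d_s^2 \leq 4\CostB_t(\gamma(s+1))$---which follows from the curve identity $\gamma_{-1}(s+1) = \gamma_0(s)$ and~\eqref{eq:BDF2_price_nonnegative} applied at $s+1$---this gives the pointwise estimate $\CostM[\frac{\tau}{2}]_t(F(\gamma)(s)) \leq \CostB_t(\gamma(s)) + 4\CostB_t(\gamma(s+1))$; summation over $s \in [a+1, b-1]_\Z$ produces the left inequality of~\eqref{eq:BDF2_cost_relation_to_minimizing_scheme} with constant $\tfrac{1}{5}$. Dually, the left comparison combined with dropping the non-positive term of~\eqref{eq:def_costB} yields $\CostB_t(\gamma(s)) \leq \CostM_t(F(\gamma)(s)) + \tfrac{1}{\tau}d_{s-1}^2 \leq \CostM_t(F(\gamma)(s)) + 2\CostM_t(F(\gamma)(s-1))$ for interior $s$ (using $\CostM_t(F(\gamma)(s-1)) \geq \tfrac{1}{2\tau}d_{s-1}^2$); summing this inequality contributes the factor $3$, with the boundary contributions at $s = a+1$ and $s = b$ absorbed into $\CostB_t(\gamma(a))$ via~\eqref{eq:BDF2_price_nonnegative}.

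The action inequalities~\eqref{eq:BDF2_relation_to_minimizing_scheme} follow by specializing~\eqref{eq:BDF2_cost_relation_to_minimizing_scheme} to $\vx = i^B(x)$, $\vx' = i^B(x')$---so that $F$ bijects $\Gamma\BDF(i^B(x), i^B(x'))$ onto a cofinal subclass of $\Gamma\MMS(i\MMS(x), i\MMS(x'))$---and taking infima on both sides; in this specialization $\CostB_t(\gamma(a))$ becomes exactly $\CostB_t(i^B(x))$, producing~\eqref{eq:BDF2_relation_to_minimizing_scheme}.

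\textbf{Main obstacle.} The subtle point is the upper bound's boundary treatment: at $s = a+1$ the curve property of $\Gamma^B$ leaves $\gamma_{-1}(a+1)$ unconstrained, while at $s = b$ the index $F(\gamma)(b)$ falls outside the domain of $F(\gamma)$; both anomalies obstruct a direct pairing with MMS costs. They must be absorbed into $\CostB_t(\gamma(a))$ by a careful bookkeeping of the auxiliary distance contributions via~\eqref{eq:BDF2_price_nonnegative}, and this is exactly what pins down the constant $3$ in the upper bound.
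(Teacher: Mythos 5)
Your proposal takes essentially the same route as the paper: pointwise comparison of instantaneous costs, then telescoping. Your ``sandwich'' $E^M_{t,\tau}(x)-\tfrac{1}{2\tau}d(x,x')^2 \le E^B_{t,\tau}(x,x')\le E^M_{t,\tau/2}(x)$ is a clean repackaging of the paper's device of picking quasi-optimal competitors $y_s$ and then invoking \eqref{eq:weird_square_formula}, and it produces the same per-step estimates (interior bound $\CostB_t(\gamma(s))\le\CostM_t(F(\gamma)(s))+\tfrac{1}{\tau}d_{s-1}^2$ and, via \eqref{eq:BDF2_price_nonnegative} applied at $s+1$, the bound $\CostM[\frac{\tau}{2}]_t(F(\gamma)(s))\le\CostB_t(\gamma(s))+4\CostB_t(\gamma(s+1))$), hence the same constants $1/5$ and $3$; the direct-substitution argument for \eqref{eq:BDF2_stillstand_cost_relation_to_minimizing_scheme} also matches. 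One caveat: the ``absorption'' of the two boundary anomalies into $\CostB_t(\gamma(a))$ that you announce as the main obstacle is stated but not carried out, and with your estimates you end up with a boundary term of the form $2\CostM_t(F(\gamma)(a))$ plus a leftover $\CostM_t\big((x_0',x_1')\big)$ rather than $\CostB_t(\gamma(a))$ exactly (this requires the implicit closure convention $\gamma_{-1}(a+1)=\gamma_0(a)$, $\gamma_1(b-1)=\gamma_0(b)$ on $\Gamma\BDF$, and even then one would need an additional comparison between $\CostM_t(i\MMS(x))$ and $\CostB_t(i\BDF(x))$); to be fair, this is precisely the index range where the paper's own displayed chain is loose (the proof bounds $\sum_{s=a+1}^{b-1}\CostB_t$ by $3\sum_{s=a}^{b-1}\CostM_t$, not the stated ranges), so your flag is well placed. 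The exact form of the additive boundary term is immaterial for the downstream use in \cref{lemma:coerciveness_BDF2,lemma:generated_by_curves_BDF2}, which only require a locally bounded remainder.
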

\begin{proof}
    For the first inequality, let $\gamma \in \Gamma\BDF(\underline{x}, \underline{x}')$ be such that $\gamma \colon [a, b] _\Z \to \PpB$ and choose a sequence of quasi-optimal points $(y_s)_{s=a+1}^b \in \Xx$ such that
    \begin{equation*}
        %\label{eq:another-quasi-optimal}
         E_t(y_s) + \frac{1}{\tau} d\big(\gamma_0(s), y_s\big)^2 \leq E_{t, \frac{\tau}{2}}\MMS (\gamma_{0} (s)) + \frac{\varepsilon}{b-a+1}.
    \end{equation*}
    We then have
    \begin{equation*}
        \begin{split}
          \sum_{s=a+1}^{b-1}\,\, & \CostM[\frac{\tau}{2}]_t(F(\gamma)(s))  - \varepsilon \\
         &
        \leq \sum_{s=a+1}^{b-1} \bigg(E_t(\gamma_0(s)) - \underbrace{\big(E_t(y_s) + \frac{1}{\tau} d(\gamma_0(s), y_s)^2\big)}_{\geq E\BDF_{t, \tau}(\gamma_0(s), \gamma_{-1}(s))} +  \frac{1}{\tau}  d\big(\gamma_0(s), \gamma_1(s)\big)^2 \bigg)\\
         & \leq \sum_{s=a+1}^{b-1} \bigg(E_t\big(\gamma_0(s)\big) - E\BDF_{t, \tau}\big(\gamma_0(s), \gamma_{-1}(s)\big)\bigg) \\
         & \qquad 
         + 4  \sum_{s=a+1}^{b-1}  \underbrace{\frac{1}{4\tau} d\big(\gamma_{-1}(s+1), \gamma_0(s+1)\big)^2}_{\leq  E_t\big( \gamma_{0}(s) \big) - E\BDF_{t, \tau}\big(\gamma_0(s), \gamma_{-1}(s)\big) \text{ by \cref{lemma:BDF2_estimates}}} \\
         & \leq 5 \sum_{s=a+1}^b \bigg(E_t\big(\gamma_0(s)\big) - E\BDF_{t, \tau}\big(\gamma_0(s), \gamma_{-1}(s)\big)\bigg) \leq 5 \sum_{s=a+1}^b \CostB_t\big(\gamma(s)\big),
         \end{split}
    \end{equation*}
    where we used \cref{eq:BDF2_nonnegativity} in the last inequality.
    Since $\varepsilon$ was arbitrary, the left inequality in~\eqref{eq:BDF2_cost_relation_to_minimizing_scheme} follows.\\
    To prove the right inequality in~\eqref{eq:BDF2_cost_relation_to_minimizing_scheme},
    we likewise fix $\gamma\colon [a, b]_\Z \to \PpB$, $\gamma \in \Gamma\BDF (\underline{x}, \underline{x}')$ and choose a sequence of quasi-optimal points $(y_s)_{s=a+1}^b$ such that
    \begin{equation*}
        E\ttau\BDF\big(\gamma_0(s), \gamma_{-1}(s)\big) + \frac{\varepsilon}{b-a+1} \geq E_t(y_s) + \frac{1}{\tau} d\big(\gamma_0(s), y_s\big)^2 - \frac{1}{4\tau} d\big(\gamma_{-1}(s), y_s\big)^2.
    \end{equation*}
    Owing to the estimate in \cref{eq:weird_square_formula}, we find
    \begin{align*}
        \begin{aligned}\sum_{s=a+1}^{b-1} \OLCostB(\gamma) - \varepsilon \vphantom{\frac{1}{4\tau}} \, \\
            \vphantom{\frac{1}{4\tau}}
        \end{aligned} & \begin{aligned}
                            \leq \sum_{s=a+1}^{b-1} \bigg( & E_t\big(\gamma_0(s)\big) - E_t(y_s) - \frac{1}{\tau} d\big(\gamma_0(s), y_s\big)^2 + \frac{1}{4\tau} d\big(\gamma_{-1}(s), y_s\big)^2 \\
                                                           & + \frac{1}{2\tau}\big(\overline{d}_s^2 + \underline{d}_s^2\big) - \frac{1}{4\tau}\overline{\underline{d}}_s^2\bigg)
                        \end{aligned}                                                                                                                                             \\
                    & \!\!\!\!\!\!\!\!\!
                    \leq \sum_{s=a+1}^{b-1} \Bigg(E_t\big(\gamma_0(s)\big) - \underbrace{\big(E_t(y_s) + \frac{1}{2\tau} d(\gamma_0(s), y_s)^2\big)}_{\geq E\ttau\MMS(\gamma_0(s))} + \frac{1}{2\tau}\underline{d}_s^2 + \frac{1}{2\tau}\big(\overline{d}_s^2 + \underline{d}_s^2\big)\Bigg) \\
                    & \!\!\!\!\!\!\!\!\! 
                    \leq \sum_{s=a}^{b-1} \left(E_t\big(\gamma_{-1}(s)\big) - E\ttau\MMS\big(\gamma_0(s)\big) + 3 \frac{1}{2\tau}\overline{d}^2\right)
        \leq 3 \sum_{s=a}^{b-1} \CostM_t\big(F(\gamma)(s)\big),
    \end{align*}
    where we set $\underline{d}_s \coloneqq d\big(\gamma_{-1}(s), \gamma_0(s)\big)$, $\overline{d}_s \coloneqq d\big(\gamma_0(s), \gamma_1(s)\big)$ and
    $\overline{\underline{d}}_s \coloneqq d\big(\gamma_{-1}(s), \gamma_1(s)\big)$, and where we used
    \cref{eq:weird_square_formula} in the second line.
    Since $\varepsilon$ was arbitrary, the second inequality in~\eqref{eq:BDF2_cost_relation_to_minimizing_scheme} follows.
    \cref{eq:BDF2_relation_to_minimizing_scheme} follows directly by applying the infimum to the preceding inequality,
    while \cref{eq:BDF2_stillstand_cost_relation_to_minimizing_scheme} follows from a simple
    substitution into the definitions of $\CostM_t$ and $\CostB_t$ in \cref{eq:CostM_def,eq:def_costB}, respectively.
\end{proof}
The preceding lemma allows us to adapt some results from the minimizing movement scheme directly.
\begin{lemma}\label{lemma:coerciveness_BDF2}
    Let us assume \cref{ass:metric_space,ass:reg_E,ass:der_time,ass:unif_coerc}.
    For all $K \subseteq \PpB$ compact and $C > 0$,
    there exist $K' \subseteq \PpB$ compact such that the following implication holds
    for all $\vx, \vx' \in K$ and $\gamma \in \Gamma(\vx, \vx')$:
    \begin{equation} \label{eq:curves_bounded_coercive_bdf}
        \sum_{s=a+1}^b \CostB_{t}(\gamma(s)) \leq C \implies \im(\gamma) \subseteq K'.
    \end{equation}
    In particular, for each $K \subseteq \PpB$ there exists a compact set $K' \subseteq \PpB$ such that for every $\varepsilon \in (0, 1)$ the following implication holds
    for all $\vx, \vx' \in K$ and $\gamma \in \Gamma(\vx, \vx')$:
    \begin{equation} \label{eq:curves_coercive_implications_bdf}
        \sum_{s=a+1}^b \CostB_{t}(\gamma(s)) \leq  \cBg_t  (\vx, \vx') + \varepsilon \implies \im(\gamma) \subseteq K'.
    \end{equation}
\end{lemma}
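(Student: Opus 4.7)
The plan is to derive \cref{lemma:coerciveness_BDF2} from the already-established \cref{lemma:coerciveness_minimizing_scheme} by transferring the cost bound from BDF2 to MMS via the left-hand inequality in \cref{eq:BDF2_cost_relation_to_minimizing_scheme}. The case $b = a+1$ is immediate, since the interior constraint in \cref{eq:def_GammaB} then involves the empty range $[a+1, b-1]$ and $\mathrm{im}(\gamma) = \{\vx, \vx'\} \subseteq K$; I thus assume $b \geq a+2$ from now on.

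For \cref{eq:curves_bounded_coercive_bdf}, I set $K_M \coloneqq p_{0,1}(K) \cup p_{-1,0}(K) \subseteq \PpM$, which is compact as a union of images of $K$ under continuous two-coordinate projections. For any $\vx, \vx' \in K$ and $\gamma \in \Gamma\BDF(\vx, \vx')$ with $\sum_{s=a+1}^b \CostB_t(\gamma(s)) \leq C$, the image curve $F(\gamma)$ from \cref{eq:BDF2_MMS_curves_bijection} belongs to $\Gamma\MMS\big((x_0, x_1), (x'_{-1}, x'_0)\big)$ with both endpoints in $K_M$, and the left-hand inequality of \cref{eq:BDF2_cost_relation_to_minimizing_scheme} gives $\sum \CostM[\tau/2]_t(F(\gamma)(s)) \leq 5C$. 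Applying \cref{lemma:coerciveness_minimizing_scheme} with step $\tau/2$, compact $K_M$, and constant $5C$ furnishes a compact $K'_M \subseteq \PpM$ containing $\mathrm{im}(F(\gamma))$. Letting $K'_\Xx \subseteq \Xx$ be the union of the two coordinate projections of $K'_M$, this confines $\gamma_0(s), \gamma_1(s)$ to $K'_\Xx$ for $s \in [a, b-1]$, while the endpoint at $b$ handles $s = b$. Pushing through the identity $\gamma_{-1}(s) = \gamma_0(s-1)$ forced by \cref{eq:def_GammaB} places $\gamma_{-1}(s) \in K'_\Xx$ for $s \in [a+2, b]$, and $\gamma_{-1}(a) \in p_{-1}(K)$.

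The main obstacle is the single free slot $\gamma_{-1}(a+1)$, which is pinned neither by the constraint at $s=a+1$ (which only couples $\gamma_1(a)$, $\gamma_0(a+1)$ and $\gamma_{-1}(a+2)$) nor by the endpoint at $a$. Here I will invoke \cref{eq:BDF2_price_nonnegative} of \cref{lemma:BDF2_estimates}, which combined with the bound $\CostB_t(\gamma(a+1)) \leq C$ yields $d(\gamma_{-1}(a+1), \gamma_0(a+1))^2 \leq 4\tau C$. Since the internal constraint at $s=a+1$ additionally forces $\gamma_0(a+1) = \gamma_1(a) = x_1 \in p_1(K)$, the point $\gamma_{-1}(a+1)$ lies in the closed $d$-neighbourhood of the compact $p_1(K)$ of radius $2\sqrt{\tau C}$. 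The hard part is to upgrade this metric confinement to genuine compactness; I propose to do so by invoking properness of $(\Xx, d)$: in the intended setting where \cref{ass:path+Lipsch} is in force, $\Xx$ is a length space, and local compactness together with the completeness extractable from \cref{ass:unif_coerc} yields properness via Hopf--Rinow, so closed bounded sets are compact. Gathering all compact pieces obtained above into a single compact $K_\star \subseteq \Xx$ and setting $K' \coloneqq (K_\star)^3 \cap \PpB$ then concludes \cref{eq:curves_bounded_coercive_bdf}.

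For the refined \cref{eq:curves_coercive_implications_bdf}, I will first produce a uniform bound $C'$ on $\cBg_t$ over $K \times K$ using the trivial two-point competitor $\gamma \colon \{a, a+1\}_\Z \to \PpB$ with $\gamma(a) = \vx$ and $\gamma(a+1) = \vx'$, which is vacuously admissible since the interior-constraint range $[a+1, b-1]$ is empty; this gives $\cBg_t(\vx, \vx') \leq \CostB_t(\vx')$, and continuity of $\CostB_t$ together with compactness of $K$ supplies the desired $C'$. Applying the already-proven \cref{eq:curves_bounded_coercive_bdf} with $C \coloneqq C' + 1$ then produces a single compact $K' \subseteq \PpB$, independent of $\varepsilon \in (0, 1)$, in which every $\varepsilon$-quasi-optimal admissible curve between points of $K$ is contained.
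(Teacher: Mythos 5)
Your overall strategy — transferring the cost bound from the BDF2 action to the minimizing movement scheme via the left-hand inequality of \cref{eq:BDF2_cost_relation_to_minimizing_scheme}, then invoking \cref{lemma:coerciveness_minimizing_scheme} with step $\tau/2$ — is exactly the route taken in the paper's (very terse) proof, and your two-point competitor for the ``In particular'' part is a cleaner way to bound $\cBg_t$ on $K\times K$ than the paper's appeal to \cref{eq:BDF2_relation_to_minimizing_scheme}. Your observation that the coordinate $\gamma_{-1}(a+1)$ is left unconstrained by \cref{eq:def_GammaB} (whose interior identity only ranges over $s\in[a+1,b-1]_\Z$) is astute and not addressed by the paper. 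However, the repair you propose does not go through under the lemma's stated hypotheses: the lemma assumes only \cref{ass:metric_space,ass:reg_E,ass:der_time,ass:unif_coerc}, not \cref{ass:path+Lipsch}, so there is no length-space structure on which to base a Hopf--Rinow/properness argument; moreover \cref{ass:unif_coerc} says sublevel sets of $G$ are compact, which does not imply completeness of $(\Xx,d)$ (consider an open interval with a coercive blow-up function). In a merely locally compact metric space, a closed bounded neighborhood of a compact set is not compact, so ``upgrade the metric confinement to genuine compactness'' is a gap.

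There is also a second free slot your argument does not handle: the identity at $s=b-1$ fixes $\gamma_1(b-2)=\gamma_0(b-1)=\gamma_{-1}(b)$ but leaves $\gamma_1(b-1)$ unconstrained, and the endpoint condition $\gamma(b)=\vx'$ does not pin it either. Consequently your assertion that $F(\gamma)\in\Gamma\MMS\big((x_0,x_1),(x'_{-1},x'_0)\big)$ is false in general — it needs $\gamma_1(b-1)=x'_0$ — and without a controlled right endpoint of $F(\gamma)$ you cannot invoke \cref{lemma:coerciveness_minimizing_scheme}. You can recover a \emph{metric} bound from the pointwise estimate $\CostB_t(\vx)\geq\frac{1}{4\tau}\big(d(x_0,x_{-1})^2+(d(x_0,x_1)-d(x_{-1},x_0))^2\big)$ (combine \cref{eq:BDF2_price_nonnegative} with the elementary inequality $\frac12(a^2+b^2)-\frac14c^2\geq\frac14(a-b)^2$ for $c\leq a+b$), but this runs into the same properness obstruction. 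Note that the paper's own (unproved) claim that $F$ is a bijection onto $\Gamma\MMS\big((x_0,x_1),(x'_{-1},x'_0)\big)$ only makes sense if the constraint in \cref{eq:def_GammaB} is also meant to apply at $s\in\{a,b\}$ (with out-of-range terms dropped), which would force $\gamma_{-1}(a+1)=x_0$ and $\gamma_1(b-1)=x'_0$ and eliminate both free slots; under that reading your argument and the paper's go through directly, with no need for properness. As written, though, the gap is genuine.
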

\begin{proof}
    For the implication in \eqref{eq:curves_bounded_coercive_bdf}, we use the first inequality in~\eqref{eq:BDF2_cost_relation_to_minimizing_scheme}
    and the corresponding implication~\eqref{eq:curves_bounded_coercive_mms} in~\cref{lemma:coerciveness_minimizing_scheme} for  $\CostM[\frac{\tau}{2}]_{t}$.
    For~\eqref{eq:curves_coercive_implications_bdf}, we use the second inequality in~\eqref{eq:BDF2_relation_to_minimizing_scheme}
    and the continuity of $\cM_{t}$, $\CostM_t$ and $\CostB_t$ to find an upper bound $C'$ on $\cBg_{t} (\vx, \vx')$ for $\vx, \vx' \in K$,
    and we apply~\eqref{eq:curves_bounded_coercive_bdf} with $C = C' + \varepsilon$.
\end{proof}
\begin{lemma}\label{lemma:BDF2_is_action}
    Let us assume \cref{ass:metric_space,ass:reg_E,ass:der_time,ass:unif_coerc}. Then, $\cB_t$ is an action according to \cref{def:action}.
\end{lemma}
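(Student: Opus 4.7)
The plan is to adapt the argument of \cref{lemma:action_minimizing_scheme} to the BDF2 setting. The a priori inequality $E_t(x) - E_t(x') \leq \cB_t(x, x')$ follows immediately from \cref{eq:BDF2_action_bigger_than_energy_diff} in \cref{lemma:energy_diff_BDF2}, so the essential task is to establish lower semicontinuity of $\cB_t$ on $\Xx \times \Xx$.

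A preliminary step is to verify that $E\BDF_{t,\tau}\colon \Xx \times \Xx \to \R$ defined in \cref{eq:def_E_BDF} is continuous. Using $d(y, x_{-1})^2 \leq 2 d(y, x_0)^2 + 2 d(x_0, x_{-1})^2$, I would bound the integrand in \cref{eq:def_E_BDF} from below by $E_t(y) + \tfrac{1}{2\tau} d(y, x_0)^2 - \tfrac{1}{2\tau} d(x_0, x_{-1})^2$. Combined with the upper bound obtained by choosing $y = x_0$ as a competitor, this shows that only $y$ lying in a sublevel set of $E_t$ contribute to the infimum. By \cref{ass:unif_coerc} this sublevel set is relatively compact, so $E\BDF_{t,\tau}$ is locally the infimum of an equicontinuous family and hence continuous; continuity of $\CostB_t$ in \cref{eq:def_costB} follows.

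Next, fixing a compact $K \subseteq \Xx$, I would introduce a modified action $\tilde c_t$ by restricting the infimum in \cref{eq:action_infimum_BDF2} to curves $\gamma\colon [a,b]_\Z \to \PpB$ whose length $b-a$ is large enough to accommodate genuinely free intermediate values (short admissible curves in $\Gamma\big(i\BDF(x), i\BDF(x')\big)$ are rigidly determined by the boundary conditions and the constraint $\gamma_1(s-1) = \gamma_0(s) = \gamma_{-1}(s+1)$). Then $\tilde c_t \geq \cB_t$ holds with equality off the diagonal of $K \times K$; on the diagonal, $\cB_t(x,x) = \CostB_t(i\BDF(x))$ is already continuous by the preliminary step. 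Hence lower semicontinuity of $\cB_t$ on $K \times K$ reduces to continuity of $\tilde c_t$ there.

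Finally, I would parameterize admissible curves of the allowed lengths by finite sequences $f \in \Xx^{[a+1,b-1]_\Z}$ of free intermediate values, since once the $\gamma_0$-values are prescribed the admissibility constraint and boundary conditions determine the full curve. Invoking \cref{lemma:coerciveness_BDF2} to restrict to sequences taking values in some compact $K' \subseteq \Xx$, I would express $\tilde c_t$ as an infimum of a family $\mathfrak{C}(f,\cdot,\cdot)$ of continuous functions parametrized by $f \in (K')^{[a+1,b-1]_\Z}$. The dependence of $\mathfrak{C}$ on $(x, x')$ enters only through the endpoints and their immediate neighbours in the curve, yielding equicontinuity of the family in $(x, x')$ uniformly in $f$, whence $\tilde c_t$ is continuous on $K \times K$. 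The main obstacle is the preliminary continuity of $E\BDF_{t,\tau}$: the subtracted term $-\tfrac{1}{4\tau}d(y, x_{-1})^2$ rules out the direct monotonicity argument $E\MMS_{t,\tau} \leq E_t$ used for MMS and forces the quadratic estimate described above.
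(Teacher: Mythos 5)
Your proposal takes exactly the route the paper intends: its proof of this lemma is a one-line appeal to the analogy with \cref{lemma:action_minimizing_scheme}, and you carry out the adaptation in full. The a priori inequality via \cref{eq:BDF2_action_bigger_than_energy_diff}, the reduction to lower semicontinuity, the modified action restricted to sufficiently long curves, the parametrization by intermediate sequences via \cref{lemma:coerciveness_BDF2}, and the equicontinuity argument are all in place.

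Two remarks. First, your preliminary step on the continuity of $E\BDF_{t,\tau}$ uses the bound
\begin{equation*}
    E_t(y) + \tfrac{1}{\tau}d(y,x_0)^2 - \tfrac{1}{4\tau}d(y,x_{-1})^2 \;\geq\; E_t(y) + \tfrac{1}{2\tau}d(y,x_0)^2 - \tfrac{1}{2\tau}d(x_0,x_{-1})^2,
\end{equation*}
which is correct and yields a single sublevel set of $E_t$. The paper instead establishes this continuity in the proof of \cref{lemma:generated_by_curves_BDF2} by observing that the effective competitors lie in the union of a metric ball and a sublevel set; your version is a touch cleaner and works just as well. Second, a small imprecision: given the constraint $\gamma_1(s-1)=\gamma_0(s)=\gamma_{-1}(s+1)$ for $s\in[a+1,b-1]_\Z$ and the boundary conditions $\gamma(a)=i\BDF(x)$, $\gamma(b)=i\BDF(x')$, prescribing the $\gamma_0$-values does \emph{not} determine the full curve---the components $\gamma_{-1}(a+1)$ and $\gamma_1(b-1)$ remain free. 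This does not break your argument: one either enlarges the parameter set $D'$ by these two extra $\Xx$-valued degrees of freedom (they still appear only near the endpoints, so the equicontinuity estimate goes through unchanged), or observes that they can be pinned down by convention without increasing the infimum, as in \cref{eq:action_infimum_BDF2}. With that repair, the proof is sound.
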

\begin{proof}
    The proof is analogous to the one for the corresponding result in the minimizing movement scheme,
    namely \cref{lemma:action_minimizing_scheme}.
\end{proof}
We are now ready to prove that the action $\cB_t$ is generated by curves.
\begin{lemma}\label{lemma:generated_by_curves_BDF2}
    Let us assume \cref{ass:metric_space,ass:reg_E,ass:unif_coerc,ass:der_time,ass:conn_comp,ass:path+Lipsch}.
    Then, for all $\tau \leq \frac{1}{L}$,
    the action $\cB_t$ is generated by curves as in \cref{def:generated_by_curves}.
\end{lemma}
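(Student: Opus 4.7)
The plan is to verify each ingredient of \cref{def:generated_by_curves}, mirroring the structure of the proof of \cref{lemma:generated_by_curves_MMS} for the minimizing movement scheme, and transferring the properties from the minimizing movement scheme to the BDF2 scheme whenever possible by means of the two-sided estimate \cref{eq:BDF2_relation_to_minimizing_scheme}. The map $i\BDF\colon x \mapsto (x,x,x)$ is a closed immersion with respect to the metric $d_{\PpB}$ because for $x,x'\in \Xx$ one has $d_{\PpB}(i\BDF(x),i\BDF(x'))=d(x,x')$; the continuity of $\CostB_t$ on $\PpB \times [0,T]$ follows as in \cref{rmk:cost_lambda_continuous}, noting that only points $y$ in a relatively compact sublevel set contribute to the infimum in \cref{eq:def_E_BDF} and therefore $(t,x_0,x_{-1})\mapsto E\BDF_{t,\tau}(x_0,x_{-1})$ is continuous. \cref{prop:curve_restriction} is immediate from \cref{eq:def_GammaB}, since the chain condition $\gamma_1(s-1)=\gamma_0(s)=\gamma_{-1}(s+1)$ is preserved under restrictions, concatenations and index shifts.

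For \cref{prop:cost_nonzero}, the key identity is \cref{eq:BDF2_stillstand_cost_relation_to_minimizing_scheme}, which gives $\CostB_t(i\BDF(x))=\CostM[\tfrac{2\tau}{3}]_t(i\MMS(x))$. Since $\tau\leq 1/L$ implies $\tfrac{2\tau}{3}\leq 1/L$, \cref{lemma:zero_gradient_zero_cost} applies and yields the equivalence $\CostB_t(i\BDF(x))=0 \iff |\partial E_t|(x)=0$. Conversely, if $\CostB_t(\underline{x})=0$ for some $\underline{x}=(x_{-1},x_0,x_1)\in\PpB$, then \cref{eq:BDF2_price_nonnegative} forces $d(x_0,x_{-1})=0$, hence $x_{-1}=x_0$, and then the surviving term $\tfrac{1}{2\tau}d(x_0,x_1)^2-\tfrac{1}{4\tau}d(x_0,x_1)^2=\tfrac{1}{4\tau}d(x_0,x_1)^2$ forces $x_1=x_0$, so that $\underline{x}=i\BDF(x_0)$. \cref{prop:curves_coercive} follows directly from \cref{lemma:coerciveness_BDF2}.

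For \cref{prop:cheap_fuel_cheap_action}, given a compact $K\subseteq \PpB$ and $\varepsilon>0$, the idea is to construct an explicit short admissible curve between $\underline{x}$ and $\underline{x}'$. Using the chain constraint in \cref{eq:def_GammaB}, the sequence
\[
    \underline{x}\to (x_0,x_1,x'_{-1})\to (x_1,x'_{-1},x'_0)\to \underline{x}'
\]
lies in $\Gamma\BDF(\underline{x},\underline{x}')$. The bounds $\CostB_t(\underline{x}),\CostB_t(\underline{x}')\leq P$ combined with \cref{eq:BDF2_price_nonnegative} control $d(x_{-1},x_0)$ and $d(x'_{-1},x'_0)$ by $2\sqrt{\tau P}$, and $d_{\PpB}(\underline{x},\underline{x}')\leq L$ controls all remaining pairwise distances in the competitor via the triangular inequality. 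Using the continuity of $\CostB_t$ at $(t,i\BDF(x_0))$ and $(t,i\BDF(x_1))$ together with the identity $\CostB_t(i\BDF(x))=\CostM[\tfrac{2\tau}{3}]_t(i\MMS(x))$, one can then show that the three intermediate cost contributions go to zero as $L,P\to 0$.

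Finally, \cref{prop:no_cost_in_within_conn_component} is obtained by sandwiching: the upper estimate in \cref{eq:BDF2_relation_to_minimizing_scheme}, combined with $\CostB_t(i\BDF(x))=0$ for $x\in\Crit$, gives
\[
    \cB_t(x,x')\leq 3\cM_t(x,x')
\]
whenever $x\in\Crit$. Since $\tau\leq 1/L$, \cref{lemma:generated_by_curves_MMS} applies and the right-hand side vanishes when $x,x'$ lie in the same path-connected component of $\Crit$; by nonnegativity, $\cB_t(x,x')=0$. Conversely, the lower estimate in \cref{eq:BDF2_relation_to_minimizing_scheme} gives $\tfrac{1}{5}\cM[\tfrac{\tau}{2}]_t(x,x')\leq \cB_t(x,x')$, and $\tfrac{\tau}{2}\leq 1/L$ ensures we may invoke \cref{lemma:generated_by_curves_MMS} for the parameter $\tfrac{\tau}{2}$ to conclude that if $\cB_t(x,x')=0$ then $x$ and $x'$ lie in the same component of $\Crit$. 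The main technical obstacle is the construction and cost bookkeeping in \cref{prop:cheap_fuel_cheap_action}: unlike the two-slot phase space of the minimizing movement scheme, here three consecutive states need to be interpolated while keeping the mixed term $-\tfrac{1}{4\tau}d(x_{-1},x_1)^2$ under control, for which the inequality \cref{eq:weird_square_formula} is the crucial tool.
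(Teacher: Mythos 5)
Your argument follows the same overall strategy as the paper's: transfer Properties~4 and~3 from the minimizing movement scheme via \cref{lemma:coerciveness_BDF2} and \cref{eq:BDF2_relation_to_minimizing_scheme}, prove Property~2 directly from the two estimates in \cref{lemma:BDF2_estimates} together with \cref{eq:BDF2_stillstand_cost_relation_to_minimizing_scheme}, and construct the explicit two-step competitor $\vx \to (x_0,x_1,x'_{-1}) \to (x_1,x'_{-1},x'_0) \to \vx'$ for Property~5. Properties~1--4 are handled correctly and match the paper's reasoning (your Property~2 argument is a slight reorganization of the paper's but equivalent).

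Property~5, however, has a genuine gap and a misleading reference. You claim that the cost bound plus \cref{eq:BDF2_price_nonnegative} control $d(x_{-1},x_0)$ and $d(x'_{-1},x'_0)$, and that the remaining pairwise distances are then handled by $d_{\PpB}(\vx,\vx')\leq L$ and the triangle inequality. This is not sufficient: $d(x_0,x_1)$ is a distance internal to $\vx$, not a cross-distance between $\vx$ and $\vx'$, so $d_{\PpB}(\vx,\vx')$ gives no control on it. One needs either a further algebraic consequence of $\CostB_t(\vx)\leq P$ (via the slack in \cref{eq:weird_square_formula}, which bounds $(d(x_{-1},x_0)-d(x_0,x_1))^2$ by $4\tau P$), or---as the paper actually does---a compactness/continuity argument: on $K$, $\CostB_t$ is continuous and vanishes only on the image of $i\BDF$, so small cost forces all three components to cluster. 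Moreover, controlling distances is not enough to bound $\CostB_t(\widetilde{\vx})$, because of the term $E_t(x_1)-E\BDF_{t,\tau}(x_1,x_0)$; the paper handles this by the two-stage continuity argument ($\varepsilon'$ small implies $\widetilde{\vx}$ near $\vx$, and then $\CostB_t(\widetilde{\vx})\leq\varepsilon/4$ by continuity of $\CostB_t$ near $\vx$). You gesture at continuity ``at $(t,i\BDF(x_0))$ and $(t,i\BDF(x_1))$,'' which is not quite the right base point, and the appeal to the identity $\CostB_t(i\BDF(x))=\CostM[\frac{2\tau}{3}]_t(i\MMS(x))$ is a red herring here---it plays no role in Property~5 (that identity is useful only for Property~2). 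In short: the competitor is right, but the bookkeeping for why $\CostB_t(\widetilde{\vx})$ and $\CostB_t(\widetilde{\vx}')$ are small needs to be rebuilt around continuity and compactness of $K$, following the paper's two-step choice of $L'$ and $\varepsilon'$.
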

\begin{proof}
    First note that by \cref{lemma:BDF2_estimates}, $\CostB_t$ is nonnegative.
    For the continuity of $E\BDF_{t, \tau}$ ---and thus of $\CostB_t$--- we
    argue as in \cref{rmk:cost_lambda_continuous} and note that
    only points which are either in the set $\{y \in \Xx  \mid d(y, x) \leq d(x, x')\}$ or in the set $\{y \in \Xx \mid E_t(y) \leq E_t(x)\}$ contribute to the infimum of
    \begin{equation}\label{eq:redefinition_infimum_E_BDF}
        y \mapsto E_t(y) + \frac{1}{\tau} d(x, y)^2 - \frac{1}{4\tau} d(x', y)^2 \geq E_t(y) + \frac{1}{\tau} d(x, y)^2 - \frac{1}{4\tau} \big(d(x, y) + d(x, x')\big)^2
    \end{equation}
    in \cref{eq:def_E_BDF}: If $y$ is in neither set, then the value of~\eqref{eq:redefinition_infimum_E_BDF}
    is large than when using $y=x$ as a competitor.
    Due to the coercivity of $E_t$, both those sets are compact.
    Thus, $\CostB_t$ is itself continous on $\PpB \times [0, T]$, as it is locally a supremum of equicontinous functions.
    As for the properties, \cref{prop:curve_restriction} is immediate
    and \cref{prop:curves_coercive} follows directly from \cref{lemma:coerciveness_BDF2}.
    We now prove the remaining three properties.\\
    \textbf{\cref{prop:cost_nonzero}: }
    We use the first statement in \cref{lemma:BDF2_estimates} to see that the assumption
    \begin{equation*}
        \CostB_t(\vx) = 0
    \end{equation*}
    implies that $E_t(x_0) - E\BDF_{t, \tau}(x_0, x_{-1}) = 0$ and $d(x_{-1}, x_0) = d(x_0, x_1)$.
    Using the second statement in \cref{lemma:BDF2_estimates}, we see that he former implies that $x_{-1} = x_0$,
    which by the latter implies that also $x_{-1} = x_0 = x_1$ and thus that $\vx =  i\BDF (x)$ for some $x \in \Xx$.
    By \cref{eq:BDF2_stillstand_cost_relation_to_minimizing_scheme}, we then also have that
    $\,\,\CostM[\frac{2\tau}{3}]_{t}( i\MMS (x)) = 0$, which by the assumptions $\tau \leq \frac{1}{L}$
    and \cref{lemma:zero_gradient_zero_cost} implies that $ |\partial E_t| (x) = 0$. \\
    \textbf{\cref{prop:no_cost_in_within_conn_component}: }
    Note that under the given assumptions, both $\cM_t$ and $\cM[\frac{\tau}{2}]_t$ are generated by curves
    as in \cref{def:generated_by_curves}.
    Furthermore, by virtue of \cref{lemma:zero_gradient_zero_cost} and \cref{eq:BDF2_stillstand_cost_relation_to_minimizing_scheme},
    under the given assumptions $\tau \leq \frac{1}{L}$, we have that
    \begin{equation*}
        |\partial E_t| (x) = 0 \iff \CostM[\frac{\tau}{2}]_t( i\MMS (x)) = 0 \iff \CostB_t( i\BDF  (x)) = 0 \iff \CostM_t( i\MMS  (x)) = 0.
    \end{equation*}
    Using these equivalences, the ``if'' direction of \cref{prop:no_cost_in_within_conn_component} follows directly from the upper bound by  $\cM_{t}$ in~\eqref{eq:BDF2_relation_to_minimizing_scheme} and the respective \cref{prop:no_cost_in_within_conn_component}
    of  $\cM_{t}$. 
    The ``only if'' direction follows from the lower bound by $\cM[\frac{\tau}{2}]_{t}$ in~\eqref{eq:BDF2_relation_to_minimizing_scheme}
    and the respective \cref{prop:no_cost_in_within_conn_component}
    of  $\, \cM[\frac{\tau}{2}]_{t}$.\\
    \textbf{\cref{prop:cheap_fuel_cheap_action}: }
    Let $K \subseteq \PpB$ be compact and $\varepsilon > 0$ be chosen. By continuity of $\CostB_t$,
    we can choose $L'$ such that for all $\vx, \vx' \in K$, the following implication holds:
    \begin{equation} \label{eq:cheap_close_cheap_bdf2}
        \CostB_t(\vx) \leq \frac{\varepsilon}{8}, \quad d_{\PpB} (\vx, \vx') \leq L' \implies \CostB_t(\vx') \leq \frac{\varepsilon}{4}.
    \end{equation}
    Since $\CostB_t$ is continuous and vanishes only on the diagonal of $\PpB$,
    we can further pick some $\varepsilon' \leq \frac{\varepsilon}{8}$ such that for all $\vx \in K$,
    \begin{equation}\label{eq:cheap_small_diameter}
        \CostB_t(\vx) \leq \varepsilon' \implies \max\big\{d(x_{-1}, x_0), d(x_0, x_1), d(x_1, x_{-1})\big\} \leq \frac{L'}{2}.
    \end{equation}
    Then, for any $\vx, \vx' \in K$ such that $\CostB_t(\vx^i) \leq \varepsilon'$ for $i \in \{1, 2\}$ and $d(\vx, \vx') \leq \frac{L'}{2}$,
    we set $\widetilde{\vx} = (x_0, x_1, x'_{-1})$ and $\widetilde{\vx}' = (x_1, x'_{-1}, x'_0)$.
    Then, by \cref{eq:cheap_small_diameter} and using the triangle inequality, $ d_{\PpB} (\widetilde{\vx}, \vx) \leq L'$
    and $ d_{\PpB} (\widetilde{\vx}', \vx') \leq L'$. Thus, using \cref{eq:cheap_close_cheap_bdf2}, we have that
    $\CostB_t(\widetilde{\vx}) \leq \frac{\varepsilon}{4}$ and $\CostB_t(\widetilde{\vx}') \leq \frac{\varepsilon}{4}$.
    We finish the proof by noting that $(\vx, \widetilde{\vx}, \widetilde{\vx}', \vx') \in \Gamma(\vx, \vx')$.
\end{proof}
To finish this section, we proof \cref{lemma:BDF2_summary_lemma}.
\begin{proof}[Proof of \cref{lemma:BDF2_summary_lemma}]
    Let $\tau > 0$.
    The fact that $\cB_{t}$ is an action as in \cref{def:action} is the content of \cref{lemma:BDF2_is_action};
    the fact that for $|\partial E_t|$ $L$-Lipschitz ($L > 0$) and $\tau \leq \frac{1}{L}$,
    $\cB_{t}$ is generated by curves as in \cref{def:generated_by_curves} is the content of the above \cref{lemma:generated_by_curves_BDF2}.
    What is left to prove is that $\ruleB_{t}$ as defined in \cref{def:evolution_rule_BDF2} is compatible with $\cB_t$ as in \cref{def:action},
    i.e., that for all $x, x' \in \Xx$, $(u_j)_{j \in \NN} \in \Xx$ and increasing sequences $(s_j)_{j \in \NN} \in \NN$
    such that $u_0 = x$, $\lim_{j \rightarrow \infty} u_{s_j} = x'$ and
    \begin{equation}
        u_{j+1} \in \argmin_{y \in \Xx} \left\{ E_t(y) + \frac{1}{\tau} d(y,u_{j})^2 - \frac{1}{4\tau} d(y,u_{j-1})^2 \right\},
    \end{equation}
    for all $j \in [0, \infty)_\Z$---where we set $u_{-1} = x$---, we have that
    \begin{equation*}
        E_t(x) - E_t(x') = \cB_t(x, x').
    \end{equation*}
    However, this follows directly by applying recursively applying \cref{eq:BDF2_curve_cost_bigger_than_energy_diff_phase_space} from \cref{lemma:energy_diff_BDF2} to the sequences
    \begin{equation*}
        \gamma^{(i)} \coloneqq \big(i(x), (x, x, u_1), (x, u_1, u_2), (u_1, u_2, u_3), \dots, (u_{s_j-2}, u_{s_j-1}, u_{s_j}), (u_{s_j-1}, u_{s_j}, u_{s_j}), i(u_{s_j})\big)
    \end{equation*}
    to see that
    \begin{equation*}
        \cB_t(x, u_{s_j}) \leq E_t(x) - E_t(u_{s_j}) + \CostB\big((u_{s_j-1}, u_{s_j}, u_{s_j})\big) + \CostB\big(i(u_{s_j})\big)
    \end{equation*}
    We finish the proof by using continuity of $E_t$ and $\cM_t$
    and noting that the convergence of $E_t(u_j)$ implies that
    $ \CostB_{t} \big((u_{s_j-1}, u_{s_j}, u_{s_j})\big) + \CostB_t \big(i(u_{s_j})\big) \rightarrow 0$ as $j \rightarrow \infty$.
\end{proof}
%
% !TEX root =axiomatic.tex
\section{Numerical experiments}\label{sec:num}
To illustrate our framework, we present an experiment where we simulate the breaking of an elastic rod (\cref{fig:key_frames})\footnote{Videos of the simulations, as well as the full code are available at \url{https://github.com/duesenfranz/quasistatic_evolutions_simulations}.}.
The rod is modeled as a chain of $n$ particles connected by $n-1$ springs;
each of the springs can either be intact or broken.
The total energy of the system is a trade-off of two energies:
On one hand, the potential energy $V$ of a spring of length $l$ is given as
$V(l) = \frac{1}{2} k (l - \bar{l})^2$ if the spring is intact, and zero otherwise---where $\bar{l}$ is the rest length of the spring. 
The surface energy $S$ of the $i$th spring, on the other hand, is modelled as $S = \sigma_i$
if the spring is broken, and zero otherwise---here, $\sigma_i \sim \mathcal{N}(\bar{\sigma}, \varepsilon)$ is the surface constant of the $i$th spring, where we add slight noise to break the symmetry.
The total energy of the system is then given by
\begin{equation}
    E(x) = \sum_{i \in \mathbb{B}} \sigma_i + \sum_{i \in \mathbb{I}} V(|x_i - x_{i+1}|),
\end{equation}
where $\mathbb{I} \subseteq [n-1]$ is the set of intact springs, $\mathbb{B} = [n-1] \setminus \mathbb{I}$ is the set of broken springs, and $x \in \R^{n}$ is the vector of the positions of the particles.
\begin{figure}[H]
    \centering
    \includegraphics[width=\textwidth]{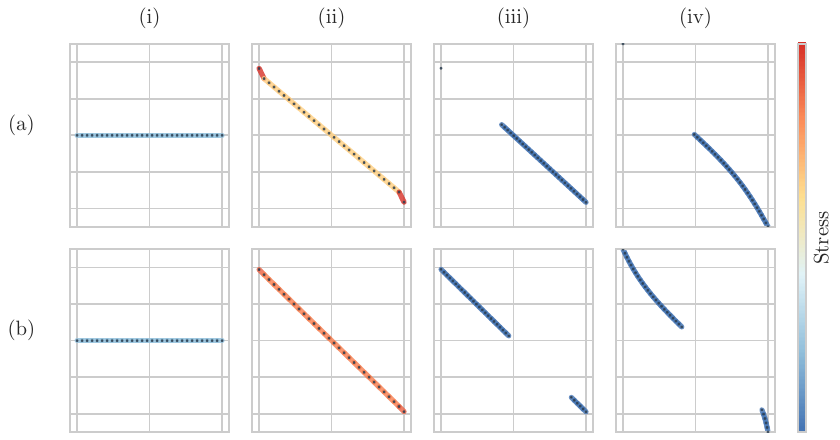}
    \caption{The simulation for an elastic rod, once for (a) $\delta=\frac{1}{15}$ and once for (b) $\delta=\frac{1}{240}$.
    From left to right, we plot (i) the initial configuration, (ii) the configuration right before the system transition which leads to the breakage of the rod, (iii) the configuration right after the transition, and (iv) the configuration at the end of the time horizon.
    In (a.ii), the stress is concentrated on the two end segments, which leads to the rod breaking early. This phenomenon is the result of the approximation error due to the large step size $\delta = \frac{1}{15}$.}
    \label{fig:key_frames}
\end{figure}
To model the transition between the intact and broken states,
we introduce a latent variable $z \in [0, 1]^{n-1}$ for each spring $i$.
The variable $z_i$ is a measure of how broken the $i$th spring is:
$z_i = 0$ means that the spring is intact, while $z_i = 1$ means that the spring is completely broken.
With this notation, the total energy of the system becomes
\begin{equation}
    E\big((x, z)\big) = \sum_{i=1}^{n-1} z_i \cdot \sigma_i + (1 - z_i) \cdot V(|x_i - x_{i+1}|).
\end{equation}
While $z \in [0, 1]^{n-1}$ theoretically allows for unphysical states where $z_i \not\in \{0, 1\}$,
such states will not appear in practice:
In the generic case, we expect that ${V(|x_i - x_{i+1}|) \neq \sigma_i}$, in which case a critical point must have $z_i \in \{0, 1\}$.

In the simulation, we fix the first and the last particle to be at position
${x_{1}(t) \coloneqq (0,  -t \cdot h)}$ and $x_{n}(t) \coloneqq (1, t \cdot h)$ respectively, at time $t$.
We set the state space to be $E \coloneqq \R^{{n-2}\times 2} \times [0, 1]^{[n-1]}$, since each system state is given by the positions of the inner particles $x' \coloneqq (x_2, \dots, x_{n-1}) \in \R^{n-2 \times 2}$ as well as the latent variables
$z \coloneqq z_1, \dots, z_{n-1} \in [0, 1]^{n-1}$. In this setup, the total energy is given by
\begin{align}
    E_t\big((\bar{x}, \bar{z})\big) & = \overbrace{ (1 - z_1) \cdot V(|x_2 - (0, -t \cdot h)|)}^{\text{Tension in first segment}}
    +  \overbrace{\sum_{i=2}^{n-2}(1 - z_i) \cdot V(|x_{i+1} - x_i|)}^{\text{Tension in inner segments}}                         \\
                            & + \underbrace{(1 - z_{n-1}) \cdot V(|x_{n-1} - (1, t \cdot h) |)}_{\text{Tension in last segment}}
    + \underbrace{\sum_{i=1}^{n-1} z_i \cdot \sigma_i}_{\text{Surface energy}}. \nonumber
\end{align}
Finally, we set the starting state to be the local minimum where the whole rod is intact and
the points $x$ are equidistantly spaced along the line from $x_{1, 0}$ to $x_{n, 0}$.
The discrete quasistatic evolution follows this local minimum over time,
positioning the points $x$ equidistantly between the endpoints $x_{1}(t)$ and $x_{n}(t)$.
Eventually, the potential energy of one of the springs exceeds the surface energy.
At this point, this local minimum disappears and the discrete quasistatic evolution jumps to a lower energy level---the one where the spring is broken.
Interestingly, this new local minimum of the rod being broken is not an isolated critical point, but a whole manifold.

Numerically, we use a BDF2 approximation of the gradient flow, corresponding to the action explained in \cref{subsec:elaborate_BDF2}, with $\tau=0.1$; we stop the gradient descent once the energy difference between two consecutive steps is less than $10^{-5}$.

We can observe the expected convergences in the quantities involved in the energy balance: First, $\mu^\delta$ converges to zero everywhere but at a single point---the point where the rod breaks---as $\delta$ goes to zero (\cref{fig:convergence_mu}). Furthermore, $\mathcal{D}^\delta$ converges to a piecewise smooth function, with a single jump point where the rod breaks (\cref{fig:convergence_D}).
The same convergences can be seen when comparing $E_t(\eta^\delta(t))$ and $\int_{0}^{t} \mathcal{D}^\delta(s) \diff s$ for different values of $\delta$ (\cref{fig:convergence_energy_sums}).

\begin{figure}[h]
    \centering
    \includegraphics[width=0.8\textwidth]{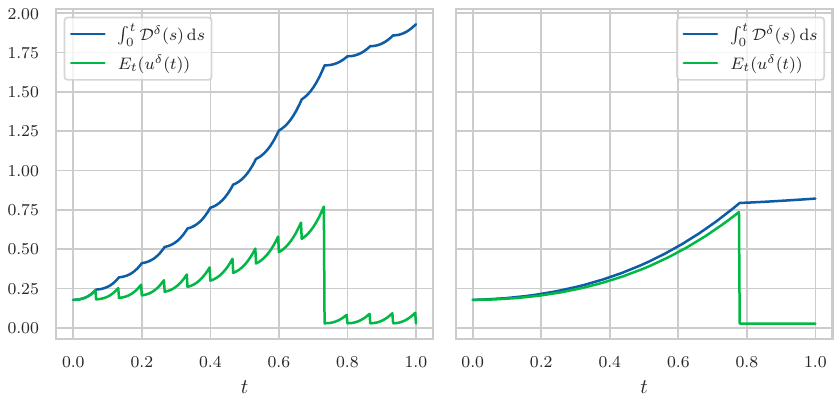}
    \caption{Comparing $E_t(\eta^\delta(t))$ and $\int_{0}^{t} \mathcal{D}^\delta(s) \diff s$, for $\delta=\frac{1}{15}$ on the left and $\delta=\frac{1}{240}$ on the right.}
    \label{fig:convergence_energy_sums}
\end{figure}

\begin{figure}[H]
    \centering
    \begin{subfigure}{0.48\textwidth}
        \centering
        \includegraphics[width=\linewidth]{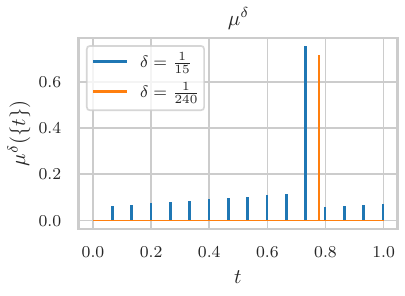}
        \caption{$\mu^\delta$ over the time horizon.}
        \label{fig:convergence_mu}
    \end{subfigure}
    \hfill
    \begin{subfigure}{0.48\textwidth}
        \centering
        \includegraphics[width=\linewidth]{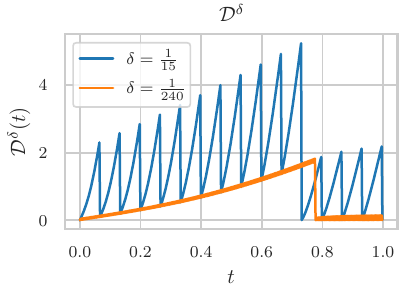}
        \caption{$\mathcal{D}^\delta$ over the time horizon.}
        \label{fig:convergence_D}
    \end{subfigure}
    \caption{The quantities involved in the energy balanced for the elastic rod simulation, for discrete quasistatic evolutions with different values of $\delta$.
    For $\delta$ going to zero, the support of $\mu^\delta$ collapses to the single point where the rod breaks at $t = 0.78$. Furthermore, $\mathcal{D}^\delta$ converges to a piecewise smooth function,
    with a single jump point at this breakage time. After the rod is broken,
    the potential energy is constant zero for steady states and $D^\delta$ converges to zero as $\delta$ goes to zero.}
    \label{fig:convergences}
\end{figure}

\appendix

\section{Structure of the measure in the energy balance}

In this part, we seek to establish that the non-negative measure $\bar \mu$ appearing in the energy balance in \cref{thm:main_result_complete} is purely atomic. 
Under suitable extra assumptions, this fact is expected, as it has already been observed for quasi\-static evolutions in finite-dimensional Hilbert spaces constructed with limiting arguments (see, e.g., \cite{AR17, Scilla_2018,minotti2018viscous}).
In this section, we adapt to our setting the arguments developed in \cite[Section~5.1]{Scilla_2018}.

We observe that, by virtue of \cref{ass:t_der_quotient}, we can rewrite  the energy balance for the quasi\-static evolution $\hat \eta\colon [0,T]\to\X$ as follows:
\begin{equation} \label{eq:energy_balance_quotient}
            \hat E\big(\hat\eta^+(t)\big) - \hat E\big(\hat\eta^-(s)\big) = \int_s^t \partial_t \hat E \big(\hat\eta(\tau)\big) \diff \tau - \bar \mu([s, t]).
\end{equation}
Moreover, we introduce the non-increasing function $f\colon [0,T] \to \R$ as
\begin{equation} \label{eq:def_ancill_funct_mu}
    f(t) \coloneqq \hat E\big( \hat \eta(t) \big) - \int_0^t \partial_t \hat E \big(\hat\eta(\tau)\big) \diff \tau,
\end{equation}
and we use the notation $f_+(t) \coloneqq \lim_{s\to t^+} f(s)$. We observe right away that
\begin{equation*}
    f_+(t) = \lim_{s\to t^+} \left(\hat E\big(\hat \eta(s) \big) - \int_0^s \partial_t \hat E \big(\hat\eta(\tau)\big) \diff \tau \right) = \hat E(\hat \eta^+(t)\big) - \int_0^t \partial_t \hat E \big(\hat\eta(\tau)\big) \diff\tau.
\end{equation*}

\begin{proposition} \label{prop:dini_der}
    Let us assume \cref{ass:conn_comp,ass:der_time,ass:metric_space,ass:path+Lipsch,ass:PL+time_der_slope,ass:reg_E,ass:unif_coerc,ass:t_der_quotient}.
    Let  $f\colon [0,T] \to \R$ be defined as in \cref{eq:def_ancill_funct_mu}. Then, for every $t\in [0,T)$, we have that
    \begin{equation*}
        D_+ f_+(t) \coloneqq \liminf_{h\to 0^+} 
        \frac{f_+(t+h)-f_+(t)}{h} \geq 0,
    \end{equation*}
    i.e., the Dini lower right derivative of $f_+$ is non-negative.
\end{proposition}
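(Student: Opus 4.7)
\textbf{Proof plan for \cref{prop:dini_der}.}

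My starting point is to translate the claim into a statement about the measure $\bar\mu$. Using the energy balance \eqref{eq:energy_balance_quotient} with $s=t_0$ and $t=t_0+h$, and subtracting the jump relation $\hat E(\hat\eta^+(t_0))-\hat E(\hat\eta^-(t_0))=-\bar\mu(\{t_0\})$ (cf.\ item~8 of \cref{thm:main_result_complete}), I obtain
\[
    f_+(t_0+h)-f_+(t_0)= -\bar\mu((t_0,t_0+h]).
\]
Since $\bar\mu\ge 0$, the claim $D_+ f_+(t_0)\ge 0$ is equivalent to $\lim_{h\to 0^+}\bar\mu((t_0,t_0+h])/h = 0$. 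My plan is to prove this by producing an upper bound on $\bar\mu((t_0,t_0+h])$ of the form $o(h)$.

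To do so, I select representatives. Because the trajectory $\hat\eta$ lives in a quotient of a compact set of $\Xx$ (cf.\ \cref{rmk:compactness_limit_traj}), any sequence $h_k\to 0^+$ admits a subsequence (not relabelled) and representatives $x^{h_k}\in\hat\eta^+(t_0+h_k)$ which converge in $\Xx$ to some $x^*\in\Xx$. The continuity of $|\partial E_\cdot|$ and of $q$ guarantees that $|\partial E_{t_0}|(x^*) = \lim_k |\partial E_{t_0+h_k}|(x^{h_k})=0$, and that $x^*$ is itself a representative of $\hat\eta^+(t_0)$. I apply \cref{ass:PL+time_der_slope} at the critical point $x^*$ of $E_{t_0}$ with test point $x^{h_k}$:
\[
    E_{t_0}(x^*)-E_{t_0}(x^{h_k})\le \varepsilon_{x^*}\!\big(d(x^*,x^{h_k})\big)\,|\partial E_{t_0}|(x^{h_k}).
\]
The Lipschitz continuity of $t\mapsto |\partial E_t|(x^{h_k})$ (locally uniformly in $x$, from \cref{ass:PL+time_der_slope}) combined with $|\partial E_{t_0+h_k}|(x^{h_k})=0$ gives $|\partial E_{t_0}|(x^{h_k})\le L h_k$. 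Writing $E_{t_0}(x^{h_k})=E_{t_0+h_k}(x^{h_k})-\int_{t_0}^{t_0+h_k}\partial_t E_\tau(x^{h_k})\,\mathrm d\tau$ and substituting into the expression of $\bar\mu((t_0,t_0+h_k])$ coming from the energy balance, I arrive at the key estimate
\[
    \frac{\bar\mu((t_0,t_0+h_k])}{h_k}\le L\,\varepsilon_{x^*}\!\big(d(x^*,x^{h_k})\big) + \frac{1}{h_k}\int_{t_0}^{t_0+h_k}\!\big[\partial_t\hat E(\hat\eta(\tau))-\partial_t E_\tau(x^{h_k})\big]\,\mathrm d\tau.
\]

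To conclude I let $k\to\infty$. The first term vanishes since $d(x^*,x^{h_k})\to 0$ and $\varepsilon_{x^*}$ is continuous at $0$. For the averaged integral, right-continuity of $\hat\eta^+$ at $t_0$ together with the continuity of $\partial_t\hat E$ on $\X$ (\cref{ass:t_der_quotient}) ensures $\partial_t\hat E(\hat\eta(\tau))\to \partial_t\hat E(\hat\eta^+(t_0))=\partial_t E_{t_0}(x^*)$ uniformly for $\tau\in[t_0,t_0+h_k]$ as $k\to\infty$; meanwhile, the joint continuity of $\partial_t E$ (\cref{ass:reg_E}) yields $\partial_t E_\tau(x^{h_k})\to \partial_t E_{t_0}(x^*)$ uniformly on the same interval. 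The averaged integrand therefore vanishes in the limit, so $\bar\mu((t_0,t_0+h_k])/h_k\to 0$ along the extracted subsequence. Running this argument on an arbitrary sequence $h_k\to 0^+$ and using the uniqueness of the limit, I conclude that $\limsup_{h\to 0^+}\bar\mu((t_0,t_0+h])/h\le 0$, which is what was needed.

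The main obstacle is the selection of representatives in the quotient space $\X$: one must verify that representatives $x^{h_k}$ of $\hat\eta^+(t_0+h_k)$ can be extracted to converge (along a subsequence) to a legitimate representative $x^*$ of $\hat\eta^+(t_0)$, and that this $x^*$ is critical for $E_{t_0}$. Once this is done, the one-sided Polyak--{\L}ojasiewicz inequality from \cref{ass:PL+time_der_slope} does all the remaining work---and, crucially, the stronger two-sided version from \cref{ass:strengthening_6} is not needed here because only the direction controlling $E_{t_0}(x^*)$ from above by $E_{t_0}(x^{h_k})$ enters the estimate.
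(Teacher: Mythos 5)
Your proposal is correct and follows essentially the same route as the paper's proof: extract a subsequence of representatives $x^{h_k}$ of $\hat\eta^+(t_0+h_k)$ converging to a representative $x^*$ of $\hat\eta^+(t_0)$, control the energy gap via the one-sided Polyak--{\L}ojasiewicz inequality \eqref{eq:liminf_ineq_relaxed}, control $|\partial E_{t_0}|(x^{h_k})$ via the Lipschitz continuity in $t$ from \cref{ass:PL+time_der_slope}, and absorb the integral term using \cref{ass:t_der_quotient} together with right-continuity of $\hat\eta^+$. The only real difference is cosmetic and in your favour: by applying \eqref{eq:liminf_ineq_relaxed} in product form you sidestep the paper's case distinction on whether $E_{t_0}(x_{t+h_n})=E_{t_0}(x')$ (which the paper needs because it writes the quotient $\tfrac{E_{t_0}(x_{t+h_n})-E_{t_0}(x')}{|\partial E_{t_0}|(x_{t+h_n})}$ and must ensure the denominator is nonzero via well-separation of critical components), and by phrasing everything through $\bar\mu((t_0,t_0+h])=-\big(f_+(t_0+h)-f_+(t_0)\big)$ you work with a single representative $x^*$ instead of two.
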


\begin{proof}
    For every $s\in [0,T]$, let us take $x_{s}\in \Xx$ such that $(s,x_{s})\in \hat \eta^+(s)$.
    Then, we fix $t\in [0,T)$ and we observe that
    \begin{equation*}
        \begin{split}
            f_+(t+h)-f_+(t) &=
            \hat E\big(\eta^+(t+h)\big) - \hat E\big(\eta^+(t)\big)
            - \int_t^{t+h} \partial_t \hat E \big(\hat\eta(\tau)\big) \diff \tau \\
            &= E_{t+h}(x_{t+h}) - E_t(x_t) - \int_t^{t+h} \partial_t \hat E \big(\hat\eta(\tau)\big) \diff \tau \\
            & = E_{t}(x_{t+h}) - E_t(x_t)
            + \int_t^{t+h} \left( 
            \partial_t E_\tau(x_{t+h})
            - \partial_t \hat E \big(\hat\eta(\tau)\big) 
            \right) \diff \tau
        \end{split}
    \end{equation*}
    for every $h>0$.
    Moreover, the expression in the intergal can be rewritten as follows:
    \begin{equation} \label{eq:app_int_term_identity}
        \begin{split}
            \partial_t E_\tau(x_{t+h})
            - \partial_t \hat E \big(\hat\eta(\tau)\big) &=
            \partial_t \hat E\big( [(\tau,x_{t+h})] \big)
            - \partial_t \hat E \big(\hat\eta(\tau)\big) \\
            & = \left( \partial_t \hat E\big( [(\tau,x_{t+h})] \big)
            -   \partial_t \hat E\big( \hat \eta^+(t+h) \big) \right) \\
            &\qquad +
            \left( \partial_t \hat E\big( \hat \eta^+(t+h) \big) 
            - \partial_t \hat E\big( \hat \eta^+(t) \big) \right) \\ 
            &\qquad +
            \left( \partial_t \hat E\big( \hat \eta^+(t) \big) 
            - \partial_t \hat E \big(\hat\eta(\tau)\big) \right).
        \end{split}
    \end{equation}
    Since $[(\tau,x_{t+h})] \to_{\X} [(t+h,x_{t+h})] = \hat \eta(t+h)$ as $\tau\to t+h$, it follows that the time derivative $\partial_t \hat E\big( [(\tau,x_{t+h})] \big) \to \partial_t \hat E\big( \hat \eta^+(t+h) \big)$ as $\tau\to t+h$, owing to \cref{ass:t_der_quotient}.
    Moreover, recalling that $\tau\in [t,t+h]$, that $\hat \eta^+$ is right-continuous and that $\hat \eta^+(\tau)\to_\X \hat \eta^+(t)$ as $\tau\to t^+$, by virtue of \cref{eq:app_int_term_identity} and \cref{ass:t_der_quotient}, we conclude that
    \begin{equation*}
        \int_t^{t+h} \left( 
            \partial_t E_\tau(x_{t+h})
            - \partial_t \hat E \big(\hat\eta(\tau)\big) 
            \right) \diff \tau = o(h),
    \end{equation*}
    yielding the identity
    \begin{equation} \label{eq:app_liminf_equiv}
        \liminf_{h\to 0^+} 
        \frac{f_+(t+h)-f_+(t)}{h} =
        \liminf_{h\to 0^+} 
        \frac{E_{t}(x_{t+h}) - E_t(x_t)}{h}.
    \end{equation}
    Before proceeding, we need to carefully choose the representative $(t,y)$ of $\hat \eta^+(t)$, and to replace $(t,x_t)$ if necessary.
    Indeed, let us firs consider $h_n\searrow 0$ as $n\to\infty$ such that
    \begin{equation} \label{eq:app_liminf_subseq}
        \liminf_{h\to 0^+} 
        \frac{E_{t}(x_{t+h}) - E_t(x_t)}{h}
        = \lim_{n\to\infty} \frac{E_{t}(x_{t+h_n}) - E_t(x_t)}{h_n},
    \end{equation}
    and such that there exits $x'\in \Xx$ for which $d(x_{t+h_n}, x')\to 0$ as $n\to\infty$. We recall that it is possible to find such a sequence by virtue of \cref{rmk:compactness_limit_traj}.
    Moreover, since $[(t+ h_n, x_{t+h_n})]= \hat \eta ^+(t+h_n) \to_\X \hat\eta^+(t)$ as $n\to\infty$, we deduce that $(t,x')\in \hat\eta^+(t)$.
    Hence, since $[(t,x')] = \hat\eta^+(t) = [(t,x_t)]$, from \cref{eq:app_liminf_equiv,eq:app_liminf_subseq} we obtain that
    \begin{equation} \label{eq:app_liminf_new_rep}
        \liminf_{h\to 0^+} 
        \frac{f_+(t+h)-f_+(t)}{h} =
        \lim_{n\to\infty} \frac{E_{t}(x_{t+h_n}) - E_t(x')}{h_n},
    \end{equation}
    where we used the fact that $E_t(x_t) = E_t(x')$, ensured by \cref{ass:conn_comp}.
    Then, we note that if $E_{t}(x_{t+h_n}) = E_t(x')$ for infinitely many $n\in \NN$, then the thesis follows immediately. Therefore, we assume that $E_{t}(x_{t+h_n}) \neq E_t(x')$ for $n$ large enough.
    From this fact, since $d(x_{t+h_n}, x')\to 0$ as $n\to \infty$ and since the connected components of critical points are assumed to be well-separated (see again \cref{ass:conn_comp}), it follows that eventually $|\partial E_{t}|(x_{t+h_n}) \neq 0$.
    Hence, we compute
    \begin{equation*}
            \frac{E_{t}(x_{t+h_n}) - E_t(x')}{h_n} =
            \frac{E_{t}(x_{t+h_n}) - E_t(x')}{|\partial E_{t}|(x_{t+h_n})} \frac{|\partial E_{t}|(x_{t+h_n})}{h_n}  \geq -\varepsilon_{x'}\big(  d(x_{t+h_n},x') \big) L,
    \end{equation*}
    where we used \cref{ass:PL+time_der_slope}, together with the fact that $|\partial E_{t+h_n}|(x_{t+h_n})=0$, due to $[(t+h_n,x_{t+h_n})]=\hat \eta^+(t+h_n)$.
    Hence, by taking the limit in the last inequality, we finally deduce that
    \begin{equation*}
        \liminf_{h\to 0^+} 
        \frac{f_+(t+h)-f_+(t)}{h} =
        \lim_{n\to\infty} \frac{E_{t}(x_{t+h_n}) - E_t(x')}{h_n} \geq 0
    \end{equation*}
    and we conclude the proof.
\end{proof}

We report below a well-known sufficient condition for monotonicity of continuous functions.

\begin{lemma} \label{lem:monotn_Dini}
    Let $g\colon [a,b] \to \R$ be continuous and such that the Dini upper right derivative of $g$ satisfies
    \begin{equation*}
        D^+g'(t) \coloneqq  \limsup_{h\to 0^+} 
        \frac{g(t+h)-g(t)}{h} \geq 0
    \end{equation*}
    for every $t\in (a,b)$. Then, $g$ is non-decreasing on $(a,b)$.
\end{lemma}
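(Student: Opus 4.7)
The statement is the classical fact that a nonnegative upper-right Dini derivative forces monotonicity. My plan would be a standard two-step argument based on an $\varepsilon$-perturbation that boosts the Dini bound from nonnegative to strictly positive, followed by a sup-type maximal principle.

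First, fix $s < t$ in $(a,b)$ and, for $\varepsilon>0$, consider the auxiliary function $g_\varepsilon\colon [a,b]\to\R$ defined by $g_\varepsilon(\tau) := g(\tau) + \varepsilon\tau$. It is continuous, and at every point $\tau\in(a,b)$ we have
\begin{equation*}
D^+ g_\varepsilon(\tau) = \limsup_{h\to 0^+} \frac{g(\tau+h)-g(\tau)}{h} + \varepsilon \ge \varepsilon > 0.
\end{equation*}
The goal is to prove $g_\varepsilon(s)\le g_\varepsilon(t)$; letting $\varepsilon\to 0^+$ afterwards yields $g(s)\le g(t)$.

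Second, I would argue by contradiction: assume $g_\varepsilon(s)>g_\varepsilon(t)$ and consider
\begin{equation*}
A := \{\tau\in[s,t] : g_\varepsilon(\tau) \ge g_\varepsilon(s)\}.
\end{equation*}
The set $A$ contains $s$ and is closed by continuity of $g_\varepsilon$, so $\tau^\star := \sup A$ belongs to $A$ and satisfies $g_\varepsilon(\tau^\star)\ge g_\varepsilon(s)$. Because $g_\varepsilon(t) < g_\varepsilon(s)$, we cannot have $\tau^\star=t$, hence $\tau^\star\in[s,t)\subset(a,b)$. For every $h\in(0, t-\tau^\star]$, the point $\tau^\star+h$ lies outside $A$, so $g_\varepsilon(\tau^\star+h)<g_\varepsilon(s)\le g_\varepsilon(\tau^\star)$, and therefore
\begin{equation*}
D^+ g_\varepsilon(\tau^\star) = \limsup_{h\to 0^+}\frac{g_\varepsilon(\tau^\star+h)-g_\varepsilon(\tau^\star)}{h} \le 0,
\end{equation*}
contradicting the strict lower bound $D^+g_\varepsilon(\tau^\star)\ge \varepsilon>0$ established in the first step.

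The main (and really only) subtlety is ensuring that the sup point $\tau^\star$ is strictly interior to $[s,t]$ and that on the whole right half-neighborhood $(\tau^\star,\tau^\star+h]$ the function strictly undershoots $g_\varepsilon(\tau^\star)$; continuity plus the assumption $g_\varepsilon(t)<g_\varepsilon(s)$ give both. Once this is in place the rest is bookkeeping: $\varepsilon$-perturbation to convert $\ge 0$ into $>0$, extraction of a maximal sublevel point, and passage to the limit $\varepsilon\to 0^+$.
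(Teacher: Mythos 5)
Your proof is correct and complete. The paper itself does not give a proof of this classical fact---it simply cites \cite[Lemma~5.2]{Scilla_2018}---and your argument is the standard one (perturb by $\varepsilon\tau$ to make the Dini bound strictly positive, then locate the last point $\tau^\star$ of the closed sublevel set $A$ and derive a contradiction from the strict positivity at $\tau^\star$), so it matches what the cited reference provides; every step, including the observation that $\tau^\star\in[s,t)\subset(a,b)$ so the Dini hypothesis applies, and the final passage $\varepsilon\to 0^+$, is sound.
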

\begin{proof}
    See, e.g., \cite[Lemma~5.2]{Scilla_2018}.
\end{proof}

Now, we show that the non-negative measure $\bar \mu$ appearing in the energy balance in \cref{thm:main_result_complete} is purely atomic. 
Since we already took care of the peculiarity of our setting in \cref{prop:dini_der}, we report the the arguments in the next proof follows the lines of \cite[Theorem~5.4]{Scilla_2018}.
We detail the proof for the sake of completeness.

\begin{proposition}\label{prop:mu_atomic}
    Let us assume \cref{ass:conn_comp,ass:der_time,ass:metric_space,ass:path+Lipsch,ass:PL+time_der_slope,ass:reg_E,ass:unif_coerc,ass:t_der_quotient}.
    Let $\bar \mu$ be the non-negative measure appearing in the energy balance in \cref{thm:traj_convergence}. Then, we have that $\mathrm{supp}\, \mu = J$, where $J$ is the jump set of the limiting trajectory $\hat\eta\colon [0,T]\to \X$.
\end{proposition}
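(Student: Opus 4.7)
The plan is to combine the inequality $D_+ f_+(t) \geq 0$ supplied by \cref{prop:dini_der} with the non-increasing monotonicity of $f_+$ encoded in the energy balance, by factoring out the atomic contributions of $\bar\mu$. First, the energy balance \eqref{eq:energy_balance_quotient} rewritten as
\begin{equation*}
    f_+(t) - f_+(s) = \hat E(\hat\eta^+(t)) - \hat E(\hat\eta^+(s)) - \int_s^t \partial_t \hat E(\hat\eta(\tau))\diff\tau = -\bar\mu((s,t])
\end{equation*}
for $0 \leq s < t \leq T$ shows that $f_+$ is right-continuous, non-increasing, and has a jump of $-\bar\mu(\{s\})$ at each $s \in J$. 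Thus the goal $\supp\,\bar\mu = J$ is equivalent to showing that the diffuse (non-atomic) part $\bar\mu_c$ of $\bar\mu$ vanishes.

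Since $J$ is countable by \cref{thm:traj_convergence}, one can define the pure-jump, right-continuous, non-decreasing function $\phi(t) \coloneqq \sum_{s \in J,\, s \leq t} \bar\mu(\{s\})$. Setting $h \coloneqq f_+ + \phi$, the jumps of $f_+$ and $\phi$ cancel exactly at each $s \in J$, so $h$ is continuous on $[0,T]$. Moreover, for $s < t$ one computes $h(t) - h(s) = -\bar\mu((s,t]) + \bar\mu_a((s,t]) = -\bar\mu_c((s,t])$, where $\bar\mu_a$ denotes the atomic part. In particular $h$ is non-increasing, and $h$ is constant if and only if $\bar\mu_c \equiv 0$.

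It remains to prove that $h$ is constant. From \cref{prop:dini_der} we have $D_+ f_+(t) \geq 0$ for every $t \in [0,T)$, and since $\phi$ is non-decreasing, $(\phi(t+r) - \phi(t))/r \geq 0$ for all $r > 0$. Using $\liminf(a_n+b_n) \geq \liminf a_n$ when $b_n \geq 0$, we deduce
\begin{equation*}
    D_+ h(t) \geq D_+ f_+(t) \geq 0 \qquad \forall\, t \in [0,T).
\end{equation*}
On the other hand, $h$ being non-increasing gives $(h(t+r)-h(t))/r \leq 0$ for all $r > 0$, whence $D^+ h(t) \leq 0$. Combining with the general inequality $D_+ h \leq D^+ h$, we obtain $D^+ h(t) = 0$ for every $t \in (0,T)$. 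As $h$ is continuous, \cref{lem:monotn_Dini} applies and yields that $h$ is non-decreasing; together with the non-increasing monotonicity established above, $h$ is constant. Therefore $\bar\mu_c \equiv 0$, and $\bar\mu$ coincides with its atomic part $\bar\mu_a$, concentrated on $J$, which proves the claim.

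The main obstacle is conceptual rather than technical: the hypothesis $D_+ f_+ \geq 0$ is one-sided (a \emph{lower} right Dini derivative) while \cref{lem:monotn_Dini} controls the \emph{upper} Dini derivative of a continuous function, and $f_+$ itself is only right-continuous. The trick of subtracting the atomic cumulative $\phi$ simultaneously restores continuity \emph{and} lets the non-increasing monotonicity of the regularized function force $D^+ h = 0$; all remaining manipulations (cancellation of jumps, Dini-derivative sum inequalities) are routine once the countability of $J$ and the formula $f_+(t) - f_+(s) = -\bar\mu((s,t])$ are in hand.
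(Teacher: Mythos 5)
Your proposal is correct and is essentially the paper's own argument under different notation. Your $\phi(t)=\sum_{s\in J,\,s\le t}\bar\mu(\{s\})$ coincides with $-f^J$ from the paper, so your $h=f_++\phi$ is precisely the paper's $f_+-f^J$; both proofs establish continuity of this jump-corrected function, transfer the inequality $D_+f_+\ge 0$ from \cref{prop:dini_der} to it via monotonicity of the added jump term, and then invoke \cref{lem:monotn_Dini}. The only cosmetic difference is in the conclusion: you observe $h$ is simultaneously non-increasing and non-decreasing, hence constant and $\bar\mu_c\equiv 0$, whereas the paper phrases the same fact as the chain of measure inequalities $\bar\mu\ge\bar\mu^J\ge\bar\mu$.
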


\begin{proof}
    We consider the non-increasing function $f\colon [0,T]\to \R$ defined as in \cref{eq:def_ancill_funct_mu}, and, owing to \cref{thm:traj_convergence} and to the enhanced energy balance in \cref{eq:energy_balance_quotient}, we observe that the distributional derivative of $f$ satisfies $\bar \mu = -df$, and we recall $\bar \mu$ is a positive measure.
    To see that $\mathrm{supp}\, \bar \mu = J$, we introduce the non-increasing function
    \begin{equation*}
        f^J(t) \coloneqq \sum_{s\in [0,t]}\big( f_+(s) - f_-(s) \big),
    \end{equation*}
    and we observe that it is right-continuous. Moreover, the set of dicontinuity points for $f^J$ is exactly $J$, and the distributional derivative satisfies $d(f^J) = (df)^J$, where  $(df)^J$ denotes the jump part of the measure $df$. Then, we observe that $\bar \mu = -df \geq -d(f^J) \eqqcolon \bar \mu^J$.
    In addition, recalling that $ f^J(t)-f^J(t+h) \geq 0$ for every $h\geq 0$, we have that
    \begin{equation} \label{eq:dini_specific}
        \liminf_{h\to 0} 
        \frac{(f_+-f^J)(t+h)-(f_+-f^J)(t)}{h} \geq
        \liminf_{h\to 0} 
        \frac{f_+(t+h) - f_+(t)}{h} \geq 0.
    \end{equation}
    We further notice that the function $f_+-f^J$ is continuous. Indeed, on the one hand, both $f_+$ and $f^J$ are right-continuous. 
    On the other hand, we have $\lim_{h\to 0^+} f_+(t-h) = f_-(t)$ and
    \begin{equation*}
        \lim_{h\to 0^+} f^J(t-h) = \sum_{s\in [0,t)}\big( f_+(s) - f_-(s) \big) = f^J(t) - \big( f_+(t) - f_-(t) \big)
    \end{equation*}
    for every $t\in(0,T]$, which combined provide the left-continuity of $f_+-f^J$.
    Hence, by using \cref{eq:dini_specific} and \cref{lem:monotn_Dini}, we deduce that $f_+-f^J$ is non-decreasing.
    Therefore, from the inequality $f_+(t)-f^J(t)\geq f_+(0)-f^J(0)$, recalling that $f^J(t) = - \bar \mu^J([0,t])$ and that $f_+(0)-f^J(0)=f_-(t)$, we obtain that 
    \begin{equation*}
        \hat E\big(\hat\eta^+(t)\big) + \bar \mu^J ([0, t]) \geq \hat E\big(\hat\eta^-(0)\big) + \int_0^t \partial_t \hat E \big(\hat\eta(\tau)\big) \diff \tau 
    \end{equation*}
    for every $t\in [0,T]$.
    Recalling that $\bar \mu^J, \bar \mu$ are positive, from the last inequality and from the balance in \cref{eq:energy_balance_quotient} we deduce that $\bar \mu^J \geq \bar \mu$. Since by construction $\bar \mu \geq \bar \mu^J$, this shows that $\bar \mu = \bar \mu^J$ and concludes the proof.
\end{proof}

\section{Gradient flow vs.~minimizing movement scheme actions}
%{Relation between the action corresponding to the gradient descent and the action corresponding to the minimizing movement scheme}
In this section, we relate the actions $\cB_t$ and $\cM_t$, corresponding to the gradient descent and to the minimizing movement scheme, respectively.
From here on, we fix a time $t \in [0, T]$ and consequently drop the subscript $t$ from the notation:
We concern ourselves with two actions $\cB$ and $\cM$
and an energy functional $E$ on $\Xx$.
We begin by relating $\CostGF$ and $\CostM$, the instantaneous costs  of the gradient flow and of the minimizing movement scheme, respectively.
\begin{lemma}\label{lemma:instanteneous_cost_relation}
    Let us assume \cref{ass:conn_comp,ass:der_time,ass:metric_space,ass:path+Lipsch,ass:PL+time_der_slope,ass:reg_E,ass:unif_coerc}. Furthermore, let us assume that $L < \frac{1}\tau$, where $L$ is the Lipschitz constant of $|\partial E|$. Then, setting $\varepsilon_L \coloneqq L \cdot \tau$, we have that, for every $x \in \Xx$,
    \begin{equation}\label{eq:app2:relation_between_instantaneous_cost}
        (1 - \varepsilon_L) \CostM\big(i(x)\big) \leq \tau \CostGF\big(i(x)\big) \leq (1 + \varepsilon_L) \CostM\big(i(x)\big).
    \end{equation}
\end{lemma}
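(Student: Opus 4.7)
Fix $t\in[0,T]$ and abbreviate $g:=|\partial E|(x)$. Substituting the definitions~\eqref{eq:def_cost_gf},~\eqref{eq:embedding_gf},~\eqref{eq:CostM_def} and~\eqref{def:embedding_MMS}, the inequalities~\eqref{eq:app2:relation_between_instantaneous_cost} become
\begin{equation*}
    \frac{\tau g^2}{2(1+\varepsilon_L)}\;\le\;E(x)-E^{\MMS}_{\tau}(x)\;\le\;\frac{\tau g^2}{2(1-\varepsilon_L)},
\end{equation*}
so the plan is to prove matching upper and lower bounds on $E(x)-E^{\MMS}_\tau(x)=\sup_{y\in\Xx}\bigl\{E(x)-E(y)-\tfrac{1}{2\tau}d(x,y)^2\bigr\}$.

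For the upper bound, I would fix an arbitrary competitor $y\in\Xx$, invoke \cref{ass:path+Lipsch} to select, for each $\varepsilon>0$, an $\varepsilon$-almost geodesic $\gamma\colon[0,d(x,y)+\varepsilon]\to\Xx$ from $x$ to $y$ with natural parametrization, and then combine the strong-upper-gradient property recalled in \cref{rmk:slope_strong_upper_gradient} with the Lipschitz bound $|\partial E|(\gamma(\sigma))\le g+L\sigma$ coming from \cref{ass:path+Lipsch}. Sending $\varepsilon\to 0$ yields
\begin{equation*}
    E(x)-E(y)\;\le\;g\,d(x,y)+\tfrac{L}{2}d(x,y)^2.
\end{equation*}
Writing $r:=d(x,y)$, this gives $E(x)-E(y)-\tfrac{r^2}{2\tau}\le gr-\tfrac{(1-\varepsilon_L)}{2\tau}r^2$, whose maximum over $r\ge 0$ equals $\tfrac{\tau g^2}{2(1-\varepsilon_L)}$ (attained at $r^\star=\tfrac{\tau g}{1-\varepsilon_L}$). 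Taking the supremum over $y$ delivers the upper bound, i.e.\ $(1-\varepsilon_L)\CostM(i(x))\le\tau\CostGF(i(x))$.

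For the lower bound, the aim is to exhibit \emph{one} specific descending competitor. I would construct an arclength-parametrized descent curve $\psi\colon[0,\Lambda]\to\Xx$ with $\psi(0)=x$, $|\dot\psi|\equiv 1$ a.e., and the descent identity
\begin{equation*}
    E(x)-E(\psi(\lambda))=\int_0^\lambda |\partial E|(\psi(\mu))\diff\mu,\qquad\forall\lambda\in[0,\Lambda].
\end{equation*}
Because $\psi$ is arclength-parametrized, $d(x,\psi(\mu))\le\mu$, and \cref{ass:path+Lipsch} then gives $|\partial E|(\psi(\mu))\ge g-L\mu$. Consequently,
\begin{equation*}
    E(x)-E(\psi(\lambda))-\frac{d(x,\psi(\lambda))^2}{2\tau}\;\ge\;g\lambda-\frac{\lambda^2(1+\varepsilon_L)}{2\tau}.
\end{equation*}
Choosing $\lambda^\star=\tfrac{\tau g}{1+\varepsilon_L}$ (admissible for $\tau$ small enough by the standing assumption $\varepsilon_L<1$, upon noticing that $\lambda^\star\le g/L$) produces the desired lower bound $\tfrac{\tau g^2}{2(1+\varepsilon_L)}$, which is equivalent to $\tau\CostGF(i(x))\le(1+\varepsilon_L)\CostM(i(x))$.

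The main obstacle is the construction of $\psi$: the slope is only a limsup, so upgrading an infinitesimal descent at $x$ into a genuine arclength descent over distance $\lambda^\star\sim\tau g$ is not automatic. I plan to overcome this by appealing to the theory of curves of maximal slope in the sense of~\cite{greenbook}. Under \cref{ass:metric_space,ass:reg_E,ass:unif_coerc,ass:path+Lipsch}, a curve $\phi\in AC_{\mathrm{loc}}([0,\infty),\Xx)$ starting at $x$ and satisfying the gradient-flow energy identity of \cref{def:gradient_flow_evolution_rule} can be obtained as the limit of minimizing-movement iterates. Cauchy--Schwarz applied to this identity forces the equalities $|\dot\phi(\sigma)|=|\partial E|(\phi(\sigma))$ and $-\tfrac{d}{d\sigma}E(\phi(\sigma))=|\partial E|(\phi(\sigma))\,|\dot\phi(\sigma)|$ a.e., and the arclength reparametrization of $\phi$ then yields the desired $\psi$, making the displayed identity above a direct change of variables. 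The remaining step is a routine verification that $\psi$ is defined on a long enough interval to contain $\lambda^\star$, which follows from the Lipschitz bound on $|\partial E|$ together with the coercivity in \cref{ass:unif_coerc}.
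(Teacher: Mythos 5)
Your proposal is correct and follows essentially the same route as the paper: both the upper bound (almost-geodesic competitors plus the Lipschitz bound on the slope, then optimizing a concave quadratic) and the lower bound (a curve of maximal slope in the sense of \cite{greenbook}, arclength reparametrization, the lower Lipschitz bound, and the choice $\lambda^\star = \tau g/(1+\varepsilon_L)$) are exactly the paper's argument. The one place where the paper is more explicit than your sketch is the admissibility of $\lambda^\star$: rather than calling it routine, the paper runs a short Gr\"onwall estimate, $|\dot\phi|(s)\ge|\partial E|(x)e^{-Ls}$, to conclude that the total arclength $l_\infty$ is at least $|\partial E|(x)/L\ge\lambda^\star$.
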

\begin{proof}
    We start with the first inequality in \eqref{eq:app2:relation_between_instantaneous_cost}.
    To this end, we fix $x \in \Xx$, pick $x' \in \Xx$ and $\varepsilon > 0$, and set $\underline{d} \coloneqq d(x, x')$. We use \cref{ass:metric_space} to choose an absolutely continuous curve $\gamma\colon [0, \underline{d} + \varepsilon] \to \Xx$ such that $\dot \gamma \equiv 1$, $\gamma(0) = x$ and $\gamma(\underline{d} + \varepsilon) = x'$.
    Since $|\partial E|$ is a strong upper gradient---see \cref{rmk:slope_strong_upper_gradient}---we have
    \begin{align*}
        E(x) - E(x') - \frac{1}{2\tau} \underline{d}^2 &\leq \int_0^{\underline{d}+\varepsilon}|\partial E|(\gamma(s)) \diff s - \frac{1}{2\tau} \underline{d}^2
        \leq \int_0^{\underline{d} + \varepsilon} \big(|\partial E|(x) + L s\big) \diff s - \frac{1}{2\tau} \underline{d}^2 \\
        &\leq (\underline{d} + \varepsilon) |\partial E|(x) + \frac{1}{2}L(\underline{d} + \varepsilon)^2 - \frac{1}{2\tau} \underline{d}^2.
    \end{align*}
    Taking $\varepsilon \to 0$, we get
    \begin{equation}\label{eq:app2:bound_over_energy_diff}
        E(x) - E(x') - \frac{1}{2\tau} \underline{d}^2 \leq \underline{d} |\partial E|(x) + L\frac{1}{2}\underline{d}^2 - \frac{1}{2\tau} \underline{d}^2 = \underline{d} |\partial E|(x) - \frac{1}{2\tau} (1 - \varepsilon_L)  \underline{d}^2.
    \end{equation}
    The right hand side above is a quadratic equation whose maximum is attained at $\underline{d} = \tau|\partial E|(x) \frac{1}{(1 - \varepsilon_L)}$; substituting this choice of $\underline{d}$ into \cref{eq:app2:bound_over_energy_diff} gives
    \begin{equation*}\label{eq:app2:uniform_bound_over_energy_diff}
        E(x) - E(x') - \frac{1}{2\tau} \underline{d}^2 \leq {\tau} \frac{1}{1 - \varepsilon_L} \frac{1}{2} |\partial E|(x)^2
    \end{equation*}
    As $\CostM\big(i(x)\big)$ is the supremum of the left hand side of the above equation over all $x' \in \Xx$, we have
    \begin{equation*}
        \CostM\big(i(x)\big) \leq {\tau} \frac{1}{1 - \varepsilon_L} \frac{1}{2} |\partial E|(x)^2 = {\tau} \frac{1}{1 - \varepsilon_L} \CostGF(i(x)), 
    \end{equation*}
    which finishes the proof of the first inequality.\\
    For the second inequality in \eqref{eq:app2:relation_between_instantaneous_cost}, we start by letting $\gamma\colon [0, \infty) \to \Xx$ be a curve of maximum slope such that $\gamma(0)=x$, which in particular implies that the following equation holds for all $s \geq 0$:
    \begin{equation}\label{eq:gamma_max_slope}
        E(x) - E\big(\gamma(s)\big) = \int_0^s |\partial E|\big(\gamma(r)\big)|\dot \gamma| \diff r;
    \end{equation}
    such a curve of maximum slope exists by \cref{rmk:slope_strong_upper_gradient} and \cite[Prop.~2.2.3, Theorem~2.3.3]{greenbook}.
    For $s > 0$, we set $l_s \coloneqq \int_0^s |\dot \gamma(\sigma) | \diff \sigma$ and we define $l_\infty \coloneqq \int_0^\infty |\dot \gamma(\sigma) | \diff \sigma$.
    Note that, as $\gamma$ is a curve of maximum slope, in addition to \cref{eq:gamma_max_slope}, the following holds for almost all $s > 0$:
    \begin{equation*}
        |\dot \gamma|(s) = |\partial E|(\gamma(s)) \leq |\partial E|(x) - L \int_0^s |\dot \gamma|(r) \diff r.
    \end{equation*}
    Using Grönwall's lemma, we deduce from the above that $|\dot \gamma|(s) \geq |\partial E|(x) e^{-L s}$,
    and thus that
    \begin{equation}\label{eq:l_infty}
        l_\infty \geq \frac{1}{L} |\partial E|(x).
    \end{equation}
    As a next step, we reparametrize $\gamma$ to obtain a curve $\widetilde{\gamma}\colon [0, l_\infty] \to \Xx$ such that $\widetilde{\gamma}(0) = x$,
    $\dot{\widetilde{\gamma}}(s) \equiv 1$ and such that \cref{eq:gamma_gradient_flow} holds for $\widetilde{\gamma}$
    in place of $\gamma$.
    We calculate, for $s \in [0, l_\infty]$,
    \begin{align*}
        \CostM\big(i(x)\big) &\geq E(x) - E\big(\widetilde{\gamma}(s)) - \frac{1}{2\tau} d\big(x, \widetilde{\gamma}(s)\big)^2 = \int_0^s |\partial E|\big(\widetilde{\gamma}(r)\big) \diff r - \frac{1}{2\tau} d\big(x, \widetilde{\gamma}(s)\big)^2 \\
        &\geq \int_0^s \left(|\partial E|(x) - L r\right) \diff r - \frac{1}{2\tau} s^2 = s |\partial E|(x) - \frac{1}{2}L s^2 - \frac{1}{2\tau} s^2 \\
        &= s |\partial E|(x) - \frac{1}{2} \left(L + \frac{1}{\tau}\right) s^2.
    \end{align*}
    Evaluating the above at $s^\star \coloneqq \frac{|\partial E|(x)}{L + \frac{1}{\tau}}$---justified by \cref{eq:l_infty}--- gives
    \begin{align*}
        \CostM\big(i(x)\big) &\geq \frac{1}{2} |\partial E|(x)^2 \tau \frac{1}{1 + \varepsilon_L} = \tau \CostGF\big(i(x)\big) \frac{1}{1 + \varepsilon_L},
    \end{align*}
    which finishes the proof of the second inequality.
\end{proof}
As a next step, we show that we can bound the cost of a single step of the minimizing movement scheme
by the cost of the gradient descent action, as long as step is small enough.
\begin{lemma}\label{lemma:estimate_one_step_of_mms_by_gf}
    Let us assume \cref{ass:conn_comp,ass:der_time,ass:metric_space,ass:path+Lipsch,ass:PL+time_der_slope,ass:reg_E,ass:unif_coerc}. Furthermore, let us assume that $L < \frac{1}\tau$, where $L$ is the Lipschitz constant of $|\partial E|$. Then, setting $\varepsilon_L \coloneqq L \cdot \tau$, we have, for each $x, x' \in \Xx$ such that $d(x, x') \leq \tau \cdot |\partial E|(x)$,
    \begin{equation}\label{eq:app2:bound_of_single_step_with_error}
        \begin{split}
            (1 - \varepsilon_L) \CostM\big((x, x')\big) &\leq 2 \cGF(x, x')  + |\partial E|(x) \big(\tau |\partial E|(x) - d(x, x')\big) \\
            &\leq 2 \cGF(x, x') + 2 \tau \CostGF\big(i(x)\big).
        \end{split}
    \end{equation}
    In particular, if $d(x, x') = \tau |\partial E|(x)$, the above inequality reduces to
    \begin{equation}\label{eq:app2:bound_of_single_step_without_error}
        (1 - \varepsilon_L) \CostM\big((x, x')\big) \leq 2 \cGF(x, x').
    \end{equation}
\end{lemma}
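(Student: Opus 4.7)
The plan is to reduce the statement to a one-line algebraic verification after combining three pieces: the decomposition of $\CostM((x,x'))$, the instantaneous-cost comparison from the previous lemma, and a sharp lower bound on $\cGF(x,x')$ obtained via \cref{lemma:equivalent_minimization}. Write $g \coloneqq |\partial E|(x)$ and $\underline d \coloneqq d(x,x')$. First I would observe that from the definition in \cref{eq:CostM_def} we have the split
\begin{equation*}
\CostM\big((x,x')\big) = \CostM\big(i(x)\big) + \tfrac{1}{2\tau}\underline d^{\,2}.
\end{equation*}
Applying the first inequality of \cref{lemma:instanteneous_cost_relation} (i.e., the already-proved \eqref{eq:app2:relation_between_instantaneous_cost}) to the first summand and discarding a $(1-\varepsilon_L)$ factor on the second, this yields
\begin{equation*}
(1-\varepsilon_L)\CostM\big((x,x')\big) \,\leq\, \tfrac{\tau g^{2}}{2} + (1-\varepsilon_L)\tfrac{\underline d^{\,2}}{2\tau}.
\end{equation*}

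Next I would produce a \emph{lower} bound on $\cGF(x,x')$. Using \cref{eq:equivalent_minimization_2}, for every unit-speed curve $\phi\colon[0,b]\to\Xx$ from $x$ to $x'$ one has $d(x,\phi(s))\leq s$, so by \cref{ass:path+Lipsch}, $|\partial E|(\phi(s)) \geq g - L s$. Since $\underline d \leq \tau g < g/L$ (using $\varepsilon_L<1$), the bound $g-Ls$ remains non-negative on $[0,\underline d+\varepsilon]$, and integrating and letting $\varepsilon\to 0$ gives
\begin{equation*}
\cGF(x,x') \;\geq\; g\,\underline d \;-\; \tfrac{L}{2}\underline d^{\,2}.
\end{equation*}

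Substituting this lower bound on $\cGF$ into the right-hand side of the desired inequality \eqref{eq:app2:bound_of_single_step_with_error} and clearing $\tfrac{1}{2\tau}$, the inequality reduces to
\begin{equation*}
(1+\varepsilon_L)\underline d^{\,2} - 2\tau g\,\underline d - \tau^{2} g^{2} \;\leq\; 0.
\end{equation*}
This is an upward-opening quadratic in $\underline d$ whose positive root equals $\tau g\,\frac{1+\sqrt{2+\varepsilon_L}}{1+\varepsilon_L}$. Since $\sqrt{2+\varepsilon_L}>1$ and $1+\varepsilon_L<2$, this root strictly exceeds $\tau g$; combined with the negative value of the quadratic at $\underline d=0$, it follows that the quadratic is non-positive on the whole interval $[0,\tau g]$, which is exactly the range allowed by the hypothesis $d(x,x')\leq\tau g$. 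This gives the first inequality of \eqref{eq:app2:bound_of_single_step_with_error}. The second inequality is immediate: since $\underline d\geq 0$, we have $g(\tau g-\underline d)\leq \tau g^{2} = 2\tau\CostGF(i(x))$. The special case \eqref{eq:app2:bound_of_single_step_without_error} follows from setting $\underline d=\tau g$, which makes the extra term $g(\tau g-\underline d)$ vanish. The only delicate step is step three—the lower bound on $\cGF$—and its tightness at both endpoints $\underline d=0$ and $\underline d=\tau g$ is what exactly matches the quadratic's zeros, leaving no slack and explaining why the hypothesis $L\tau<1$ cannot be relaxed.
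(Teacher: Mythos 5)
Your proof is correct, and the core ingredients match the paper's: both split $\CostM\big((x,x')\big) = \CostM\big(i(x)\big) + \tfrac{1}{2\tau}\underline d^{\,2}$, both apply the first inequality of \cref{lemma:instanteneous_cost_relation} to the first term, and both use a unit-speed competitor to derive the same lower bound $\cGF(x,x') \geq g\underline d - \tfrac{L}{2}\underline d^{\,2}$. Where you diverge is in how the two sides are finally compared. The paper coarsens aggressively at both ends: it replaces $\tfrac{1}{2\tau}\underline d^{\,2}$ by $\tau\CostGF\big(i(x)\big)$ (using the hypothesis $\underline d \leq \tau g$) to land immediately on $(1-\varepsilon_L)\CostM\big((x,x')\big) \leq 2\tau\CostGF\big(i(x)\big)$, and separately coarsens the lower bound further to $\cGF(x,x') \geq \tfrac{1}{2}g\underline d = \tau\CostGF\big(i(x)\big) - \tfrac{1}{2}g\big(\tau g - \underline d\big)$, after which the conclusion drops out by substitution with no quadratic analysis. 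You instead keep all terms tight and reduce the claim to verifying $(1+\varepsilon_L)\underline d^{\,2} - 2\tau g\underline d - \tau^2 g^2 \leq 0$ on $[0,\tau g]$; that reduction is algebraically exact, and the quadratic is indeed non-positive on that interval (negative at $\underline d = 0$, and $(\varepsilon_L - 2)\tau^2 g^2 < 0$ at $\underline d = \tau g$, with convexity closing the gap — your root computation gives a stronger statement but is not needed). The paper's route is slightly shorter and constant-free; yours makes the dependence on $\underline d$ fully visible, which is a legitimate trade-off.

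Two small points. First, your closing sentence claims the endpoints $\underline d = 0$ and $\underline d = \tau g$ ``match the quadratic's zeros''; they do not — the quadratic is strictly negative at both, with its positive root strictly beyond $\tau g$ as you yourself compute — so the ``no slack'' remark is not accurate (there is slack). Second, your lower-bound derivation talks about integrating over $[0,\underline d + \varepsilon]$ as if the competitor had that exact length; for the lower bound you need no $\varepsilon$ at all — every unit-speed competitor on $[0,b]$ has $b \geq \underline d$, and $\int_0^b \geq \int_0^{\underline d}(g - Ls)\,\mathrm{d}s$ by nonnegativity of $|\partial E|$ and of $g - Ls$ on $[0,\underline d]$ (which uses $\underline d \leq \tau g \leq g/L$). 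Neither slip affects validity.
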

\begin{proof}
    Both the second inequality in~\eqref{eq:app2:bound_of_single_step_with_error} and~\eqref{eq:app2:bound_of_single_step_without_error} follow immediatly. In the rest of this proof, we concern ourselves the first inequality in~\eqref{eq:app2:bound_of_single_step_with_error}.
    Under the above assumptions, we have, using \cref{lemma:instanteneous_cost_relation}:
    \begin{equation}\label{eq:action_estimation_1}
        \begin{split}
            (1 - \varepsilon_L) \CostM\big((x, x')\big) &= (1 - \varepsilon_L) \bigg( \CostM\big(i(x)\big) + \frac{1}{2\tau} d(x, x')^2\bigg) \\
            &\leq \tau \CostGF\big(i(x)\big) + \frac{1}{2\tau}\tau^2|\partial E|(x)^2 = 2\tau\CostGF\big(i(x)\big).
        \end{split}
    \end{equation}
    On the other hand, we can bound $\cGF(x, x')$ from below: By virtue of \cref{lemma:equivalent_minimization} it suffices to find a lower bound for $\int_0^s |\partial E|(\gamma(\sigma)) \diff \sigma$, uniformly over $s > 0$ and all absolutely continuous curves $\gamma\colon [0, s] \to \Xx$ such that $\dot \gamma \equiv 1$, $\gamma(0) = x$ and $\gamma(s) = x'$. Let such $s$ and $\gamma$ be given. We note that $s \geq d(x, x') \eqqcolon \underline{d}$ and write
    \begin{equation*}
        \begin{split}
            \int_0^s |\partial E|(\gamma(\sigma)) \diff \sigma &\geq \int_0^{\underline{d}} |\partial E|(\gamma(\sigma)) \diff \sigma  \geq \underline{d} |\partial E|(x) - \frac{1}{2} L \underline{d}^2 \\
            &\geq \frac{1}{2} |\partial E|(x) \underline{d} = \frac{1}{2} \tau |\partial E|(x)^2 - \frac{1}{2} |\partial E|(x) \big(\tau |\partial E|(x) - \underline{d}\big).
        \end{split}
    \end{equation*}
    Taking the infimum over $\gamma$ in the inequality above, we obtain
    \begin{equation}\label{eq:action_estimation_2}
        \cGF(x, x') \geq \tau \CostGF\big(i(x)\big) - \frac{1}{2} |\partial E|(x) \big(\tau |\partial E|(x) - \underline{d}\big).
    \end{equation}
    Combining \cref{eq:action_estimation_1,eq:action_estimation_2} gives
    \begin{align*}
        (1 - \varepsilon_L) \CostM\big((x, x')\big) &\leq 2 \cGF(x, x') + |\partial E|(x) \big(\tau |\partial E|(x) - \underline{d}\big)
    \end{align*}
    as required.
\end{proof}
We continue by proving a inverse relation to \cref{lemma:estimate_one_step_of_mms_by_gf}:
\begin{lemma}\label{lemma:estimate_gf_by_one_step_of_mms}
    Let us assume \cref{ass:conn_comp,ass:der_time,ass:metric_space,ass:path+Lipsch,ass:PL+time_der_slope,ass:reg_E,ass:unif_coerc}. Furthermore, let us assume that $L < \frac{1}\tau$, where $L$ is the Lipschitz constant of $|\partial E|$. Then, setting $\varepsilon_L \coloneqq L \cdot \tau$, we have, for every $x, x' \in \Xx$:
    \begin{equation*}
        \cGF(x, x') \leq 4 \frac{1 + \varepsilon_L}{1 - \varepsilon_L} \,\CostM\big((x, x')\big)
    \end{equation*}
\end{lemma}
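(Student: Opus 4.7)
The plan is to produce an upper bound on $\cGF(x,x')$ by feeding a unit-speed near-geodesic between $x$ and $x'$ into the variational definition, then converting the resulting estimate into one involving $\CostM$ via the instantaneous cost comparison \cref{lemma:instanteneous_cost_relation}. The hypothesis $L<1/\tau$ (equivalently $\varepsilon_L<1$) will be used only at the very end.

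First, I would invoke \cref{lemma:equivalent_minimization} (equivalently formula~\eqref{eq:equivalent_minimization_2}) to rewrite
$$\cGF(x,x') = \inf\left\{\int_a^b|\partial E|(\phi(s))\diff s \,\middle|\, \phi\in AC([a,b],\Xx),\,\phi(a)=x,\,\phi(b)=x',\,|\dot\phi|\equiv 1\right\}.$$
The path-space part of \cref{ass:path+Lipsch} guarantees, for each $\varepsilon>0$, an absolutely continuous unit-speed curve $\gamma\colon[0,\underline d+\varepsilon]\to\Xx$ with $\gamma(0)=x$ and $\gamma(\underline d+\varepsilon)=x'$, where $\underline d\coloneqq d(x,x')$. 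The Lipschitz part of \cref{ass:path+Lipsch} yields $|\partial E|(\gamma(\sigma))\le |\partial E|(x)+L\sigma$ along this curve, so letting $\underline g\coloneqq|\partial E|(x)$ and sending $\varepsilon\to 0$ gives the candidate estimate
$$\cGF(x,x')\le \underline d\,\underline g+\frac{L}{2}\underline d^{\,2}.$$

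Next, I would apply Young's inequality with weight $\tau$, splitting $\underline d\,\underline g\le \frac{\tau}{2}\underline g^{\,2}+\frac{1}{2\tau}\underline d^{\,2}$. The term $\frac{\tau}{2}\underline g^{\,2}=\tau\CostGF(i(x))$ is controlled by \cref{lemma:instanteneous_cost_relation}, giving $\frac{\tau}{2}\underline g^{\,2}\le(1+\varepsilon_L)\CostM(i(x))\le(1+\varepsilon_L)\CostM((x,x'))$, where in the last step I use the identity $\CostM((x,x'))=\CostM(i(x))+\frac{1}{2\tau}\underline d^{\,2}$ coming directly from~\eqref{eq:CostM_def}. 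The same identity gives $\frac{1}{2\tau}\underline d^{\,2}\le\CostM((x,x'))$ and $\frac{L}{2}\underline d^{\,2}=\varepsilon_L\cdot\frac{1}{2\tau}\underline d^{\,2}\le\varepsilon_L\CostM((x,x'))$. Summing these three contributions yields $\cGF(x,x')\le(1+2\varepsilon_L)\CostM((x,x'))$, which is in fact sharper than the claimed bound, and since $\varepsilon_L<1$ we certainly have $1+2\varepsilon_L\le 4\frac{1+\varepsilon_L}{1-\varepsilon_L}$, so the lemma follows.

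The genuinely delicate step is the first one: the definition of $\cGF$ involves the quadratic integrand $\frac12(|\partial E|^2+|\dot\phi|^2)$, so a naive unit-speed candidate incurs an unwanted additive $\underline d/2$ from the kinetic term. Passing through the reparametrization-invariant formulation in \cref{lemma:equivalent_minimization} neatly sidesteps this obstruction. An alternative that avoids \cref{lemma:equivalent_minimization} would be to rescale $\gamma$ to a curve of duration $T$ and optimize in $T$; this is feasible but produces the same kind of estimate after some algebra, with the same order of constants. Everything else is straightforward bookkeeping using the hypothesis $L\tau<1$ that is already packaged into \cref{lemma:instanteneous_cost_relation}.
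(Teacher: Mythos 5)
Your proof is correct in substance, and it takes a genuinely different route from the paper's. After establishing the same crude bound $\cGF(x,x') \leq \underline d\,|\partial E|(x) + \frac{L}{2}\underline d^{\,2}$ via the reparametrization-invariant formula of \cref{lemma:equivalent_minimization} and a near-geodesic — this part coincides with the paper — the paper proceeds by a case distinction on whether $\underline d \leq \frac{2\tau}{1-\varepsilon_L}|\partial E|(x)$ or not, absorbing the cross term into either the $|\partial E|^2$-contribution or the $\underline d^2$-contribution depending on the case. You instead apply Young's inequality $\underline d\,|\partial E|(x) \leq \frac{\tau}{2}|\partial E|(x)^2 + \frac{1}{2\tau}\underline d^{\,2}$ once and for all, then bound each resulting term by $\CostM\big((x,x')\big)$ using the decomposition $\CostM\big((x,x')\big) = \CostM\big(i(x)\big) + \frac{1}{2\tau}\underline d^{\,2}$ and \cref{lemma:instanteneous_cost_relation}. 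This avoids the case split entirely and is a cleaner argument.

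One small slip: the three termwise bounds you list are $(1+\varepsilon_L)\CostM\big((x,x')\big)$, $\CostM\big((x,x')\big)$, and $\varepsilon_L\CostM\big((x,x')\big)$, which sum to $(2 + 2\varepsilon_L)\CostM\big((x,x')\big)$, not $(1 + 2\varepsilon_L)\CostM\big((x,x')\big)$ as you wrote. Since $2(1+\varepsilon_L) \leq \frac{4(1+\varepsilon_L)}{1-\varepsilon_L}$ is equivalent to $1-\varepsilon_L \leq 2$, the conclusion still holds. In fact, if you use the identity $\CostM\big((x,x')\big) = \CostM\big(i(x)\big) + \frac{1}{2\tau}\underline d^{\,2}$ exactly rather than bounding $\CostM\big(i(x)\big)$ by $\CostM\big((x,x')\big)$, the three contributions combine to give the sharper estimate $\cGF(x,x') \leq (1+\varepsilon_L)\CostM\big((x,x')\big)$, which improves on the paper's constant $4\frac{1+\varepsilon_L}{1-\varepsilon_L}$ by a factor larger than $4$.
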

\begin{proof}
    The proof is similar to the first part of the proof of \cref{lemma:instanteneous_cost_relation}.
    We show the inequality by choosing, for an arbitrary $\varepsilon > 0$, as a competitor for the infimum in \cref{eq:equivalent_minimization_2}
    an absolutely continuous curve $\gamma\colon [0, \underline{d} + \varepsilon] \to \Xx$ such that $\dot \gamma \equiv 1$, $\gamma(0) = x$ and $\gamma(\underline{d} + \varepsilon) = x'$; the existence of such a curve is guaranteed by \cref{ass:metric_space}.
    With this choice, we have, setting $\underline{d} \coloneqq d(x, x')$:
    \begin{equation*}
        \cGF(x, x') \leq \int_0^{\underline{d}+\varepsilon} |\partial E|\big(\gamma(s)\big) \diff s 
        \leq (\underline{d} + \varepsilon) |\partial E|(x) + \frac{1}{2}L (\underline{d} + \varepsilon)^2.
    \end{equation*}
    Taking the limit $\varepsilon \to 0$, we get
    \begin{equation}\label{eq:gradient_cost_crude_estimate}
        \cGF(x, x') \leq \underline{d}|\partial E|(x) + \frac{1}{2}L \underline{d}^2 = \underline{d}|\partial E|(x) + \frac{1}{2\tau} \underline{d}^2 - \frac{1}{2\tau} ( 1 - \varepsilon_L)  \underline{d}^2.
    \end{equation}
    If $\underline{d} \leq \frac{2\tau}{1 - \varepsilon_L} |\partial E|(x)$, we use \cref{lemma:instanteneous_cost_relation} to reduce the above to
    \begin{align*}
        \cGF(x, x') &\leq \frac{2\tau}{1 - \varepsilon_L}|\partial E|^2(x) + \frac{1}{2\tau} \underline{d}^2 = \frac{4}{1 - \varepsilon_L}\tau \CostGF(i(x)) + \frac{1}{2\tau} \underline{d}^2 \\
        &\leq  4 \frac{1 + \varepsilon_L}{1 - \varepsilon_L}\, \CostM\big(i(x)\big) + \frac{1}{2\tau} \underline{d}^2 \leq 4 \frac{1 + \varepsilon_L}{1 - \varepsilon_L} \,\CostM\big((x, x')\big).
    \end{align*}
    On the other hand, if $\underline{d} >  \frac{2\tau}{1 - \varepsilon_L} |\partial E|(x)$, 
    \cref{eq:gradient_cost_crude_estimate} also gives
    \begin{align*}
        \cGF(x, x') &\leq \underline{d}|\partial E|(x) + \frac{1}{2\tau} \underline{d}^2 - \frac{1}{2\tau} ( 1 - \varepsilon_L)   \frac{2\tau}{1 - \varepsilon_L} |\partial E|(x) \underline{d}  \\
        &= \frac{1}{2\tau} \underline{d}^2 \leq \CostM\big((x, x')\big) \leq 4 \frac{1 + \varepsilon_L}{1 - \varepsilon_L} \, \CostM\big((x, x')\big).
    \end{align*}
\end{proof}
We are now in a position to prove a relation between the actions $\cB$ and $\cM$.
\begin{lemma}
    Let us assume \cref{ass:conn_comp,ass:der_time,ass:metric_space,ass:path+Lipsch,ass:PL+time_der_slope,ass:reg_E,ass:unif_coerc}. Furthermore, let us assume that $L < \frac{1}\tau$, where $L$ is the Lipschitz constant of $|\partial E|$. Then, setting $\varepsilon_L \coloneqq L \cdot \tau$, we have, for any $x \neq x' \in \Xx$
    \begin{equation}\label{eq:app2:relation_of_actions}
        \frac{1-\varepsilon_L}{4(1+\varepsilon_L)} \cGF(x, x') \leq \cM(x, x') - \CostM\big(i(x)\big) \leq \frac{2}{1-\varepsilon_L} \cGF(x, x') + \frac{2 \tau R}{1-\varepsilon_L},
    \end{equation}
    where $R \coloneqq \sup_{x'' \in K_\Xx}\CostGF\big(i(x'')\big)$, $K \subseteq \PpGF$ is a chosen compact such that \cref{prop:curves_coercive} for $\cGF$ is fulfilled for $K$, and $K_\Xx$ denotes the projection of $K$ onto $\Xx$.
\end{lemma}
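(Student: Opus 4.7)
\medskip

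\noindent\textbf{Proof plan.} The two bounds are handled independently, each combining a decomposition into discrete steps with the per-step comparisons in \cref{lemma:estimate_one_step_of_mms_by_gf} and \cref{lemma:estimate_gf_by_one_step_of_mms}. Before starting, note that $\cGF$ is \emph{symmetric}: the integrand $\frac12(|\partial E|^2 + |\dot\phi|^2)$ in \cref{eq:def_gradient_flow_action} is invariant under time-reversal of $\phi$. Also, by rewriting the infimum in \cref{eq:action_infimum_MMS} with $u_{a+1}$ as a free variable and the remaining tail forming an MMS sequence from $u_{a+1}$ to $x'$, one verifies the identity
\begin{equation*}
  \cM(x,x') - \CostM(i(x))
  = \inf_{(u_s)_{s=a}^{b+1}}\Big[\tfrac{1}{2\tau}d(x,u_{a+1})^2 + \sum_{s=a+1}^{b-1}\CostM\big((u_s,u_{s+1})\big) + \CostM(i(x'))\Big],
\end{equation*}
where the infimum is over sequences with $u_a = x$ and $u_b = u_{b+1} = x'$. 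This identity makes explicit which terms are subtracted by $\CostM(i(x))$.

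\medskip

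\noindent\emph{Lower bound.} Fix any admissible MMS sequence. By the triangle inequality for $\cGF$ (\cref{lemma:triangular_inequ_on_P}), $\cGF(x,x') \leq \sum_{s=a}^{b-1}\cGF(u_s,u_{s+1})$. The key step is to apply \cref{lemma:estimate_gf_by_one_step_of_mms} to the \emph{reversed} pair: using $\cGF(u_s,u_{s+1}) = \cGF(u_{s+1},u_s)$ we get
\begin{equation*}
  \cGF(u_s,u_{s+1}) \leq \tfrac{4(1+\varepsilon_L)}{1-\varepsilon_L}\CostM\big((u_{s+1},u_s)\big) = \tfrac{4(1+\varepsilon_L)}{1-\varepsilon_L}\Big[\CostM(i(u_{s+1})) + \tfrac{1}{2\tau}d(u_s,u_{s+1})^2\Big].
\end{equation*}
Summing over $s = a,\dots,b-1$, the reindexing $t=s+1$ shifts the $\CostM(i(\cdot))$ terms to indices $t = a+1,\dots,b$; isolating $\CostM(i(u_b)) = \CostM(i(x'))$ and recombining with the squared-distance terms yields
\begin{equation*}
  \cGF(x,x') \leq \tfrac{4(1+\varepsilon_L)}{1-\varepsilon_L}\Big[\tfrac{1}{2\tau}d(x,u_{a+1})^2 + \sum_{s=a+1}^{b-1}\CostM\big((u_s,u_{s+1})\big) + \CostM(i(x'))\Big].
\end{equation*}
Taking infimum over sequences gives the lower bound via the identity above. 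The crucial trick is the reversal, which causes $\CostM(i(x))$ to be absent from the shifted sum; had we applied the lemma in the forward direction, a $\CostM(i(x))$ term would appear that we cannot absorb.

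\medskip

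\noindent\emph{Upper bound.} We build an explicit MMS sequence by discretizing a near-optimal gradient-flow curve. By \cref{prop:curves_coercive} applied to $K = \{i(x),i(x')\}$, there exists a compact $K' \subseteq \PpGF$ containing near-optimal curves; we let $K_\Xx$ be its $\Xx$-projection and $R = \sup_{y \in K_\Xx}\CostGF(i(y))$. By \cref{lemma:equivalent_minimization_2}, for every $\varepsilon>0$ we pick a unit-speed $\phi\colon [0,T_\phi] \to K_\Xx$ from $x$ to $x'$ with $\int_0^{T_\phi} |\partial E|(\phi(s))\diff s \leq \cGF(x,x') + \varepsilon$. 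Define $s_0 = 0$, $s_{i+1} = \min(s_i + \tau|\partial E|(\phi(s_i)), T_\phi)$, and set $u_i = \phi(s_i)$; terminate at index $N$ with $u_N = x'$, and complete the MMS sequence by $u_{N+1} = x'$. Since $|\dot\phi|\equiv 1$, we have $d(u_i,u_{i+1}) \leq s_{i+1}-s_i \leq \tau|\partial E|(u_i)$, so \cref{lemma:estimate_one_step_of_mms_by_gf} applies to each step: summing and using $\sum_i \cGF(u_i,u_{i+1}) \leq \cGF(x,x')+\varepsilon$ (triangle inequality on the restricted $\phi$'s), together with the bound $|\partial E|(u_i)(\tau|\partial E|(u_i) - d(u_i,u_{i+1})) \leq 2\tau\CostGF(i(u_i)) \leq 2\tau R$ on the error terms that do not vanish, yields the bound
\begin{equation*}
  (1-\varepsilon_L)\sum_{i=0}^{N-1}\CostM\big((u_i,u_{i+1})\big) \leq 2\cGF(x,x') + 2\tau R + 2\varepsilon.
\end{equation*}
Finally, $\cM(x,x') \leq \sum_{i=0}^{N-1}\CostM((u_i,u_{i+1})) + \CostM(i(x'))$; subtracting $\CostM(i(x)) \geq 0$ and using \cref{lemma:instanteneous_cost_relation} to absorb $\CostM(i(x'))$ into the $R$-term (possibly after slightly enlarging the compact $K_\Xx$ so that $R$ dominates the closing cost) delivers the claimed upper bound.

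\medskip

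\noindent\emph{Main obstacle.} The most delicate point is the error bookkeeping in the upper bound: one must argue that only the final (truncated) step of the discretization contributes a nonzero $|\partial E|(u_i)(\tau|\partial E|(u_i) - d(u_i,u_{i+1}))$ term of size $O(\tau R)$, and then absorb the unavoidable closing contribution $\CostM(i(x'))$ into the same $R$-term without degrading the prefactor. On the lower-bound side, the subtlety is the choice of reversed application of \cref{lemma:estimate_gf_by_one_step_of_mms} to align the $\CostM(i(u_{s+1}))$ terms with the standard MMS expression after a one-index shift.
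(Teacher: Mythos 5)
Your lower bound argument is correct, and notably different from (and arguably cleaner than) the paper's own proof. The identity you derive for $\cM(x,x') - \CostM(i(x))$ is right, and the \emph{reversal} trick — applying \cref{lemma:estimate_gf_by_one_step_of_mms} to the reversed pair $(u_{s+1},u_s)$ and exploiting the symmetry of $\cGF$ — is exactly what is needed to shift the $\CostM(i(\cdot))$ indices so that $\CostM(i(x))$ drops out and $\CostM(i(x'))$ appears as a free leftover. One amusing side effect: the paper's own proof (both directions) in fact establishes the inequality with $\CostM(i(x'))$ in place of $\CostM(i(x))$ (the displayed equality $\sum_{j=a+1}^{b}\CostM(\gamma(j)) - \CostM(i(x)) = \sum_{j=a+1}^{b-1}\CostM(\gamma(j))$ is justified by noting $\gamma(b)=i(x')$, so it only holds if one reads $i(x')$; the same $\CostM(i(x'))$ reappears in the last line of the upper-bound computation). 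Your lower bound matches the statement exactly as printed, whereas the paper's proof matches a variant with $x'$ subtracted.

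The upper bound has a genuine gap. You discretize a near-optimal unit-speed curve $\phi$ by $s_{i+1}=\min(s_i+\tau|\partial E|(\phi(s_i)),\,T_\phi)$, set $u_i=\phi(s_i)$, and note that $d(u_i,u_{i+1})\leq s_{i+1}-s_i\leq\tau|\partial E|(u_i)$. The inequality $d(u_i,u_{i+1})\leq s_{i+1}-s_i$ is strict in general: $\phi|_{[s_i,s_{i+1}]}$ is typically \emph{not} a geodesic between its endpoints, so its chord length is shorter than its arc length. Consequently, the error term $|\partial E|(u_i)\big(\tau|\partial E|(u_i)-d(u_i,u_{i+1})\big)$ from \cref{lemma:estimate_one_step_of_mms_by_gf} is nonzero for \emph{every} step, not just the final truncated one. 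You have no control on $\sum_i |\partial E|(u_i)\big(\tau|\partial E|(u_i)-d(u_i,u_{i+1})\big)$; it can grow linearly with the number of steps (and can be arbitrarily large for an oscillatory $\phi$), so the claimed $2\tau R$ bound does not follow. The paper avoids this by selecting the next control point $y_{i+1}$ along the curve so that $d(y_i,y_{i+1})=\max\{\tau|\partial E|(y_i),\varepsilon'\}$ holds \emph{with equality} (achievable by the intermediate value theorem, since $t\mapsto d(y_i,\gamma_x(t))$ is continuous and starts at zero). For the $\tau|\partial E|(y_i)\geq\varepsilon'$ steps the error term then vanishes identically (via \cref{eq:app2:bound_of_single_step_without_error}); for the $\tau|\partial E|(y_i)<\varepsilon'$ steps, \cref{lemma:instanteneous_cost_relation} bounds the step cost by $O(\varepsilon'^2/\tau)$. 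The $\varepsilon'$-floor also prevents the recursion $s_{i+1}=s_i+\tau|\partial E|(\phi(s_i))$ from stalling wherever $|\partial E|(\phi(s_i))=0$, which your scheme does not address.

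Finally, your plan to absorb $\CostM(i(x'))$ into the $R$-term cannot match the claimed constant: even under the clean estimate $\CostM(i(x'))\leq\tfrac{\tau}{1-\varepsilon_L}\CostGF(i(x'))\leq\tfrac{\tau R}{1-\varepsilon_L}$, you end up with $\tfrac{3\tau R}{1-\varepsilon_L}$ rather than $\tfrac{2\tau R}{1-\varepsilon_L}$ after also subtracting $\CostM(i(x))\geq 0$. If the statement is read with $\CostM(i(x'))$ subtracted (as the paper's proof does), the absorption becomes unnecessary and the constants close directly.
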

\begin{proof}
    We start with the first inequality. By taking the infimum, it suffices to show that for all competitors $\gamma \in \Gamma\MMS\big(i(x), i(x')\big)$,
    we have that
    \begin{equation}\label{eq:app2:upper_bound_costGF_proof}
        \frac{1-\varepsilon_L}{4(1+\varepsilon_L)} \cGF(x, x') \leq \sum_{j=a+1}^{b} \CostM\big(\gamma(j)\big) - \CostM\big(i(x)\big) = \sum_{j=a+1}^{b-1} \CostM\big(\gamma(j)\big),
    \end{equation}
    where $a < b \in \Z$ are chosen such that $\dom(\gamma) = [a, b]_\Z$; for the second equality above, note that necessarily $\gamma(b) = i(x')$. However,~\eqref{eq:app2:upper_bound_costGF_proof} follows by repeatedly applying \cref{lemma:estimate_gf_by_one_step_of_mms}, using the triangle inequality on $\cGF$ and noting that $\gamma_2(j) = \gamma_1(j+1)$ for all $j \in [a, b-1]_\Z$ and that $\gamma_1(a+1) = x$, $\gamma_2(b-1) = x'$. \\
    We now turn to the second inequality in~\eqref{eq:app2:relation_of_actions}, which is a little more intricate to prove. We start by picking some $\varepsilon > 0$ and a curve $\gamma \in \Gamma\GF\big(i(x), i(x')\big)$ such that
    $\int_a^b\CostGF\big(\gamma(s)\big) \diff s \leq \cGF(x, x') + \varepsilon$,
    where $a < b \in \R$ are such that $\dom(\gamma) = [a, b]_\R$, and such that $\im(\gamma) \subseteq K$---\cref{prop:curves_coercive} guarantees that we can indeed pick such a curve. In the rest of this proof, we will choose appriate control points along the curve $\gamma$ to repeatedly apply \cref{lemma:estimate_one_step_of_mms_by_gf} to the segments of $\gamma$. \\
    Recalling the indexing conventions on $\gamma$ introduced in \cref{lemma:gradient_flow_summary_lemma}, we pick some arbitrary $\varepsilon' > 0$ and iteratively choose controlpoints
    $a=t_1 < \dots < t_n \in [a, b]$ such that---setting $y_i \coloneqq \gamma_x(t_i)$---$d(y_i, y_{i+1}) = \max\{\tau |\partial E|(y_i), \varepsilon'\}$ for $1 \leq i < n$ and such that $d(y_n, x') \leq \tau |\partial E|(y_n)$. Denoting the length of $\gamma_x$ by $l$, we have that $n \leq \frac{l}{\varepsilon'}$. For each $1 \leq i < n$, if 
    $\tau |\partial E|(y_i) \leq \varepsilon'$, we can use \cref{lemma:instanteneous_cost_relation} to see that
    \begin{align*}
        (1 - \varepsilon_L) \, \CostM\big((y_i, y_{i+1})\big) &\leq \tau \CostGF\big(i(y_i)\big) + \frac{1-\varepsilon_L}{2 \tau} \varepsilon'^2 = \frac{1}{2\tau}\big(\tau|\partial E|(y_i)\big)^2 + \frac{1-\varepsilon_L}{2 \tau} \varepsilon'^2 \\
        &\leq \frac{1}{2\tau} \varepsilon'^2 + \frac{1}{2\tau} \varepsilon'^2 = \frac{1}{\tau} \varepsilon'^2.
    \end{align*}
    On the other hand, if $\tau |\partial E|(y_i) \geq \varepsilon'$, we can use \cref{lemma:estimate_one_step_of_mms_by_gf} to see that $(1 - \varepsilon_L) \, \CostM\big((y_i, y_{i+1})\big) \leq 2 \cGF(y_i, y_{i+1})$. Thus, either way,
    \begin{equation*}
        (1 - \varepsilon_L) \, \CostM\big((y_i, y_{i+1})\big) \leq 2 \cGF(y_i, y_{i+1}) + \frac{1}{\tau} \varepsilon'^2 \leq 2 \int_{t_i}^{t_{i+1}} \CostGF\big(\gamma(s)\big) \diff s + \frac{1}{\tau} \varepsilon'^2;
    \end{equation*}
    furthermore, also using \cref{lemma:estimate_one_step_of_mms_by_gf}, we also have that
    \begin{equation*}
        (1 - \varepsilon_L) \, \CostM\big((y_n, x')\big) \leq 2 \cGF(x, x') + 2 \tau \CostGF\big(i(x)\big) \leq 2 \int_{t_n}^{b} \CostGF\big(\gamma(s)\big) \diff s + 2 \tau \CostGF\big(i(x)\big).
    \end{equation*}
    Using the curve $\big(i(x), (y_1, y_2), (y_2, y_3), \dots, (y_{n-1}, y_n), (y_n, x'), i(x')\big) \in \Gamma\MMS\big(i(x), i(x')\big)$ as a competitor for the infimum in $\cM(x, x')$, we finally get
    \begin{align*}
        \cM(x, x') &\leq \sum_{i=1}^{n-1} \CostM\big((y_i, y_{i+1})\big) + \CostM\big((y_n, x')\big) + \CostM\big(i(x')\big) \\
        &\begin{aligned}
            \leq \frac{1}{1 - \varepsilon_L}\bigg(2 \sum_{i=1}^{n-1} \int_{t_i}^{t_{i+1}} \CostGF\big(\gamma(s)\big) \diff s + (n-1) \frac{1}{\tau} \varepsilon'^2 + &\\
            2 \int_{t_n}^{b} \CostGF\big(\gamma(s)\big) \diff s + 2 \tau \CostGF\big(i(x)\big)&\bigg) + \CostM\big(i(x')\big)
        \end{aligned} \\
        &\leq \frac{1}{1 - \varepsilon_L}\bigg( 2 \int_a^b \CostGF\big(\gamma(s)\big) \diff s + \frac{l \varepsilon'}{\tau} + 2\tau R\bigg) + \CostM\big(i(x')\big) \\
        &\leq \frac{2}{1-\varepsilon_L}\cGF(x, x') + \varepsilon + \frac{l \varepsilon'}{\tau} + \frac{2 \tau R}{1-\varepsilon_L} + \CostM\big(i(x')\big).
    \end{align*}
    As both $\varepsilon > 0$ and $\varepsilon' > 0$ were arbitrary, we can take the limit $\varepsilon \to 0$ and $\varepsilon' \to 0$ in the above inequality to obtain the thesis.
\end{proof}

\section*{Acknowledgments}
The work of SA was partially funded by the Austrian Science Fund through the project 10.55776/P35359, by the University of Naples Federico II through the FRA Project ``ReSinApas", by the MUR - PRIN 2022 project ``Variational Analysis of Complex Systems in Materials Science, Physics and Biology'', No.~2022HKBF5C, funded by European Union NextGenerationEU, and by the Gruppo Nazionale per l'Analisi Matematica, la Probabilit\`a e le loro Applicazioni (GNAMPA-INdAM, Project 2025: DISCOVERIES - Difetti e Interfacce in Sistemi Continui: un'Ottica Variazionale in Elasticit\`a con Risultati Innovativi ed Efficaci Sviluppi).
AS acknowledges partial support from GNAMPA-INdAM. MF acknowledges the support of the Munich Center for Machine Learning and the ERC Advanced Grant NEITALG, grant agreement No. 101198055.

\bigskip
\begin{center}
  \FundingLogos
  
  \vspace{0.5em}
  \begin{tcolorbox}\centering\small
    % CHOOSE the sentence that applies to your grant/programme and replace placeholders.
    % 1) For Horizon Europe / ERC (recommended wording):
    Funded by the European Union. Views and opinions expressed are however those of the author(s) only and do not necessarily reflect those of the European Union or the European Research Council Executive Agency. Neither the European Union nor the granting authority can be held responsible for them.
    This project has received funding from the European Research Council (ERC) under the European Union’s Horizon Europe research and innovation programme (grant agreement No. 101198055, project acronym NEITALG).
    
    % 2) (Alternatively, for Horizon 2020 ERC grants use the H2020 wording:)
    % This project has received funding from the European Research Council (ERC) under the European Union's Horizon 2020 research and innovation programme (grant agreement No. <GRANT NUMBER>, project acronym <ACRONYM>).
  \end{tcolorbox}
\end{center}

\bibliographystyle{siam}
\bibliography{biblio}

\end{document}